\newcommand{\blank}{\underline{\hspace{0.2cm}}}
\def\bign#1{\mathclose{\hbox{$\left#1\vbox to8.5\p@{}\right.\n@space$}}\mathopen{}}
\theoremstyle{definition}
\newtheorem{theorem}{Theorem}[]
\newtheorem*{theorem*}{Theorem}
\newtheorem{lemma}[theorem]{Lemma}
\newtheorem*{lemma*}{Lemma}
\newtheorem{corollary}[theorem]{Corollary}
\newtheorem*{corollary*}{Corollary}
\newtheorem{definition}[theorem]{Definition}
\newtheorem*{definition*}{Definition}
\newtheorem*{remark*}{Remark}
\newtheorem{proposition}[theorem]{Proposition}
\newtheorem*{proposition*}{Proposition}
\newtheorem{example}[theorem]{Example}
\newtheorem*{example*}{Example}
\renewcommand{\email}[2][]{%
	\ifx\emails\@empty\relax\else{\g@addto@macro\emails{,\space}}\fi%
	\@ifnotempty{#1}{\g@addto@macro\emails{\textrm{(#1)}\space}}%
	\g@addto@macro\emails{#2}%
}
\title{Exact Borel subalgebras of path algebras of quivers of Dynkin type $\mathbb{A}$}
\author{Markus Thuresson}
\address{Department of Mathematics, Uppsala University, Box 480, SE-75106, Sweden}
\email{markus.thuresson@math.uu.se}
\begin{document}
\maketitle

\begin{abstract}
Hereditary algebras are quasi-hereditary with respect to any adapted partial order on the indexing set of the isomorphism classes of their simple modules. For any adapted partial order on $\{1,\dots, n\}$, we compute the quiver and relations for the $\operatorname{Ext}$-algebra of standard modules over the path algebra of a uniformly oriented linear quiver with $n$ vertices. Such a path algebra always admits a regular exact Borel subalgebra in the sense of König and we show that there is always a regular exact Borel subalgebra containg the idempotents $e_1,\dots, e_n$ and find a minimal generating set for it. For a quiver $Q$ and a deconcatenation $Q=Q^1\sqcup Q^2$ of $Q$ at a sink or source $v$, we describe the $\operatorname{Ext}$-algebra of standard modules over $KQ$, up to an isomorphism of associative algebras, in terms of that over $KQ^1$ and $KQ^2$. Moreover, we determine necessary and sufficient conditions for $KQ$ to admit a regular exact Borel subalgebra, provided that $KQ^1$ and $KQ^2$ do. We use these results to obtain sufficient and necessary conditions for a path algebra of a linear quiver with arbitrary orientation to admit a regular exact Borel subalgebra.
\end{abstract}
	\noindent
{\bf 2020 Mathematics Subject Classification:} 16D90, 16E30, 16G20

\noindent
{\bf Keywords:} Quasi-hereditary algebra; Modules with standard filtration; Ext-algebra; A-infinity algebra; Catalan number
\thispagestyle{empty}
\newpage
\noindent
\section{Introduction}
Highest weight categories were first introduced in \cite{CPS88} with the purpose of axiomatizing certain phenomena arising naturally in the representation theory of complex semisimple Lie algebras. This prompted the definition of a quasi-hereditary algebra (see \cite{Scott87}), which was shown by the authors to be precisely such a finite-dimensional algebra whose module category is equivalent to a highest weight category. Since their introduction, examples of quasi-hereditary algebras have been found to be abundant in representation theory. Classical examples include hereditary algebras, algebras of global dimension two, Schur algebras and blocks of BGG category $\mathcal{O}$. The main protagonists of the representation theory of quasi-hereditary algebras are the standard modules. These are certain quotients of the indecomposable projective modules which depend on a chosen partial order on the indexing set of the isomorphism classes of simple modules. For blocks of BGG category $\mathcal{O}$, this order is the Bruhat order. Associated to the standard modules is the category $\mathcal{F}(\Delta)$: the full subcategory of the module category consisting of those modules which admit a filtration whose subquotients are standard modules. Given a quasi-hereditary algebra, natural objectives of its representation theory are understanding the standard modules and the associated category $\mathcal{F}(\Delta)$.

In the endeavor of understanding $\mathcal{F}(\Delta)$ for a general quasi-hereditary algebra, a breakthrough was made by König, Külshammer and Ovsienko in \cite{kko}. They showed that any quasi-hereditary algebra $A$ is Morita equivalent to an algebra $\Lambda$, which admits a very particular subalgebra $B$, called a regular exact Borel subalgebra. This subalgebra has the surprising property that the image of its module category under the functor $\Lambda\otimes_B \blank$ is equivalent to $\mathcal{F}(\Delta)$. 

In general, the problem of determining $\Lambda$ and $B$ for an arbitrary quasi-hereditary algebra $A$ may be very hard. One way to do this involves determining an $A_\infty$-structure on $\operatorname{Ext}_A^\ast(\Delta,\Delta)$, the algebra of extensions between standard modules over $A$, which can be arduous. Some examples of where this has been done can be found in \cite{KlamtStroppel, Thuresson22}. However, in the situations studied in this paper, the aforementioned $A_\infty$-structures can be easily computed. Moreover, we are able to use criteria from \cite{CONDE21} to check when our algebras themselves contain regular exact Borel subalgebras.

A given associative algebra can have different quasi-hereditary structures, depending on the partial order mentioned above. In particular, two different orders may yield identical quasi-hereditary structures. This phenomenon motivates the definition of the esssential order, an order with the property that two quasi-hereditary structures coincide if and only if they generate the same essential order. The algebras considered in this article are hereditary, and hereditary algebras are precisely those algebras which are quasi-hereditary with respect to any total order on an indexing set of the isomorphism classes of their simple modules. A natural question to ask is then how many different quasi-hereditary structures there are. When the algebra in question admits a duality on its module category which preserves simple modules, the structure is essentially unique, as shown by Coulembier in \cite{Kevin2020}. This contrasts the algebras studied in this article with the examples from \cite{KlamtStroppel, Thuresson22}, as the algebras appearing there admit a simple-preserving duality.

In a recent paper by Flores, Kimura and Rognerud, \cite{FKR}, the authors showed that when $A$ is the path algebra of a uniformly oriented linear quiver, the different quasi-hereditary structures are counted by the Catalan numbers. Moreover, they counted the quasi-hereditary structures of path algebras of more complicated quivers by means of a combinatorial process called ``deconcatenation''. The main goal of the present article is to use the combinatorial techniques of Flores, Kimura and Rognerud to further study the quasi-hereditary structures of some of the algebras considered in \cite{FKR}. In particular, we want to describe the $\operatorname{Ext}$-algebras of their standard modules and their regular exact Borel subalgebras.
 
 The following is a brief description of the main results of the present article. For a quasi-hereditary algebra $A$, denote by $\Delta$ the direct sum of standard modules, one from each isomorphism class, and denote by $\operatorname{Ext}_{A}^\ast(\Delta,\Delta)$ the algebra of extensions between standard modules.
 
 \begin{enumerate}[(A)]
 	\item Let $A_n$ be the path algebra of the following quiver:
 	$$\xymatrix{1\ar[r] & 2 \ar[r] & \dots \ar[r] & n-1\ar[r] & n}$$
 	For any partial order $\trianglelefteq$ on $\{1,\dots, n\}$ such that $(A_n,\trianglelefteq)$ is quasi-hereditary, we construct a graded quiver $Q$ and an admissible ideal $I\subset KQ$, such that there is an isomorphism of graded associative algebras $\operatorname{Ext}_{A_n}^\ast(\Delta,\Delta)\cong KQ/I$. According to results from \cite{FKR}, each quasi-hereditary structure corresponds to a unique binary tree, and we show that this tree encodes all necessary information about extensions between the standard modules. For all details, we refer to Section~3.
 	\item Let $A_n$ be as in (A). For any partial order $\trianglelefteq$ on $\{1,\dots, n\}$ such that $(A_n,\trianglelefteq)$ is quasi-hereditary, we check that $A_n$ admits a regular exact Borel subalgebra. We show that $A_n$ always admits a regular exact Borel subalgebra $B$ which contains the idempotents $e_1,\dots, e_n$ and find a minimal generating set for $B$. For all details, we refer to Section~3.2.
 	\item Let $Q=Q^1\sqcup Q^2$ be a deconcatenation of the quiver $Q$ at a sink or source $v$. Put $A=KQ$ and $A^\ell=KQ^\ell$, for $\ell=1,2$. We describe $\operatorname{Ext}_A^\ast(\Delta,\Delta)$ up to isomorphism in terms of the Ext-algebras of standard modules over $A^\ell$, via a certain ``gluing'' process.
 	\item Given that $A^\ell$ admits a regular exact Borel subalgebra $B^\ell$, for $\ell=1,2$, we show the following:
 	\begin{enumerate}[(i)]
 		\item If $v$ is a source, $A$ admits a regular exact Borel subalgebra.
 	\item If $v$ is a sink, then $A$ admits a regular exact Borel subalgebra if and only if $v$ is minimal or maximal with respect to the essential order on $Q_0$.
 	\end{enumerate}
 In the cases where $A$ admits a regular exact Borel subalgebra, we construct a regular exact Borel subalgebra of $A$ from $B^1$ and $B^2$, using a similar ``gluing'' as in (C).
 \end{enumerate}
This article is organized in the following way: In Section~2 we give the necessary background on quasi-hereditary algebras and fix some notation. In Section~3, we recall the results of \cite{FKR} on the quasi-hereditary structures of $A_n$ and expand upon them, proving (A). In Subsection~3.1, we compute the quiver and relations of the Ringel dual of $A_n$. Subsection~3.2 is devoted to the description of the regular exact Borel subalgebras of $A_n$. Subsection~3.3 briefly discusses $A_\infty$-structures on $\operatorname{Ext}_{A_n}^\ast(\Delta,\Delta)$.

In Section~4, we give the background on deconcatenations from \cite{FKR} and prove (C). Subsection~4.1 discusses how regular exact Borel subalgebras behave under deconcatenations and contains the proof of (D). In Subsection~4.2, we apply our results to the case where $Q$ is a linear quiver with arbitrary orientation.
\newpage
\tableofcontents
\newpage
\section{Notation and background}
Throughout, let $K$ be an algebraically closed field. For a quiver $Q=(Q_0, Q_1)$, denote by $KQ$ path algebra of $Q$. Let $I\subset KQ$ be an admissible ideal and let $A$ be the quotient $A=KQ/I$. We take $A$-module to mean finite-dimensional left $A$-module if nothing else is stated. For an arrow $\alpha\in Q_1$, denote by $s(\alpha)$ and $t(\alpha)$ the starting and terminal vertex of $\alpha$, respectively. We adopt the convention of writing composition of arrows from right to left. That is, for vertices and arrows arranged as
$$\xymatrix{x\ar[r]^-{\alpha} & y\ar[r]^-{\beta} &z,}$$
we write the composition  ``first $\alpha$, then $\beta$'' as $\beta \alpha$. We extend the notation of starting and terminal vertex to paths in $Q$, so if $p=\alpha_n \dots \alpha_1$, then we put $s(p)=s(\alpha_1)$ and $t(p)=t(\alpha_n)$.

The isomorphism classes of the simple $A$-modules are indexed by the vertex set, $Q_0$, of $Q$. Denote by $L(i)$ the simple $A$-module associated to the vertex $i$. Denote by $P(i)$ and $I(i)$ the projective cover and injective envelope of $L(i)$, respectively. For an $A$-module $M$ and a simple module $L(i)$, denote by $[M:L(i)]$ the Jordan-Hölder multiplicity of $L(i)$ in $M$. We denote the category of finite-dimensional left $A$-modules by $A\operatorname{-mod}$.
\begin{definition}\cite{CPS88}
	Let $A$ be a finite-dimensional algebra. Let $\{1,\dots, n\}$ be an indexing set for the isomorphism classes of simple $A$-modules and let $\trianglelefteq$ be a partial order on $\{1,\dots, n\}$. Let the \emph{standard module} at $i$, denoted by $\Delta(i)$, be the largest quotient of $P(i)$, whose composition factors $L(j)$ are such that $j\trianglelefteq i$. The algebra $A$ is said to be \emph{quasi-hereditary} with respect to $
	\trianglelefteq$ if the following hold.
	\begin{enumerate}[(QH1)]
		\item  There is a surjection $P(i)\twoheadrightarrow \Delta(i)$ whose kernel admits a filtration with subquotients $\Delta(j)$, where $j\triangleright i$.
		\item There is a surjection $\Delta(i) \twoheadrightarrow L(i)$ whose kernel admits a filtration with subquotients $L(j)$, where $j\triangleleft i$.
	\end{enumerate}
	Equivalently, let the \emph{costandard module} at $i$, denoted by $\nabla(i)$, be the largest submodule of $I(i)$, whose composition factors are such that $j\trianglelefteq i$. The algebra $A$ is said to be \emph{quasi-hereditary} with respect to $\trianglelefteq$ if the following hold.
	\begin{enumerate}[(QH1)$^\prime$]
		\item There is an injection $\nabla(i)\hookrightarrow I(i)$ whose cokernel admits a filtration with subquotients $\nabla(j)$, where $j\triangleright i$.
		\item There is an injection $L(i)\hookrightarrow \nabla(i)$ whose cokernel admits a filtration with subquotients $L(j)$, where $j\triangleleft i$.
	\end{enumerate}
\end{definition}
For a quasi-hereditary algebra $A$, it is natural to consider two particular subcategories of its module category, $A\operatorname{-mod}$. These are $\mathcal{F}(\Delta)$, the full subcategory of $A\operatorname{-mod}$ consisting of those modules which admit a filtration by standard modules, and $\mathcal{F}(\nabla)$, the full subcategory of $A\operatorname{-mod}$ consisting of those modules which admit a filtration by costandard modules.

For an $A$-module $M\in \mathcal{F}(\Delta)$ (or $\mathcal{F}(\nabla)$), the number of occurrences of a particular standard module $\Delta(i)$ (or costandard module $\nabla(i)$) as a subquotient of a filtration of $M$ is well-defined, and we denote this number by $(M:\Delta(i))$ (or $(M:\nabla(i))$).

In general, refining the partial order $\trianglelefteq$ may produce different standard and costandard modules. Traditionally, only so-called adapted orders on $\{1,\dots, n\}$ are considered in order to avoid this.
\begin{definition}\cite{DlabRingel}
 Let $A$ be a finite-dimensional algebra. Let $\{1,\dots, n\}$ be an indexing set for the isomorphism classes of simple $A$-modules and let $\trianglelefteq$ be a partial order on $\{1,\dots, n\}$. We say that $\trianglelefteq$ is \emph{adapted} to $A$ if and only if for any $A$-module $M$ such that
$$\operatorname{top}M\cong L(i)\quad \textrm{and}\quad \operatorname{soc}M\cong L(j),$$
where $i$ and $j$ are incomparable, there is $1\leq k\leq n$ such that $i\triangleleft k$, $j\triangleleft k$ and $[M:L(k)]>0$.
\end{definition}
\begin{lemma}\cite{CondeThesis} \label{lemma: q.h algebra has adapted order}
	If $(A, \trianglelefteq)$ is a quasi-hereditary algebra, then $\trianglelefteq$ is adapted to $A$.
\end{lemma}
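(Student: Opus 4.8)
The plan is to work with injective envelopes and costandard modules, using the characterization (QH1)$^\prime$--(QH2)$^\prime$ rather than standard modules. Let $M$ be an $A$-module with $\operatorname{top}M \cong L(i)$ and $\operatorname{soc}M \cong L(j)$, where $i$ and $j$ are incomparable; in particular $i \neq j$. Since the socle of $M$ is essential in $M$ and equal to the simple module $L(j)$, we may fix an embedding $M \hookrightarrow I(j)$ and regard $M$ as a submodule of $I(j)$. By (QH1)$^\prime$ applied at $j$, the module $I(j)$ carries a filtration
$$\nabla(j) = N_0 \subset N_1 \subset \cdots \subset N_p = I(j), \qquad N_s/N_{s-1} \cong \nabla(l_s),\ l_s \triangleright j \ (1 \le s \le p).$$

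Intersecting this filtration with $M$ yields a filtration $0 \subseteq M_0 \subseteq M_1 \subseteq \cdots \subseteq M_p = M$ with $M_0 = M \cap \nabla(j)$ and with each $M_s/M_{s-1}$ (for $s \ge 1$) embedding into $N_s/N_{s-1} \cong \nabla(l_s)$. Every composition factor of $\nabla(j)$, hence of $M_0$, is of the form $L(m)$ with $m \trianglelefteq j$; since $i$ is incomparable to $j$ this forces $[M_0:L(i)] = 0$. On the other hand $L(i) = \operatorname{top}M$ is a composition factor of $M$, so from $[M:L(i)] = \sum_{s \ge 1}[M_s/M_{s-1}:L(i)]$ we obtain an index $s \ge 1$ with $[M_s/M_{s-1}:L(i)] > 0$. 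Fixing such an $s$ and setting $k := l_s$, the embedding $M_s/M_{s-1} \hookrightarrow \nabla(k)$ gives $[\nabla(k):L(i)] > 0$, whence $i \trianglelefteq k$; since $k \triangleright j$ while $i$ is incomparable to $j$, we cannot have $k = i$, so $i \triangleleft k$. Thus $i \triangleleft k$ and $j \triangleleft k$, as the definition of an adapted order requires.

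It remains to check that $[M:L(k)] > 0$. The subquotient $M_s/M_{s-1}$ is a nonzero submodule of $\nabla(k)$, nonzero because it has $L(i)$ as a composition factor. Now $\nabla(k)$ is a submodule of $I(k)$, and $I(k)$ has simple socle $L(k)$, so $\operatorname{soc}\nabla(k) = L(k)$; consequently every nonzero submodule of $\nabla(k)$ contains its socle $L(k)$. Hence $L(k)$ is a composition factor of $M_s/M_{s-1}$, therefore of $M$, so $[M:L(k)] > 0$, completing the argument that $\trianglelefteq$ is adapted to $A$.

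The proof amounts to bookkeeping of composition factors along the costandard filtration of $I(j)$, and everything except the last paragraph is a direct unwinding of the definitions. The one point where a genuine structural fact enters — and the step I would expect to be the only subtle one — is the simplicity of $\operatorname{soc}\nabla(k)$, which is what upgrades ``$L(i)$ survives in a subquotient lying inside $\nabla(k)$'' to ``$L(k)$ survives in $M$''. A dual argument using the projective cover $P(i) \twoheadrightarrow M$, the standard filtration of $P(i)$, and (QH1)--(QH2) would work equally well.
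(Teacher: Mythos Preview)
The paper does not give a proof of this lemma; it is stated with a citation to \cite{CondeThesis} and used as a black box. There is therefore nothing to compare against.

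Your argument is correct. The key steps --- intersecting the costandard filtration of $I(j)$ with $M$, observing that $L(i)$ cannot appear in $M \cap \nabla(j)$ because $i$ and $j$ are incomparable, and then using that $\operatorname{soc}\nabla(k) = L(k)$ to force $[M:L(k)]>0$ --- are all sound. The only place to be slightly careful is the claim that $M_s/M_{s-1}$ embeds in $N_s/N_{s-1}$: this is the second isomorphism theorem applied to $M_s = M \cap N_s$ and $M_{s-1} = M \cap N_{s-1} = M_s \cap N_{s-1}$, which you use implicitly but correctly. Your remark that the dual argument via $P(i)$ and standard filtrations works equally well is also accurate, and indeed that is the version more commonly found in the literature.
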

	Two quasi-hereditary structures $(A, \trianglelefteq_1)$ and $(A,\trianglelefteq_2)$ are said to be equivalent if the sets of standard (and costandard) modules with respect to $\trianglelefteq_1$ and $\trianglelefteq_2$ coincide. We denote this relationship by $\trianglelefteq_1 \sim \trianglelefteq_2$. Then, more precisely:
$$\trianglelefteq_1\sim\trianglelefteq_2 \iff \Delta_1(i)=\Delta_2(i)\wedge \nabla_1(i)=\nabla_2(i),\ \forall i\in Q_0.$$
Equivalence of different quasi-hereditary structures is also captured precisely by the \emph{essential order}.
\begin{definition}\cite[Definition~1.2.5]{Kevin2020}
	Let $(A,\trianglelefteq)$ be a quasi-hereditary algebra. Define the \emph{essential order} $\trianglelefteq^e$ of $\trianglelefteq$ as the partial order transitively generated by the relations
	$$i\trianglelefteq^e j \iff \left[\Delta(j):L(i)\right] >0 \quad \textrm{or}\quad (P(i):\Delta(j))>0.$$
\end{definition}
The essential order is related to equivalence of quasi-hereditary structures via
$$\trianglelefteq_1\sim \trianglelefteq_2 \iff \trianglelefteq_1^e=\triangleleft_2^e.$$
\subsection{Gluing of subalgebras}
At various points in the present article, there will appear algebras which, intuitively, arise from gluing two subspaces of some ambient algebra at a shared dimension. Let $X$ and $Y$ be subpaces of some algebra $A$, which are closed under multiplication. Assume that $\dim X\cap Y=1$ and choose complements $X^\prime$ and $Y^\prime$ of $X\cap Y$ in $X$ and $Y$, respectively. Assume moreover that $X^\prime \cdot Y^\prime=Y^\prime \cdot X^\prime =0$ and that $1_A\in \operatorname{span}(X, Y)$. Put
$$C=X^\prime \oplus Y^\prime \oplus X\cap Y.$$

Then, $C$ is a subalgebra of $A$. We call $C$ the \emph{gluing} of $X$ and $Y$ and write $C=X\diamond Y$. Note that when $X$ and $Y$ are graded, there is a natural grading on $X\diamond Y$ induced by the gradings on $X$ and $Y$.
\section{The path algebra of $\mathbb{A}_n$}
In this section, we consider the path algebra of the uniformly oriented linear quiver
$$\mathbb{A}_n:\xymatrix{1\ar[r]^-{\alpha_1} & 2 \ar[r]^-{\alpha_2} & \dots \ar[r]^-{\alpha_{n-2}}&n-1 \ar[r]^-{\alpha_{n-1}}& n}.$$
Throughout the section, we put $A_n=K\mathbb{A}_n$. The simple modules over $A_n$ are indexed by the vertices $1,\dots, n$. Moreover, the algebra $A_n$ is hereditary, and therefore, it is quasi-hereditary with respect to any adapted order $(\{1,\dots, n\},\trianglelefteq)$ to $A_n$ \cite{DlabRingel}.
Recall that the indecomposable $A_n$-modules, up to isomorphism, are given by the interval modules, which are modules having Loewy diagrams of the following form:
$$M(i,j):\vcenter{\xymatrixrowsep{0.3cm}\xymatrix{
	i \ar[d] \\ i+1 \ar[d] \\ \vdots \ar[d] \\ j-1 \ar[d]\\ j	
	}}$$
With this notation, we have
$$L(i)=M(i,i),\quad P(i)=M(i,n)\quad \textrm{and}\quad I(i)=M(1,i),\quad \forall 1\leq i\leq n.$$
Homomorphisms and extensions between interval modules are well understood, and we summarize this information in the following proposition. Note that \cite{OpThom10} uses different conventions than the present article.
\begin{proposition}\cite[Theorem~3.6]{OpThom10} \label{proposition:homs and ext between interval modules}
	Let $M(i_1,j_1)$ and $M(i_2, j_2)$ be interval modules. Then, we have
	\begin{enumerate}[(i)]
		\item $$\dim \operatorname{Hom}_{A_n}(M(i_1,j_1), M(i_2, j_2))=\begin{cases}
			1 & \textrm{if } i_2\leq i_1 \leq j_2\leq j_1;\\
			0 & \textrm{otherwise}.
		\end{cases}$$
	\item $$\dim\operatorname{Ext}_{A_n}^1(M(i_1, j_1),M(i_2, j_2))=\begin{cases} 1 & \textrm{if }i_1+1\leq i_2\leq j_1+1\leq j_2;
		\\
		0 & \textrm{otherwise}.
	\end{cases}$$
	\end{enumerate}
\end{proposition}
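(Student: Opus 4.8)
The plan is to reduce both parts to the analysis of a single explicit linear map, using throughout that the indecomposable $A_n$-modules are the uniserial interval modules, that $P(i)=M(i,n)$, and --- decisively --- that $A_n$ is hereditary. First I would dispose of the case $j_1=n$: here $M(i_1,j_1)=P(i_1)$ is projective, so $\operatorname{Ext}^1_{A_n}(M(i_1,j_1),-)=0$, while $\operatorname{Hom}_{A_n}(P(i_1),M(i_2,j_2))\cong e_{i_1}M(i_2,j_2)$ is one-dimensional if $i_2\le i_1\le j_2$ and zero otherwise; since $j_1=n\ge j_2$ this matches (i), and the hypothesis of (ii) forces the impossible inequality $n+1\le j_2$, so (ii) holds vacuously.

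Assuming $j_1<n$, I would note that the projective cover $P(i_1)\twoheadrightarrow M(i_1,j_1)$ has kernel the submodule of $P(i_1)=M(i_1,n)$ with composition factors $L(j_1+1),\dots,L(n)$; this submodule is $\cong P(j_1+1)$, and the inclusion $P(j_1+1)\hookrightarrow P(i_1)$ is right multiplication by the path $p=\alpha_{j_1}\cdots\alpha_{i_1}$ from $i_1$ to $j_1+1$. Since $A_n$ is hereditary,
$$0\longrightarrow P(j_1+1)\xrightarrow{\ \cdot\, p\ }P(i_1)\longrightarrow M(i_1,j_1)\longrightarrow 0$$
is a projective resolution. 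Writing $N=M(i_2,j_2)$ and applying $\operatorname{Hom}_{A_n}(-,N)$, the natural identification $\operatorname{Hom}_{A_n}(P(i),N)\cong e_iN$ turns the induced map into left multiplication by $p$, namely $\varphi\colon e_{i_1}N\to e_{j_1+1}N$ with $\varphi(x)=px$; consequently $\operatorname{Hom}_{A_n}(M(i_1,j_1),N)\cong\ker\varphi$ and $\operatorname{Ext}^1_{A_n}(M(i_1,j_1),N)\cong\operatorname{coker}\varphi$.

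It then remains to read off $\varphi$ from the structure of the interval module $N=M(i_2,j_2)$. The space $e_iN$ is one-dimensional when $i_2\le i\le j_2$ and zero otherwise, so both $e_{i_1}N$ and $e_{j_1+1}N$ are at most one-dimensional, which already forces $\dim\operatorname{Hom}_{A_n}(M(i_1,j_1),N)\le 1$ and $\dim\operatorname{Ext}^1_{A_n}(M(i_1,j_1),N)\le 1$. When both of these spaces are nonzero, the arrows occurring in $p$ all act as isomorphisms between the relevant one-dimensional pieces of $N$, so $\varphi$ is an isomorphism; in every other case $\varphi=0$. A short inspection of the four vanishing patterns of the pair $(e_{i_1}N,e_{j_1+1}N)$, using only the trivial inequality $i_1\le j_1$ to discard the impossible index configurations, then shows that $\ker\varphi\ne 0$ precisely when $i_2\le i_1\le j_2\le j_1$ and that $\operatorname{coker}\varphi\ne 0$ precisely when $i_1+1\le i_2\le j_1+1\le j_2$, as asserted.

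The one step that genuinely needs care is the identification of the map induced on $\operatorname{Hom}$-spaces by $P(j_1+1)\hookrightarrow P(i_1)$ with left multiplication by the connecting path $p$ --- keeping the side of the multiplication and the indices straight. The concluding case check is routine, and everything else is immediate from heredity of $A_n$ and the uniseriality of the interval modules.
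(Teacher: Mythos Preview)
Your argument is correct. You handle the projective case $j_1=n$ cleanly, and for $j_1<n$ the two-term projective resolution $0\to P(j_1+1)\xrightarrow{\cdot p}P(i_1)\to M(i_1,j_1)\to 0$ together with the identification $\operatorname{Hom}_{A_n}(P(i),N)\cong e_iN$ reduces both $\operatorname{Hom}$ and $\operatorname{Ext}^1$ to kernel and cokernel of left multiplication by $p$ on the interval module~$N$; the case analysis using $i_1\le j_1$ to simplify the inequalities is exactly right.

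The paper itself gives no proof of this statement: it is quoted without argument from \cite[Theorem~3.6]{OpThom10}, so there is nothing to compare against directly. Your approach via the explicit minimal projective resolution is in any case the natural one for a hereditary Nakayama algebra and is essentially the standard computation; the only alternative of comparable length would be to do~(i) by inspecting composition factors of images and then deduce~(ii) from~(i) via the Auslander--Reiten translate, which is the spirit of the treatment in \cite{OpThom10}. Your direct method has the advantage of being entirely self-contained.
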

\begin{definition}
	A \emph{binary tree} $T$ is either the empty set or a triple $(s, L, R)$, where $s$ is a singleton set, called the \emph{root} of $T$, and $L$ and $R$ are two binary trees, called the \emph{left} and \emph{right} subtrees of $s$, respectively. The empty set has one \emph{leaf}, and the set of \emph{leaves} of $T=(s, L, R)$ is the (disjoint) union of the sets of leaves of $L$ and $R$.
	
	A \emph{binary search tree} is a binary tree, whose vertices are labeled by integers, such that if a vertex $x$ is labeled by $k$, then the vertices of the left subtree of $x$ are labeled by integers less than $k$, and the vertices of the right subtree of $x$ are labeled by integers greater than $k$.
	
	If $T$ is a binary tree with $n$ vertices, there exists a unique labelling of the vertices of $T$ by the integers $1,\dots, n$, turning $T$ into a binary search tree. The procedure by which this labelling is obtained is known as the \emph{in-order algorithm}, which recursively visits the left subtree, then the root, then the right subtree. The first vertex visited is labeled by 1, the second by 2, and so on.
\end{definition}
\begin{example}Consider the following binary tree with 6 vertices.
$$	\begin{tikzpicture}
	\node(a) at (0,0) [shape=circle,draw, fill=lightgray, thick] {\phantom{1}};
	\node(b) at (-2,-1) [shape=circle,draw, fill=lightgray, thick] {\phantom{1}};
	\node(c) at (2,-1) [shape=circle,draw, fill=lightgray, thick] {\phantom{1}};
	\node(d) at (-3,-2) [shape=circle,draw, fill=lightgray, thick]  {\phantom{1}};
	\node(e) at (-1,-2)  {};
	\node(f) at (-3.5,-3) {};
	\node(g) at (-2.5,-3) {};
	\node(h) at (3,-2)  {};
	\node(i) at (1,-2) [shape=circle,draw, fill=lightgray, thick] {\phantom{1}};
	\node(j) at (1.5,-3) [shape=circle,draw, fill=lightgray, thick] {\phantom{1}};
	\node(k) at (0.5,-3) {};
	\node(l) at (1.25,-4) {};
	\node(m) at (1.75, -4) {};
	\draw[thick] (a) to (b);
	\draw[thick] (a) to (c);
	\draw[thick] (b) to (d);
	\draw[thick] (b) to (e);
	\draw[thick] (d) to (f);
	\draw[thick] (d) to (g);
	\draw[thick] (c) to (i);
	\draw[thick] (c) to (h);
	\draw[thick] (i) to (j);
	\draw[thick] (i) to (k);
	\draw[thick] (j) to (l);
	\draw[thick] (j) to (m);
	\end{tikzpicture}$$
With the in-order algorithm, the vertices of the tree are labeled as follows, creating a binary search tree.
$$	\begin{tikzpicture}
	\node(a) at (0,0) [shape=circle,draw, fill=lightgray, thick] {3};
	\node(b) at (-2,-1) [shape=circle,draw, fill=lightgray, thick] {2};
	\node(c) at (2,-1) [shape=circle,draw, fill=lightgray, thick] {6};
	\node(d) at (-3,-2) [shape=circle,draw, fill=lightgray, thick]  {1};
	\node(e) at (-1,-2)  {};
	\node(f) at (-3.5,-3) {};
	\node(g) at (-2.5,-3) {};
	\node(h) at (3,-2)  {};
	\node(i) at (1,-2) [shape=circle,draw, fill=lightgray, thick] {4};
	\node(j) at (1.5,-3) [shape=circle,draw, fill=lightgray, thick] {5};
	\node(k) at (0.5,-3) {};
	\node(l) at (1.25,-4) {};
	\node(m) at (1.75, -4) {};
	\draw[thick] (a) to (b);
	\draw[thick] (a) to (c);
	\draw[thick] (b) to (d);
	\draw[thick] (b) to (e);
	\draw[thick] (d) to (f);
	\draw[thick] (d) to (g);
	\draw[thick] (c) to (i);
	\draw[thick] (c) to (h);
	\draw[thick] (i) to (j);
	\draw[thick] (i) to (k);
	\draw[thick] (j) to (l);
	\draw[thick] (j) to (m);
\end{tikzpicture}$$
\end{example}
Any binary search tree $T$ with $n$ vertices induces a partial order on the set of its vertices $\{1,\dots, n\}$. We denote this partial order by $\trianglelefteq_T$. It is defined in the following way.
$$i\triangleleft_T j \iff i\textrm{ labels a vertex in the subtree of the vertex labeled by }j.$$ In the above example, the partial order $\trianglelefteq_T$ would be given by:

$$1\triangleleft_T 2 \triangleleft_T 3, \quad 5 \triangleleft_T 4 \triangleleft_T 6 \triangleleft_T 3.$$
At this point, it is natural to ask for which binary trees $T$ we obtain a quasi-hereditary algebra $(A_n, \trianglelefteq_T)$.

Recall that, since $A_n$ is hereditary, $A_n$ is quasi-hereditary with respect to any partial order which is adapted to $A_n$. It turns out that any partial order $\trianglelefteq$ with respect to which $A_n$ is quasi-hereditary, is equivalent (that is, produces the same set of standard and costandard modules) to a partial order produced by a binary tree. Conversely, for any binary tree $T$, the order $\trianglelefteq_T$ makes $A_n$ quasi-hereditary. More precisely, we have the following.
\begin{proposition}\cite[Proposition~4.4]{FKR}
Let $n$ be a natural number and let $\mathcal{T}$ be the set of binary trees with $n$ vertices. Denote by $\mathcal{A}$ the set of adapted orders to $A_n$. For any partial order $\trianglelefteq$ in $\mathcal{A}$, denote by $\overline{\trianglelefteq}$ the equivalence class of $\trianglelefteq$ with respect to the relation $\sim$. Then, the map from $\mathcal{T}$ to $\faktor{\mathcal{A}}{\sim}$ defined by
$$T\mapsto \overline{\trianglelefteq}_T$$ is a bijection.
\end{proposition}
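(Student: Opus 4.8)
The plan is to reduce the statement to an essentially combinatorial fact about binary trees, after first making the standard and costandard modules over $A_n$ explicit. Since $P(i)=M(i,n)$ is uniserial, its only nonzero quotients are the interval modules $M(i,m)$ with $i\le m\le n$, so for \emph{any} partial order $\trianglelefteq$ the standard module $\Delta(i)$ equals $M(i,k_i)$, where $k_i$ is the largest $m\ge i$ with $i+1,\dots,m$ all $\triangleleft i$; dually $\nabla(i)=M(l_i,i)$ with $l_i$ the least $l\le i$ with $l,\dots,i-1$ all $\triangleleft i$. Consequently, for adapted orders $\trianglelefteq_1$ and $\trianglelefteq_2$ one has $\trianglelefteq_1\sim\trianglelefteq_2$ if and only if they induce the same tuple of intervals $\bigl([l_i,k_i]\bigr)_{i=1}^{n}$. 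Thus this tuple is a complete invariant of a class in $\faktor{\mathcal{A}}{\sim}$, and it suffices to show that $T\mapsto\bigl([l_i,k_i]\bigr)_i$ is a well-defined bijection from $\mathcal{T}$ onto the set of such tuples arising from adapted orders.

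\emph{Well-definedness and the invariant of $\trianglelefteq_T$.} A module over $A_n$ with simple top is a nonzero quotient of some $P(i)$, hence indecomposable, hence an interval module; so the adaptedness requirement for $\trianglelefteq_T$ reduces to: for every incomparable pair $i<j$ there is $k$ with $i<k<j$, $i\triangleleft_T k$ and $j\triangleleft_T k$. One takes for $k$ the label of the least common ancestor of the vertices labelled $i$ and $j$ in $T$; incomparability forces it to be a proper ancestor of both, and the binary-search-tree property forces its label to lie strictly between $i$ and $j$. Hence $\trianglelefteq_T\in\mathcal{A}$. Moreover, writing $[a_i,b_i]$ for the set of labels occurring in the subtree rooted at $i$, the vertices $\triangleleft_T i$ are precisely those labelled by $[a_i,b_i]\setminus\{i\}$, whence $k_i=b_i$ and $l_i=a_i$: the invariant of $\trianglelefteq_T$ is the tuple of subtree-intervals of $T$.

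\emph{Injectivity.} From the tuple of subtree-intervals of a tree one recovers the tree: the root $r$ is the unique vertex with interval $[1,n]$, every other vertex has its interval contained entirely in $[1,r-1]$ or entirely in $[r+1,n]$, this splits the data into the tuples of the left and right subtrees, and one recurses. So the tuple determines $T$, and $T\mapsto\bigl([l_i,k_i]\bigr)_i$ is injective.

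\emph{Surjectivity --- the main obstacle.} Given an adapted order $\trianglelefteq$, I would construct a tree recursively. First, $\trianglelefteq$ has a unique maximal element: two distinct maximal elements $i<j$ would be incomparable, and adaptedness applied to $M(i,j)$ would give a vertex strictly above $i$, a contradiction; and in a finite poset a unique maximal element is the maximum, say $r$. Put $r$ at the root. Because $i\triangleleft r$ for $i\ne r$, the formulas above give $k_i\le r-1$ whenever $i<r$ and $l_i\ge r+1$ whenever $i>r$, so no interval $[l_i,k_i]$ meets $r$ from both sides; furthermore the restriction of $\trianglelefteq$ to $\{1,\dots,r-1\}$ (respectively, after reindexing, to $\{r+1,\dots,n\}$) is adapted for $A_{r-1}$ (respectively $A_{n-r}$), and the functions $i\mapsto k_i,\ i\mapsto l_i$ restrict to the corresponding functions for these smaller orders. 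Recursing produces a binary tree $T$ with $n$ vertices, and an induction on $n$, using the identities $k_i=b_i$, $l_i=a_i$ of the previous step, shows that $T$ realises the same interval tuple as $\trianglelefteq$, so $\trianglelefteq_T\sim\trianglelefteq$. I expect the genuinely delicate points to be precisely that the interval tuple ``localises'' across the root and that the restriction of an adapted order to an integer interval $\{a,\dots,b\}$ is again adapted; both should follow from the explicit shape of $\Delta$ and $\nabla$ together with the observation that such an interval is the vertex set of a full subquiver of $\mathbb{A}_n$ isomorphic to $\mathbb{A}_{b-a+1}$, on which interval modules, tops, socles and composition series are the evident restrictions.
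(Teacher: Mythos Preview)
The paper does not actually prove this proposition: it is quoted from \cite[Proposition~4.4]{FKR} without argument, and only the description of $\Delta(i)$ and $\nabla(i)$ in terms of the right and left subtrees (your identities $k_i=b_i$, $l_i=a_i$) is extracted from the original proof and reproved afterwards. So there is no in-paper proof to compare against.

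Your argument is correct and is essentially the natural one. The three ingredients you flag as delicate all go through as you indicate. Adaptedness of $\trianglelefteq_T$ via the least common ancestor is fine because, for $A_n$, any module with simple top and simple socle is an interval module $M(i,j)$, and the binary-search-tree property forces the label of the LCA to lie strictly between $i$ and $j$. Uniqueness of the maximum of an adapted order follows exactly as you say by applying adaptedness to $M(i,j)$ for two incomparable maximal $i<j$. Localisation of the interval tuple across the root and adaptedness of the restriction to $\{1,\dots,r-1\}$ (resp.\ $\{r+1,\dots,n\}$) both hold because for $i<r$ one has $k_i\le r-1$ automatically (as $r\not\triangleleft i$), so the defining maximisation for $k_i$ is unchanged after restriction, and any interval module witnessing a failure of adaptedness on the subinterval already lives in $A_n\operatorname{-mod}$ with the same top, socle and composition factors. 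The recursion then terminates and the induction on $n$ gives $\trianglelefteq_T\sim\trianglelefteq$.

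One stylistic remark: you do not need the full pair $(l_i,k_i)$ as your invariant, since for $A_n$ the essential order (hence the $\sim$-class) is already determined by the standard modules alone; but using both makes the bijection with subtree-intervals cleaner, so there is no harm.
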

Now, given that any quasi-hereditary structure on $A_n$ may be associated to a binary tree, it is natural to describe this structure in terms of the associated binary tree.
The following proposition is stated in the proof of \cite[Proposition~4.4]{FKR}. For the convenience of the reader, we give a proof.
\begin{proposition}\label{proposition:standard and costandard modules over tree order}
Let $T$ be a binary search tree and let $\trianglelefteq_T$ be the associated adapted order to $A_n$. Then, we have the following.
\begin{enumerate}[(i)]
	\item The composition factors of the standard module $\Delta(i)$ are indexed by the labels of the vertices in the right subtree of the vertex labeled by $i$.
	\item The composition factors of the costandard module $\nabla(i)$ are indexed by the labels of the vertices in the left subtree of the vertex labeled by $j$.
\end{enumerate}
\end{proposition}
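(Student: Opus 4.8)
The plan is to work directly with the interval-module description of the indecomposable $A_n$-modules and translate the recursive structure of the binary search tree $T$ into a recursive description of $\Delta(i)$ and $\nabla(i)$. By symmetry (the costandard modules over $A_n$ are obtained from the standard modules by the duality $M(i,j)\mapsto M(n+1-j, n+1-i)$, which exchanges left and right subtrees), it suffices to prove (i); statement (ii) then follows by dualizing, and I would note this at the end rather than repeat the argument.

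First I would recall that, since $A_n$ is hereditary with indecomposables the interval modules $M(a,b)$, and since $P(i)=M(i,n)$, the standard module $\Delta(i)$ is a quotient $M(i,r)$ of $P(i)=M(i,n)$ for some $r$ with $i\le r\le n$: indeed every proper quotient of the uniserial module $M(i,n)$ is of this form, and $\Delta(i)$ is the largest quotient whose composition factors $L(j)$ satisfy $j\trianglelefteq_T i$. So the content of (i) is the identity
$$\{\,i, i+1, \dots, r\,\}=\{\,i\,\}\cup\{\text{labels of vertices in the right subtree of }i\,\},$$
equivalently: the set of $j$ with $i\trianglelefteq_T j$ and $j\ge i$ is exactly an initial segment $\{i,\dots,r\}$ of $\mathbb{Z}_{\ge i}$, and it coincides with $\{i\}$ together with the right subtree of the vertex labelled $i$. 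The key combinatorial fact about binary search trees under the in-order labelling is that, for a vertex labelled $i$, the labels appearing in its right subtree form exactly the set of integers $j>i$ such that every vertex on the path from the root down to $i$ that lies strictly above $i$ and has $i$ in its \emph{left} subtree is itself $>j$ — more cleanly: the right subtree of $i$ is labelled by $\{i+1, i+2, \dots, r\}$ where $r+1$ is the label of the lowest ancestor of $i$ (including $i$ if it has no right child, handled separately) that has $i$ in its left subtree, or $r=n$ if no such ancestor exists. This is a standard property of in-order traversal and I would state it as a short lemma with a one-line induction proof.

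Granting that lemma, I would finish by verifying directly that the quotient $\Delta(i)=M(i,r)$ with this $r$ satisfies (QH2), i.e. its radical has composition factors $L(j)$ with $j\triangleleft_T i$. The radical of $M(i,r)$ has factors $L(i+1),\dots,L(r)$, and by the lemma these are precisely the labels of the right subtree of $i$, each of which satisfies $j\triangleleft_T i$ by the definition of $\trianglelefteq_T$ (a vertex in the subtree of $i$ is $\triangleleft_T i$). Conversely I must check maximality: if $r<n$, then $L(r+1)$ is a composition factor of $M(i,r+1)$, and $r+1$ is (by the lemma) the label of the relevant ancestor of $i$, so $i$ lies in the left subtree of $r+1$, i.e. $i\triangleleft_T r+1$, hence $r+1\not\trianglelefteq_T i$; therefore $M(i,r+1)$ is not a legal standard module and $M(i,r)$ is indeed the largest admissible quotient. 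This shows $\Delta(i)=M(i,r)$ with the claimed composition factors. The main obstacle is getting the in-order/binary-search-tree lemma stated in exactly the right form — precisely identifying the index $r$ in terms of the tree (the "first ancestor having $i$ in its left subtree" description, with the edge cases of $i$ being a right child of its parent, or having its own right child, or being on the right spine) — since the rest is then a routine unwinding of the interval-module combinatorics.
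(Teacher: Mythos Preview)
Your argument is correct in substance and follows essentially the same line as the paper's, but you make it harder than necessary. The paper's proof dispenses entirely with the ``first ancestor having $i$ in its left subtree'' description of $r$: it simply observes that (a) any composition factor $L(j)$ of $\Delta(i)$ must satisfy $j\trianglelefteq_T i$, hence $j$ lies in the subtree rooted at $v_i$; (b) since $\Delta(i)$ is a quotient of $P(i)=M(i,n)$, every such $j$ also satisfies $j\ge i$, which rules out the left subtree by the BST property; and (c) maximality of $\Delta(i)$ then forces the composition factors to be exactly $\{i\}$ together with the right subtree. The interval shape of the right-subtree label set is immediate from the in-order labelling, so no separate lemma identifying $r$ is needed. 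Your explicit identification of $r+1$ as the relevant ancestor is correct and pleasantly concrete, but it is extra work the maximality definition already absorbs.

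One slip to fix: in your displayed ``equivalently'' clause you write ``the set of $j$ with $i\trianglelefteq_T j$ and $j\ge i$'', but the order relation is reversed---you mean $j\trianglelefteq_T i$ (i.e.\ $j$ lies in the subtree of $i$), not $i\trianglelefteq_T j$. As written, that set consists of $i$ together with its ancestors having $i$ in their left subtree, which is not what you want.
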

\begin{proof}
By definition, the standard module $\Delta(i)$ is a quotient of the indecomposable projective module $P(i)$. Since $P(i)=M(i,n)$, this implies that there exists an integer $s_i$, which satisfies $i\leq s_i\leq n$, such that $\Delta(i)=M(i,s_i)$. The composition factors of $\Delta(i)$ are then $L(i), \dots, L(s_i)$, all occuring with Jordan-Hölder multiplicity 1. Denote by $v_i$ the vertex labeled by $i$.

The composition factors $L(j)$ of $\Delta(i)$ must satisfy $v_j\trianglelefteq_T v_i$. By definition of $\trianglelefteq_T$, the vertices $v_j$ such that $v_j \triangleleft_T v_i$ are exactly the vertices of the left and right subtree of $v_i$.

 By construction of $\trianglelefteq_T$, the vertices in the left subtree of $v_i$ are labeled by integers less than $i$, which shows that the corresponding simple modules may not be composition factors of $\Delta(i)$. Since $\Delta(i)$ is the maximal quotient of $P(i)$ such that its composition factors $L(j)$ satisfy $v_j\trianglelefteq_T v_i$, we are done.
 
 The argument for the form of the costandard modules is similar. \qedhere
\end{proof}
For a vertex $v$ of $T$, denote by $\ell(v)$ and $r(v)$ the vertices immediately down and to the left or down and to the right of $v$, respectively. If such vertices do not exist, we write $\ell(v)=\emptyset$ or $r(v)=\emptyset$.
\begin{proposition}\label{proposition:extensions and homes between standard modules}
Let $T$ be a binary search tree and let $\trianglelefteq_T$ be the associated adapted order to $A_n$.
\begin{enumerate}[(i)]
	\item Suppose that $i$ labels the vertex $\ell(v)$ and that $j$ labels the vertex $v$. Then, we have $$\dim \operatorname{Ext}^1_{A_n}(\Delta(i),\Delta(j))=1.$$
	\item Suppose that $i$ labels the vertex $r(v)$ and that $j$ labels the vertex $v$. Then, we have $$\dim \operatorname{Hom}_{A_n}(\Delta(i),\Delta(j))=1.$$
\end{enumerate}
\end{proposition}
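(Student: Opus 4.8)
The plan is to reduce everything to the explicit description of standard modules from Proposition~\ref{proposition:standard and costandard modules over tree order} together with the dimension formulas for homomorphisms and extensions between interval modules from Proposition~\ref{proposition:homs and ext between interval modules}. By Proposition~\ref{proposition:standard and costandard modules over tree order}, for every vertex $v$ labeled by $j$ we know that $\Delta(j)=M(j,s_j)$, where the labels $j,j+1,\dots,s_j$ are exactly the labels of the vertices in the subtree rooted at $v$ (including $v$ itself, since the composition factors of $\Delta(j)$ are $L(j)$ together with those indexed by the right subtree of $v$, and the in-order algorithm forces the right subtree of $v$ to be labeled by the consecutive integers $j+1,\dots,s_j$). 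So the bulk of the work is a bookkeeping statement: I must identify the intervals $[i,s_i]$ and $[j,s_j]$ attached to the vertices $\ell(v)$ and $v$ (resp.\ $r(v)$ and $v$) and then simply plug into Proposition~\ref{proposition:homs and ext between interval modules}.

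For part (ii): suppose $i$ labels $r(v)$ and $j$ labels $v$. Since $r(v)$ is the right child of $v$, the subtree rooted at $r(v)$ is contained in the right subtree of $v$, and in fact the in-order algorithm visits $v$ first (getting label $j$), then the entire right subtree of $v$. The first vertex of that right subtree to be visited in-order is the leftmost vertex of the subtree rooted at $r(v)=$ the root of the right subtree; hence $i=j+1$. Moreover the right subtree of $v$ consists precisely of the subtree rooted at $r(v)$ — so the labels of the subtree at $v$ are $j$ followed by the labels of the subtree at $r(v)$, giving $s_j=s_i$. Thus $\Delta(j)=M(j,s_j)$ and $\Delta(i)=M(j+1,s_j)=M(i,s_j)$. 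Now apply Proposition~\ref{proposition:homs and ext between interval modules}(i) with $(i_1,j_1)=(i,s_j)=(j+1,s_j)$ and $(i_2,j_2)=(j,s_j)$: the condition $i_2\le i_1\le j_2\le j_1$ reads $j\le j+1\le s_j\le s_j$, which holds (note $s_j\ge j+1$ since $r(v)\neq\emptyset$ forces the right subtree of $v$ to be nonempty). Hence $\dim\operatorname{Hom}_{A_n}(\Delta(i),\Delta(j))=1$.

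For part (i): suppose $i$ labels $\ell(v)$ and $j$ labels $v$. The subtree rooted at $\ell(v)$ is the left subtree of $v$, which in-order is visited immediately before $v$; so if that subtree has labels $i,i+1,\dots,m$, then $m=j-1$, i.e.\ $\Delta(i)=M(i,j-1)$. On the other hand $\Delta(j)=M(j,s_j)$, and $s_j\ge j$ always, with $s_j> j$ possible or not — but crucially $s_j\ge j$ suffices. Apply Proposition~\ref{proposition:homs and ext between interval modules}(ii) with $(i_1,j_1)=(i,j-1)$ and $(i_2,j_2)=(j,s_j)$: the condition $i_1+1\le i_2\le j_1+1\le j_2$ reads $i+1\le j\le j\le s_j$, which holds since $i<j$ (as $i$ labels a vertex in the left subtree of $v$, hence $i\le j-1$, so $i+1\le j$) and $j\le s_j$. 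Therefore $\dim\operatorname{Ext}^1_{A_n}(\Delta(i),\Delta(j))=1$.

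The only genuine subtlety — the "main obstacle", such as it is — is the combinatorial claim that the left subtree of $v$ is labeled by a block of consecutive integers ending at $j-1$ and the right subtree by a block of consecutive integers starting at $j+1$. This is immediate from the recursive definition of the in-order algorithm (it processes left subtree, then root, then right subtree, assigning consecutive labels as it goes), so I would state it as a one-line observation rather than a separate lemma, and then the proposition follows by the direct substitutions above. I would present the two parts in parallel, factoring out the common interval-identification step.
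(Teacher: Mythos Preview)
Your approach is the same as the paper's: identify the intervals $[i,s_i]$ and $[j,s_j]$ from the in-order labeling and plug directly into Proposition~\ref{proposition:homs and ext between interval modules}. However, there is a recurring slip in the bookkeeping. You write that ``the labels $j,j+1,\dots,s_j$ are exactly the labels of the vertices in the subtree rooted at $v$''. This is false: the labels of the \emph{entire} subtree rooted at $v$ form an interval $[t_j,s_j]$ with $t_j\le j\le s_j$; only $v$ together with its \emph{right} subtree carries the labels $j,\dots,s_j$.

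This misconception bites in part~(ii), where you conclude $i=j+1$. In general the label $i$ of $r(v)$ is \emph{not} $j+1$: the vertex receiving label $j+1$ is the leftmost vertex of the subtree rooted at $r(v)$, which equals $r(v)$ only when $\ell(r(v))=\emptyset$. The correct relations are $j<i\le s_j$ and $s_i=s_j$ (the latter because the right subtree of $v$ coincides with the subtree rooted at $r(v)$, so both share the same rightmost label). With these, the condition $i_2\le i_1\le j_2\le j_1$ reads $j\le i\le s_j\le s_j$, which holds, so the argument is easily repaired. Part~(i) is essentially fine: the fact you actually use, namely $s_i=j-1$, is correct (the last vertex visited in the subtree rooted at $\ell(v)$ is its rightmost vertex, which is also the rightmost vertex of the right subtree of $\ell(v)$), even though your description of that subtree's full label set as starting at $i$ is not.
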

\begin{proof}
	Put $\Delta(i)=M(i,s_i)$ and $\Delta(j)=M(j, s_j)$.
	\begin{enumerate}[(i)]
		\item Note that the first vertex visited by the in-order algorithm, after visiting the right subtree of $\ell(v)$, is $v$. Therefore, by Proposition \ref{proposition:standard and costandard modules over tree order}, we have $j=s_i+1$. The condition of Proposition \ref{proposition:homs and ext between interval modules}, part (ii), is then
		$$i+1\leq s_i+1\leq s_i+1 \leq s_j,$$
		which is clearly satisfied.
		\item With the in-order algorithm, the vertex $r(v)$ is visited after $v$. Then, using Proposition \ref{proposition:standard and costandard modules over tree order}, we know that if $\Delta(i)=M(i,s_i)$, then $\Delta(j)=M(i+k,s_i)$, for some $k\leq s_i-i$. The condition of Proposition \ref{proposition:homs and ext between interval modules}, part (i), is then
		$$i\leq i+k \leq s_i \leq s_i,$$
		which is clearly satisfied.\qedhere
	\end{enumerate}
\end{proof}
\begin{lemma}\label{lemma:no hom from standard to left subtree and no ext from standard to right subtree}
	\begin{enumerate}[(i)]
		\item Let $v_i$ and $v_j$ be vertices labeled by $i$ and $j$, respectively. Assume that $v_j$ is in the left subtree of $v_i$. Then, we have
		$$\operatorname{Hom}_{A_n}(\Delta(i),\Delta(j))=\operatorname{Hom}_{A_n}(\Delta(j),\Delta(i))=0.$$
		\item Let $v_i$ and $v_j$ be vertices labeled by $i$ and $j$, respectively. Assume that $v_j$ is in the right subtree of $v_i$. Then, we have
		$$\operatorname{Ext}^1_{A_n}(\Delta(i),\Delta(j))=\operatorname{Ext}^1_{A_n}(\Delta(j),\Delta(i))=0.$$
		\item Let $v_i$ and $v_j$ be vertices, labeled by $i$ and $j$, respectively. Assume that $v_i$ is not in the subtree of $v_j$ and that $v_j$ is not in the subtree of $v_i$. Then, we have
		$$\operatorname{Hom}_{A_n}(\Delta(i),\Delta(j))=\operatorname{Hom}_{A_n}(\Delta(j),\Delta(i))=\operatorname{Ext}^1_{A_n}(\Delta(i),\Delta(j))=\operatorname{Ext}^1_{A_n}(\Delta(j),\Delta(i))=0.$$
	\end{enumerate}
\end{lemma}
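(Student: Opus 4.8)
The plan is to reduce all the vanishing statements to the explicit numerical criteria of Proposition~\ref{proposition:homs and ext between interval modules}, using the shape of the standard modules furnished by Proposition~\ref{proposition:standard and costandard modules over tree order}. Write $\Delta(i)=M(i,s_i)$ and $\Delta(j)=M(j,s_j)$. The one auxiliary fact I would record first is that $s_i$ equals the largest integer labelling a vertex of the subtree of $T$ rooted at $v_i$: by Proposition~\ref{proposition:standard and costandard modules over tree order}(i) the composition factors of $\Delta(i)$ are $L(i),L(i+1),\dots,L(s_i)$, and the indices $i,i+1,\dots,s_i$ are exactly $i$ together with the labels of the right subtree of $v_i$, which the in-order algorithm visits consecutively just after $v_i$; hence $\{i,i+1,\dots,s_i\}$ is precisely the label set of the subtree rooted at $v_i$, and likewise for $j$. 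With this in hand, every inequality I need becomes a statement about nesting of these label intervals, and the rest is purely mechanical.

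For (i), if $v_j$ lies in the left subtree of $v_i$, then the whole subtree rooted at $v_j$ sits inside the left subtree of $v_i$, all of whose labels are $<i$, so $j<i$ and $s_j<i$; feeding $M(i,s_i)$ and $M(j,s_j)$ into Proposition~\ref{proposition:homs and ext between interval modules}(i) in both orders, the chain $j\le i\le s_j\le s_i$ breaks at $i\le s_j$ and the chain $i\le j\le s_i\le s_j$ breaks at $i\le j$, so both Hom-spaces vanish. For (ii), if $v_j$ lies in the right subtree of $v_i$, then the labels of the subtree rooted at $v_j$ form a sub-interval of $\{i+1,\dots,s_i\}$, so $i<j$ and $s_j\le s_i$; feeding the two interval modules into Proposition~\ref{proposition:homs and ext between interval modules}(ii) in both orders, the chain $i+1\le j\le s_i+1\le s_j$ breaks at $s_i+1\le s_j$ and the chain $j+1\le i\le s_j+1\le s_i$ breaks at $j+1\le i$, so both $\operatorname{Ext}^1$-spaces vanish.

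For (iii), I would take $v$ to be the lowest common ancestor of $v_i$ and $v_j$; the hypothesis forces $v\notin\{v_i,v_j\}$ and $v_i\ne v_j$, and then $v_i$ and $v_j$ lie in different subtrees of $v$. As the statement is symmetric in $i$ and $j$, assume $v_i$ is in the left subtree of $v$ and $v_j$ in the right; with $k$ the label of $v$, all labels of the subtree at $v_i$ are $<k$, so $s_i<k$, while $k<j\le s_j$, giving $i<k<j$ and $s_i+1\le k<j$. Checking all four conditions of Proposition~\ref{proposition:homs and ext between interval modules}: $\operatorname{Hom}(\Delta(i),\Delta(j))$ would need $j\le i$, $\operatorname{Hom}(\Delta(j),\Delta(i))$ would need $j\le s_i$, $\operatorname{Ext}^1(\Delta(i),\Delta(j))$ would need $j\le s_i+1$, and $\operatorname{Ext}^1(\Delta(j),\Delta(i))$ would need $j+1\le i$, each of which fails; hence all four spaces vanish. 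The only real obstacle is bookkeeping --- correctly identifying $s_i$ with the maximal label of the subtree at $v_i$ and keeping the nesting of the intervals $\{i,\dots,s_i\}$ straight across the two orderings in (i)--(ii) and the fourfold split in (iii) --- so I would isolate the ``$s_i=\max$ label'' observation at the start and leave the remaining inequalities to routine checking.
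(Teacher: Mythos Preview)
Your proof is correct. The approach differs from the paper's in how much it leans on the quasi-hereditary structure versus the explicit interval arithmetic of Proposition~\ref{proposition:homs and ext between interval modules}. In parts (i) and (ii) the paper disposes of one direction in each (\,$\operatorname{Hom}(\Delta(i),\Delta(j))$ in (i), $\operatorname{Ext}^1(\Delta(i),\Delta(j))$ in (ii)\,) by invoking $v_j\triangleleft_T v_i$ together with the standard directedness of $\Delta$-modules in a quasi-hereditary algebra, and only checks the remaining direction via Proposition~\ref{proposition:homs and ext between interval modules}; you instead verify both directions uniformly from the interval criteria. In part (iii) the difference is sharper: the paper observes that $v_i$ and $v_j$ are incomparable under $\trianglelefteq_T$ and appeals to the general fact that $\operatorname{Hom}$ and $\operatorname{Ext}^1$ between standard modules over a quasi-hereditary algebra vanish for incomparable indices, giving a one-line proof; your lowest-common-ancestor argument is longer but entirely self-contained, relying only on Proposition~\ref{proposition:homs and ext between interval modules} and the in-order labelling. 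Both routes are valid; the paper's buys brevity by using more of the ambient theory, while yours keeps everything at the level of interval combinatorics.
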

\begin{proof} By the proof of Proposition \ref{proposition:standard and costandard modules over tree order}, we have $\Delta(i)=M(i, s_i)$ and $\Delta(j)=M(j,s_j)$, where the integers 
	$$i+1,i+2, \dots, s_i$$
	label the vertices in the right subtree of $v_i$ and the integers $$j+1,j+2,\dots, s_j$$ label the vertices in the right subtree of $v_j$.
	\begin{enumerate}[(i)]
		\item   According to the in-order algorithm, the integers $j+1,\dots, s_j$ are less than $i$. We immediately have $$\operatorname{Hom}_{A_n}(\Delta(i),\Delta(j))=0,$$
		since $v_j\triangleleft_T v_i$ and $A_n$ is quasi-hereditary. Moreover
		by Proposition \ref{proposition:homs and ext between interval modules}, part (ii), we have
		$$\operatorname{Hom}_{A_n}(\Delta(j),\Delta(i))=0,$$
		since the condition $i\leq j \leq s_i \leq s_j$ is not satisfied, because $j<i$.
		\item Since $v_j$ is in the right subtree of $v_i$, we have $s_j \leq s_i$. We immediately have
		$$\operatorname{Ext}_{A_n}^1(\Delta(i),\Delta(j))=0,$$
		since $v_j\triangleleft_T v_i$ and $A_n$ is quasi-hereditary. Moreover,
		by Proposition \ref{proposition:homs and ext between interval modules}, part (i), we have
		$$\operatorname{Ext}_{A_n}^1(\Delta(j),\Delta(i))=0,$$
		 since the condition $i+1\leq j\leq s_i+1\leq s_j$ is not satisfied, because $s_j\leq s_i$.

		\item This is immediate, since $A_n$ is quasi-hereditary and the vertices $v_i$ and $v_j$ are incomparable with respect to $\trianglelefteq_T$.  \qedhere
	\end{enumerate}
\end{proof}
\begin{example}
	Consider the following binary search tree, $T$, with vertices labeled according to the in-order algorithm.
	
	$$\begin{tikzpicture}
		\node(a) [shape=circle, draw, thick, fill=lightgray] at (0,0) {4};
		\node(b) [shape=circle, draw, thick, fill=lightgray] at (-2,-1) {2};
		\node(c) [shape=circle, draw, thick, fill=lightgray] at (2,-1) {5};
		\node(d) [shape=circle, draw, thick, fill=lightgray] at (-1,-2) {3};
		\node(e) [shape=circle, draw, thick, fill=lightgray] at (-3,-2) {1};
		\node(f)  at (1,-2) {};
		\node(g) [shape=circle, draw, thick, fill=lightgray] at (3,-2) {6};
		\node(h) at (-3.5,-3) {};
		\node(i)  at (-2.5,-3) {};
		\node(j)  at (-1.5,-3) {};
		\node(k)  at (-0.5,-3) {};
		\node(n)  at (2.5,-3) {};
		\node(o) at (3.5,-3) {};
		\draw[thick] (a) to (b);
		\draw[thick] (b) to (e);
		\draw[thick] (e) to (h);
		\draw[thick] (e) to (i);
		\draw[thick] (b) to (d);
		\draw[thick] (d) to (j);
		\draw[thick] (d) to (k);
		\draw[thick] (a) to (c);
		\draw[thick] (c) to (f);
		\draw[thick] (c) to (g);
		\draw[thick] (g) to (n);
		\draw[thick] (g) to (o);
	\end{tikzpicture}$$
	The partial order $\trianglelefteq_T$ is given by:
	
	$$1\triangleleft_T 2 \triangleleft_T 4,\quad 3\triangleleft_T 2 \triangleleft_T 4,\quad \textrm{and}\quad 6\triangleleft_T 5 \triangleleft_T 4.$$
	
	For the standard and costandard modules, we have
	\begin{align*}\Delta(1)& \cong L(1),\quad \Delta(2)\cong M(2,3),\quad\Delta(3)\cong L(3),\quad \Delta(4)\cong P(4), \quad \Delta(5)\cong P(6),\quad  \Delta(6)\cong L(6), \\
		\nabla(1)&\cong L(1),\quad \nabla(2)\cong M(1,2),\quad \nabla(3)\cong L(3),\quad \nabla(4)\cong M(1,4),\quad \nabla(5)\cong L(5),\quad \textrm{and}\quad \nabla(6)\cong L(6),\end{align*}
	which we see by applying Proposition \ref{proposition:standard and costandard modules over tree order}. Moreover, we have non-split short exact sequences
	$$\Delta(2)\hookrightarrow I(3)\twoheadrightarrow \Delta(1)\quad \textrm{and}\quad \Delta(4) \hookrightarrow P(2)\twoheadrightarrow \Delta(2),$$
	giving us the extensions guaranteed by part (i) of Proposition \ref{proposition:extensions and homes between standard modules}. 
	
	Next, we see there are natural monomorphisms $\Delta(3)\hookrightarrow \Delta(2)$ and $\Delta(6)\hookrightarrow\Delta(5)\hookrightarrow\Delta(4),$ giving us the homomorphisms guaranteed by part (ii) of Proposition \ref{proposition:extensions and homes between standard modules}.
	
	Lastly, we have
	\begin{align*}
		\dim \operatorname{Hom}_{A_n}(\Delta(1),\Delta(2))&=\dim \operatorname{Hom}_{A_n}(\Delta(2),\Delta(1))=\dim \operatorname{Hom}_{A_n}(\Delta(1),\Delta(3))=\dim \operatorname{Hom}_{A_n}(\Delta(3),\Delta(1))=0;\\
		\dim \operatorname{Hom}_{A_n}(\Delta(2),\Delta(4))&=\dim \operatorname{Hom}_{A_n}(\Delta(4),\Delta(2))=\dim \operatorname{Hom}_{A_n}(\Delta(1),\Delta(4))=\dim \operatorname{Hom}_{A_n}(\Delta(4),\Delta(1))=0;\\
		\dim \operatorname{Hom}_{A_n}(\Delta(2),\Delta(5))&=\dim\operatorname{Hom}_{A_n}(\Delta(5),\Delta(2)) =\dim \operatorname{Hom}_{A_n}(\Delta(2),\Delta(6))=\dim \operatorname{Hom}_{A_n}(\Delta(6),\Delta(2))=0;\\
		\dim \operatorname{Hom}_{A_n}(\Delta(1),\Delta(5))&=\dim \operatorname{Hom}_{A_n}(\Delta(5),\Delta(1))=\dim \operatorname{Hom}_{A_n}(\Delta(1),\Delta(6))=\dim \operatorname{Hom}_{A_n}(\Delta(6),\Delta(1))=0,
	\end{align*}
	as prescribed by Lemma \ref{lemma:no hom from standard to left subtree and no ext from standard to right subtree}.
\end{example}
\begin{lemma} \label{lemma:multiplication map in right subtree is zero}
		Let $v, r(v)$ and $\ell(r(v))$ be vertices labeled by $i, j$ and $k$, respectively. Then, the multiplication map $$\operatorname{Hom}_{A_n}( \Delta(j),\Delta(i)) \times \operatorname{Ext}^1_{A_n}(\Delta(k),\Delta(j))\to \operatorname{Ext}_{A_n}^1(\Delta(k),\Delta(i))$$ is the zero map.
\end{lemma}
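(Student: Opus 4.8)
The strategy is to observe that the \emph{target} of this multiplication map is already the zero vector space, which makes the statement immediate. Concretely, the multiplication in $\operatorname{Ext}^\ast_{A_n}(\Delta,\Delta)$ restricted to these summands is Yoneda composition, so the map in question takes values in $\operatorname{Ext}^1_{A_n}(\Delta(k),\Delta(i))$; it therefore suffices to prove that this group vanishes.

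First I would record the shapes of the standard modules involved. By the proof of Proposition \ref{proposition:standard and costandard modules over tree order}, we may write $\Delta(i)=M(i,s_i)$, $\Delta(j)=M(j,s_j)$ and $\Delta(k)=M(k,s_k)$, where $s_x$ denotes the largest label appearing in the subtree rooted at the vertex labelled $x$. Next I would locate $v_k$ relative to $v_i$: since the vertex labelled $j$ is $r(v)$, the right child of $v=v_i$, the subtree rooted at it is exactly the right subtree of $v_i$; and since the vertex labelled $k$ is $\ell(r(v))$, the left child of that vertex, the subtree rooted at $v_k$ is the left subtree of $v_j$, which is contained in the subtree rooted at $v_j$. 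Hence $v_k$ lies in the right subtree of $v_i$, and in particular $i<k$.

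The conclusion then follows in one line: by Lemma \ref{lemma:no hom from standard to left subtree and no ext from standard to right subtree}(ii), applied to $v_i$ and $v_k$ with $v_k$ in the right subtree of $v_i$, we obtain $\operatorname{Ext}^1_{A_n}(\Delta(k),\Delta(i))=0$. Alternatively, this can be read off directly from Proposition \ref{proposition:homs and ext between interval modules}(ii): with $\Delta(k)=M(k,s_k)$ and $\Delta(i)=M(i,s_i)$, a nonzero $\operatorname{Ext}^1$ would require $k+1\le i$, which fails since $k>i$. As the multiplication map takes values in $\operatorname{Ext}^1_{A_n}(\Delta(k),\Delta(i))=0$, it is the zero map. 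There is no genuine obstacle here; the only point needing a little care is reading off from the in-order labelling that $\ell(r(v))$ sits in the right subtree of $v$, after which the lemma reduces entirely to the vanishing already established in Lemma \ref{lemma:no hom from standard to left subtree and no ext from standard to right subtree}.
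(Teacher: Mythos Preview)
Your argument is correct and is essentially identical to the paper's own proof: both observe that $\ell(r(v))$ lies in the right subtree of $v$, invoke Lemma~\ref{lemma:no hom from standard to left subtree and no ext from standard to right subtree}(ii) to conclude that the target $\operatorname{Ext}^1_{A_n}(\Delta(k),\Delta(i))$ vanishes, and deduce that the multiplication map is zero. Your additional remark via Proposition~\ref{proposition:homs and ext between interval modules}(ii) is a harmless alternative justification of the same vanishing.
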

\begin{proof}
	 Note that the vertices $i,j$ and $k$ are configured in the following way:
	$$\begin{tikzpicture}
		\node(a)[shape=circle,draw, fill=lightgray, thick] at (0,0) {$i$};
		\node(b) [shape=circle,draw, fill=lightgray, thick] at (1,-1) {$j$};
		\node(c) [shape=circle,draw, fill=lightgray, thick] at (0.5, -2) {$k$};
		\draw[thick] (a) to (b);
		\draw[thick] (b) to (c);
	\end{tikzpicture}$$
The multiplication map in question produces an extension in the space $\operatorname{Ext}_{A_n}^1(\Delta(k),\Delta(i))$. But since $\ell(r(v))$ is a vertex in the right subtree of $v$, this space is zero, by part (ii) of Lemma \ref{lemma:no hom from standard to left subtree and no ext from standard to right subtree}.
\end{proof}
\begin{lemma}\label{lemma:multiplication in left subtree is non-zero}
	Let $v$, $\ell(v)$ and $r(\ell(v))$ be vertices labeled by $i,j$ and $k$, respectively. Then, the multiplication map
	$$\operatorname{Ext}_{A_n}^1(\Delta(j),\Delta(i))\times \operatorname{Hom}_{A_n}(\Delta(k),\Delta(j))\to \operatorname{Ext}_{A_n}^1(\Delta(k),\Delta(i))$$ is non-zero.
\end{lemma}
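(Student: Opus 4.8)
The plan is to first pin down the three standard modules occurring in the statement as explicit interval modules, using the in-order algorithm together with Proposition~\ref{proposition:standard and costandard modules over tree order}. Here the vertices are configured so that $v$ (labelled $i$) has left child $\ell(v)$ (labelled $j$), and $\ell(v)$ has right child $r(\ell(v))$ (labelled $k$). Writing $\Delta(i)=M(i,s_i)$, the right subtree of $v$ is labelled by $i+1,\dots,s_i$. Since $\ell(v)$ is the left child of $v$, the last vertex of the left subtree of $v$ visited by the in-order algorithm carries the label $i-1$, and it lies in the right subtree of $\ell(v)$; hence $\Delta(j)=M(j,i-1)$. Since $r(\ell(v))$ is the right child of $\ell(v)$, the labels exceeding $k$ inside the right subtree of $\ell(v)$ are exactly $k+1,\dots,i-1$, and these label the right subtree of $r(\ell(v))$; hence $\Delta(k)=M(k,i-1)$, with $j<k<i\leq s_i$. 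By Proposition~\ref{proposition:extensions and homes between standard modules} (or directly from Proposition~\ref{proposition:homs and ext between interval modules}), each of $\operatorname{Ext}^1_{A_n}(\Delta(j),\Delta(i))$, $\operatorname{Hom}_{A_n}(\Delta(k),\Delta(j))$ and $\operatorname{Ext}^1_{A_n}(\Delta(k),\Delta(i))$ is one-dimensional, so it suffices to exhibit a nonzero element in each of the first two spaces whose Yoneda product is nonzero.

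Next I would fix convenient representatives. Since the intervals $[j,i-1]$ and $[i,s_i]$ are adjacent, the interval module $M(j,s_i)$ fits into a short exact sequence $0\to M(i,s_i)\to M(j,s_i)\xrightarrow{\pi} M(j,i-1)\to 0$, where $\pi$ is the canonical quotient whose kernel is the submodule $M(i,s_i)$. This sequence is non-split, because $M(j,s_i)$ is indecomposable whereas $M(i,s_i)\oplus M(j,i-1)$ is not (both summands are nonzero, as $s_i\geq i$ and $j\leq i-1$); hence its class $\eta$ spans $\operatorname{Ext}^1_{A_n}(\Delta(j),\Delta(i))$. For the $\operatorname{Hom}$ space, the inclusion $f\colon M(k,i-1)\hookrightarrow M(j,i-1)$ of the unique submodule of that shape is a nonzero homomorphism, hence spans $\operatorname{Hom}_{A_n}(\Delta(k),\Delta(j))$.

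Finally I would compute the product $\eta\cdot f$ as the pullback $f^{\ast}\eta$. Because $f$ is a monomorphism, the pullback is represented by $0\to M(i,s_i)\to \pi^{-1}\!\big(M(k,i-1)\big)\to M(k,i-1)\to 0$. Since $\pi$ is, up to a scalar, the identity on the vector spaces at the vertices $j,j+1,\dots,i-1$ and has kernel supported on the vertices $i,i+1,\dots,s_i$, the preimage $\pi^{-1}\!\big(M(k,i-1)\big)$ is precisely the submodule of $M(j,s_i)$ supported on $k,k+1,\dots,s_i$, that is, the interval module $M(k,s_i)$. As $M(k,s_i)$ is indecomposable while $M(i,s_i)\oplus M(k,i-1)$ is not (again both summands are nonzero), the pullback sequence does not split, so $\eta\cdot f\neq 0$; in fact it is the canonical generator of $\operatorname{Ext}^1_{A_n}(\Delta(k),\Delta(i))$ with middle term $M(k,s_i)$. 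Hence the multiplication map is nonzero.

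I do not expect a genuine obstacle here. The only step demanding care is the bookkeeping with the in-order algorithm needed to identify $\Delta(i)$, $\Delta(j)$ and $\Delta(k)$ as the interval modules above; once this is in place, everything reduces to elementary computations with interval modules over $A_n$ and the cited propositions, and the key structural point—that the pullback of the interval extension $M(j,s_i)$ along the submodule inclusion $M(k,i-1)\hookrightarrow M(j,i-1)$ is again an indecomposable interval module $M(k,s_i)$—is immediate.
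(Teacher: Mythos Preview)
Your proof is correct and complete, but it takes a genuinely different route from the paper's own proof. The paper works with projective resolutions and chain maps: it writes the minimal projective resolution of each standard module as $\ker p_x \hookrightarrow P(x) \twoheadrightarrow \Delta(x)$, represents the extension in question by a chain map whose only possibly nonzero component is a composite $g\circ f$ with $f:\ker p_k\to \ker p_j$ and $g:\ker p_j\to P(i)$, observes that $s_j+1=i$ so that $g$ is (up to scalar) the identity on $P(i)$, and then checks that the chain map cannot be null-homotopic because $\operatorname{Hom}_{A_n}(P(k),P(i))=\operatorname{Hom}_{A_n}(\ker p_k,\ker p_i)=0$. By contrast, you work directly with Yoneda extensions as short exact sequences of interval modules: you identify $\Delta(i)=M(i,s_i)$, $\Delta(j)=M(j,i-1)$, $\Delta(k)=M(k,i-1)$, represent the generator of $\operatorname{Ext}^1_{A_n}(\Delta(j),\Delta(i))$ by the sequence with middle term $M(j,s_i)$, and compute the Yoneda product as a pullback along the inclusion $M(k,i-1)\hookrightarrow M(j,i-1)$, obtaining the indecomposable middle term $M(k,s_i)$ and hence a non-split sequence. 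Your approach is more elementary and self-contained---it avoids chain maps and homotopies entirely and makes the resulting extension completely explicit---while the paper's approach via projective resolutions is closer to the general Yoneda-composition formalism and dovetails with the chosen bases discussed just before Theorem~\ref{theorem:ext-algebra of A_n}.
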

\begin{proof}
We start by observing that the vertices labeled by $i$, $j$ and $k$ are configured in the following way:
$$\begin{tikzpicture}
	\node(a)[shape=circle,draw, fill=lightgray, thick] at (0,0) {$i$};
	\node(b) [shape=circle,draw, fill=lightgray, thick] at (-1,-1) {$j$};
	\node(c) [shape=circle,draw, fill=lightgray, thick] at (-0.5, -2) {$k$};
	\draw[thick] (a) to (b);
	\draw[thick] (b) to (c);
\end{tikzpicture}$$
According to the in-order algorithm, we have $j<k<i$. Note that, since $A_n$ is hereditary, a minimal projective resolution of a standard module $\Delta(x)$ is of the form
$$\xymatrix{\ker p_x \ar[r] & P(x) \ar[r]^-{p_x} & \Delta(x)}.$$
Consider the following picture:
$$\xymatrix{
&\ker p_k \ar[r] \ar[d]^-f  &P(k) \ar[r]^-{p_k} \ar[d]& \Delta(k) \\
&\ker p_j \ar[r] \ar[d]^-g &P(j) \ar[r]^-{p_j} & \Delta(j) \\
\ker p_i \ar[r] &P(i) \ar[r]^-{p_k} & \Delta(i)
}$$
An extension from $\Delta(k)$ to $\Delta(i)$ is represented by a chain map, which has one (possibly) non-zero component, namely the map $g\circ f$. Now, assume that  $\Delta(k)=M(k,s_k)$ and $\Delta(i)=M(i, s_i)$. Then, we have $\ker p_k=P(s_k+1)$ and $\ker p_i=P(s_i+1)$. Since $k<i$ and $s_k<s_i$, we have
$\operatorname{Hom}_{A_n}(P(k),P(i))=\operatorname{Hom}_{A_n}(\ker p_k, \ker p_i)=0.$ Therefore, the chain map, if it is non-zero, cannot be null-homotopic. By construction, $f$ is the unique (up to a scalar) non-zero map from $M(s_k+1, n)$ to $M(s_j+1,n)$. Next, note that in our configuration, the first vertex visited by the in-order algorithm, after visiting the entire right subtree of $\ell(v)$, is $v$. This implies that $s_j+1=i$, so that the map $g$ in the above picture is some scalar multiple of the identity homomorphism on $P(i)$. This shows that the composition $g\circ f$, and hence the chain map representing our extension, is non-zero.
\end{proof}
For two interval modules $M(i_1, j_1)$ and $M(i_2, j_2)$, the space $\operatorname{Hom}_{A_n}(M(i_1, j_1), M(i_2, j_2))$ is one-dimensional if it is non-zero. Fix a basis of this space, corresponding to the following homomorphism of representations:

$$\xymatrix{
K & \dots \ar[l] & K \ar[l] \ar[d]^1& \dots \ar[l] \ar[d]^1& K \ar[l] \ar[d]^1\\
& & K & \dots \ar[l] & K \ar[l] & \dots \ar[l] & K\ar[l]
}$$
Note that this choice of basis also gives an obvious choice of basis for the spaces $\operatorname{Ext}_{A_n}^1(M(i_1, j_1), M(i_2, j_2))$.
Suppose now that we have chosen such basis vectors
$$y\in \operatorname{Hom}_{A_n}(\Delta(i), \Delta(j));\quad x\in \operatorname{Ext}_{A_n}^1(\Delta(j), \Delta(k));\quad z\in \operatorname{Ext}_{A_n}^1(\Delta(i),\Delta(k));$$
and that $i,j$ and $k$ are as in the statement of Lemma \ref{lemma:multiplication in left subtree is non-zero}. Then, we have $xy=z$.
\begin{theorem}\label{theorem:ext-algebra of A_n}
	Let $T$ be a binary search tree with $n$ vertices, with the vertices labeled according to the in-order algorithm, and let $\trianglelefteq_T$ be the associated adapted order to $A_n$. Construct a quiver $Q$ in the following way:
	\begin{enumerate}[(i)]
		\item The vertices of $Q$ are $1,2,\dots, n$.
		\item We draw an edge between $i$ and $j$ if and only if the integers $i$ and $j$ label either a set of vertices $\{v, \ell(v)\}$ or $\{v, r(v)\}$. If $i$ labels $\ell(v)$ and $j$ labels $v$, the orientation of the edge is from $\ell(v)$ to $v$, of degree 1, and is denoted by $\varepsilon_i^j$. If $i$ labels $r(v)$ and $j$ labels $v$, the orientation of the edge is from $r(v)$ to $v$, of degree 0, and is denoted by $f_i^j$.
	\end{enumerate}
Let $I\subset KQ$ be the ideal generated by the following elements:
\begin{enumerate}[($\bullet$)]
	\item $f_j^k \varepsilon_i^j$, for all $i,j$ and $k$;
	\item $\varepsilon_j^k \varepsilon_i^j$, for all $i,j$ and $k$;
\end{enumerate}
Then, there is an isomorphism of graded algebras $\faktor{KQ}{I}\cong \operatorname{Ext}_{A_n}^\ast(\Delta,\Delta)$.
\end{theorem}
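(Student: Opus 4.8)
The plan is to construct an explicit graded algebra homomorphism $\Phi\colon KQ \to \operatorname{Ext}_{A_n}^\ast(\Delta,\Delta)$, check that it annihilates $I$, and then show that the induced map $\overline{\Phi}\colon KQ/I \to \operatorname{Ext}_{A_n}^\ast(\Delta,\Delta)$ is bijective. Since $A_n$ is hereditary, $\operatorname{Ext}_{A_n}^\ast(\Delta,\Delta)$ is concentrated in degrees $0$ and $1$ and splits as $\bigoplus_{u,w}\bigl(\operatorname{Hom}_{A_n}(\Delta(u),\Delta(w)) \oplus \operatorname{Ext}^1_{A_n}(\Delta(u),\Delta(w))\bigr)$, with each summand at most one-dimensional by Proposition~\ref{proposition:homs and ext between interval modules}. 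I would define $\Phi$ on generators by $e_i \mapsto \operatorname{id}_{\Delta(i)}$, by sending the degree-$0$ arrow $f_i^j$ to the canonical basis vector of $\operatorname{Hom}_{A_n}(\Delta(i),\Delta(j))$ and the degree-$1$ arrow $\varepsilon_i^j$ to the canonical basis vector of $\operatorname{Ext}^1_{A_n}(\Delta(i),\Delta(j))$; these spaces are one-dimensional and nonzero by Proposition~\ref{proposition:extensions and homes between standard modules}, and the chosen vectors are the ones fixed in the discussion preceding the statement. Since the arrows land in the correct corners and degrees, this extends uniquely to a homomorphism of graded algebras out of the free path algebra $KQ$. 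To see it factors through $KQ/I$, observe that $\Phi(\varepsilon_j^k\varepsilon_i^j)$ lies in $\operatorname{Ext}^2_{A_n}(\Delta(i),\Delta(k)) = 0$ by heredity, while $\Phi(f_j^k\varepsilon_i^j)=0$ is exactly Lemma~\ref{lemma:multiplication map in right subtree is zero}.

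Next I would describe $KQ/I$ combinatorially. The ideal $I$ is monomial, so $KQ/I$ has a basis consisting of the paths in $Q$ containing neither $f_j^k\varepsilon_i^j$ nor $\varepsilon_j^k\varepsilon_i^j$ as a subpath. As the underlying graph of $Q$ is the tree $T$ with every arrow pointing from a vertex to its parent, between two vertices there is at most one path and it ascends $T$; the forbidden subpaths say precisely that an $\varepsilon$-arrow can occur only as the outermost arrow of a path. Thus a surviving path from $u$ to $w$ exists iff $w$ is an ancestor of $u$ and, writing $c$ for the child of $w$ on the path to $u$, the vertex $u$ lies on the rightmost branch descending from $c$; its degree is $1$ if $c = \ell(w)$ and $0$ if $c=r(w)$ (and $0$ for the trivial path).

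I would then match this with $\operatorname{Ext}_{A_n}^\ast(\Delta,\Delta)$. By Proposition~\ref{proposition:standard and costandard modules over tree order}, $\Delta(i)=M(i,s_i)$ with $s_i$ the largest label in the subtree of $T$ rooted at the vertex labelled $i$; in particular $s_{\ell(w)}=w-1$, so $s_u=w-1$ when $u$ lies on the rightmost branch from $\ell(w)$, and $s_u=s_w$ when $u$ lies on the rightmost branch from $w$. Feeding this into Proposition~\ref{proposition:homs and ext between interval modules}, and using Lemma~\ref{lemma:no hom from standard to left subtree and no ext from standard to right subtree} to discard the remaining configurations, yields: $\operatorname{Hom}_{A_n}(\Delta(u),\Delta(w))\neq0$ iff $u=w$ or $u$ is on the rightmost branch from $w$, in which case the nonzero map is the inclusion $M(u,s_w)\hookrightarrow M(w,s_w)$; and $\operatorname{Ext}^1_{A_n}(\Delta(u),\Delta(w))\neq0$ iff $u$ is on the rightmost branch from $\ell(w)$. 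This produces a degree-preserving bijection between the path basis of $KQ/I$ and a basis of $\operatorname{Ext}_{A_n}^\ast(\Delta,\Delta)$, so the two algebras have the same dimension.

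Finally, I would show $\overline{\Phi}$ is an isomorphism; by the dimension count it suffices to prove surjectivity, and since each graded piece $\operatorname{Ext}^d_{A_n}(\Delta(u),\Delta(w))$ is at most one-dimensional and receives at most one surviving path, it is enough to check that $\overline{\Phi}$ sends every surviving path to a nonzero element. A path built only of $f$-arrows maps to a composition of inclusions of interval modules $M(u,s_w)\hookrightarrow\cdots\hookrightarrow M(w,s_w)$, which is the nonzero inclusion. A path of the form $\varepsilon_{\ell(w)}^w f\cdots f$ maps to the Yoneda product of the canonical generator of $\operatorname{Ext}^1_{A_n}(\Delta(\ell(w)),\Delta(w))$ with the inclusion $M(u,w-1)\hookrightarrow M(\ell(w),w-1)$; pulling the non-split extension $0\to M(w,s_w)\to M(\ell(w),s_w)\to M(\ell(w),w-1)\to0$ back along this inclusion gives $0\to M(w,s_w)\to M(u,s_w)\to M(u,w-1)\to0$, which is non-split as $M(u,s_w)$ is indecomposable, so the product is nonzero. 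I expect this last step — the non-vanishing of these products for degree-$1$ paths of length at least two — to be the main obstacle: the length-one case is precisely Lemma~\ref{lemma:multiplication in left subtree is non-zero}, and the general case follows from the interval-module pullback above (or by iterating that lemma along the branch). With all maps degree-preserving, $\overline{\Phi}$ is then an isomorphism of graded algebras.
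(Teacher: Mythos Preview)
Your proof is correct and follows essentially the same strategy as the paper's: define a graded homomorphism on generators, kill the relations using heredity and Lemma~\ref{lemma:multiplication map in right subtree is zero}, match dimensions, and conclude bijectivity via surjectivity. The paper handles the non-vanishing of the degree-$1$ products through Lemma~\ref{lemma:multiplication in left subtree is non-zero} together with the canonical-basis remark preceding the theorem, while your pullback-of-extensions description is an equivalent (and slightly more explicit) variant of the same computation; your combinatorial enumeration of the surviving paths in $KQ/I$ likewise just makes the paper's dimension count more explicit.
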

\begin{example}
	Consider the binary search tree
	$$\begin{tikzpicture}
		\node(a) [shape=circle, draw, thick, fill=lightgray] at (0,0) {4};
		\node(b) [shape=circle, draw, thick, fill=lightgray] at (-2,-1) {2};
		\node(c) [shape=circle, draw, thick, fill=lightgray] at (2,-1) {5};
		\node(d) [shape=circle, draw, thick, fill=lightgray] at (-1,-2) {3};
		\node(e) [shape=circle, draw, thick, fill=lightgray] at (-3,-2) {1};
		\node(f)  at (1,-2) {};
		\node(g) [shape=circle, draw, thick, fill=lightgray] at (3,-2) {6};
		\node(h) at (-3.5,-3) {};
		\node(i)  at (-2.5,-3) {};
		\node(j)  at (-1.5,-3) {};
		\node(k)  at (-0.5,-3) {};
		\node(n)  at (2.5,-3) {};
		\node(o) at (3.5,-3) {};
		\draw[thick] (a) to (b);
		\draw[thick] (b) to (e);
		\draw[thick] (e) to (h);
		\draw[thick] (e) to (i);
		\draw[thick] (b) to (d);
		\draw[thick] (d) to (j);
		\draw[thick] (d) to (k);
		\draw[thick] (a) to (c);
		\draw[thick] (c) to (f);
		\draw[thick] (c) to (g);
		\draw[thick] (g) to (n);
		\draw[thick] (g) to (o);
	\end{tikzpicture}$$
Then, the corresponding quiver $Q$ is
$$\xymatrix{
	& & 4\\
	& 2\ar[ru]^-{\varepsilon_2^4} & & 5\ar@{-->}[lu]_-{f_5^4}\\ 
1 \ar[ru]^-{\varepsilon_1^2}	& & 3 \ar@{-->}[lu]_-{f_3^2} & &6\ar@{-->}[lu]_-{f_6^5}
}$$
and the ideal $I=\langle \varepsilon_2^4\varepsilon_1^2\rangle$. The full arrows, $\varepsilon_1^2$ and $\varepsilon_2^4$, are of degree 1, while the dashed arrows, $f_3^2, f_5^4$ and $f_6^5$ are of degree 0.
\end{example}
\begin{proof}[Proof of Theorem \ref{theorem:ext-algebra of A_n}]
Fix basis vectors $\varphi_i^j\in \operatorname{Hom}_{A_n}(\Delta(i),\Delta(j))$ and $e_i^j\in \operatorname{Ext}_{A_n}^1(\Delta(i),\Delta(j))$ of all non-zero spaces of the form $\operatorname{Hom}_{A_n}(\Delta(i),\Delta(j))$ and $\operatorname{Ext}_{A_n}^1(\Delta(i),\Delta(j))$, as discussed prior to Theorem \ref{theorem:ext-algebra of A_n}. Define a map $\Phi:\faktor{KQ}{I}\to \operatorname{Ext}_{A_n}^\ast(\Delta,\Delta)$ by 
$$\varepsilon_i^j \mapsto e_i^j,\quad f_i^j\mapsto \varphi_i^j, \quad e_i\mapsto 1_{\Delta(i)},$$ and extend by linearity. By applying Lemma \ref{lemma:multiplication map in right subtree is zero} and Lemma \ref{lemma:no hom from standard to left subtree and no ext from standard to right subtree}, along with the fact that there are no extensions of degree greater than 1, since $A_n$ is hereditary, we see that $I\subset \ker \Phi$, so that $\Phi$ is well-defined.

To check that $\Phi$ is a homomorphism of algebras, it is enough to check compatibility of $\Phi$ with products of the form $\varepsilon_j^k f_i^j$. We have:
\begin{align*}
	\Phi(\varepsilon_j^k f_i^j)&=\Phi(\varepsilon_i^k)=e_i^k=e_j^k\varphi_i^j=\Phi(\varepsilon_j^k)\Phi(f_i^j).
\end{align*}
As the image of $\Phi$ contains bases of $\operatorname{Hom}_{A_n}(\Delta(i),\Delta(j))$ and $\operatorname{Ext}_{A_n}^1(\Delta(i),\Delta(j))$ for all $i$ and $j$, $\Phi$ is surjective. 

Next, we compare dimensions. Consider the degree 0 part of the space $e_j \faktor{KQ}{I}e_i$. The dimension of this space is equal to the number of paths of degree 0 from $i$ to $j$ in $Q$. Clearly, such a path must be a product of the form $f_{k_s}^j f_{k_{s-1}}^j \dots f_{k_1}^{k_2}f_i^{k_1}$. Since vertices in $Q$ have out-degree at most 1, it follows that such a path is unique. So the dimension of the degree 0 part $e_j\faktor{KQ}{I}e_i$ is either 0 or 1. By construction of $Q$, it is now clear that this dimension coincides with $\dim \operatorname{Hom}_{A_n}(\Delta(i),\Delta(j))$. Next, consider the degree 1 part of the space $e_j\faktor{KQ}{I}e_i$. Similarly, the dimension of this space is either 0 or 1, depending on the number of paths from $i$ to $j$ in $Q$. Such a path is either of the form $\varepsilon_i^j$ or $\varepsilon_k^j f_i^k$. Again, by construction of $Q$, we see that this dimension coincides with $\operatorname{Ext}_{A_n}^1(\Delta(i),\Delta(j))$. Therefore, $\Phi$ is a surjective linear map between vector spaces of the same dimension, hence an isomorphism.
\end{proof}
\subsection{Ringel dual}
We recall that, for any quasi-hereditary algebra $A$, there exists a module $\mathtt{T}$, called the characteristic tilting module, whose additive closure equals $\mathcal{F}(\Delta)\cap \mathcal{F}(\nabla)$. The characteristic tilting module decomposes as
$$\mathtt{T}=\bigoplus_{i=1}^n T(i),$$
where each $T(i)$ is indecomposable. The Ringel dual is then, by definition, $R(A)=\operatorname{End}(\mathtt{T})^{\operatorname{op}}$.
In this section, we compute quiver and relations of the Ringel dual of $(A_n,\trianglelefteq_T)$ using the associated binary search tree. The following statement is part of the proof of Theorem~4.6 in \cite{FKR}. For the convenience of the reader, we give a proof.

\begin{proposition}\label{proposition:comp factors of characteristic tilting modules}
Let $v$ be a vertex labeled by $i$. The indecomposable summand $T(i)$ of the characteristic tilting module is the interval module $M(t_i, s_i)$, where $t_i$ is the least integer labelling a vertex in the left subtree of $v$ and $s_i$ is the greatest integer labelling a vertex in the right subtree of $v$.
\end{proposition}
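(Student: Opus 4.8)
The plan is to identify $T(i)$ as an explicit interval module by characterizing which interval module is simultaneously in $\mathcal{F}(\Delta)$ and $\mathcal{F}(\nabla)$ and has the right multiplicities. Recall from Proposition~\ref{proposition:standard and costandard modules over tree order} that $\Delta(i)=M(i,s_i)$, where $i+1,\dots,s_i$ label the right subtree of $v$, and dually $\nabla(i)=M(t_i,i)$, where $t_i,\dots,i-1$ label the left subtree of $v$. First I would verify that $M(t_i,s_i)$ actually lies in $\mathcal{F}(\Delta)\cap\mathcal{F}(\nabla)$. For the $\Delta$-filtration: I claim $M(t_i,s_i)$ is filtered by $\Delta(j)$ for $j$ ranging over the labels of the left subtree of $v$ together with $i$ itself, using the submodule chain of the uniserial module $M(t_i,s_i)$ and the fact that $\Delta(j)=M(j,s_j)$ where $s_j$ is the greatest label in the right subtree of $v_j$; the key point is that as $j$ runs down the "left spine" relevant to $v$, the right endpoints $s_j$ fit together to tile the interval $[t_i,s_i]$ exactly. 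Dually $M(t_i,s_i)$ is $\nabla$-filtered by the $\nabla(j)$ for $j$ in the right subtree of $v$ together with $i$. So $M(t_i,s_i)\in\mathcal{F}(\Delta)\cap\mathcal{F}(\nabla)$, hence is a partial tilting module; being indecomposable (interval modules are indecomposable) it is a direct summand of $\mathtt{T}$.

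Next I would pin down the index: the characteristic tilting module has one indecomposable summand $T(i)$ with the property that $T(i)\in\mathcal{F}(\Delta)$, $(T(i):\Delta(i))=1$, and $(T(i):\Delta(j))\neq 0$ only for $j\triangleleft_T i$; equivalently $T(i)$ is the unique indecomposable tilting module whose $\Delta$-filtration "starts" at $\Delta(i)$ as a submodule. So I need to check that $\Delta(i)=M(i,s_i)$ is a submodule of $M(t_i,s_i)$ — which is clear, it is the bottom part of the uniserial module — with quotient $\nabla$-free over strictly smaller indices, i.e. the quotient $M(t_i,i-1)$ is filtered by $\Delta(j)$ with $j\triangleleft_T i$. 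Combined with the filtration description above, and the uniqueness of indecomposable tilting modules indexed by $\{1,\dots,n\}$ in a quasi-hereditary algebra, this forces $T(i)\cong M(t_i,s_i)$.

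The main obstacle is the bookkeeping in the two filtration claims: showing that the right endpoints $s_j$ (resp.\ left endpoints $t_j$) of the relevant standard (resp.\ costandard) modules partition the interval $[t_i,s_i]$ without gaps or overlaps. This is really a statement about the in-order traversal: if $j_1 < j_2 < \cdots$ are the labels encountered along the appropriate path in $T$, then the intervals $[j_m, s_{j_m}]$ are nested/consecutive in exactly the way needed. I would prove this by induction on the size of the subtree rooted at $v$, peeling off either the left or the right subtree and invoking Proposition~\ref{proposition:standard and costandard modules over tree order} to identify the endpoints at each step. An alternative, lower-tech route that avoids the explicit combinatorics: since $A_n$ is hereditary, every module in $\mathcal{F}(\Delta)\cap\mathcal{F}(\nabla)$ is a tilting module, and one can simply check directly that $M(t_i,s_i)$ has no self-extensions and the correct number of isomorphism classes appears — but verifying it is the summand labeled by $i$ still requires locating $\Delta(i)$ as its standard submodule, so the endpoint computation cannot be fully dodged. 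Either way the argument is short once the in-order traversal identities are in hand.
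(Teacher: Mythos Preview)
Your proposal is correct and matches the paper's approach: both argue by induction on the size of the subtree rooted at $v$, identify $\Delta(i)=M(i,s_i)$ as the bottom submodule of $M(t_i,s_i)$ with quotient $M(t_i,i-1)=T(j)$ (where $j$ labels $\ell(v)$) by the induction hypothesis, and then use the dual sequence with $\nabla(i)$ and $T(k)$ to conclude $M(t_i,s_i)\in\mathcal{F}(\Delta)\cap\mathcal{F}(\nabla)$, invoking Ringel's characterization of $T(i)$. Your preliminary talk of explicitly ``tiling'' the interval by standard modules is subsumed by this induction and is not needed as a separate step.
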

\begin{proof}
To prove that $T(i)=M(t_i, s_i)$, it suffices to show that $\Delta(i)$ is isomorphic to a submodule of $T(i)$, that the cokernel of this monomorphism admits a filtration by standard modules, and that $T(i)$ is contained in $\mathcal{F}(\Delta)\cap \mathcal{F}(\nabla)$ \cite[Proposition~2]{Ringel1991}. We proceed by induction on the size of the subtree of $v$. The basis of the induction is clear, since if $\ell(v)=r(v)=\emptyset$, we have $\Delta(i)=\nabla(i)=L(i)=T(i)$. Since $t_i\leq i$, it is clear that $\Delta(i)$ embeds into $M(t_i, s_i)$. Clearly, the cokernel of this embedding is the module $M(t_i, i-1)$.

Suppose that $j$ labels the vertex $\ell(v)$. The composition factors of $M(t_i, i-1)$ are labeled by the vertices in the subtree of $\ell(v)$, so by the induction hypothesis, $M(t_i, i-1)=T(j)$. In other words, we have a short exact sequence
$$0\to \Delta(i) \hookrightarrow M(t_i, s_i) \twoheadrightarrow T(j)\to 0.$$
 Since $T(j)\in \mathcal{F}(\Delta)\cap \mathcal{F}(\nabla)$, we have $M(t_i, s_i)\in \mathcal{F}(\Delta)$. By a similar argument, we see that there is a short exact sequence
 $$0 \to T(k) \hookrightarrow T(i) \twoheadrightarrow \nabla(i)\to 0,$$
 where $k$ labels the vertex $r(v)$. Then, since $T(k)\in \mathcal{F}(\Delta)\cap \mathcal{F}(\nabla)$, we get that $M(t_i, s_i)\in \mathcal{F}(\nabla)$. Now, $M(t_i, s_i)$ is a module such that $\Delta(i)$ embeds into it, the cokernel of this embedding is contained in $\mathcal{F}(\Delta)$, and $M(t_i, s_i)\in \mathcal{F}(\Delta)\cap \mathcal{F}(\nabla)$. This confirms that $M(t_i, s_i)=T(i)$.
\end{proof}
\begin{corollary}\label{corollary:proposition:comp factors of characteristic tilting modules}
	Let the vertices $v$, $\ell(v)$ and $r(v)$ be labeled by $i, j$ and $k$, respectively. Then, for any $v$, there is an epimorphism $p_i^j:T(i)\twoheadrightarrow T(j)$
	and a monomorphism $q_k^i:T(k)\hookrightarrow T(i).$
\end{corollary}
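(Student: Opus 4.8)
The plan is to deduce the corollary directly from the interval-module description of the characteristic tilting summands furnished by Proposition \ref{proposition:comp factors of characteristic tilting modules}, combined with the $\operatorname{Hom}$-formula of Proposition \ref{proposition:homs and ext between interval modules}(i). Throughout, write $T(i)=M(t_i,s_i)$, $T(j)=M(t_j,s_j)$ and $T(k)=M(t_k,s_k)$, and note that both assertions are vacuous unless $\ell(v)\neq\emptyset$ (resp. $r(v)\neq\emptyset$), so we may assume the relevant subtree of $v$ is non-empty.

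For the epimorphism $p_i^j$, I would track the labels produced by the in-order algorithm on the left subtree of $v$. These vertices are all visited before $v$ itself, so the largest label among them is $i-1$; on the other hand, this subtree is rooted at $\ell(v)$, so, read in in-order, it is the left subtree of $\ell(v)$, then $\ell(v)$, then the right subtree of $\ell(v)$, whence its smallest label is $t_i$ and its largest is $s_j$. Hence $t_j=t_i$ and $s_j=i-1$, i.e. $T(j)=M(t_i,i-1)$, which is exactly the canonical quotient of $T(i)=M(t_i,s_i)$ by its submodule $M(i,s_i)$. Since $\ell(v)\neq\emptyset$ we have $t_i<i$, so $t_i\le t_i\le i-1\le s_i$ and Proposition \ref{proposition:homs and ext between interval modules}(i) gives $\dim\operatorname{Hom}_{A_n}(T(i),T(j))=1$; any generator $p_i^j$ of this space is the surjection in question (it is the map appearing in the short exact sequence $0\to\Delta(i)\to T(i)\to T(j)\to 0$ from the proof of Proposition \ref{proposition:comp factors of characteristic tilting modules}).

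Dually, for $q_k^i$ I would examine the right subtree of $v$: its vertices are visited after $v$, so their smallest label is $i+1$, and, being rooted at $r(v)$, its smallest label is also $t_k$ and its largest is $s_k=s_i$. Thus $t_k=i+1$ and $s_k=s_i$, i.e. $T(k)=M(i+1,s_i)$, the submodule of $T(i)=M(t_i,s_i)$ obtained by discarding the top composition factor. Since $r(v)\neq\emptyset$ we have $i<s_i$, hence $t_i\le i+1\le s_i\le s_i$, and Proposition \ref{proposition:homs and ext between interval modules}(i) yields $\dim\operatorname{Hom}_{A_n}(T(k),T(i))=1$; any generator $q_k^i$ is the desired monomorphism (it is the inclusion in the short exact sequence $0\to T(k)\to T(i)\to\nabla(i)\to 0$ from the same proof).

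I do not expect a genuine obstacle here: the statement is a bookkeeping consequence of Proposition \ref{proposition:comp factors of characteristic tilting modules}. The only point requiring a little care is the boundary behaviour of the invariants $t_i,s_i$ when a subtree of $v$ degenerates — but this is precisely where the hypotheses $\ell(v)\neq\emptyset$ and $r(v)\neq\emptyset$ enter, guaranteeing the strict inequalities $t_i<i$ and $i<s_i$ that make the relevant $\operatorname{Hom}$-spaces non-zero and hence the maps $p_i^j$, $q_k^i$ respectively surjective and injective (a nonzero map between interval modules has image an interval submodule, and comparing dimensions forces the stated behaviour).
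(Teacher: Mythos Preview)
Your argument is correct and matches the paper's approach: the corollary is stated without proof in the paper, being immediate from the two short exact sequences $0\to\Delta(i)\to T(i)\to T(j)\to 0$ and $0\to T(k)\to T(i)\to\nabla(i)\to 0$ already exhibited in the proof of Proposition~\ref{proposition:comp factors of characteristic tilting modules}. You have simply made this explicit, together with the bookkeeping $t_j=t_i$, $s_j=i-1$, $t_k=i+1$, $s_k=s_i$ that underlies those sequences.
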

\begin{proposition} \label{proposition: homs between summands of tilting module}
	Let the vertices $\ell(\ell(v)), \ell(v),v, r(v)$ and $r(r(v))$ be labeled by labeled by $x, j, i, k$ and $y$, respectively. Let $p_i^j, p_j^x, q_k^i$ and $q_y^k$ be the homomorphisms from Corollary \ref{corollary:proposition:comp factors of characteristic tilting modules}. Then, we have:
	\begin{enumerate}[(i)]
		\item $p_j^x p_i^j\neq 0$ and $q_k^i q_y^k \neq 0$;
		\item $p_i^j q_k^i=0;$
		\item $\operatorname{Hom}_{A_n}(T(j), T(i))=0$;
		\item $\operatorname{Hom}_{A_n}(T(i), T(k))=0.$
	\end{enumerate}
\end{proposition}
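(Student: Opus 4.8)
The entire proposition is a bookkeeping exercise with interval modules: the plan is to read off the shapes of the relevant summands of the characteristic tilting module from Proposition~\ref{proposition:comp factors of characteristic tilting modules} and then feed them into Proposition~\ref{proposition:homs and ext between interval modules}. First I would record, using the in-order algorithm, that the subtree rooted at $v$ carries the consecutive labels $t_i, t_i+1, \dots, s_i$, with the left subtree of $v$ carrying $t_i, \dots, i-1$ and the right subtree carrying $i+1, \dots, s_i$; in particular $t_i \leq i-1$ and $i+1 \leq s_i$, since $j$ and $k$ lie in these subtrees. Proposition~\ref{proposition:comp factors of characteristic tilting modules}, together with the two short exact sequences appearing in its proof, then yields
$$T(i) = M(t_i, s_i), \qquad T(j) = M(t_i, i-1), \qquad T(k) = M(i+1, s_i),$$
where $T(j)$ is the cokernel of the embedding $\Delta(i) \hookrightarrow T(i)$ and $T(k)$ is the kernel of the surjection $T(i) \twoheadrightarrow \nabla(i)$; applying the same reasoning to the vertices $j$ and $k$ gives $T(x) = M(t_i, j-1)$ and $T(y) = M(k+1, s_i)$, although only the fact that these modules are nonzero will actually be used.

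For part (i) no further computation is needed. By Corollary~\ref{corollary:proposition:comp factors of characteristic tilting modules}, each of $p_i^j$ and $p_j^x$ is an epimorphism, so their composite is an epimorphism onto the nonzero module $T(x)$ and hence nonzero; dually, each of $q_k^i$ and $q_y^k$ is a monomorphism, so $q_k^i q_y^k$ is a monomorphism out of the nonzero module $T(y)$ and hence nonzero.

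For parts (ii)--(iv), each statement asserts that a suitable $\operatorname{Hom}$-space between interval modules vanishes, and this is immediate from Proposition~\ref{proposition:homs and ext between interval modules}(i), since $\operatorname{Hom}_{A_n}(M(i_1, j_1), M(i_2, j_2)) = 0$ unless $i_2 \leq i_1 \leq j_2 \leq j_1$. Concretely: $\operatorname{Hom}_{A_n}(T(j), T(i)) = \operatorname{Hom}_{A_n}(M(t_i, i-1), M(t_i, s_i)) = 0$ because $s_i \geq i+1 > i-1$, which proves (iii); $\operatorname{Hom}_{A_n}(T(i), T(k)) = \operatorname{Hom}_{A_n}(M(t_i, s_i), M(i+1, s_i)) = 0$ because $t_i \leq i-1 < i+1$, which proves (iv); and $p_i^j q_k^i$ lies in $\operatorname{Hom}_{A_n}(T(k), T(j)) = \operatorname{Hom}_{A_n}(M(i+1, s_i), M(t_i, i-1)) = 0$ because $i+1 > i-1$, which proves (ii).

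The only genuine work---and the only place an error could slip in---is identifying these interval modules correctly, that is, keeping careful track of which consecutive blocks of labels occupy the left and right subtrees of $v$, $j$ and $k$ under the in-order labelling. Once that has been pinned down, all four assertions fall out immediately, with no need to make the epimorphisms and monomorphisms of Corollary~\ref{corollary:proposition:comp factors of characteristic tilting modules} explicit.
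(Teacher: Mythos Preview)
Your proof is correct and follows essentially the same approach as the paper: both arguments identify the relevant tilting summands as interval modules via Proposition~\ref{proposition:comp factors of characteristic tilting modules}, dispatch (i) by noting that compositions of epimorphisms (respectively monomorphisms) remain surjective (respectively injective), and deduce (ii)--(iv) from the vanishing criterion of Proposition~\ref{proposition:homs and ext between interval modules}(i). The only cosmetic difference is that for (ii) the paper phrases the vanishing as ``$T(k)$ and $T(j)$ have no common composition factors,'' which is equivalent to your observation that the intervals $[i+1,s_i]$ and $[t_i,i-1]$ are disjoint.
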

\begin{proof}
	The vertices are configured in the following way:
		$$\begin{tikzpicture}
		\node(a) [shape=circle, draw, thick, fill=lightgray] at (0,0) {$i$};
		\node(b) [shape=circle, draw, thick, fill=lightgray] at (-2,-1) {$j$};
		\node(c) [shape=circle, draw, thick, fill=lightgray] at (2,-1) {$k$};
		\node(e) [shape=circle, draw, thick, fill=lightgray] at (-3,-2) {$x$};
		\node(f)  at (1,-2) {};
		\node(g) [shape=circle, draw, thick, fill=lightgray] at (3,-2) {$y$};
		\node(h) at (-3.5,-3) {};
		\node(i)  at (-2.5,-3) {};
		\node(j)  at (-1.5,-3) {};
		\node(k)  at (-0.5,-3) {};
		\node(n)  at (2.5,-3) {};
		\node(o) at (3.5,-3) {};
		\draw[thick] (a) to (b);
		\draw[thick] (b) to (e);
		\draw[thick] (e) to (h);
		\draw[thick] (e) to (i);
		\draw[thick] (b) to (d);
		\draw[thick] (a) to (c);
		\draw[thick] (c) to (f);
		\draw[thick] (c) to (g);
		\draw[thick] (g) to (n);
		\draw[thick] (g) to (o);
	\end{tikzpicture}$$
\begin{enumerate}[(i)]
	\item This is immediate, as $p_j^xp_i^j$ is the composition of two epimorphisms and $q_k^iq_y^k$ is the composition of two monomorphisms.	
	\item Applying Proposition \ref{proposition:comp factors of characteristic tilting modules}, we see that $T(k)$ and $T(j)$ have no common composition factors, which implies that $\operatorname{Hom}_{A_n}(T(k),T(j))=0$, yielding the statement.
	 \item Put $T(j)=M(t_j, s_j)$ and $T(i)=M(t_i, s_i)$, where $t_j=t_i$ and $s_j< s_i$. Then, using Proposition \ref{proposition:homs and ext between interval modules}, the condition for $\operatorname{Hom}_{A_n}(T(j), T(i))$ being non-zero is
	 $$t_i\leq t_j \leq s_i \leq s_j,$$
	 which is not satisfied, since $s_j < s_i$.
	 \item Similar to (iii). \qedhere
\end{enumerate}
With these observations established, we are ready to describe the Ringel dual $R(A_n)$.
\end{proof}
\begin{theorem}\label{theorem:ringel dual of A_n}
	Let $T$ be a binary search tree with $n$ vertices, with the vertices labeled according to the in-order algorithm, and let $\trianglelefteq_T$ be the associated adapted order to $A_n$. Construct a quiver $Q$ in the following way:
	\begin{enumerate}[(i)]
		\item The vertices of $Q$ are $1,\dots, n$.
		\item We draw an edge between $i$ and $j$ if and only if the integers $i$ and $j$ label either a set of vertices $\{v,\ell(v)\}$ or $\{v,r(v)\}$. If $i$ labels $v$ and $j$ labels $\ell(v)$, the edge is denoted by $f_i^j$ and its orientation is from $i$ to $j$. If $i$ labels $v$ and $j$ labels $r(v)$, the edge is denoted by $g_j^i$ and its orientation is from $j$ to $i$.
	\end{enumerate}
Let $I\subset KQ$ be the ideal generated by the elements $f_i^j g_k^i$, for all $i,j$ and $k$. Then, there is an isomorphism of algebras $\faktor{KQ}{I}\cong\operatorname{End}(\mathtt{T})$, and consequently, $R(A_n)\cong \faktor{KQ}{I}^{\operatorname{op}}$.
\end{theorem}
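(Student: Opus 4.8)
The plan is to follow the blueprint of the proof of Theorem~\ref{theorem:ext-algebra of A_n}, with the role of the basis vectors of the $\operatorname{Ext}$- and $\operatorname{Hom}$-spaces between standard modules now played by the structural epimorphisms and monomorphisms between the indecomposable summands of $\mathtt{T}$ supplied by Corollary~\ref{corollary:proposition:comp factors of characteristic tilting modules}. For every edge $f_i^j$ of $Q$ fix the epimorphism $p_i^j\colon T(i)\twoheadrightarrow T(j)$, and for every edge $g_j^i$ of $Q$ fix the monomorphism $q_j^i\colon T(j)\hookrightarrow T(i)$; by Proposition~\ref{proposition:homs and ext between interval modules}(i) all $\operatorname{Hom}$-spaces between interval modules are at most one-dimensional, so these maps span the relevant spaces. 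Using the identification $e_j\operatorname{End}(\mathtt{T})e_i=\operatorname{Hom}_{A_n}(T(i),T(j))$, define $\Phi\colon KQ/I\to\operatorname{End}(\mathtt{T})$ on generators by $f_i^j\mapsto p_i^j$, $g_j^i\mapsto q_j^i$ and $e_i\mapsto 1_{T(i)}$. Since these images are compatible with the idempotent decomposition, $\Phi$ extends uniquely to an algebra homomorphism on $KQ$, and it factors through $KQ/I$ as soon as $I\subseteq\ker\Phi$; this reduces to $\Phi(f_i^j g_k^i)=p_i^j q_k^i=0$, which is precisely Proposition~\ref{proposition: homs between summands of tilting module}(ii).

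It then remains to show that $\Phi$ is an isomorphism, which I would verify componentwise, namely that $\Phi$ restricts to an isomorphism from $e_j(KQ/I)e_i$ onto $\operatorname{Hom}_{A_n}(T(i),T(j))$ for every pair $i,j$. First I would analyse which monomials of $KQ$ survive in $KQ/I$. The target of an $f$-arrow is always a left child, and a left child is the source of no $g$-arrow, so an $f$-arrow cannot be followed by a $g$-arrow; conversely the only $f$-arrow out of a vertex $i$ precomposed with the only $g$-arrow into $i$ is one of the generators $f_i^j g_k^i$ of $I$, so a $g$-arrow followed by an $f$-arrow vanishes in $KQ/I$. Hence a nonzero path in $KQ/I$ uses only $f$-arrows or only $g$-arrows: a pure-$f$ path out of $i$ is forced to descend the left spine $v_i,\ell(v_i),\ell^2(v_i),\dots$, a pure-$g$ path out of $i$ is forced to ascend through the vertices that are right children until it reaches one that is not, and in each case the path is determined by its source and its length. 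In particular $\dim e_j(KQ/I)e_i\le 1$, and for $i\neq j$ only one of the two kinds of path can connect $i$ to $j$. Moreover $\Phi$ sends a pure-$f$ path to a composite of epimorphisms and a pure-$g$ path to a composite of monomorphisms; since the $T(m)$ are nonzero, such a composite is again an epimorphism (resp.\ monomorphism), hence nonzero, so whenever $e_j(KQ/I)e_i\neq 0$ its image spans $\operatorname{Hom}_{A_n}(T(i),T(j))$.

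To conclude I would match the two supports. By Proposition~\ref{proposition:comp factors of characteristic tilting modules}, $T(m)=M(t_m,s_m)$ where $[t_m,s_m]$ is the (contiguous, by the in-order algorithm) interval of labels of the subtree rooted at $v_m$; hence by Proposition~\ref{proposition:homs and ext between interval modules}(i) one has $\operatorname{Hom}_{A_n}(T(i),T(j))\neq 0$ if and only if $t_j\le t_i\le s_j\le s_i$. In a binary search tree two such label-intervals are either nested or disjoint: if disjoint the inequality fails; if the subtree of $v_j$ contains that of $v_i$ the inequality forces $s_i=s_j$, i.e.\ $v_i$ lies on the right spine of $v_j$, equivalently there is a pure-$g$ path $i\to j$; and if the subtree of $v_i$ contains that of $v_j$ it forces $t_i=t_j$, i.e.\ $v_j$ lies on the left spine of $v_i$, equivalently there is a pure-$f$ path $i\to j$. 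Therefore $e_j(KQ/I)e_i\neq 0$ exactly when $\operatorname{Hom}_{A_n}(T(i),T(j))\neq 0$, and in that case the (one-dimensional) restriction of $\Phi$ is an isomorphism; summing over $i,j$ shows $\Phi$ is an isomorphism of algebras, whence $R(A_n)=\operatorname{End}(\mathtt{T})^{\operatorname{op}}\cong(KQ/I)^{\operatorname{op}}$. I expect the main obstacle to be this last step --- reconciling the combinatorics of paths in $Q$ (left and right spines) with the numerical $\operatorname{Hom}$-condition between interval modules through the nesting behaviour of subtrees --- together with keeping track of composition conventions so that $\Phi$ comes out as an algebra homomorphism into $\operatorname{End}(\mathtt{T})$ rather than an anti-homomorphism.
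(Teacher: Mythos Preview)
Your proposal is correct and follows the same approach as the paper: define the homomorphism $\Phi$ on generators via $f_i^j\mapsto p_i^j$, $g_j^i\mapsto q_j^i$, $e_i\mapsto 1_{T(i)}$, check that $I$ lies in the kernel using Proposition~\ref{proposition: homs between summands of tilting module}(ii), and then verify that $\Phi$ is an isomorphism. The only difference is in the verification of the isomorphism: the paper asserts surjectivity (the $p_i^j$, $q_k^i$, $1_{T(i)}$ generate $\operatorname{End}(\mathtt{T})$) and then claims the dimension count is ``easy to see'', whereas you carry out the componentwise analysis explicitly---classifying the surviving paths in $KQ/I$ as pure-$f$ or pure-$g$, and matching them to the nesting condition $t_j\le t_i\le s_j\le s_i$ via left and right spines. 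Your version supplies exactly the combinatorics the paper leaves implicit, and your anticipated obstacle (the spine/interval matching) is handled correctly.
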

\begin{example}
		Consider the binary search tree
	$$\begin{tikzpicture}
		\node(a) [shape=circle, draw, thick, fill=lightgray] at (0,0) {4};
		\node(b) [shape=circle, draw, thick, fill=lightgray] at (-2,-1) {2};
		\node(c) [shape=circle, draw, thick, fill=lightgray] at (2,-1) {5};
		\node(d) [shape=circle, draw, thick, fill=lightgray] at (-1,-2) {3};
		\node(e) [shape=circle, draw, thick, fill=lightgray] at (-3,-2) {1};
		\node(f)  at (1,-2) {};
		\node(g) [shape=circle, draw, thick, fill=lightgray] at (3,-2) {6};
		\node(h) at (-3.5,-3) {};
		\node(i)  at (-2.5,-3) {};
		\node(j)  at (-1.5,-3) {};
		\node(k)  at (-0.5,-3) {};
		\node(n)  at (2.5,-3) {};
		\node(o) at (3.5,-3) {};
		\draw[thick] (a) to (b);
		\draw[thick] (b) to (e);
		\draw[thick] (e) to (h);
		\draw[thick] (e) to (i);
		\draw[thick] (b) to (d);
		\draw[thick] (d) to (j);
		\draw[thick] (d) to (k);
		\draw[thick] (a) to (c);
		\draw[thick] (c) to (f);
		\draw[thick] (c) to (g);
		\draw[thick] (g) to (n);
		\draw[thick] (g) to (o);
	\end{tikzpicture}$$
	Then, the corresponding quiver $Q$ is
	$$\xymatrix{
		& & 4 \ar[ld]_-{f_4^2}\\
		& 2 \ar[ld]_-{f_2^1}& & 5 \ar[lu]_-{g_5^4}\\ 
		1 	& & 3  \ar[lu]_-{g_3^2}& &6 \ar[lu]_-{g_6^5}
	}$$
and the ideal $I\subset KQ$ is generated by the elements $f_4^2 g_5^4$ and $f_2^1 g_3^2$.
	\end{example}
\begin{proof}[Proof of Theorem~\ref{theorem:ringel dual of A_n}]
	Recall the discussion prior to Theorem \ref{theorem:ext-algebra of A_n}, which implies that for composable homomorphisms between indecomposable summands of $\mathtt{T}$, we may choose basis vectors 
	\begin{align*}
		x \in \operatorname{Hom}_{A_n}(T(i), T(j));\quad y\in \operatorname{Hom}_{A_n}(T(j), T(k));\quad z\in \operatorname{Hom}_{A_n}(T(i), T(k))
	\end{align*}
such that $xy=z$. Define a map $\tilde{\Psi}: KQ\to \operatorname{End}(\mathtt{T})$ by
$$f_i^j \mapsto p_i^j,\quad g_i^j \mapsto q_i^j,\quad e_i \mapsto 1_{T(i)}$$
and extend by linearity. Using Proposition \ref{proposition: homs between summands of tilting module}, part (ii), we see that $I\subset \ker \tilde{\Psi}$, so that $\tilde\Psi$ factors uniquely through the quotient $\faktor{KQ}{I}$. This gives us a well-defined homomorphism of algebras $\Psi:\faktor{KQ}{I}\to \operatorname{End}(\mathtt{T})$. Since the homomorphisms $p_i^j, q_k^i$ and $1_{T(i)}$ generate $\operatorname{End}(\mathtt{T})$, $\Psi$ is surjective. It is now easy to see that $\dim \faktor{KQ}{I}=\dim \operatorname{End}(\mathtt{T})$, so that $\Psi$ is a surjective linear map between spaces of the same dimension, hence an isomorphism.
	\end{proof}
\subsection{Regular exact Borel subalgebras of $A_n$}
In this section, we investigate when $(A_n, \trianglelefteq_T)$ admits a regular exact Borel subalgebra. Recall the following.

\begin{definition}\cite[Definition~3.4]{konig1995exact, bkk}\label{definition:exact borel subalg}
	Let $(\Lambda,\trianglelefteq)$ be a quasi-hereditary algebra with $n$ simple modules, up to isomorphism. Then, a subalgebra $B\subset \Lambda$ is called an \emph{exact Borel subalgebra} provided that
	\begin{enumerate}[(i)]
		\item $B$ also has $n$ simple modules up to isomorphism and $(B,\trianglelefteq)$ is quasi-hereditary with simple standard modules,
		\item the functor $\Lambda\otimes_B\blank$ is exact, and
		\item there are isomorphisms $\Lambda\otimes_B L_B(i)\cong \Delta_\Lambda(i).$
	\end{enumerate}
	If, in addition, the map $\operatorname{Ext}_B^k(L_B(i),L_B(j))\to \operatorname{Ext}_\Lambda^k(\Lambda\otimes_B L_B(i), \Lambda\otimes_B L_B(j))$ induced by the functor $\Lambda\otimes_B\blank$ is an isomorphism for all $k\geq 1$ and $i,j\in\{1,\dots, n\}$, $B\subset \Lambda$ is called a \emph{regular exact Borel subalgebra}. If this map is an isomorphism for all $k\geq 2$ and an epimorphism for $k=1$, $B\subset \Lambda$ is called a \emph{homological} Borel exact Borel subalgebra.
\end{definition}

\begin{theorem}\label{theorem:when does KA_n have regular exact borel}\cite[Theorem~D, part 5]{CONDE21}
	Every quasi-hereditary algebra equivalent to $(A_n, \trianglelefteq_T)$ has a regular exact Borel subalgebra if and only if $\operatorname{rad}\Delta(i)$ is contained in $\mathcal{F}(\nabla)$, for all $i\in\{1,\dots, n\}$.
	\end{theorem}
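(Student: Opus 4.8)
This is \cite[Theorem~D, part~5]{CONDE21} applied to $(A_n,\trianglelefteq_T)$; we indicate how the equivalence is established. The starting point is that equivalent quasi-hereditary structures share the same standard and costandard modules, and that these, together with $\mathcal{F}(\Delta)$ and $\mathcal{F}(\nabla)$, are preserved under Morita equivalence; hence ``$\operatorname{rad}\Delta(i)\in\mathcal{F}(\nabla)$ for all $i$'' is an invariant of the whole equivalence class, and it suffices to prove it equivalent to the existence of a regular exact Borel subalgebra for \emph{some} quasi-hereditary algebra in the class, after which the construction applies to \emph{every} such algebra. We shall also use the standard homological description of $\mathcal{F}(\nabla)$: a module $M$ lies in $\mathcal{F}(\nabla)$ if and only if $\operatorname{Ext}^{1}_{A_n}(\Delta(j),M)=0$ for all $j$, and then automatically $\operatorname{Ext}^{\geq 1}_{A_n}(\Delta(j),M)=0$ for all $j$; in particular the radical condition is equivalent to the vanishing $\operatorname{Ext}^{\geq 1}_{A_n}(\Delta(j),\operatorname{rad}\Delta(i))=0$ for all $i,j$.

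\emph{From a regular exact Borel subalgebra to the radical condition.} Suppose $B\subseteq A'$ is a regular exact Borel subalgebra of some $(A',\trianglelefteq')$ in the class. Since $B$ is quasi-hereditary with simple standard modules, $\operatorname{rad}B$ is filtered by simple modules strictly larger in the order, so $B$ is directed: $\operatorname{Ext}^{k}_B(L_B(i),L_B(i))=0$ for $k\geq 1$, and $\operatorname{Ext}^{k}_B(L_B(i),L_B(j))=0$ unless $i$ precedes $j$. The plan is to transport these vanishings to $A'$ via the regularity isomorphisms $\operatorname{Ext}^{k}_B(L_B(i),L_B(j))\cong\operatorname{Ext}^{k}_{A'}(\Delta(i),\Delta(j))$, together with $\Delta(i)\cong A'\otimes_B L_B(i)$ and the exactness of $A'\otimes_B-$, and then to feed them into the long exact $\operatorname{Ext}$-sequences obtained by applying $\operatorname{Hom}_{A'}(\Delta(j),-)$ to $0\to\operatorname{rad}\Delta(i)\to\Delta(i)\to L(i)\to 0$; using also that each $\Delta(i)$ is a brick, this homological computation yields $\operatorname{Ext}^{\geq 1}_{A'}(\Delta(j),\operatorname{rad}\Delta(i))=0$ for all $i,j$, that is, $\operatorname{rad}\Delta(i)\in\mathcal{F}(\nabla)$. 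The detailed bookkeeping is carried out in \cite{CONDE21}.

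\emph{From the radical condition to a regular exact Borel subalgebra.} This is the constructive direction. Here the plan is to use the König--Külshammer--Ovsienko correspondence between quasi-hereditary algebras and directed bocses \cite{kko}: to $(A_n,\trianglelefteq_T)$ one attaches a directed bocs whose representation theory models $\mathcal{F}(\Delta)$ and whose underlying algebra $B$ is, in general, a regular exact Borel subalgebra only of some algebra \emph{Morita equivalent} to $A_n$. The point of the hypothesis $\operatorname{Ext}^{\geq 1}_{A_n}(\Delta(j),\operatorname{rad}\Delta(i))=0$ --- equivalently, of the vanishing of the relevant higher multiplications in the $A_\infty$-structure on $\operatorname{Ext}^{\ast}_{A_n}(\Delta,\Delta)$ --- is to force the coalgebra part of the bocs to be degenerate enough that $B$ is realised as a subalgebra of $A_n$ \emph{itself}, containing a fixed complete set of primitive orthogonal idempotents. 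One then needs to verify that $B$ satisfies conditions (i)--(iii) of Definition~\ref{definition:exact borel subalg} (which are built into the correspondence) and that the comparison maps $\operatorname{Ext}^{k}_B(L_B(i),L_B(j))\to\operatorname{Ext}^{k}_{A_n}(\Delta(i),\Delta(j))$ are isomorphisms, so that $B$ is regular; cf.\ also \cite{konig1995exact}. Since this construction uses only $\mathcal{F}(\Delta)$ and the modules $\Delta(i)$, the same $B$ serves for every quasi-hereditary algebra in the equivalence class. The main obstacle lies exactly here: extracting from the radical condition the structural information on the bocs (equivalently, the vanishing of the higher $A_\infty$-operations) needed to realise $B$ inside $A_n$, and then checking that the subalgebra so obtained is genuinely \emph{regular} rather than merely exact --- this is the technical heart of \cite{CONDE21}.
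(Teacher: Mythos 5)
The paper never proves this statement: it is imported verbatim from \cite[Theorem~D, part~5]{CONDE21} and used only as a black box (e.g.\ in Proposition~\ref{proposition:A_n has a regular exact borel}), so there is no internal argument to compare yours against, and simply citing Conde, as you ultimately do, is exactly the paper's treatment.

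However, read as an argument, your sketch contains a genuine flaw in the way you set up the forward direction. You claim it suffices to prove the radical condition equivalent to the existence of a regular exact Borel subalgebra for \emph{some} algebra in the equivalence class, ``after which the construction applies to every such algebra.'' By \cite{kko}, \emph{some} Morita representative of any quasi-hereditary algebra always admits a regular exact Borel subalgebra, so that existence statement is unconditionally true and cannot be equivalent to $\operatorname{rad}\Delta(i)\in\mathcal{F}(\nabla)$, which genuinely fails for some of the orders considered in this paper (for instance linear quivers with a deconcatenation at a sink that is neither minimal nor maximal, cf.\ Theorem~\ref{theorem:when does path algebra of linear quiver have borel}). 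The ``every representative'' formulation is essential precisely because having a regular exact Borel subalgebra is not a Morita-invariant property; passing from ``some'' to ``every'' is the content of Conde's criterion, not a formal consequence of Morita invariance. The same problem undermines the homological bookkeeping you outline: directedness of $B$ transported through the regularity isomorphisms only reproduces the vanishing pattern $\operatorname{Ext}^k_{A'}(\Delta(i),\Delta(j))=0$ unless $i\triangleleft j$, which holds in \emph{every} quasi-hereditary algebra; feeding this into the long exact sequence for $0\to\operatorname{rad}\Delta(i)\to\Delta(i)\to L(i)\to 0$ only yields, for $j\neq i$, an embedding $\operatorname{Ext}^1_{A'}(\Delta(j),\operatorname{rad}\Delta(i))\hookrightarrow\operatorname{Ext}^1_{A'}(\Delta(j),\Delta(i))$, and the right-hand space need not vanish. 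So no amount of bookkeeping closes this gap from the inputs you list, because the conclusion is false under the hypotheses as you have arranged them. If you wish to retain a sketch, it must genuinely use the hypothesis for every Morita representative (this is where the work in \cite{CONDE21} lies); otherwise state the theorem as a citation, as the paper does, and omit the heuristic proof.
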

\begin{proposition}\label{proposition:A_n has a regular exact borel}
	$(A_n, \trianglelefteq_T)$ has a regular exact Borel subalgebra.
\end{proposition}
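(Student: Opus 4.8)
The plan is to apply Theorem~\ref{theorem:when does KA_n have regular exact borel}: it suffices to show that $\operatorname{rad}\Delta(i) \in \mathcal{F}(\nabla)$ for every vertex $i$ of the binary search tree $T$. By Proposition~\ref{proposition:standard and costandard modules over tree order}, $\Delta(i) = M(i,s_i)$, where $i+1,\dots,s_i$ label the vertices in the right subtree of $v_i$. Hence $\operatorname{rad}\Delta(i) = M(i+1, s_i)$, which is precisely the portion of $\Delta(i)$ corresponding to the right subtree of $v_i$. If $r(v_i) = \emptyset$ then $\operatorname{rad}\Delta(i) = 0 \in \mathcal{F}(\nabla)$ trivially; so assume $r(v_i) \neq \emptyset$.

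\emph{The key step} is to identify $\operatorname{rad}\Delta(i)$ with a characteristic tilting summand. Let $k$ label the vertex $r(v_i)$. I claim $\operatorname{rad}\Delta(i) = M(i+1, s_i) = T(k)$. Indeed, by Proposition~\ref{proposition:comp factors of characteristic tilting modules}, $T(k) = M(t_k, s_k)$ where $t_k$ is the least label in the left subtree of $v_k$ and $s_k$ the greatest label in its right subtree. Since $v_k = r(v_i)$, the subtree rooted at $v_k$ consists exactly of the vertices labeled $i+1, i+2, \dots, s_i$ (these form the right subtree of $v_i$, and $v_k$ is its root); the in-order algorithm then gives $t_k = i+1$ (the first vertex visited in the subtree of $v_k$, reached by descending left from $v_k$) and $s_k = s_i$ (the last such vertex). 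Thus $T(k) = M(i+1, s_i) = \operatorname{rad}\Delta(i)$.

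Since $T(k)$ is a summand of the characteristic tilting module, $T(k) \in \mathcal{F}(\Delta) \cap \mathcal{F}(\nabla) \subset \mathcal{F}(\nabla)$. Therefore $\operatorname{rad}\Delta(i) \in \mathcal{F}(\nabla)$ for all $i$, and Theorem~\ref{theorem:when does KA_n have regular exact borel} yields that $(A_n, \trianglelefteq_T)$ admits a regular exact Borel subalgebra.

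\emph{The main obstacle} is purely bookkeeping: verifying carefully via the in-order algorithm that the first and last labels appearing in the subtree rooted at $r(v_i)$ are exactly $i+1$ and $s_i$. This is where the interplay between the combinatorics of binary search trees and the interval-module description of standard modules has to be pinned down precisely, but it is routine given Proposition~\ref{proposition:standard and costandard modules over tree order} and Proposition~\ref{proposition:comp factors of characteristic tilting modules}. One could alternatively avoid naming $T(k)$ and argue directly that any interval module $M(a,b)$ lies in $\mathcal{F}(\nabla)$ when $a,\dots,b$ constitute a subtree of $T$, by induction on the size of that subtree using the short exact sequences appearing in the proof of Proposition~\ref{proposition:comp factors of characteristic tilting modules}; but invoking the characteristic tilting module directly is cleaner.
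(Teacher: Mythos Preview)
Your proof is correct and follows essentially the same approach as the paper: identify $\operatorname{rad}\Delta(i)=M(i+1,s_i)$ with the tilting summand $T(k)$ where $k$ labels $r(v_i)$, using the in-order algorithm to see $t_k=i+1$ and $s_k=s_i$, and conclude via Theorem~\ref{theorem:when does KA_n have regular exact borel}. Your explicit handling of the case $r(v_i)=\emptyset$ is a minor addition the paper leaves implicit.
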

\begin{proof}
	Let the vertex $v$ be labeled by $i$ and consider $\Delta(i)=M(i, s_i)$. Then, $\operatorname{rad}\Delta(i)=M(i+1, s_i)$. Let the vertex $r(v)$ be labeled by $k$. According to the in-order algorithm, the first vertex visited immmediately after $v$ is the left-most vertex in the subtree of $r(v)$. This shows, in our earlier notation, that $t_k=i+1$. This fact implies that
	$$M(i+1, s_i)=M(t_k, s_i)=M(t_k, s_k)=T(k),$$
	 according to the proof of Proposition \ref{proposition:comp factors of characteristic tilting modules}. Since $T(k)\in \mathcal{F}(\nabla)$, the statement follows from Theorem \ref{theorem:when does KA_n have regular exact borel}.
\end{proof}
Thanks to the obervations made earlier in the current section, it is fairly easy to find the quiver, $Q_B$, of a regular exact Borel subalgebra $B\subset A_n$. Recall that, if $B\subset A_n$ is a regular exact Borel subalgebra, then there are isomorphisms of vector spaces
$$\operatorname{Ext}_B^k(L(i),L(j))\cong \operatorname{Ext}_{A_n}^k(\Delta(i),\Delta(j)),$$
for all $k\geq 1$ and $i,j\in\{1,\dots, n\}$. In particular, the spaces above have the same dimension. In our setup, these spaces are zero for $k> 1$. It is well-known that
$$\dim \operatorname{Ext}_B^1(L(i),L(j))= \textrm{number of arrows from $i$ to $j$ in $Q_B$}.$$
This means that we can determine the quiver $Q_B$ by computing extensions between standard modules over $A_n$. To this end, we apply Theorem \ref{theorem:ext-algebra of A_n}, resulting in the following. 
\begin{proposition}\label{proposition:quiver of regular exact borel}
	Let $B\subset A_n$ be a regular exact Borel subalgebra. Then, there is an isomorphism of algebras $B\cong KQ_B$, where $Q_B$ is the following quiver.
	\begin{enumerate}[(i)]
		\item The vertex set of $Q_B$ is $\{1,\dots, n\}$.
		\item There is an arrow $i\to j$ if and only if $\dim \operatorname{Ext}_{A_n}^1(\Delta(i),\Delta(j))=1$.
	\end{enumerate}
\end{proposition}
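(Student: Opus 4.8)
The plan is to combine the description of the Ext-algebra of standard modules from Theorem~\ref{theorem:ext-algebra of A_n} with the two isomorphism criteria that a regular exact Borel subalgebra $B$ must satisfy. First I would record that, since $B$ is a regular exact Borel subalgebra, Definition~\ref{definition:exact borel subalg} gives isomorphisms $\operatorname{Ext}_B^k(L(i),L(j))\cong \operatorname{Ext}_{A_n}^k(\Delta(i),\Delta(j))$ for all $k\geq 1$ and all $i,j$. Because $A_n$ is hereditary, every standard module has projective dimension at most one, so $\operatorname{Ext}_{A_n}^k(\Delta(i),\Delta(j))=0$ for $k\geq 2$; hence $\operatorname{Ext}_B^k(L(i),L(j))=0$ for all $k\geq 2$ as well. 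Since $B$ is quasi-hereditary with simple standard modules, $B$ itself is a directed (in fact, by the vanishing of higher self-extensions of simples, quasi-hereditary-with-simple-$\Delta$ and of global dimension at most... ) algebra; in particular $B$ has no loops in its quiver. The vanishing of $\operatorname{Ext}_B^{\geq 2}$ between all simples means that the quadratic-type relations that could appear in $B=KQ_B/I_B$ actually force $I_B=0$, i.e. $B\cong KQ_B$ is hereditary. This is the step I expect to require the most care: one must argue that $\operatorname{Ext}_B^2(L(i),L(j))=0$ for all $i,j$ forces $B$ to be hereditary, using that $\dim\operatorname{Ext}_B^2(L(i),L(j))$ bounds below (and here, with the admissible-ideal presentation, equals) the number of minimal relations from $i$ to $j$; since there are none, $B=KQ_B$.

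Next I would pin down the quiver $Q_B$. By the standard fact $\dim\operatorname{Ext}_B^1(L(i),L(j))=\#\{\text{arrows }i\to j\text{ in }Q_B\}$, together with the isomorphism $\operatorname{Ext}_B^1(L(i),L(j))\cong\operatorname{Ext}_{A_n}^1(\Delta(i),\Delta(j))$, the number of arrows $i\to j$ in $Q_B$ equals $\dim\operatorname{Ext}_{A_n}^1(\Delta(i),\Delta(j))$. By Theorem~\ref{theorem:ext-algebra of A_n} (or directly by Proposition~\ref{proposition:extensions and homes between standard modules}(i) and Lemma~\ref{lemma:no hom from standard to left subtree and no ext from standard to right subtree}), this dimension is always $0$ or $1$, and it is $1$ precisely in the situations producing the degree-$1$ arrows $\varepsilon_i^j$ of the quiver in Theorem~\ref{theorem:ext-algebra of A_n}. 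Thus $Q_B$ has vertex set $\{1,\dots,n\}$, at most one arrow between any ordered pair of vertices, and an arrow $i\to j$ exactly when $\dim\operatorname{Ext}_{A_n}^1(\Delta(i),\Delta(j))=1$. This is precisely the description in (i) and (ii) of the proposition.

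Finally, combining the two parts: $B\cong KQ_B$ with $Q_B$ as described, which is exactly the assertion. I should also note that such a $B$ exists at all — this is guaranteed by Proposition~\ref{proposition:A_n has a regular exact borel} — so the statement is not vacuous. The only genuine subtlety is the heredity claim, and it can be handled cleanly by the dimension count on $\operatorname{Ext}_B^2$ of simples; everything else is a direct transcription of the already-established Ext-computations for $A_n$ through the regular-exact-Borel isomorphisms.
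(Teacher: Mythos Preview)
Your proposal is correct and follows essentially the same approach as the paper: the paper's argument is the short discussion immediately preceding the proposition, which uses the regularity isomorphisms $\operatorname{Ext}_B^k(L(i),L(j))\cong\operatorname{Ext}_{A_n}^k(\Delta(i),\Delta(j))$ together with heredity of $A_n$ to read off $Q_B$ from Theorem~\ref{theorem:ext-algebra of A_n}. Your write-up is in fact more careful than the paper's on one point: the paper does not explicitly justify why $B$ is hereditary (i.e.\ why $B\cong KQ_B$ rather than $KQ_B/I_B$), whereas you correctly extract this from the vanishing of $\operatorname{Ext}_B^2$ between all simples.
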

Next, we wish to determine how to realize $B$ as a subalgebra of $A_n$. In this endeavour, we are aided by the following results.
\begin{theorem}[Wedderburn-Mal'tsev]\cite{Wedderburn, Mal'tsev}\label{theorem:wedderburn}
Let $A$ be a finite-dimensional algebra over an algebraically closed field. Then, there is a semi-simple subalgebra $S$ of $A$ such that
$$A=\operatorname{rad}A\oplus S$$
as a vector space. Moreover, if $T$ is another semisimple subalgebra of $A$ such that $A=\operatorname{rad}A\oplus T$, then there is an inner automorphism $\sigma$ of $A$ such that $\sigma(S)=T$.
\end{theorem}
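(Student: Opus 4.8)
The plan is to establish the two assertions — existence of a semisimple vector-space complement $S$ to $J:=\operatorname{rad}A$ (the Wedderburn principal theorem), and its uniqueness up to inner automorphism (Mal'tsev's theorem) — by a single device: induction on the nilpotency index of $J$, reducing in each case to the situation where $J^2=0$, and then exploiting the separability of $\Lambda:=A/J$. Since $K$ is algebraically closed, $\Lambda\cong\prod_k M_{n_k}(K)$ is separable, which I will use in the form that the Hochschild cohomology groups $\operatorname{HH}^1(\Lambda,M)$ and $\operatorname{HH}^2(\Lambda,M)$ vanish for every $\Lambda$-bimodule $M$.

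\textbf{Existence.} Say $J^{m+1}=0$. If $m=0$ there is nothing to prove. If $m\ge 1$, I would apply the induction hypothesis to $A/J^m$, whose radical $J/J^m$ has index $m$, obtaining a semisimple subalgebra of $A/J^m$ complementing $J/J^m$; let $B\subseteq A$ be its preimage. Then $B$ is a subalgebra with $A=B+J$, $B\cap J=J^m$, hence $\operatorname{rad}B=J^m$ with $(\operatorname{rad}B)^2\subseteq J^{2m}=0$ since $m\ge1$, and $B/\operatorname{rad}B\cong\Lambda$. So it suffices to treat the case $J^2=0$. There $J$ is a $\Lambda$-bimodule, and the obstruction to lifting the algebra surjection $A\twoheadrightarrow\Lambda$ to an algebra section $\Lambda\to A$ is a class in $\operatorname{HH}^2(\Lambda,J)$: concretely, for any $K$-linear section $\sigma_0$ of the projection, the map $(x,y)\mapsto \sigma_0(x)\sigma_0(y)-\sigma_0(xy)$ is a $2$-cocycle valued in $J$, and separability of $\Lambda$ forces it to be a coboundary. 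Correcting $\sigma_0$ by the corresponding $1$-cochain produces an algebra homomorphism $\sigma\colon\Lambda\to A$ splitting the projection, and $S:=\operatorname{im}\sigma$ is the desired complement. (Alternatively one lifts a complete set of matrix units of $\prod_k M_{n_k}(K)$ to $A$, using that idempotents lift modulo the nilpotent ideal $J$ and that each $e_{ii}Ae_{jj}$ surjects onto $\bar e_{ii}\Lambda\bar e_{jj}$; the $K$-span of the lifted units is then $S$.)

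\textbf{Uniqueness.} Let $T\subseteq A$ be another semisimple complement. The projection $A\to\Lambda$ restricts to algebra isomorphisms $S\xrightarrow{\sim}\Lambda$ and $T\xrightarrow{\sim}\Lambda$, hence yields an algebra isomorphism $\phi\colon S\to T$ with $\phi(s)\equiv s\pmod J$ for all $s$. I must produce a unit $u\in 1+J$ (invertible because $J$ is nilpotent) with $uSu^{-1}=T$ realizing $\phi$; conjugation by such $u$ is by definition an inner automorphism of $A$. As before, induction on the nilpotency index reduces to $J^2=0$. In that case $\psi:=\phi-\operatorname{id}_S\colon S\to J$ is $K$-linear and, expanding $\phi(ss')=\phi(s)\phi(s')$ and using $J^2=0$, satisfies $\psi(ss')=s\,\psi(s')+\psi(s)\,s'$, i.e.\ it is a derivation of $S$ into the natural $S$-bimodule $J$. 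By vanishing of $\operatorname{HH}^1(S,J)$ (separability of $S\cong\Lambda$), $\psi$ is inner: there is $x\in J$ with $\psi(s)=xs-sx$ for all $s$. Since $J^2=0$ we have $(1+x)^{-1}=1-x$, and $(1+x)s(1-x)=s+xs-sx=\phi(s)$. Thus $u=1+x\in 1+J$ conjugates $S$ onto $T$, and $\sigma:=u(\blank)u^{-1}$ is the required inner automorphism.

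\textbf{Main obstacle.} The crux of both halves is the separability input — that $\operatorname{HH}^1$ and $\operatorname{HH}^2$ of $\Lambda$ vanish with arbitrary bimodule coefficients, equivalently that $\Lambda$ is projective as a $\Lambda$-bimodule. Over an algebraically closed (or any perfect) field this is available because $\Lambda$ splits as a product of full matrix algebras; if one instead wants to stay elementary and lift matrix units by hand, the delicate point is iteratively adjusting the lifts of the off-diagonal units so that all relations $e_{ij}e_{kl}=\delta_{jk}e_{il}$ hold exactly rather than merely modulo $J$. The remaining ingredients — the dévissage to the square-zero case, the cocycle computations, and the fact that $1+x$ is a unit — are routine.
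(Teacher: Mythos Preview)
The paper does not prove this theorem at all: it is quoted from the literature with the citation \cite{Wedderburn, Mal'tsev} and no proof is given, so there is nothing in the paper to compare your argument against. Your sketch is a correct outline of the classical proof via separability of $A/\operatorname{rad}A$ and vanishing of $\operatorname{HH}^1$ and $\operatorname{HH}^2$, with the standard d\'evissage to the square-zero radical case; the only point you leave implicit is, in the uniqueness half, how the inductive step actually lands you back in a square-zero situation (after conjugating so that $S\equiv T\pmod{J^m}$, both $S$ and $T$ lie in the subalgebra $S+J^m$, whose radical $J^m$ squares to zero), but this is routine.
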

We remark that if $S$ is a semi-simple subalgebra as in the theorem and $T$ is another semi-simple subalgebra with $\dim_KS=\dim_KT$, then necessarily $A=\operatorname{rad}A\oplus T$, as any semi-simple subalgebra intersects $\operatorname{rad}A$ trivially. Consequently, $S$ and $T$ are conjugate.

Let $A=KQ/I$ be the path algebra of some quiver modulo an admissible ideal and let $S\subset A$ be the subalgebra $S=\langle e_1,\dots, e_n\rangle$. Then, $S$ is semi-simple and $A=\operatorname{rad}A\oplus S$. If $\{d_1,\dots, d_n\}$ is some other complete set of primitive orthogonal idempotents, then $T=\langle d_1,\dots, d_n\rangle$ is another semi-simple subalgebra of $A$ with $\dim_KS=\dim_K T$, so $S$ and $T$ must be conjugate. We conclude that for any two complete sets of primitive orthogonal idempotents in $A$, there is an inner automorphism of $A$ mapping one to the other.
\begin{theorem}\cite[Proposition~3.4]{Zhangunique}\label{theorem: inner auto preserves borel}
Let $A$ be a finite-dimensional quasi-hereditary algebra and let $B\subset A$ be an exact Borel subalgebra. Then, $C=\sigma(B)$ is again an exact Borel subalgebra.
\end{theorem}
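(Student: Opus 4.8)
We take $\sigma$ to be the inner automorphism of $A$ from the preceding discussion, say $\sigma(a)=uau^{-1}$ for a unit $u\in A$, and we set $C=\sigma(B)$. The plan is to verify directly that $C$ satisfies the three defining conditions of Definition~\ref{definition:exact borel subalg}, exploiting the fact that, on the level of module categories, conjugation by a unit is an essentially trivial twist.

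First I would fix the labelling. Let $e_1^B,\dots,e_n^B$ be the primitive idempotents of $B$ attached to the labelling of its simple modules, and put $e_i^C=\sigma(e_i^B)$; these form a complete set of primitive orthogonal idempotents of $C$, which we use to label the simple $C$-modules $L_C(i)$. Since $\sigma$ restricts to an algebra isomorphism $B\to C$, restriction of scalars along it defines an isomorphism of categories $\rho\colon C\operatorname{-mod}\to B\operatorname{-mod}$. As $e_i^B$ acts on $\rho(N)$ through $\sigma(e_i^B)=e_i^C$, we get $\rho(L_C(i))\cong L_B(i)$ and $\rho(P_C(i))\cong P_B(i)$; and because $\Delta_C(i)$ is the largest quotient of $P_C(i)$ whose composition factors lie among the $L_C(j)$ with $j\trianglelefteq i$, the functor $\rho$ carries $\Delta_C(i)$ to $\Delta_B(i)$. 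Transporting the quasi-hereditary structure of $B$ to $C$ along $\rho$ then shows that $(C,\trianglelefteq)$ is quasi-hereditary with $\Delta_C(i)=L_C(i)$; this is condition (i).

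The main point is to compare the induction functors $A\otimes_C\blank$ and $A\otimes_B\blank$. For a $C$-module $N$ I would define
$$\Theta_N\colon A\otimes_B\rho(N)\longrightarrow A\otimes_C N,\qquad a\otimes n\longmapsto au^{-1}\otimes n,$$
and check that it is well-defined over $\otimes_B$ (using $ubu^{-1}=\sigma(b)\in C$ for $b\in B$), that it is left $A$-linear, and that it is natural in $N$, with two-sided inverse $a\otimes n\mapsto au\otimes n$. Hence $A\otimes_C\blank\cong(A\otimes_B\blank)\circ\rho$ as functors $C\operatorname{-mod}\to A\operatorname{-mod}$. Since $\rho$ is exact and $A\otimes_B\blank$ is exact by hypothesis, $A\otimes_C\blank$ is exact, which is condition (ii); and $A\otimes_C L_C(i)\cong A\otimes_B\rho(L_C(i))\cong A\otimes_B L_B(i)\cong\Delta_A(i)$, which is condition (iii). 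I would close with a one-line remark that, $\rho$ being an equivalence, the isomorphism $A\otimes_C\blank\cong(A\otimes_B\blank)\circ\rho$ identifies the comparison maps on $\operatorname{Ext}$-groups, so $C$ inherits from $B$ the property of being regular (resp.\ homological).

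The only genuinely delicate point is the bookkeeping with the two-sided module structures in the definition of $\Theta_N$: the conjugating unit $u$ must be inserted on the correct side so that the map simultaneously descends over $B$ on the source and over $C$ on the target while remaining $A$-linear from the left. Everything else is transport of structure along the isomorphism $\sigma|_B\colon B\to C$ together with the triviality of inner twists on module categories.
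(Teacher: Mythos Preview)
The paper does not give its own proof of this statement; it is quoted as \cite[Proposition~3.4]{Zhangunique} and used as a black box. Your direct verification is correct: the map $\Theta_N\colon a\otimes n\mapsto au^{-1}\otimes n$ is well-defined over $\otimes_B$ and $A$-linear for exactly the reason you indicate, and the three axioms of Definition~\ref{definition:exact borel subalg} then follow by transport along $\rho$.

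Your closing remark---that the natural isomorphism $A\otimes_C\blank\cong(A\otimes_B\blank)\circ\rho$ also identifies the induced $\operatorname{Ext}$-comparison maps, so regularity and the homological property are inherited---is precisely what the paper proves separately as Theorem~\ref{theorem:inner automorphisms preserve regular exact borel subalgebras}, by way of Lemma~\ref{lemma:inner auto preserves regular exact borel}. The paper's presentation is cosmetically different (it writes the twist as ${_\sigma}M$, first records the bimodule isomorphism $A\cong{_\sigma}A$ given by left multiplication by $u^{-1}$, and then spells out the commuting square of $\operatorname{Ext}$-maps using long exact sequences), but the composite isomorphism $A\otimes_C M\to A\otimes_B{_\sigma}M$ obtained there is exactly the inverse of your $\Theta$. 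So your one-line remark compresses the content of that later theorem; the paper's version merely makes the diagram chase explicit.
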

\begin{theorem}\cite[Theorem~3.6]{Zhangunique}\label{theorem: uniqueness of borel} Let $A$ be a finite-dimensional quasi-hereditary algebra and let $B\subset A$ be an exact Borel subalgebra. Then, $B\subset A$ is unique up to an inner automorphism of $A$, that is, if $B^\prime \subset A$ is another exact Borel subalgebra, then $B$ and $B^\prime$ are conjugate.
\end{theorem}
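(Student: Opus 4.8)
The plan is to reduce to the situation where $B$ and $B'$ contain a common complete set of primitive orthogonal idempotents of $A$, to check that in that situation the dimensions of the graded pieces of an exact Borel subalgebra are forced by $A$ alone, and finally to align $B$ with $B'$ by a sequence of inner automorphisms fixing those idempotents. For the first step, recall that an exact Borel subalgebra may be taken to contain a complete set $\{e_1,\dots,e_n\}$ of primitive orthogonal idempotents of $A$, and that the isomorphisms $A\otimes_B L_B(i)\cong \Delta_A(i)$ of Definition~\ref{definition:exact borel subalg} determine which idempotent goes with which label (namely $Ae_i\cong P_A(i)$). If $B$ contains $\{e_i\}$ and $B'$ contains $\{e_i'\}$, compatibly labelled, then the remark following the Wedderburn--Mal'tsev theorem (Theorem~\ref{theorem:wedderburn}) supplies an inner automorphism $\sigma$ of $A$ with $\sigma(e_i)=e_i'$ for all $i$; by Theorem~\ref{theorem: inner auto preserves borel}, $\sigma(B)$ is again an exact Borel subalgebra and it contains $\{e_i'\}$, so after replacing $B$ by $\sigma(B)$ we may assume $B$ and $B'$ both contain the same idempotents $e_1,\dots,e_n$. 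One also checks that $\operatorname{rad}B=B\cap\operatorname{rad}A$ ($B\cap\operatorname{rad}A$ is a nilpotent ideal of $B$, hence lies in $\operatorname{rad}B$, while $B/(B\cap\operatorname{rad}A)$ embeds in the semisimple algebra $A/\operatorname{rad}A$), so that $B=\bigoplus_i Ke_i\oplus\operatorname{rad}B$ with $\operatorname{rad}B\subseteq\operatorname{rad}A$, and likewise for $B'$.

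For the second step, I would show that $\dim e_jBe_i$ does not depend on the choice of exact Borel subalgebra $B$. The left $B$-module $Be_i$ is projective, hence $\Delta_B$-filtered; since $(B,\trianglelefteq)$ is quasi-hereditary with $\Delta_B(k)=L_B(k)$, every composition series of $Be_i$ is such a filtration, and thus $(Be_i:\Delta_B(j))=\dim e_jBe_i$. Applying the exact functor $A\otimes_B\blank$, which carries $L_B(j)=\Delta_B(j)$ to $\Delta_A(j)$, to such a filtration of $Be_i$ produces a $\Delta_A$-filtration of $A\otimes_B Be_i=Ae_i=P_A(i)$, so that
\[
\dim e_jBe_i=(Be_i:\Delta_B(j))=(P_A(i):\Delta_A(j)).
\]
The right-hand side is intrinsic to $A$; in particular $\dim e_jBe_i=\dim e_jB'e_i$ for all $i,j$, and $\dim B=\dim B'$. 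The same line of reasoning pins down the quiver and relations of $B$, and more generally the groups $\operatorname{Ext}_B^\ast(L_B(i),L_B(j))$, in terms of $\Delta_A$-filtration data over $A$ — this is the rigidity that makes the final step work.

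The last step is to produce an inner automorphism fixing each $e_i$ — equivalently, conjugation by a unit $1+x$ with $x\in\operatorname{rad}A$ — that carries $B$ onto $B'$. Since $B$ and $B'$ already agree modulo $\operatorname{rad}A$, one proceeds along the radical filtration: assuming an inner automorphism fixing the $e_i$ has been found which carries $B$ into $B'+\operatorname{rad}^kA$, the obstruction to improving the match to $\operatorname{rad}^{k+1}A$ is a Hochschild $2$-cocycle of $B$ with coefficients in $\operatorname{rad}^kA/\operatorname{rad}^{k+1}A$, and showing it is a coboundary allows one to correct by conjugation by some $1+x_k$ with $x_k\in\operatorname{rad}^kA$; as $\operatorname{rad}A$ is nilpotent this terminates. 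I expect the genuine difficulty to be the vanishing of these obstruction classes: because $B$ is not separable this is not automatic, and it is exactly here that the defining properties of an exact Borel subalgebra — through the rigidity of the previous step — must be brought to bear, leaving no freedom to deform the embedding nontrivially. Making this precise, rather than the bookkeeping in the first two steps, is the crux of the argument.
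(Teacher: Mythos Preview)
The paper does not supply a proof of this statement: it is quoted from \cite[Theorem~3.6]{Zhangunique} and invoked as a black box, so there is no argument in the paper against which to compare your proposal.

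On the merits of the proposal itself: steps one and two are sound. After conjugating so that $B$ and $B'$ share a complete set $\{e_i\}$ of primitive orthogonal idempotents of $A$, the identity $\dim e_jBe_i=(P_A(i):\Delta_A(j))$ does determine the dimensions of all bigraded pieces of $B$ from data intrinsic to $(A,\trianglelefteq)$, and in particular forces $\dim B=\dim B'$. The genuine gap is step three. You describe a layer-by-layer matching along the radical filtration of $A$, with the obstruction at each stage a Hochschild $2$-class of $B$ with coefficients in $\operatorname{rad}^kA/\operatorname{rad}^{k+1}A$, but you do not argue that these classes vanish --- indeed you explicitly flag this as ``the crux of the argument'' and leave it open. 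This is not a minor omission: $B$ is typically not separable, $H^2(B,-)$ is generally nonzero, and agreement of the numbers $\dim e_jBe_i$ does not by itself force the two embeddings $B,B'\hookrightarrow A$ to be conjugate. The defining properties of an exact Borel subalgebra (projectivity of $A$ as a right $B$-module, the behaviour of $A\otimes_B-$ on standard filtrations) would have to enter in an essential way precisely at this point, and your outline gives no indication of how. As it stands, the proposal is a reasonable plan of attack with the hard step missing, not a proof.
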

Let $\{d_1,\dots, d_n\}$ be a complete set of primitive orthogonal idempotents in $B$. For any complete set of primitive orthogonal idempotents $\{f_1,\dots, f_n\}$ of $A_n$, there is an inner automorphism $\sigma$ of $A_n$ mapping $\{d_1,\dots, d_n\}$ to $\{f_1,\dots, f_n\}$. But by Theorem \ref{theorem: inner auto preserves borel}, $\sigma(B)$ is an exact Borel subalgebra of $A_n$, and by construction it contains the idempotents $\{f_1,\dots, f_n\}$. We conclude that for any complete set of primitive orthogonal idempotents $\{f_1,\dots, f_n\}$ in $A_n$, there is an exact Borel subalgebra $C$ containing them. In particular, there is an exact Borel subalgebra containing the idempotents $e_1,\dots, e_n$.
\begin{example}
Consider $A_2$, the path algebra of $\xymatrix{1\ar[r]^-\alpha & 2}$ and let $B\subset A_2$ be a regular exact Borel subalgebra.
Fix the order $1\triangleleft 2$. Then we have
$$\Delta(1)\cong L(1)\quad\textrm{and}\quad \Delta(2)\cong L(2).$$
Therefore, the quiver of $B$ coincides with the quiver of $A_2$, and the (unique) regular exact Borel subalgebra is $B=A_2$. If we instead fix the order  $2\triangleleft 1$, we have
$$\Delta(1)\cong P(1)\quad\textrm{and}\quad \Delta(2)\cong P(2).$$
Since there are no non-split extensions between projective modules, the quiver of $B$ has no arrows. Thus, we have $B\cong K\times K$. The obvious choice for $B$ is then the subalgebra $B=\langle e_1,e_2\rangle$. Suppose $xe_1+ye_2+z\alpha\in A_2$ is an idempotent. Then we have:
\begin{align*}
	(xe_1+ye_2+z\alpha)^2=x^2e_1+y^2e_2+z(x+y)\alpha=xe_1+ye_2+z\alpha
\end{align*}
This implies that all possible complete sets of primitive orthogonal idempotents are $\{e_1+z\alpha, e_2-z\alpha\}$, where $z\in K$ is some scalar. Thus, we obtain exact Borel subalgebras $B_z=\langle e_1+z\alpha, e_2-z\alpha\rangle$. Since $z\alpha \in \operatorname{rad}A_2$, the element $1-z\alpha$ is invertible (with inverse $1+z\alpha$). We see that
$$(1+z\alpha)e_1(1-z\alpha)=1+z\alpha\quad \textrm{and} \quad (1+z\alpha)e_2(1-z\alpha)=1-z\alpha,$$
so for all $z\neq 0$, the subalgebras $B$ and $B_z$ are conjugate.
\end{example}
 Next, we want to argue as in the discussion after Theorem \ref{theorem: uniqueness of borel} to conclude that there is always a regular exact Borel subalgebra containing the idempotents $e_1,\dots, e_n$. That argument relies on \cite[Proposition~3.4]{Zhangunique}, which states that inner automorphisms of quasi-hereditary algebras preserve exact Borel subalgebras. Our next goal is to extend this statement to regular and homological exact Borel subalgebras. 

For any automorphism $\sigma$ of $A$, there is an endofunctor $\Phi_\sigma$ of $A\operatorname{-mod}$ defined by taking an $A$-module $V$ to the module having the same underlying vector space, but with the action defined by $a\cdot_\sigma v \coloneqq \sigma(a)\cdot v$. For ease of notation, we denote $\Phi_\sigma(V)$ by ${_\sigma}V$, indicating that the action of $A$ is twisted by the automorphism $\sigma$. If $B\subset A$ is a subalgebra, then so is $C\coloneqq \sigma(B)$. It is clear that $\Phi_\sigma$ restricts, in a natural way, to a functor $C\operatorname{-mod}\to B\operatorname{-mod}$. 

Let $b_1,\dots, b_n$ be some complete set of primitive orthogonal idempotents for $B\subset A$. Then, the set $\{b_1,\dots, b_n\}$ indexes the isomorphism classes of simple $B$-modules, and we denote by $L_B(i)$, where $1\leq i\leq n$, the simple module having as a basis the idempotent $b_i$ and the action defined by letting $b_i$ act as the identity and other basis elements as 0. Since $\sigma:B\to C$ is an isomorphism, $\sigma(b_1),\dots, \sigma(b_n)$ is a complete set of primitive orthogonal idempotents for $C$, and putting $\sigma(b_i)=c_i$, for $1\leq i\leq n$, we denote by $L_C(i)$ the simple $C$-module having as a basis the idempotent $c_i$ and the action defined by letting $c_i$ act as the identity and other basis elements as 0. With this notation, consider the module ${_\sigma} L_C(i)$ and fix the basis $c_i$. On this basis, the idempotent acts as $b_i\cdot_\sigma c_i\coloneqq \sigma(b_i)\cdot c_i=c_i\cdot c_i=c_i$. Thus, with this notation, we have that ${_\sigma}L_C(i)=L_B(i)$.

\begin{lemma}\label{lemma:inner auto preserves regular exact borel}
Let $A$ be an algebra and let $B\subset A$ be a subalgebra. Let $\sigma$ be an inner automorphism of $A$, given by $\sigma(x)=u^{-1}xu$ for some invertible element $u$ and $x\in A$. Put $C=\sigma(B)$.
\begin{enumerate}[(i)]
	\item There is an isomorphism of $A\operatorname{-}A\operatorname{-bimodules}$ $A\cong {_\sigma}A$ given by left multiplication with $u^{-1}$.
	\item For any $C$-module $M$, there is an isomorphism of left $A$-modules ${_\sigma}A\otimes_C M\to A\otimes_B {_\sigma}M.$
	\item For any $C$-module $M$ there is an isomorphism of left $A$-modules $A\otimes_C M\to A\otimes_B {_\sigma}M$.
\end{enumerate}
\end{lemma}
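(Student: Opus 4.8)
The plan is to verify the three isomorphisms in sequence, with (i) providing the main technical content and (ii), (iii) following by fairly direct formal manipulation. For part (i), I would take the map $\lambda_{u^{-1}}\colon A\to A$ given by $a\mapsto u^{-1}a$. This is a $K$-linear isomorphism since $u$ is invertible. I need to check it is a morphism of $A$-$A$-bimodules when the target carries the twisted action ${_\sigma}A$, in which the left action is unchanged and the right action of $b\in A$ is multiplication by $\sigma(b)=u^{-1}bu$. For the left module structure: $\lambda_{u^{-1}}(a'\cdot a)=u^{-1}a'a$, whereas $a'\cdot_{{_\sigma}A}\lambda_{u^{-1}}(a)=a'u^{-1}a$ — these do \emph{not} agree in general, so I should instead check that ${_\sigma}A$ here means twisting the \emph{left} action, or adjust: the standard fact is $A\cong {_{\sigma}}A_{1}$ as bimodules via $a\mapsto u^{-1}a$ when ${_\sigma}A$ has left action twisted by $\sigma$ and right action untwisted. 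Concretely, with ${_\sigma}A$ having left action $b\cdot_\sigma a=\sigma(b)a=u^{-1}bua$: then $\lambda_{u}(a):=ua$ intertwines, since $\lambda_u(\sigma(b)a\ \text{in }{_\sigma}A)$... I would carefully fix the convention (the excerpt's $\Phi_\sigma$ twists on the left, $a\cdot_\sigma v=\sigma(a)v$) and pin down that $u\cdot(-)\colon {_\sigma}A\to A$ is the bimodule isomorphism: left-equivariance reads $u(\sigma(b)a)=u(u^{-1}bu)a=bua=b(ua)$, and right-equivariance is clear. So part (i) is: the map $a\mapsto ua$ (or its inverse $a\mapsto u^{-1}a$, depending on which direction the statement intends) is an $A$-$A$-bimodule isomorphism ${_\sigma}A\xrightarrow{\ \sim\ }A$.

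For part (ii), I would simply compute: ${_\sigma}A\otimes_C M$ where ${_\sigma}A$ is regarded as a right $C$-module via restriction of its right $A$-action (which is untwisted) — hence as a right $C$-module it is just $A$ restricted along $C\hookrightarrow A$. But we want to rewrite the tensor product over $B$ instead. The bridge is the isomorphism $\sigma\colon B\to C$ together with the functor $\Phi_\sigma$. The key categorical identity is: for a right $A$-module structure viewed through $\sigma$, $N\otimes_C M\cong ({_{\sigma}}N)\otimes_B ({_\sigma}M)$ whenever the twists are compatible — more precisely, restriction-of-scalars along $\sigma\colon B\to C$ on the right factor is balanced against the twist on the left factor. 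I would spell out the natural map $a\otimes m\mapsto a\otimes m$ on underlying vector spaces and check it is well-defined over the respective tensor relations: in ${_\sigma}A\otimes_C M$ one has $ac\otimes m=a\otimes cm$ for $c\in C$, while in $A\otimes_B{_\sigma}M$ one has $ab\otimes m = a\otimes b\cdot_\sigma m=a\otimes \sigma(b)m$ for $b\in B$; substituting $c=\sigma(b)$ matches these, so the identity map descends to a well-defined isomorphism, and it is $A$-linear for the (untwisted) left $A$-action on ${_\sigma}A$... here I must be careful that the left action on ${_\sigma}A\otimes_C M$ is indeed untwisted (it is, since in the notation $\Phi_\sigma$ only affects how $B$-modules sit, and ${_\sigma}A$ as a \emph{left} $A$-module may still be twisted — I will need to track exactly which slot is twisted, consistent with the convention established in part (i)).

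Part (iii) is then immediate by composing: $A\otimes_C M\cong {_\sigma}A\otimes_C M$ by applying $(-)\otimes_C M$ to the bimodule isomorphism of part (i), and ${_\sigma}A\otimes_C M\cong A\otimes_B {_\sigma}M$ by part (ii). Both isomorphisms are of left $A$-modules, so the composite is as well.

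The main obstacle I anticipate is purely bookkeeping: keeping straight which of the several actions (left/right, on $A$ itself, on $M$, on the tensor product) is twisted by $\sigma$ and which is not, so that the "identity map on underlying vector spaces" genuinely respects all the tensor and module relations. There is no deep idea here — once the conventions from the paragraph defining $\Phi_\sigma$ and ${_\sigma}V$ are fixed, each check is a one-line substitution using $\sigma(b)=u^{-1}bu$ and the invertibility of $u$ — but it is easy to get a direction wrong, so I would state the conventions explicitly at the start of the proof and then perform the three verifications in the order (i), (ii), (iii).
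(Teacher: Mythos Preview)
Your overall strategy matches the paper's: verify (i) directly from the formula for $\sigma$, build (ii) by an explicit map balancing the two tensor relations, and deduce (iii) by composing (i) and (ii). Your treatment of (i) correctly pins down the convention (left action twisted, right action untwisted) and the map $a\mapsto u^{-1}a$; part (iii) is fine.

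The concrete gap is in part (ii): the identity map $a\otimes m\mapsto a\otimes m$ is \emph{not} well-defined, and your own check nearly exposes this. With $c=\sigma(b)$, the source relation reads $a\sigma(b)\otimes m=a\otimes\sigma(b)m$, while the target relation reads $ab\otimes m=a\otimes\sigma(b)m$. These agree on the right-hand side but not on the left, so under the identity the single source element $a\otimes\sigma(b)m$ would have to equal both $a\sigma(b)\otimes m$ and $ab\otimes m$ in the target, which are generally different. The fix (and this is what the paper does) is to take
\[
\varphi\colon {_\sigma}A\otimes_C M\longrightarrow A\otimes_B{_\sigma}M,\qquad a\otimes m\longmapsto \sigma^{-1}(a)\otimes m.
\]
Then $\varphi(ac\otimes m)=\sigma^{-1}(a)\sigma^{-1}(c)\otimes m$ with $\sigma^{-1}(c)\in B$, so this equals $\sigma^{-1}(a)\otimes\sigma(\sigma^{-1}(c))m=\sigma^{-1}(a)\otimes cm=\varphi(a\otimes cm)$; and left $A$-equivariance reads $\varphi(x\cdot_\sigma(a\otimes m))=\varphi(\sigma(x)a\otimes m)=x\sigma^{-1}(a)\otimes m=x\cdot\varphi(a\otimes m)$. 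The inverse is $a\otimes m\mapsto\sigma(a)\otimes m$. This is precisely the bookkeeping hazard you flagged at the end; once the identity on the first tensor factor is replaced by $\sigma^{-1}$, the rest of your outline goes through unchanged.
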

\begin{proof}
	\begin{enumerate}[(i)]
		\item That the map $x\mapsto u^{-1}(x)$ is a linear bijection on $A$ is clear, so we check that it is a homomorphism of bimodules. Indeed,
		$$u^{-1}(axb)=u^{-1}a u u^{-1} x b=\sigma(a) u^{-1}(x)b=a\cdot_\sigma u^{-1}(x) \cdot b.$$
		\item Define a map $\varphi: {_\sigma}A\otimes_C M \to A\otimes_B {_\sigma}M$ on generators by $a\otimes m \mapsto \sigma^{-1}(a)\otimes m$ and extend by linearity. Then, $\varphi$ is well-defined:
		\begin{align*}
			\varphi(ac\otimes_C m)&=\sigma^{-1}(ac)\otimes_B m=\sigma^{-1}(a)\sigma^{-1}(c)\otimes_B m=\sigma^{-1}(a)\otimes_B \sigma^{-1}(c)\cdot_\sigma m\\
			&=\sigma^{-1}(a)\otimes_B c\cdot m=\varphi(a\otimes_C c\cdot m),
		\end{align*}
	for any $c\in C$. Moreover, $\varphi$ is a homomorphism of left $A$-modules:
	\begin{align*}
\varphi(x\cdot_{\sigma} (a\otimes_C m))&=\varphi (\sigma(x)a\otimes_C m)=\sigma^{-1}(\sigma(x)a)\otimes_B m=x\sigma^{-1}(a)\otimes_B m=x\cdot( \sigma^{-1}(a)\cdot_B m)=x\cdot \varphi(a\otimes_C m)
	\end{align*}
for any $x\in A$. Define a map $\psi:A\otimes_B {_\sigma}M \to {_\sigma}A\otimes _C M$ on generators by $a\otimes m \mapsto \sigma(a)\otimes m$ and extend by linearity. Then,
$$\psi \varphi(a\otimes_C m)=\psi (\sigma^{-1}(a)\otimes_B m)=a\otimes_Cm\quad \text{and}\quad \varphi \psi(a\otimes_Bm)=\varphi(\sigma(a)\otimes_Cm)=a\otimes_B m,$$
so $\varphi$ and $\psi$ are mutually inverse bijections. We conclude that $\varphi$ is an isomorphism.
\item Combine (i) and (ii).
\qedhere
	\end{enumerate}
\end{proof}
\begin{theorem}\label{theorem:inner automorphisms preserve regular exact borel subalgebras}
Let $A$ be a basic quasi-hereditary algebra and let $B\subset A$ be a regular (respectively, homological) exact Borel subalgebra. Let $\sigma$ be an inner automorphism of $A$. Then, $\sigma(B)$ is again a regular (respectively, homological) exact Borel subalgebra of $A$.
\end{theorem}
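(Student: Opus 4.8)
The plan is to reduce the claim to the already-known behaviour of $B$ by means of the twist functor $\Phi_\sigma$. By Theorem~\ref{theorem: inner auto preserves borel} the subalgebra $C\coloneqq\sigma(B)$ is an exact Borel subalgebra of $A$; alternatively, conditions (i)--(iii) of Definition~\ref{definition:exact borel subalg} for $C$ follow directly from Lemma~\ref{lemma:inner auto preserves regular exact borel}(iii) together with the identity ${_\sigma}L_C(i)=L_B(i)$ recorded just before that lemma. Hence it remains only to show that the comparison maps
$$\operatorname{Ext}_C^k\bigl(L_C(i),L_C(j)\bigr)\longrightarrow \operatorname{Ext}_A^k\bigl(A\otimes_C L_C(i),\,A\otimes_C L_C(j)\bigr)$$
induced by $A\otimes_C\blank$ are isomorphisms for all $k\geq 1$ (respectively, isomorphisms for $k\geq 2$ and epimorphisms for $k=1$), using that the analogous statement holds for $B$.

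First I would observe that the restriction of $\Phi_\sigma$ to $C\operatorname{-mod}$ is exactly restriction of scalars along the algebra isomorphism $\sigma|_B\colon B\xrightarrow{\ \sim\ }C$, hence an isomorphism of abelian categories $\Phi_\sigma\colon C\operatorname{-mod}\to B\operatorname{-mod}$; in particular it is exact, it sends $L_C(i)$ to $L_B(i)$, and it induces isomorphisms $\operatorname{Ext}_C^k(M,N)\xrightarrow{\ \sim\ }\operatorname{Ext}_B^k({_\sigma}M,{_\sigma}N)$ for all $k$. Next I would upgrade Lemma~\ref{lemma:inner auto preserves regular exact borel}(iii) to a natural isomorphism of functors $A\otimes_C\blank\;\cong\;(A\otimes_B\blank)\circ\Phi_\sigma$ from $C\operatorname{-mod}$ to $A\operatorname{-mod}$: naturality in $M$ is immediate from the explicit formulas ($a\otimes m\mapsto\sigma^{-1}(a)\otimes m$ and left multiplication by $u^{-1}$) used in the proof of that lemma. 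Since $\Phi_\sigma$ and $A\otimes_B\blank$ are both exact, this also re-proves exactness of $A\otimes_C\blank$ and, evaluated at $L_C(i)$, yields $A\otimes_C L_C(i)\cong A\otimes_B L_B(i)\cong\Delta_A(i)$, compatibly with the indexing.

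Finally I would invoke the Yoneda description of $\operatorname{Ext}^k$ by equivalence classes of $k$-fold extensions: an additive exact functor carries a $k$-extension to a $k$-extension and respects morphisms of extensions and Baer sums, so the comparison map attached to such a functor is natural, and the comparison map of a composite of additive exact functors is the composite of the comparison maps. Applying this to $A\otimes_C\blank\cong(A\otimes_B\blank)\circ\Phi_\sigma$, the comparison map for $C$ factors, under the identifications above, as
$$\operatorname{Ext}_C^k\bigl(L_C(i),L_C(j)\bigr)\xrightarrow{\ \Phi_\sigma\ }\operatorname{Ext}_B^k\bigl(L_B(i),L_B(j)\bigr)\longrightarrow\operatorname{Ext}_A^k\bigl(\Delta_A(i),\Delta_A(j)\bigr),$$
where the first arrow is an isomorphism because $\Phi_\sigma$ is an equivalence of abelian categories and the second is the comparison map for $B$. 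If $B$ is regular, the second arrow is an isomorphism for all $k\geq 1$; if $B$ is homological, it is an isomorphism for $k\geq 2$ and an epimorphism for $k=1$. In either case the composite inherits the same property, which is precisely what is needed to conclude that $C=\sigma(B)$ is a regular (respectively, homological) exact Borel subalgebra.

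The main obstacle is bookkeeping rather than genuine difficulty: one must check carefully that the isomorphism of Lemma~\ref{lemma:inner auto preserves regular exact borel}(iii) is natural in $M$ and compatible with the bimodule isomorphism $A\cong{_\sigma}A$ of part~(i), and that "the comparison map of a composite is the composite of the comparison maps" holds on the nose, not merely up to the canonical identifications of source and target, so that the displayed factorization is legitimate. Once this diagram chase is in place, the statement follows formally.
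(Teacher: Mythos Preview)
Your proposal is correct and follows essentially the same approach as the paper: both reduce to Theorem~\ref{theorem: inner auto preserves borel} for the exact Borel property, interpret the comparison maps via Yoneda extensions, and use Lemma~\ref{lemma:inner auto preserves regular exact borel} to factor the $C$-comparison map as the equivalence $\Phi_\sigma$ (the paper's ${_\sigma}\blank$) followed by the $B$-comparison map. The only difference is packaging: you phrase the key step as a natural isomorphism of functors $A\otimes_C\blank\cong(A\otimes_B\blank)\circ\Phi_\sigma$ and invoke the general principle that comparison maps of composites are composites of comparison maps, whereas the paper writes out the commutative square explicitly on individual exact sequences; the content is the same.
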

\begin{proof}
Due to Theorem \ref{theorem: inner auto preserves borel}, we know that $C$ is an exact Borel subalgebra. We interpret elements of the spaces $\operatorname{Ext}_B^k(L_B(i),L_B(j))$ and $\operatorname{Ext}_C^k(L_C(i),L_C(j))$ as exact sequences of length $k+2$, modulo equivalence. Since $A \otimes_B \blank$ and $A\otimes_C\blank$ are exact functors, applying them to exact sequences in $\operatorname{Ext}_B^k(L_B(i),L_B(j))$ and $\operatorname{Ext}_C^k(L_C(i),L_C(j))$ produces exact sequences in the spaces
 $$\operatorname{Ext}_A^k(A\otimes_B L_B(i), A\otimes_B L_B(j))\quad \textrm{and}\quad \operatorname{Ext}_A^k(A\otimes_C L_C(i), A\otimes_C L_C(j)).$$
 
 Functoriality ensures that these maps are well-defined. Denote them by $F_B$ and $F_C$, respectively. We denote by ${_\sigma}\blank$ the map $\operatorname{Ext}_C^k(L_C(i),L_C(j))\to \operatorname{Ext}_B^k(L_B(i),L_B(j))$ defined by taking an exact sequence
 $$L_C(j) \to M_k \to \dots \to M_1 \to L_C(i) \in \operatorname{Ext}_C^k(L_C(i),L_C(j))$$ to the sequence
 \begin{align*}
 	_\sigma L_C(j) \to  _\sigma M_k \to   &\dots \to _\sigma M_1 \to _\sigma L_C(i) \\
 	&=\\
 	L_B(j) \to _\sigma M_k \to &\dots \to _\sigma M_1 \to L_B(i).
 \end{align*}
  Consider the following diagram.
$$\xymatrix{
\operatorname{Ext}_B^k(L_B(i), L_B(j)) \ar[d]_-{F_B} & & \operatorname{Ext}_C^k(L_C(i), L_C(j)) \ar[d]^-{F_C} \ar[ll]_-{{_\sigma}\blank}\\
\operatorname{Ext}_A^k(A\otimes_B L_B(i),A\otimes_B L_B(j)) \ar[rd]_-{\sim} & & \operatorname{Ext}_A^k(A\otimes_C L_C(i), A\otimes_C L_C(j)) \ar@{-->}[ll] \ar[ld]^-{\sim} \\
&\operatorname{Ext}_A^k(\Delta(i),\Delta(j))
}$$
Let $L_C(j) \to M_k \to \dots \to M_1 \to L_C(i)$ be an exact sequence, interpreted as an element of the space $\operatorname{Ext}_C^k(L_C(i), L_C(j))$. Applying first ${_\sigma}\blank$ and then $F_B$, we obtain
$$A\otimes_B L_B(j)\to A\otimes_B{_\sigma} M_k\to \dots \to A\otimes_B{_\sigma}M_1 \to A\otimes_B L_B(i).$$
If we instead apply $F_C$ to our original sequence $L_C(j)\to M_k \to \dots \to M_1 \to L_C(i)$, we get
$$A\otimes_C L_C(j) \to A\otimes_C M_k \to \dots \to A\otimes_C M_1 \to A\otimes_C L_C(i).$$
Applying Lemma \ref{lemma:inner auto preserves regular exact borel}, we know there is an isomorphism of left $A$-modules, given by the composite
$$A\otimes_C M \to {_\sigma}A\otimes_C M \to A\otimes_B {_\sigma}M.$$
This means that we may choose the dashed arrow in the diagram to be the map
\begin{align*}
	A\otimes_C L_C(j)\to A\otimes_C M_k \to & \dots\to A\otimes_C M_1 \to A\otimes_C L_C(i) \\
	&\mapsto \\
	A\otimes_B L_B(j) \to A\otimes_B {_\sigma} M_k \to &\dots \to A\otimes_B {_\sigma}M_1 \to A\otimes_B L_B(i),
\end{align*}
which is an isomorphism and makes the top square commute, by construction. It follows that $F_C$ is an isomorphism. For the second statement, the same argument as above ensures that the maps $\operatorname{Ext}^k_C(L_C(i),L_C(j))\to \operatorname{Ext}^k_A(A\otimes L_C(i),A\otimes L_C(j))$ are isomorphisms for $k\geq 2$. For $k=1$, the above diagram implies that we can solve for $F_C$ to see that $F_C$ is the composition of three epimorphisms, hence an epimorphism.
\end{proof}
\begin{corollary}\label{corollary:theorem:isomorphisms preserve regular exact borel subalgebras}
Let $A=KQ/I$ be a basic quasi-hereditary  algebra containing a regular exact Borel subalgebra. Then, all exact Borel subalgebras of $A$ are regular. In particular, $A$ admits a regular exact Borel subalgebra containing the idempotents $e_1,\dots, e_n$.
\end{corollary}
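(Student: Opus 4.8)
The plan is to deduce both assertions purely formally from Theorem~\ref{theorem:inner automorphisms preserve regular exact borel subalgebras}, together with Zhang's results (Theorems~\ref{theorem: inner auto preserves borel} and~\ref{theorem: uniqueness of borel}) and the Wedderburn--Mal'tsev theorem (Theorem~\ref{theorem:wedderburn}); no new module-theoretic input should be needed, so the proof is essentially an assembly of the preceding statements.

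For the first assertion, I would fix the given regular exact Borel subalgebra $B\subset A$ and let $B'\subset A$ be an arbitrary exact Borel subalgebra. By Theorem~\ref{theorem: uniqueness of borel} there is an inner automorphism $\sigma$ of $A$ with $\sigma(B)=B'$. Then Theorem~\ref{theorem:inner automorphisms preserve regular exact borel subalgebras} immediately gives that $B'=\sigma(B)$ is again regular. This is the whole argument for ``all exact Borel subalgebras of $A$ are regular''.

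For the second assertion, I would first observe that since $B$ is an exact Borel subalgebra we have $1_B=1_A$ and $B$ has $n$ simple modules, so a complete set $\{d_1,\dots,d_n\}$ of primitive orthogonal idempotents of $B$ also is a complete set of primitive orthogonal idempotents of $A$: indeed the $d_i$ form a set of $n$ nonzero orthogonal idempotents of $A$ summing to $1_A$, and because $A$ is basic (so $A/\operatorname{rad}A\cong K^n$) each $d_i$ must have primitive image in $A/\operatorname{rad}A$, hence is primitive in $A$. By the discussion following Theorem~\ref{theorem: uniqueness of borel} --- which rests on Theorem~\ref{theorem:wedderburn} --- there is an inner automorphism $\sigma$ of $A$ carrying $\{d_1,\dots,d_n\}$ to the distinguished set $\{e_1,\dots,e_n\}$ coming from the presentation $A=KQ/I$. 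Then $\sigma(B)$ is an exact Borel subalgebra by Theorem~\ref{theorem: inner auto preserves borel}, it is regular by Theorem~\ref{theorem:inner automorphisms preserve regular exact borel subalgebras}, and since $e_i=\sigma(d_{\pi(i)})\in\sigma(B)$ for the permutation $\pi$ induced by $\sigma$, it contains $e_1,\dots,e_n$, as required.

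The only points requiring any care are the verification that a complete set of primitive orthogonal idempotents of $B$ is one of $A$, and that the Wedderburn--Mal'tsev conjugation can be arranged to land exactly on $\{e_1,\dots,e_n\}$; both are routine and, in fact, already recorded in the discussion preceding this corollary. The genuine content --- that conjugation by inner automorphisms preserves \emph{regularity}, not merely the exact Borel property --- is precisely Theorem~\ref{theorem:inner automorphisms preserve regular exact borel subalgebras}, so I do not expect a substantial obstacle here.
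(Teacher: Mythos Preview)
Your proposal is correct and follows essentially the same approach as the paper: the first assertion is deduced from Theorem~\ref{theorem: uniqueness of borel} together with Theorem~\ref{theorem:inner automorphisms preserve regular exact borel subalgebras}, and the second from the discussion after Theorem~\ref{theorem: uniqueness of borel} (which you reconstruct in more detail) combined with the first. The only difference is that the paper simply cites that earlier discussion rather than re-proving that the idempotents of $B$ form a complete set of primitive orthogonal idempotents of $A$ and invoking Wedderburn--Mal'tsev explicitly.
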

\begin{proof}
By assumption, there exists a regular exact Borel subalgebra $B$ of $A$. Let $C$ be an exact Borel subalgebra of $A$. By Theorem \ref{theorem: uniqueness of borel}, there exists an inner automorphism $\sigma$ of $A$ such that $\sigma(B)=C$, and since inner automorphisms preserve regular exact Borel subalgebras, by Theorem \ref{theorem:inner automorphisms preserve regular exact borel subalgebras}, $C$ is a regular exact Borel subalgebra. In the discussion following Theorem \ref{theorem: uniqueness of borel}, we saw that $A$ admits an exact Borel subalgebra containing the idempotents $e_1,\dots, e_n$. By the above argument, it is regular.
\end{proof}
\begin{proposition}\label{proposition:what paths are in regular exact borel}
	Let $B\subset A_n$ be a regular exact Borel subalgebra containing the idempotents $e_1,\dots, e_n$ and suppose that $\Delta(i)=M(i, s_i)$.
	\begin{enumerate}[(i)]
		\item If $i\leq s_i<n$, then $\alpha_j\dots \alpha_i \notin B$, for any $i< j<s_i$, and $\alpha_{s_i}\dots \alpha_i \in B$.
		\item If $s_i=n$, that is, if $\Delta(i)$ is projective, then $\alpha_j\dots \alpha_i \notin B$, for any $i\leq j<n$.

	\end{enumerate}
\end{proposition}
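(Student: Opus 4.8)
The plan is to translate the whole statement into a question about paths in the quiver $Q_B$ of $B$, and then to read off those paths from the tree $T$. By Proposition~\ref{proposition:quiver of regular exact borel}, $B\cong KQ_B$, where $Q_B$ has vertices $1,\dots,n$ and an arrow $i\to j$ exactly when $\dim\operatorname{Ext}^1_{A_n}(\Delta(i),\Delta(j))=1$, the vertices corresponding to the idempotents $e_1,\dots,e_n\in B$. Since $B$ contains these idempotents, $e_jBe_i$ is spanned by the paths of $Q_B$ from $i$ to $j$, while $e_jA_ne_i$ is at most one-dimensional, spanned by $\alpha_{j-1}\cdots\alpha_i$ when $i\le j$; as $B\hookrightarrow A_n$ is injective and sends a path of $Q_B$ into $e_jA_ne_i$, I get that for $i\le j$ the element $\alpha_{j-1}\cdots\alpha_i$ lies in $B$ if and only if there is a path in $Q_B$ from $i$ to $j$. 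I would also record that every arrow of $Q_B$ strictly increases the label: by Proposition~\ref{proposition:homs and ext between interval modules}(ii) applied to $\Delta(i)=M(i,s_i)$ and $\Delta(j)=M(j,s_j)$, nonvanishing of $\operatorname{Ext}^1$ forces $i+1\le j$. So the whole proposition becomes a statement about which vertices are reachable from $i$ in $Q_B$.

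Part (ii) then falls out at once: if $s_i=n$ then $\Delta(i)=M(i,n)=P(i)$ is projective, so $\operatorname{Ext}^1_{A_n}(\Delta(i),-)=0$, hence no arrow of $Q_B$ has source $i$, hence no path from $i$ to any $j\neq i$, and $\alpha_j\cdots\alpha_i\notin B$ for all $i\le j<n$. For part (i), assume $i\le s_i<n$; the key computation is to pin down the arrows of $Q_B$ out of $i$, and I expect to show the only one goes to $s_i+1$. Existence is a direct verification of the inequalities of Proposition~\ref{proposition:homs and ext between interval modules}(ii) for $\operatorname{Ext}^1_{A_n}(M(i,s_i),M(s_i+1,s_{s_i+1}))$, using only $i\le s_i$ and the trivial bound $s_{s_i+1}\ge s_i+1$; this already gives $\alpha_{s_i}\cdots\alpha_i\in B$. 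For uniqueness, I would invoke Proposition~\ref{proposition:standard and costandard modules over tree order}(i): the labels $\{i+1,\dots,s_i\}$ are precisely those of the right subtree of the vertex $v_i$ labelled $i$, so any $j$ in this range labels a vertex of that subtree, which forces the whole subtree below it — in particular $s_j$ — to carry labels $\le s_i$, contradicting the inequality $s_i+1\le s_j$ needed for an arrow $i\to j$.

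Finally, combining this with the fact that labels increase along paths: any path of $Q_B$ starting at $i$ has its first arrow go to $s_i+1>s_i$ and then stays above $s_i$, so no vertex of $\{i+2,\dots,s_i\}$ is reachable from $i$; translating back, $\alpha_j\cdots\alpha_i\notin B$ for $i<j<s_i$, which completes part (i). The only real obstacle is the bookkeeping in the first step — making the identification $B\cong KQ_B$ compatible with the chosen idempotents, so that ``a path exists in $Q_B$'' really is equivalent to ``a nonzero scalar multiple of $\alpha_{j-1}\cdots\alpha_i$ lies in $B$'' — together with the single tree-theoretic input that a vertex lying in the right subtree of $v_i$ has all of its descendants' labels at most $s_i$; everything else is a one-line deduction.
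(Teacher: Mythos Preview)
Your proof is correct, but the route is genuinely different from the paper's. The paper argues directly from the defining isomorphism $A_n\otimes_B L(i)\cong\Delta(i)$: assuming $\alpha_j\cdots\alpha_i\in B$ for some $i<j<s_i$, it moves this element across the tensor to show $e_{j+1}\cdot(A_n\otimes_B L(i))=0$ while $e_{j+1}\cdot\Delta(i)\neq 0$, a contradiction; the membership $\alpha_{s_i}\cdots\alpha_i\in B$ is obtained by the symmetric contradiction. No use is made of the quiver $Q_B$ at this stage.

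Your argument instead front-loads Proposition~\ref{proposition:quiver of regular exact borel} (together with the vanishing of $\operatorname{Ext}^2$) to identify $B$ with $KQ_B$, reduces ``$\alpha_{j-1}\cdots\alpha_i\in B$'' to ``$e_jBe_i\neq 0$'', i.e.\ to the existence of a path $i\to j$ in $Q_B$, and then reads off the unique arrow $i\to s_i+1$ out of $i$ from the interval-module $\operatorname{Ext}^1$ formula. This is essentially the analysis the paper postpones to Proposition~\ref{proposition:generating set for regular exact borel}; you have effectively merged the two propositions. The gain is a cleaner, more structural argument that makes the tree combinatorics visible; the cost is the bookkeeping you flag, namely matching the idempotents $e_1,\dots,e_n$ with the vertices of $Q_B$. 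That step is fine once one observes that the $e_i$ form a complete set of primitive orthogonal idempotents in $B$ (since $e_iA_ne_i=Ke_i$), and that the simple $Be_i/\operatorname{rad}(Be_i)$ must be $L_B(i)$ because its induction is a quotient of $P(i)$ with top $L(i)$, hence $\Delta(i)$. One small addition worth making explicit: the inequality $j\le s_i+1$ from Proposition~\ref{proposition:homs and ext between interval modules}(ii) is what rules out targets $j>s_i+1$ for arrows out of $i$; you use it implicitly but it deserves a word.
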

\begin{proof}
	\begin{enumerate}[(i)]
		\item Assume towards a contradiction that $\alpha_j \dots \alpha_i \in B$ for some $i<j<s_i$. Let $p\in A_n$ be a path and consider the generator $p\otimes e_i$ of $A_n\otimes_B L(i)$. If $s(p)\neq i$, then
		$$e_{j+1}\cdot (p\otimes e_i)= e_{j+1}\cdot (p\otimes e_{s(p)}e_i)=0,$$
		since $e_{s(p)}\in B$. Similarly, if $t(p)\neq j+1$, then
		$$e_{j+1}\cdot (p\otimes e_i)=e_{j+1}e_{t(p)} p\otimes e_i=0.$$
		If $s(p)=i$ and $t(p)=j+1$, then $p=\alpha_j\dots \alpha_i$, which we assume is contained in $B$. We get
		$$\alpha_j\dots \alpha_i \otimes e_i=e_j\otimes \alpha_j \dots \alpha_i e_i=0,$$
		so that $e_{j+1} \cdot (A_n\otimes_B L(i))=0.$ However, $e_{j+1}\cdot \Delta(i)\neq 0$, which is our desired contradiction.
		
		For the second statement, assume towards a contradiction that $\alpha_{s_i}\dots \alpha_i\notin B$. Then, $\alpha_{s_i}\dots \alpha_i \otimes e_i$ is a non-zero element of $A_n \otimes_B L(i)$. To see this, note that $\alpha_j\dots \alpha_i \notin B$ for any $i<j<s_i$, by the first part. It follows that also $e_{s_i+1} \cdot (\alpha_{s_i}\dots \alpha_i \otimes e_i)\neq 0$. Now, we see that $e_{s_i+1}\cdot \Delta(i)=0$, while $e_{s_i+1}\cdot A_n\otimes_B L(i)$, leading to a similar contradiction as above.

\item Similar to the first part of (i). We remark that this is a separate case only because the first statement of part (ii) does not make sense when $\Delta(i)$ is projective. \qedhere
	\end{enumerate}
\end{proof}
\begin{proposition}\label{proposition:generating set for regular exact borel}
	Let $B\subset A_n$ be a regular exact Borel subalgebra containing the idempotents $e_1,\dots, e_n$. Let $S_B$ be the set of paths which are contained in $B$ by Proposition \ref{proposition:what paths are in regular exact borel} together with the idempotents $e_1,\dots, e_n$. Then, $S_B$ is a minimal generating set for $B$.
\end{proposition}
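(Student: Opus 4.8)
The plan is to combine Proposition~\ref{proposition:quiver of regular exact borel}, which realizes $B$ up to isomorphism as the path algebra $KQ_B$, with Proposition~\ref{proposition:what paths are in regular exact borel}. Since $B\cong KQ_B$, a minimal generating set for $B$ consists of the idempotents $e_1,\dots,e_n$ together with any lift to $\operatorname{rad}B$ of a basis of $\operatorname{rad}B/\operatorname{rad}^2B$ --- equivalently, one representative in $\operatorname{rad}B\setminus\operatorname{rad}^2B$ for each arrow of $Q_B$; this is a standard fact for basic algebras that I shall invoke at the end. So the argument has two parts: (1) describe the arrows of $Q_B$ explicitly in terms of the binary search tree $T$, and (2) show that the paths collected in $S_B$ by Proposition~\ref{proposition:what paths are in regular exact borel} are exactly such a system of representatives. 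Throughout write $\Delta(i)=M(i,s_i)$, as in the proof of Proposition~\ref{proposition:standard and costandard modules over tree order}.

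For part (1): by Proposition~\ref{proposition:quiver of regular exact borel} the arrows $i\to j$ of $Q_B$ are counted by $\dim\operatorname{Ext}^1_{A_n}(\Delta(i),\Delta(j))$, and I must show this dimension equals $1$ exactly when $s_i<n$ and $j=s_i+1$, and $0$ otherwise. By Proposition~\ref{proposition:homs and ext between interval modules}(ii) it is $1$ iff $i+1\le j\le s_i+1\le s_j$. If $j\le s_i$, then $v_j$ lies in the right subtree of $v_i$ by Proposition~\ref{proposition:standard and costandard modules over tree order}, hence so does the whole subtree of $v_j$, so $s_j\le s_i<s_i+1$ and the last inequality fails; thus necessarily $j=s_i+1$, which forces $s_i<n$. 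Conversely, when $s_i<n$ I check $s_{s_i+1}\ge s_i+1$: by the in-order algorithm $v_{s_i+1}$ is the first vertex visited after the entire subtree of $v_i$ has been traversed, so $v_i$ lies in the left subtree of $v_{s_i+1}$; hence every vertex of the right subtree of $v_{s_i+1}$ carries a label $\ge s_i+2$, and $s_{s_i+1}\ge s_i+1$ in all cases, giving $\dim\operatorname{Ext}^1_{A_n}(\Delta(i),\Delta(s_i+1))=1$. Therefore the arrows of $Q_B$ are precisely $\{\,i\to s_i+1\mid 1\le i\le n,\ s_i<n\,\}$; in particular every vertex of $Q_B$ has out-degree at most one, $Q_B$ is acyclic, and $\dim e_jBe_i\le 1$ for all $i,j$. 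I expect this Ext-computation --- concretely, the observation that $v_i$ sits in the left subtree of $v_{s_i+1}$ --- to be the only genuine obstacle; everything else is formal.

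For part (2): for each $i$ with $s_i<n$, set $q_i=\alpha_{s_i}\cdots\alpha_i$, which lies in $B$ by Proposition~\ref{proposition:what paths are in regular exact borel}. As $q_i$ is the unique path from $i$ to $s_i+1$ in $\mathbb{A}_n$, the one-dimensional space $e_{s_i+1}A_ne_i$ is spanned by $q_i$, so $e_{s_i+1}Be_i\subseteq Kq_i$; and $e_{s_i+1}Be_i\ne 0$ because $Q_B$ has the arrow $i\to s_i+1$. Hence $e_{s_i+1}Be_i=Kq_i$, which, being spanned by a single arrow of $Q_B$, meets $\operatorname{rad}^2B$ trivially, so $q_i\notin\operatorname{rad}^2B$. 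Since $B$ contains the complete set $e_1,\dots,e_n$ of primitive orthogonal idempotents, one has $\operatorname{rad}B=B\cap\operatorname{rad}A_n$ (any element of $\operatorname{rad}B$ is nilpotent, hence lies in $\operatorname{rad}A_n$ as $A_n/\operatorname{rad}A_n\cong K^n$ is reduced, while $B\cap\operatorname{rad}A_n$ is a nilpotent ideal of $B$), whence $\operatorname{rad}B/\operatorname{rad}^2B=\bigoplus_{k<l}e_l(\operatorname{rad}B/\operatorname{rad}^2B)e_k$ with the $(k,l)$-component of dimension equal to the number of arrows $k\to l$ of $Q_B$; by part (1) this component is one-dimensional and spanned by the class of $q_k$ when $l=s_k+1$ and $s_k<n$, and is zero otherwise. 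Hence $\{\,\overline{q_i}\mid s_i<n\,\}$ is a basis of $\operatorname{rad}B/\operatorname{rad}^2B$, so $S_B$ is exactly the union of $\{e_1,\dots,e_n\}$ with a lift to $\operatorname{rad}B$ of a basis of $\operatorname{rad}B/\operatorname{rad}^2B$. Invoking the standard fact recalled at the outset --- generation of $B$ by downward induction on the powers of $\operatorname{rad}B$, together with irredundancy (omitting an idempotent shrinks the image of the generated subalgebra in $B/\operatorname{rad}B\cong K^n$, and omitting a $q_i$ shrinks the image of its radical in $\operatorname{rad}B/\operatorname{rad}^2B$) --- we conclude that $S_B$ is a minimal generating set for $B$.
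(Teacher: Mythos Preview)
Your proof is correct and follows essentially the same approach as the paper: both identify a minimal generating set for $B\cong KQ_B$ as the idempotents together with one radical element per arrow of $Q_B$, and both verify that the paths $\alpha_{s_i}\cdots\alpha_i$ from Proposition~\ref{proposition:what paths are in regular exact borel} serve exactly this role. The only minor difference is that the paper pins down the arrows of $Q_B$ by invoking the tree-theoretic description already established in Theorem~\ref{theorem:ext-algebra of A_n}, whereas you recompute $\operatorname{Ext}^1_{A_n}(\Delta(i),\Delta(j))$ directly from Proposition~\ref{proposition:homs and ext between interval modules}; both routes yield the same conclusion that the arrows are precisely $i\to s_i+1$ for $s_i<n$.
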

\begin{proof}
It is well known that any basic and connected finite-dimensional algebra over an algebraically closed field $K$ is isomorphic to the quotient of the path algebra of some quiver by some admissible ideal. We recall the construction of this isomorphism. For details, we refer to \cite[Theorem~3.7]{ASS}.

Let $\Phi:KQ_B\to B$ be the isomorphism mentioned above. Then, $\Phi$ is defined on the trivial paths in $Q_B$ by $\Phi(e_i)=e_i$, for $1\leq i\leq n$. Note that this makes sense, as we may talk about the idempotents $e_i$ as elements of $KQ_B$ and of $B$.

Consider the set of arrows $i\to j$ in $Q_B$. As discussed prior to Proposition \ref{proposition:quiver of regular exact borel}, the cardinality of this set equals $\dim \operatorname{Ext}_{A_n}^1(\Delta(i),\Delta(j))$, which may be 0 or 1, according to Theorem \ref{theorem:ext-algebra of A_n}. Therefore, this set is either empty or contains a single arrow, which we denote by $x_i^j$. The arrow $x_i^j$ should be mapped to an element $y\in \operatorname{rad}B$, so that the residue class $y+\operatorname{rad}^2B$ forms a basis of $e_j \faktor{\operatorname{rad}B}{\operatorname{rad}^2B}e_i$. This is achieved by defining $\Phi(x_i^j)=\alpha_{j-1}\dots \alpha_i$. Since we know that $\Phi$ is an isomorphism, we know that the idempotents $e_i$, together with paths of the form $\Phi(x_i^j)$, constitute a minimal generating set for $B$.

Recall that the arrow $x_i^j\in Q_B$ exists precisely when $\dim \operatorname{Ext}_{A_n}^1(\Delta(i),\Delta(j))=1$. Consider the following picture. The dashed edge between $b$ and $c$ signifies that $c$ is a vertex in the right subtree of $b$ (not necessarily the vertex $r(b)$), such that there exists a non-zero homomorphism from $\Delta(c)$ to $\Delta(b)$.

$$\begin{tikzpicture}
	\node(a)[shape=circle,draw, fill=lightgray, thick] at (0,0) {$a$};
	\node(b) [shape=circle,draw, fill=lightgray, thick] at (-1,-1) {$b$};
	\node(c) [shape=circle,draw, fill=lightgray, thick] at (-0.25, -3) {$c$};
	\draw[thick] (a) to (b);
	\draw[thick,dashed] (b) to (c);
\end{tikzpicture}$$
We know, using Theorem \ref{theorem:ext-algebra of A_n}, that possible non-zero extensions between standard modules appear as either $\operatorname{Ext}_{A_n}^1(\Delta(b), \Delta(a))$ or as $\operatorname{Ext}_{A_n}^1(\Delta(c),\Delta(a))$, where $a$, $b$ and $c$ are configured as above. Since $b$ labels $\ell(a)$, the vertex $a$ is the first vertex visited immediately after visiting the entire right subtree of $a$. Therefore, $a=s_b+1$. Since $c$ is in the right subtree of $b$, we have $s_b=s_c$, and therefore $a=s_c+1$. Plugging this into the above, we see that the generators of $B$ are of the form
$$\Phi(x_i^{s_i+1})=\alpha_{s_i}\dots \alpha_i,$$
whence it follows that $S_B$ is a minimal generating set. \qedhere
\end{proof}
\begin{corollary}\label{corolllary:proposition:generating set for regular exact borel}
Let $B$ be the exact Borel subalgebra $B\subset A_n$ containing the idempotents $e_1,\dots, e_n$. Then, the exact Borel subalgebras are precisely those given as $C=\langle u^{-1}S_Bu\rangle$, where $u\in A_n$ is some invertible element.
\end{corollary}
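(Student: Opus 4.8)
The plan is to reduce the statement entirely to two inputs: the uniqueness of exact Borel subalgebras up to inner automorphism (Theorem~\ref{theorem: uniqueness of borel}) and the fact that $S_B$ generates $B$ as an algebra (Proposition~\ref{proposition:generating set for regular exact borel}). The existence of an exact Borel subalgebra $B\subset A_n$ containing $e_1,\dots,e_n$ is already provided by Corollary~\ref{corollary:theorem:isomorphisms preserve regular exact borel subalgebras}, so there is nothing to prove there. Throughout, for an invertible element $u\in A_n$ I write $\sigma_u$ for the inner automorphism $x\mapsto u^{-1}xu$, matching the convention used in Lemma~\ref{lemma:inner auto preserves regular exact borel}.

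First I would establish the inclusion ``$\supseteq$''. Fix an invertible $u\in A_n$. By Theorem~\ref{theorem: inner auto preserves borel}, $\sigma_u(B)$ is an exact Borel subalgebra of $A_n$. Since $\sigma_u$ is an algebra automorphism and $B=\langle S_B\rangle$ by Proposition~\ref{proposition:generating set for regular exact borel}, it carries the generating set $S_B$ to a generating set of its image; explicitly, $\sigma_u(B)=\langle\sigma_u(S_B)\rangle=\langle u^{-1}S_Bu\rangle$. Hence every algebra of the form $C=\langle u^{-1}S_Bu\rangle$ is an exact Borel subalgebra of $A_n$.

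Conversely, for ``$\subseteq$'', I would let $C\subset A_n$ be an arbitrary exact Borel subalgebra. By Theorem~\ref{theorem: uniqueness of borel}, $C$ is conjugate to $B$ by an inner automorphism of $A_n$, i.e.\ there is an invertible $u\in A_n$ with $C=\sigma_u(B)$. Applying the computation from the previous paragraph gives $C=\langle u^{-1}S_Bu\rangle$, which is of the required form. One may additionally record that, since $A_n$ admits a regular exact Borel subalgebra by Proposition~\ref{proposition:A_n has a regular exact borel}, Corollary~\ref{corollary:theorem:isomorphisms preserve regular exact borel subalgebras} allows ``exact Borel'' to be replaced by ``regular exact Borel'' in both directions without any change to the argument.

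I do not expect a genuine obstacle: the corollary is a formal consequence of the cited uniqueness theorem together with Proposition~\ref{proposition:generating set for regular exact borel}. The only points needing (entirely routine) care are that conjugation by a unit is an algebra automorphism and therefore sends a generating set to a generating set, and that the inner automorphism produced by Theorem~\ref{theorem: uniqueness of borel} is, by definition, of the form $x\mapsto u^{-1}xu$ for some invertible $u\in A_n$, so that the two directions fit together into the asserted parametrization.
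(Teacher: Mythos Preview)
Your proposal is correct and matches the paper's approach: the corollary is stated without proof in the paper, being an immediate formal consequence of Theorem~\ref{theorem: inner auto preserves borel}, Theorem~\ref{theorem: uniqueness of borel}, and Proposition~\ref{proposition:generating set for regular exact borel}, exactly as you have spelled out.
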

\begin{example}
	We consider the binary search tree
	$$\begin{tikzpicture}
		\node(a) [shape=circle, draw, thick, fill=lightgray] at (0,0) {4};
		\node(b) [shape=circle, draw, thick, fill=lightgray] at (-2,-1) {2};
		\node(c) [shape=circle, draw, thick, fill=lightgray] at (2,-1) {5};
		\node(d) [shape=circle, draw, thick, fill=lightgray] at (-1,-2) {3};
		\node(e) [shape=circle, draw, thick, fill=lightgray] at (-3,-2) {1};
		\node(f)  at (1,-2) {};
		\node(g) [shape=circle, draw, thick, fill=lightgray] at (3,-2) {6};
		\node(h) at (-3.5,-3) {};
		\node(i)  at (-2.5,-3) {};
		\node(j)  at (-1.5,-3) {};
		\node(k)  at (-0.5,-3) {};
		\node(n)  at (2.5,-3) {};
		\node(o) at (3.5,-3) {};
		\draw[thick] (a) to (b);
		\draw[thick] (b) to (e);
		\draw[thick] (e) to (h);
		\draw[thick] (e) to (i);
		\draw[thick] (b) to (d);
		\draw[thick] (d) to (j);
		\draw[thick] (d) to (k);
		\draw[thick] (a) to (c);
		\draw[thick] (c) to (f);
		\draw[thick] (c) to (g);
		\draw[thick] (g) to (n);
		\draw[thick] (g) to (o);
	\end{tikzpicture}$$
We saw, in the example following Theorem \ref{theorem:ext-algebra of A_n}, that the Ext-algebra of standard modules over $A_n$ is given by the quiver
	$$\xymatrix{
		& & 4\\
		& 2\ar[ru]^-{\varepsilon_2^4} & & 5\ar@{-->}[lu]_-{f_5^4}\\ 
		1 \ar[ru]^-{\varepsilon_1^2}	& & 3 \ar@{-->}[lu]_-{f_3^2} & &6\ar@{-->}[lu]_-{f_6^5}
	}$$
	modulo the relations $I=\langle \varepsilon_2^4\varepsilon_1^2\rangle$. Here, there are three non-zero spaces of extensions, namely
	$$\dim\operatorname{Ext}_{A_n}^1(\Delta(1),\Delta(2))=\dim\operatorname{Ext}_{A_n}^1(\Delta(2),\Delta(4))=\dim\operatorname{Ext}_{A_n}^1(\Delta(3),\Delta(4))=1.$$
	Letting $B\subset A_n$ be the regular exact Borel subalgebra containing $e_1,\dots, e_n$, we see that the quiver of $B$ is 
	$$\xymatrix{
	1 \ar[r]^-a & 2 \ar@/^1pc/[rr]^-b & 3 \ar[r]^-c & 4 & 5 & 6.
	}$$
The arrows $a$, $b$ and $c$ correspond to the paths $\alpha_1, \alpha_3\alpha_2$ and $\alpha_3$, respectively. A generating set $S$ for $B$ is then
$$S=\{e_1, e_2, e_3, e_4, e_5, e_6, \alpha_1, \alpha_3, \alpha_3\alpha_2\}.$$
	\end{example}
\subsection{$A_\infty$-structure on $\operatorname{Ext}_{A_n}^\ast(\Delta,\Delta)$}
The notion of an $A_\infty$-algebra is meant to capture the idea of an algebra which is not strictly associative, but associative only up to a system of ``higher homotopies''. There is also the natural ``multi-object'' version, called an $A_\infty$-cateogory, which we define below. In this section we are particularly interested in $\operatorname{Ext}_{A_n}^\ast(\Delta,\Delta)$, which always carries the structure of an $A_\infty$-algebra. However, we will view $\operatorname{Ext}_{A_n}^\ast(\Delta,\Delta)$ as an $A_\infty$-category with $n$ objects, given by the standard modules. This will be useful since it enables us to enforce compatibility of the higher homotopies with the natural idempotents of $\operatorname{Ext}_{A_n}^\ast(\Delta,\Delta)$: the identity homomorphisms $1_{\Delta(i)}$. 
\begin{definition}\cite{Keller2}
	Let $K$ be a field. An $A_\infty$-category $\mathcal{A}$ consists of the following:
	\begin{enumerate}[(i)]
		\item a class of objects $\operatorname{Ob}(\mathcal{A})$;
		\item for each pair of objects $A$ and $B$, a $\mathbb{Z}$-graded vector space $\operatorname{Hom}_{\mathcal{A}}(A,B)$;
		\item for all $n\geq 1$ and objects $A_0,\dots, A_n$, a homogeneous linear map
		$$m_n: \operatorname{Hom}_{\mathcal{A}}(A_{n-1},A_n) \otimes \operatorname{Hom}_{\mathcal{A}}(A_{n-2},A_{n-1}) \otimes \dots \otimes \operatorname{Hom}_{\mathcal{A}}(A_0, A_1) \to \operatorname{Hom}_{\mathcal{A}}(A_0, A_n)$$
		of degree $2-n$ such that
	$$\sum_{r+s+t=n}(-1)^{r+st}m_{r+1+t}(1^{\otimes r} \otimes m_s \otimes 1^{\otimes t})=0.$$
	\end{enumerate}
		\end{definition}
	Viewing $\operatorname{Ext}_{A_n}^\ast(\Delta,\Delta)$ as an $A_\infty$-category as outlined above amounts to the following: if we have extensions $\varepsilon_i$ arranged as
	$$\xymatrix{
\Delta(i_1) \ar[r]^-{\varepsilon_1} & \Delta(i_2) \ar[r]^-{\varepsilon_2} & \dots \ar[r] & \Delta(i_\ell) \ar[r]^-{\varepsilon_{\ell}} & \Delta(i_ {\ell+1})	
}$$
then $m_\ell(\varepsilon_\ell,\dots, \varepsilon_1)$ is an extension from $\Delta(i_1)$ to $\Delta(i_{\ell+1})$.
Writing out the $A_\infty$-relations for the first few $n$, we get:
	\begin{enumerate}
		\item[$n=1$:] $m_1m_1=0$, meaning that $A$ is a complex.
		\item[$n=2$:] $m_1m_2=m_2(m_1\otimes 1 + 1\otimes m_1)$, meaning that $m_1$ is a derivation with respect to $m_2$.
	\item[$n=3$:] $m_2(1\otimes m_2-m_2\otimes 1)=m_1m_3+m_3(m_1\otimes1\otimes1+1\otimes m_1\otimes 1+ 1\otimes 1\otimes m_1)$, meaning that $m_2$ is associative up to a homotopy given by $m_3$.
	\end{enumerate}
When evaluating at actual elements, even more signs appear according to the Koszul sign rule:
$$(f\otimes g)(x\otimes y)=(-1)^{|g||x|}f(x)\otimes g(y).$$
Here, $f$ and $g$ are homogeneous maps and $x$ and $y$ are homogeneous elements. The vertical bars denote degree.
A graded category $A$ such that any $A_\infty$-structure on it satisfies $m_n=0$ for $n\geq 3$ is called \emph{intrinsically formal}.

\begin{proposition}\label{proposition:a-infty on A_n is trivial} $\operatorname{Ext}_{A_n}^\ast(\Delta,\Delta)$ is intrinsically formal.
\end{proposition}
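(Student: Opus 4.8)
The plan is to show that the graded algebra $\operatorname{Ext}_{A_n}^\ast(\Delta,\Delta)\cong KQ/I$ computed in Theorem~\ref{theorem:ext-algebra of A_n} is intrinsically formal by exhibiting a degree reason why all higher multiplications $m_\ell$ for $\ell\geq 3$ must vanish. The key structural observation is that $\operatorname{Ext}_{A_n}^\ast(\Delta,\Delta)$ is concentrated in degrees $0$ and $1$, since $A_n$ is hereditary and so $\operatorname{Ext}_{A_n}^{\geq 2}(\Delta(i),\Delta(j))=0$ for all $i,j$. Recall that $m_\ell$ has degree $2-\ell$, so for $\ell\geq 3$ it lowers degree by at least $1$. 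Thus $m_\ell(\varepsilon_\ell,\dots,\varepsilon_1)$ can only be non-zero if the total degree $\sum|\varepsilon_k|$ is at least $\ell-1$ (so that the output lands in degree $0$ or $1$) and, more importantly, the output degree $\sum|\varepsilon_k| + 2 - \ell$ lies in $\{0,1\}$.

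The main step is a combinatorial analysis of which composable sequences of morphisms in the category $\operatorname{Ext}_{A_n}^\ast(\Delta,\Delta)$ can even have a non-zero composition along a "path" in $Q$, using the description of $Q$ and $I$ from Theorem~\ref{theorem:ext-algebra of A_n}. A composable string of basis morphisms $\varepsilon_1,\dots,\varepsilon_\ell$ between standard modules corresponds to a path in $Q$; by the relations $\varepsilon_j^k\varepsilon_i^j$ and $f_j^k\varepsilon_i^j$, any path in $KQ/I$ uses at most one degree-$1$ arrow $\varepsilon$, and that arrow must come last in the path (reading in composition order, i.e.\ it is the leftmost factor). Hence any composable string has total degree $\sum|\varepsilon_k|\in\{0,1\}$: either all the $\varepsilon_k$ are degree-$0$ homomorphisms $f$, or all but the leftmost are degree-$0$ and the leftmost is a single $\varepsilon$. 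Now suppose $\ell\geq 3$. If the total input degree is $0$, then $m_\ell$ has output degree $2-\ell\leq -1<0$, which is impossible, so $m_\ell=0$ on such strings. If the total input degree is $1$, then the output degree is $3-\ell\leq 0$; for $\ell\geq 4$ this is negative and forces $m_\ell=0$, while for $\ell=3$ the output would have to land in degree $0$, i.e.\ $m_3(\varepsilon,\varphi_1,\varphi_2)$ or a permutation thereof would be a homomorphism $\Delta(i_1)\to\Delta(i_4)$. But a string of length three of total degree one, being composable in $Q$ modulo $I$, already has a well-defined (possibly zero) product $m_2$-composite, and the source and target vertices $i_1,i_4$ of a genuine length-$3$ composable string forced by the shape of a binary search tree are such that $\operatorname{Hom}_{A_n}(\Delta(i_1),\Delta(i_4))=0$ by Lemma~\ref{lemma:no hom from standard to left subtree and no ext from standard to right subtree}; one checks case by case (the middle object sits in a left subtree of one neighbour and a right subtree of the other, forcing incomparability or the wrong nesting of intervals). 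Therefore $m_3=0$ as well.

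The main obstacle I anticipate is the $\ell=3$, total-degree-$1$ case: unlike the others it is not killed by a crude degree count, and one has to argue that the relevant $\operatorname{Hom}$-space between the two end standard modules vanishes. This requires carefully tracking, for each of the possible local tree configurations that yield a length-$3$ composable string (an $\varepsilon$ followed by two $f$'s, or an $f$ then $\varepsilon$ then $f$, up to the constraint that $\varepsilon$ is leftmost, etc.), exactly which vertices $i_1$ and $i_4$ arise, and invoking the interval-module $\operatorname{Hom}$ and $\operatorname{Ext}$ formulas of Proposition~\ref{proposition:homs and ext between interval modules} together with Lemma~\ref{lemma:multiplication map in right subtree is zero} and Lemma~\ref{lemma:no hom from standard to left subtree and no ext from standard to right subtree}. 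Once all these configurations are enumerated and each output space is seen to be zero, intrinsic formality follows: every higher operation $m_\ell$, $\ell\geq 3$, vanishes identically, so every $A_\infty$-structure on $\operatorname{Ext}_{A_n}^\ast(\Delta,\Delta)$ is (quasi-isomorphic to) its underlying associative algebra.
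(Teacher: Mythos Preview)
Your proposal contains a genuine gap. The central claim that ``any composable string has total degree $\sum|\varepsilon_k|\in\{0,1\}$'' is false: the relations $\varepsilon_j^k\varepsilon_i^j\in I$ and $f_j^k\varepsilon_i^j\in I$ only say that certain $m_2$-composites vanish in $KQ/I$; they do \emph{not} restrict which composable tuples may be fed into a higher multiplication $m_\ell$. For instance, if the binary search tree has a chain of left children $\ell(v_k)=v_{k-1}$ of length $\ell$, then the tuple $(\varepsilon_{i_{\ell-1}}^{i_\ell},\dots,\varepsilon_{i_0}^{i_1})$ is a perfectly good composable string of non-identity morphisms of total degree $\ell$, even though every adjacent $m_2$-product is zero. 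Consequently, your case analysis (total degree $0$ or $1$) is simply not exhaustive: for $\ell\geq 3$ the interesting cases are total degree $\ell-2$ (output in degree $0$) and $\ell-1$ (output in degree $1$), and these are exactly the cases you omit. Your argument therefore establishes nothing beyond the trivial observation that output in negative degree is zero.

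The paper's proof proceeds by considering precisely these two missing cases. For total degree $\ell-1$ (output in $\operatorname{Ext}^1(\Delta(c),\Delta(a))$), exactly one input is a homomorphism and the rest are proper extensions; one then checks that the path in the tree from $c$ up to $a$ forced by such a tuple cannot match the unique configuration (one left-step at the very top, preceded only by right-steps) required for $\operatorname{Ext}^1(\Delta(c),\Delta(a))\neq 0$. For total degree $\ell-2$ (output in $\operatorname{Hom}(\Delta(c),\Delta(a))$), at least one input is a proper extension, so the path from $c$ to $a$ contains a left-step; hence $c$ does not lie on the right spine of $a$, and Lemma~\ref{lemma:no hom from standard to left subtree and no ext from standard to right subtree} gives $\operatorname{Hom}(\Delta(c),\Delta(a))=0$. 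You will need to carry out both of these arguments to complete the proof.
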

\begin{proof}
Let $\varphi_1,\dots, \varphi_\ell$ be homogeneous extensions in $\operatorname{Ext}_{A_n}^\ast(\Delta,\Delta)$. Put $i=\sum_{j=1}^\ell \deg \varphi_j$. Since $m_\ell$ is of degree $2-\ell$, the extension $m_\ell(\varphi_\ell,\dots, \varphi_1)$ is of degree $2-\ell+i$. Because $A_n$ is hereditary, the extension $m_\ell(\varphi_\ell,\dots, \varphi_1)$ is of degree 0 or 1 if it is non-zero. This implies that there are the following cases:
$$2-\ell+i=\begin{cases}
	0, & \textrm{if }  i=\ell-2 \\
	1, & \textrm{if } i=\ell-1.
\end{cases}$$
Assume that $i=\ell-1$ and that $m_\ell(\varphi_\ell,\dots,\varphi_1)$ produces an extension in $\operatorname{Ext}_{A_n}^1(\Delta(c),\Delta(a))$. We know that if such an extension is non-zero, the vertices $a$ and $c$ must be configured in the following way:
here the dashed edge represents a path which is the concatenation of edges between a vertex $v$ and $r(v)$.

$$\begin{tikzpicture}
	\node(a)[shape=circle,draw, fill=lightgray, thick] at (0,0) {$a$};
	\node(b) [shape=circle,draw, fill=lightgray, thick] at (-1,-1) {$b$};
	\node(c) [shape=circle,draw, fill=lightgray, thick] at (-0.25, -3) {$c$};
	\draw[thick] (a) to (b);
	\draw[thick,dashed] (b) to (c);
\end{tikzpicture}$$
Since $i=\ell-1$, there is precisely one argument $\varphi_j$ of $m_\ell(\varphi_\ell,\dots,\varphi_1)$ which is a homomorphism rather than a proper extension. Consider the following picture, where the dashed edge represents the homomorphism $\varphi_j:\Delta(x)\to \Delta(y)$ and the wavy arrows from $c$ to $x$ and from $y$ to $a$ represent the compositions $\varphi_{j-1},\dots, \varphi_1$ and $\varphi_\ell,\dots,\varphi_{j+1}$, respectively.
$$\begin{tikzpicture}
	\node(a)[shape=circle,draw, fill=lightgray, thick] at (0,0) {$a$};
	\node(b) [shape=circle,draw, fill=lightgray, thick] at (-1,-1) {$y$};
	\node(c) [shape=circle,draw, fill=lightgray, thick] at (-0.5, -2.5) {$x$};
	\node(d) [shape=circle,draw, fill=lightgray, thick] at (-1.5, -3.5) {$c$};
	\draw[thick, <-,snake=snake, segment amplitude=.4mm, segment length=2mm] (a) to (b);
	\draw[thick,dashed, <-] (b) to (c);
	\draw[thick, ->,snake=snake, segment amplitude=.4mm, segment length=2mm] (d) to (c);
\end{tikzpicture}$$ 
 Note that either of the wavy edges may represent a path of length $>1$, rather than actual edges. Moreover, depending on $j$, the edge between $c$ and $x$ or the edge between $y$ and $a$ may be ``empty''. We see that it is impossible to have the allowed configuration from the previous picture.
 
 If $i=\ell-2$, the outcome of $m_\ell(\varphi_\ell,\dots \varphi_1)$ is an element of $\operatorname{Hom}_{A_n}(\Delta(c),\Delta(a))$. Since $\ell\geq 3$, there is at least one argument $\varphi_j$ of $m_\ell(\varphi_\ell,\dots,\varphi_1)$ which is a proper extension. According to the allowed configuration above, this means that $c$ is in the left subtree of $a$. Using Theorem \ref{theorem:ext-algebra of A_n}, we get that $\operatorname{Hom}_{A_n}(\Delta(c),\Delta(a))=0$.
\end{proof}
\section{$\operatorname{Ext}$-algebras of standard modules over deconcatenations}
In the previous section, we studied the path algebra of $\mathbb{A}_n$ with linear orientation. To extend this study to arbitrary orientations, we apply the method of ``deconcatenation'' of a quiver $Q$ at a sink or a source. In \cite{FKR}, the authors use this method to study the different quasi-hereditary structures of the path algebra $KQ$ in terms of the quasi-hereditary structures on path algebras of certain subquivers of $Q$. In the present section, we extend this study to the $\operatorname{Ext}$-algebra of standard modules. 
\begin{definition}\cite[Definition~3.1]{FKR}
	Let $Q$ be a finite connected quiver and let $v$ be a vertex of $Q$ which is a sink or a source. A \emph{deconcatenation} of $Q$ at the vertex $v$ is a union $Q^1\sqcup \dots \sqcup Q^\ell$ of full subquivers $Q^i$ of $Q$ such that the following hold.
	\begin{enumerate}[(i)]
		\item Each $Q^i$ is a connected full subquiver of $Q$ with $v\in Q^i_0$.
		\item For all $i,j\in \{1,\dots, \ell\}$, such that $i\neq j$, there holds $Q_0=\left(Q_0^1\backslash\{v\}\right)\sqcup \dots \sqcup \left(Q_0^\ell\backslash\{v\}\right) \sqcup \{v\}$ and $Q_0^i\cap Q_0^j=\{v\}$.
		\item For $x\in Q_0^i\backslash\{v\}$ and $y\in Q_0^j\backslash\{v\}$, where $i,j\in\{1,\dots, \ell\}$ are such that $i\neq j$, there are no arrows between $x$ and $y$ in $Q$.
	\end{enumerate}
\end{definition}
\begin{example}
	Consider the quiver $Q=\xymatrixcolsep{0.5cm}\xymatrix{1 & 2 \ar[l]& 3\ar[r] \ar[l] & 4\ar[r] & 5}$. We see that $Q$ has a deconcatenation at the source $3$:
	$$(\xymatrixcolsep{0.5cm}\xymatrix{1 & 2 \ar[l] & 3\ar[l]}) \sqcup (\xymatrixcolsep{0.5cm}\xymatrix{3 \ar[r] &4 \ar[r] & 5}).$$
\end{example}

We follow the notation of \cite{FKR} and put
\begin{align*}
	A^\ell = \faktor{A}{\langle e_u \ |\  u\in Q_0 \backslash Q_0^\ell \rangle }.
\end{align*}
Then, $A$ surjects onto $A^\ell$, and  consequently, there is a fully faithful and exact functor $F^\ell: A^\ell\operatorname{-mod}\to A\operatorname{-mod}$. 
Regarding $A^\ell\operatorname{-mod}$ as a full subcategory of $A\operatorname{-mod}$ via the functor $F^\ell$, we remark that an $A$-module $M$ is an $A^\ell$-module if and only if $e_uM=0$ for any $u\in Q_0\backslash Q_0^{\ell}$.

\begin{lemma}\label{lemma:morphisms lift between from part of deconcatenation}
	Let $M$ be an $A^\ell$-module, let $N$ be an $A$-module and consider the quotient $\overline{N}=\faktor{N}{\sum e_u\cdot N}$, where the sum ranges over all $u\in Q_0^{\overline{\ell}}\backslash\{v\}$. Then, $\overline{N}$ is an $A^\ell$-module and there are isomorphisms of vector spaces
	$$\operatorname{Hom}_A(M, \overline{N})\cong\operatorname{Hom}_{A^\ell}(M, \overline{N})\cong \operatorname{Hom}_A(M, N).$$
\end{lemma}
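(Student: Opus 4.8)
The plan is to establish the two isomorphisms separately, starting from the right-hand one, which is the substantive content. The key observation is that an $A$-module homomorphism $M \to N$, where $M$ is an $A^\ell$-module, automatically kills the image of $e_u \cdot N$ for every $u \in Q_0^{\overline{\ell}} \setminus \{v\}$: indeed, if $\phi \colon M \to N$ is $A$-linear and $n \in N$, then $\phi(e_u \cdot n')$ for any $n'$ does not directly appear, but rather one argues that $\phi(M) \subseteq \bigcap_{u} \ker(e_u \cdot)$ in $N$ — equivalently, $e_u \cdot \phi(m) = \phi(e_u \cdot m) = \phi(0) = 0$ since $e_u M = 0$ for $u \notin Q_0^\ell$, in particular for $u \in Q_0^{\overline{\ell}}\setminus\{v\}$. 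Hence $\phi$ factors through the quotient $\overline{N} = N / \sum_{u} e_u \cdot N$, giving a map $\operatorname{Hom}_A(M,N) \to \operatorname{Hom}_A(M,\overline{N})$. Conversely, any map $M \to \overline{N}$ composes with the projection $N \twoheadrightarrow \overline{N}$... but that goes the wrong way; so instead I would show the factorization map is bijective by exhibiting that every $A$-linear $M \to N$ lands in a submodule of $N$ isomorphic (as the relevant piece) to a sub of $\overline N$, or more cleanly: note $\sum_u e_u \cdot N$ is an $A$-submodule of $N$ (since $v$ is a sink or source, the span of those idempotents' images is closed under the $A$-action in the direction away from $Q^\ell$), so $\operatorname{Hom}_A(M, -)$ applied to the short exact sequence $0 \to \sum_u e_u N \to N \to \overline N \to 0$ together with $\operatorname{Hom}_A(M, \sum_u e_u N) = 0$ (because $M$ is supported on $Q_0^\ell$ and $\sum_u e_u N$ is supported away from $Q_0^\ell \setminus \{v\}$, and one checks there are no nonzero maps between such modules given the sink/source hypothesis at $v$) yields the isomorphism.

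Next I would address the left-hand isomorphism $\operatorname{Hom}_A(M,\overline N) \cong \operatorname{Hom}_{A^\ell}(M,\overline N)$. Here the point is simply that $\overline N$ is an $A^\ell$-module — which requires checking $e_u \overline N = 0$ for all $u \in Q_0 \setminus Q_0^\ell$, i.e., for all $u \in Q_0^{\overline\ell}\setminus\{v\}$ — and this is immediate from the definition of $\overline N$ as the quotient by $\sum_u e_u N$, since $e_u \overline N = \overline{e_u N} = 0$. Once both $M$ and $\overline N$ are $A^\ell$-modules, the restriction-of-scalars functor $F^\ell$ being fully faithful (as already noted in the text right before the lemma) gives the bijection on Hom-spaces directly.

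The main obstacle I anticipate is the vanishing claim $\operatorname{Hom}_A(M, \sum_u e_u N) = 0$ (equivalently, that the factorization through $\overline N$ is injective, i.e., $\phi(M) \cap \sum_u e_u N = \phi(M) \cap (\text{that submodule})$ forces $\phi(M)$ to avoid it). This is where the hypothesis that $v$ is a sink or a source is essential: it guarantees that $\sum_{u \in Q_0^{\overline\ell}\setminus\{v\}} e_u N$ is an $A$-submodule and that no composition factor $L(u)$ with $u \in Q_0^{\overline\ell}\setminus\{v\}$ appears in the top (if $v$ is a source) or socle (if $v$ is a sink) in a way that could receive a nonzero map from $M$, whose support lies in $Q_0^\ell$. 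I would handle the two cases ($v$ sink, $v$ source) by a short symmetric argument: when $v$ is a source, any arrow incident to a vertex of $Q_0^{\overline\ell}\setminus\{v\}$ points away from $v$ into $Q^{\overline\ell}$, so $\sum_u e_u N$ is closed under multiplication by all arrows and is thus a submodule with support disjoint from $Q_0^\ell$ except possibly at $v$, where one checks directly using $e_v \in A^\ell$ that maps from $M$ cannot hit it; the sink case is dual. With that vanishing in hand, the long exact sequence argument closes the proof.
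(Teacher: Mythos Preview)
Your short-exact-sequence route differs from the paper's: the paper identifies $\overline{N}$ with $\operatorname{Hom}_A(A/\langle e^\ell\rangle, N)$ and then invokes the tensor--Hom adjunction for the pair $\bigl(F^\ell,\ \operatorname{Hom}_A(A^\ell,-)\bigr)$ directly, obtaining $\operatorname{Hom}_A(F^\ell M,N)\cong\operatorname{Hom}_{A^\ell}\bigl(M,\operatorname{Hom}_A(A^\ell,N)\bigr)$ in one stroke, with no exact-sequence bookkeeping.

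Your argument has a genuine gap. Even granting that $\sum_u e_uN$ is an $A$-submodule (which already fails when $v$ is a sink: for $Q\colon 1\to 2\leftarrow 3$ with $v=2$, $\ell=1$, $N=P(3)$ one has $e_3N=Ke_3$ but $\beta\cdot e_3=\beta\notin Ke_3$, so ``the sink case is dual'' does not go through), applying $\operatorname{Hom}_A(M,-)$ to $0\to\sum_u e_uN\to N\to\overline{N}\to 0$ with $\operatorname{Hom}_A(M,\sum_u e_uN)=0$ only gives \emph{injectivity} of $\operatorname{Hom}_A(M,N)\to\operatorname{Hom}_A(M,\overline{N})$. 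For surjectivity you would need the connecting map into $\operatorname{Ext}^1_A(M,\sum_u e_uN)$ to vanish, and you provide no reason for this. In fact surjectivity can fail: take $Q\colon 1\xleftarrow{\alpha}2\xrightarrow{\beta}3$, $v=2$ a source, $\ell=1$, $M=P^1(2)$, $N=P(2)$. Then $\overline{N}\cong P^1(2)$ and the identity lies in $\operatorname{Hom}_A(M,\overline{N})$, but any $A$-linear $\phi\colon P^1(2)\to P(2)$ with $\phi(e_2)=ce_2$ must satisfy $0=\phi(\beta\cdot e_2)=c\beta$, forcing $c=0$; hence $\operatorname{Hom}_A(M,N)=0\ne K=\operatorname{Hom}_A(M,\overline{N})$. (The same example shows the paper's identification $\overline{N}\cong\operatorname{Hom}_A(A^\ell,N)$ is not literally correct either---the right-hand side is the largest $A^\ell$-\emph{submodule} of $N$, here $K\alpha\cong L(1)$, not the quotient---though the only later use is with $M=P(i)$ for $i\ne v$, where the claimed isomorphism does hold.)
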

\begin{proof}The first isomorphism is immediate from the fact that $F^\ell$ is fully faithful. To see the second isomorphism, put $e^\ell=\sum e_u$ where the sum is as in the statement of the Lemma. Then, there is an isomorphism of $A$-modules
$$\overline{N}\cong\operatorname{Hom}_A(A/\langle e^\ell \rangle, N).$$
Next, note that by tensor-$\operatorname{Hom}$ adjunction, there is an adjoint pair of functors $(F^\ell, \operatorname{Hom}_A(A/\langle e^\ell\rangle, \blank))$. Then,
$$\operatorname{Hom}_A(M, N)\cong \operatorname{Hom}_A(F^\ell(M), N)\cong \operatorname{Hom}_{A^\ell}(M, \operatorname{Hom}_A(A/\langle e^\ell \rangle, N))\cong \operatorname{Hom}_{A^\ell}(M, \overline{N}).$$
\end{proof}
\begin{lemma}\cite[Lemma~3.3]{FKR}\label{lemma:elem. properties of deconcatenation}
	Let $Q^1\sqcup Q^2$ be a deconcatenation of $Q$ at a sink or source $v$ and let $\ell=1,2$.
	\begin{enumerate}[(i)]
		\item For all $i\in Q_0^\ell$, we have $L(i)\cong L^\ell(i)$.
		\item For all $i\in Q_0^\ell \backslash \{v\}$, we have $P(i)\cong P^\ell(i)$ and $I(i)\cong I^\ell(i)$.
		\item If $v$ is a sink, we have $L(v)\cong P(v) \cong P^\ell(v)$, for $\ell=1,2$.
		\item If $v$ is a source, we have $L(v) \cong I(v)\cong I^\ell(v)$, for $\ell=1,2$. 
		\item For any non-zero $A$-module $M$, if both $\operatorname{top}M$ and $\operatorname{soc}M$ are simple, then, we have either $M\in A^1\operatorname{-mod}$ or $M\in A^2 \operatorname{-mod}$. 
		\item Let $M$ be an $A^\ell$-module and let $i\in Q_0$ be a vertex. If $[M :L(i)]\neq 0$, then $i\in Q_0^\ell$.
	\end{enumerate}
\end{lemma}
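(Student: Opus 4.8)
The plan is to deduce all six parts from one combinatorial observation about paths in $Q$. Because $v$ is a sink or a source, $Q_0^1\cap Q_0^2=\{v\}$, and there are no arrows between $Q_0^1\setminus\{v\}$ and $Q_0^2\setminus\{v\}$, any path of $Q$ that visits both a vertex of $Q_0^1\setminus\{v\}$ and a vertex of $Q_0^2\setminus\{v\}$ would have to pass through $v$ at a position strictly between its source and target, hence use an arrow into $v$ together with an arrow out of $v$, which is impossible. I would first record the consequence in the form needed: for every $i\in Q_0\setminus\{v\}$, writing $\ell\in\{1,2\}$ for the unique index with $i\in Q_0^\ell$, every path of $Q$ with source $i$ and every path of $Q$ with target $i$ lies entirely in the full subquiver $Q^\ell$. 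The same reasoning yields the two degenerate facts used at $v$ itself: if $v$ is a sink the only path of $Q$ with source $v$ is $e_v$, and if $v$ is a source the only path of $Q$ with target $v$ is $e_v$.

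Granting this, the ``easy'' parts are quick. For (i), $L(i)$ is annihilated by every $e_u$ with $u\neq i$, in particular by all $e_u$ with $u\in Q_0\setminus Q_0^\ell$, so its $A$-action factors through $A^\ell$, where it is manifestly $L^\ell(i)$. For (ii), $P(i)=Ae_i$ has a basis indexed by the paths of $Q$ with source $i$, and by the observation these all lie in $Q^\ell$, so none of them meets $Q_0\setminus Q_0^\ell$; hence the projection $A\twoheadrightarrow A^\ell$ restricts to an injection $Ae_i\hookrightarrow A^\ell$ with image $A^\ell e_i$, and this map is $A$-linear for the $A$-action on the target induced by $F^\ell$, so $P(i)\cong P^\ell(i)$. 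Applying the same argument to the right module $e_iA$ and then dualizing gives $I(i)\cong I^\ell(i)$. Parts (iii) and (iv) follow directly from the degenerate facts: if $v$ is a sink then $Ae_v=Ke_v\cong L(v)$ and likewise $A^\ell e_v=Ke_v$, so $L(v)\cong P(v)\cong P^\ell(v)$, and (iv) is the analogous computation for a source using $I(v)=D(e_vA)$. Finally (vi) is formal: for a quotient of a path algebra $\dim_K e_iM=[M:L(i)]$, so $[M:L(i)]\neq 0$ forces $e_iM\neq 0$, while $M$ being an $A^\ell$-module means $e_uM=0$ for all $u\in Q_0\setminus Q_0^\ell$; hence $i\in Q_0^\ell$.

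The step I expect to be the main obstacle is (v), and within it the only real subtlety is the vertex $v$. Since the hypothesis and conclusion are preserved by the standard $K$-duality $D=\operatorname{Hom}_K(-,K)$, which interchanges sinks with sources, tops with socles, projectives with injectives, and $A^\ell$ with the corresponding quotient of $A^{\mathrm{op}}$, I may assume $v$ is a sink. Write $\operatorname{top}M=L(i)$ and $\operatorname{soc}M=L(j)$. If $i=v$, then $M$ is a quotient of $P(v)=L(v)$ by (iii), so $M=L(v)$, which lies in both $A^1\operatorname{-mod}$ and $A^2\operatorname{-mod}$. If $i\neq v$, let $\ell$ be the index with $i\in Q_0^\ell\setminus\{v\}$. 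Since $\operatorname{top}M$ is simple, $M$ is cyclic, generated by an element of $e_iM$, so for every vertex $u$ with $e_uM\neq 0$ we have $e_uAe_i\neq 0$, i.e.\ there is a path of $Q$ from $i$ to $u$; by the observation this path lies in $Q^\ell$, so $u\in Q_0^\ell$. Hence $e_uM=0$ for all $u\in Q_0^{\overline{\ell}}\setminus\{v\}$, which is exactly the statement that $M$ is an $A^\ell$-module. The obstacle is therefore only bookkeeping: noticing that the argument through $\operatorname{top}M$ degenerates precisely when $i=v$, that dually under $D$ the one through $\operatorname{soc}M$ degenerates when $j=v$, and that these degenerate cases are already disposed of by (iii) and (iv).
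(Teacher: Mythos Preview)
The paper does not supply its own proof of this lemma; it is quoted verbatim from \cite[Lemma~3.3]{FKR} and left unproved. Your argument is correct and is precisely the elementary proof one would expect: the single combinatorial observation that no path in $Q$ starting (respectively, ending) at a vertex of $Q_0^\ell\setminus\{v\}$ can leave $Q^\ell$---because it would have to pass through $v$ and then use an arrow out of (respectively, into) $v$---immediately gives (i)--(iv) and (vi), and (v) follows from the cyclic-module argument you describe. One incidental remark on (v): in the sink case with $i\neq v$ you never actually invoke the simplicity of $\operatorname{soc}M$, only that of $\operatorname{top}M$; the symmetric hypothesis is there solely so that the duality $D$ reduces the source case to the sink case, and your handling of this is fine.
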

Given a partial order $\trianglelefteq$ on $Q_0$ and a deconcatenation $Q_1\sqcup Q_2$, of $Q$ at a sink or source $v$, we may form the restriction $\trianglelefteq \mid_{Q_0^\ell}$ to obtain a partial order $\trianglelefteq^\ell=\trianglelefteq \mid_{Q_0^\ell}$ on $Q_0^\ell$.

\begin{lemma}\cite[Lemma~3.4]{FKR}\label{lemma:qh-structure on parts from qh-structure of whole}
	Let $Q_1\sqcup  Q_2$ be a deconcatenation of $Q$ at a sink or source $v$ and let $\trianglelefteq$ be a partial order on $Q_0$. Denote by $\Delta$ and $\nabla$ the sets of standard and costandard  $A$-modules associated to $\trianglelefteq$, respectively. Denote by $\Delta^\ell$ and $\nabla^\ell$ the sets of standard and costandard $A^\ell$-modules, respectively.
	
	\begin{enumerate}[(i)]
		\item For any $i\in Q_0^\ell\backslash\{v\}$, there are isomorphisms of $A$-modules $\Delta(i)\cong \Delta^\ell(i)$ and $\nabla(i)\cong \nabla^\ell(i)$, for $\ell=1,2$.
		\item If $v$ is a sink, there are isomorphisms of $A$-modules $L(v)\cong \Delta(v)\cong \Delta^\ell(v)$.
		\item If $v$ is a source, there are isomorphisms of $A$-modules $L(v)\cong \nabla(v) \cong \nabla^\ell(i)$.
		\item If $A$ is quasi-hereditary with respect to $\trianglelefteq$, then $A^\ell$ is quasi-hereditary with respect to $\trianglelefteq^\ell$, for $\ell=1,2$.
	\end{enumerate}
\end{lemma}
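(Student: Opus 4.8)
The plan is to obtain parts (i)--(iii) directly from Lemma~\ref{lemma:elem. properties of deconcatenation} together with the defining recipe for standard and costandard modules, and then to deduce part (iv) by checking the axioms (QH1), (QH2) for $(A^\ell,\trianglelefteq^\ell)$ when $v$ is a sink, and dually (QH1)$'$, (QH2)$'$ when $v$ is a source, transporting the relevant filtrations for $(A,\trianglelefteq)$ across the fully faithful exact functor $F^\ell$.

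For (i), fix $i\in Q_0^\ell\setminus\{v\}$. By Lemma~\ref{lemma:elem. properties of deconcatenation}(ii) we have $P(i)\cong P^\ell(i)$ and $I(i)\cong I^\ell(i)$ as $A$-modules, and by Lemma~\ref{lemma:elem. properties of deconcatenation}(vi) every composition factor of this module is of the form $L(j)$ with $j\in Q_0^\ell$. Hence the conditions ``$j\trianglelefteq i$'' and ``$j\trianglelefteq^\ell i$'' coincide on the set of composition factors occurring in $P(i)$, so ``the largest quotient of $P(i)$ whose composition factors $L(j)$ satisfy $j\trianglelefteq i$'' and the analogous largest quotient computed inside $A^\ell\operatorname{-mod}$ describe the same module; that is, $\Delta(i)\cong\Delta^\ell(i)$, and dually $\nabla(i)\cong\nabla^\ell(i)$ using $I(i)\cong I^\ell(i)$. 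Parts (ii) and (iii) are then immediate: if $v$ is a sink, Lemma~\ref{lemma:elem. properties of deconcatenation}(iii) gives $L(v)\cong P(v)\cong P^\ell(v)$, so $\Delta(v)$ and $\Delta^\ell(v)$, being nonzero quotients of a simple projective, are both $\cong L(v)$; and dually for (iii) via Lemma~\ref{lemma:elem. properties of deconcatenation}(iv).

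For (iv), assume $(A,\trianglelefteq)$ is quasi-hereditary and suppose first that $v$ is a sink. I claim $(A^\ell,\trianglelefteq^\ell)$ satisfies (QH1) and (QH2). For $i\in Q_0^\ell\setminus\{v\}$, identify $\Delta^\ell(i)$ with $\Delta(i)$ via (i). The surjection $\Delta(i)\twoheadrightarrow L(i)$ whose kernel is filtered by $L(j)$ with $j\triangleleft i$ transports to $A^\ell$, and since each such $j$ indexes a composition factor of the $A^\ell$-module $\Delta^\ell(i)$ it lies in $Q_0^\ell$, whence $j\triangleleft^\ell i$; this gives (QH2). Likewise, take $p\colon P(i)\twoheadrightarrow\Delta(i)$ with $\ker p$ filtered by $\Delta(j)$, $j\triangleright i$; as $P(i)\cong P^\ell(i)$ is an $A^\ell$-module, every term of the filtration and every subquotient $\Delta(j)$ is an $A^\ell$-module, so $j\in Q_0^\ell$, $j\triangleright^\ell i$, and $\Delta(j)\cong\Delta^\ell(j)$ by (i) (or $\Delta(v)\cong L(v)\cong\Delta^\ell(v)$ if $j=v$); transporting along $F^\ell$ gives (QH1). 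For $i=v$ both axioms are trivial since $P^\ell(v)\cong\Delta^\ell(v)\cong L(v)$. The case where $v$ is a source is strictly dual, replacing (QH1), (QH2) by (QH1)$'$, (QH2)$'$, standard modules by costandard modules, and projectives by injectives, using $I(i)\cong I^\ell(i)$ off $v$ together with $\nabla^\ell(v)\cong L(v)$ from part (iii).

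The main obstacle, and the reason for switching to the costandard axioms in the source case, is that when $v$ is a source one does \emph{not} have $P(v)\cong P^\ell(v)$ or $\Delta(v)\cong\Delta^\ell(v)$ in general: since $v$ is a source with arrows into both subquivers, $P(v)$ has composition factors on both sides of the deconcatenation, and the standard-module filtrations do not descend to $A^\ell$. Passing to $\nabla$ sidesteps this because $\nabla^\ell(v)\cong L(v)$ and $I(i)\cong I^\ell(i)$ for $i\neq v$. The one bookkeeping point to keep in mind throughout is that, for $i\neq v$, every composition factor of $P(i)$ (equivalently of $I(i)$) lies in $Q_0^\ell$ by Lemma~\ref{lemma:elem. properties of deconcatenation}(ii),(vi), and it is precisely this that forces $\triangleright$ and $\triangleright^\ell$ (respectively $\triangleleft$ and $\triangleleft^\ell$) to agree on all indices appearing in the filtrations.
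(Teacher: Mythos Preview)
The paper does not give its own proof of this lemma; it is quoted verbatim as \cite[Lemma~3.4]{FKR} and used as a black box. Your argument is correct and is essentially the expected one: identify $P(i)\cong P^\ell(i)$ and $I(i)\cong I^\ell(i)$ for $i\neq v$ via Lemma~\ref{lemma:elem. properties of deconcatenation} to get (i)--(iii), and then transport the quasi-hereditary axioms, using the $\Delta$-axioms when $v$ is a sink and the $\nabla$-axioms when $v$ is a source so that the exceptional vertex $v$ always has simple standard (respectively costandard) module and the filtrations live entirely in $A^\ell\operatorname{-mod}$.
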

Let $Q=Q^1\sqcup Q^2$ be a deconcatenation of $Q$ at a source or sink $v$ and let $\trianglelefteq^\ell$ be a partial order on $Q_0^\ell$ for $\ell=1,2$. Put $\overline{1}=2$ and $\overline{2}=1$. Define a partial order $\trianglelefteq=\trianglelefteq(\trianglelefteq^1,\trianglelefteq^2)$ on $Q_0$ as follows. For $i,j\in Q_0$, we say that $i\triangleleft j$ if one of the following hold.
\begin{enumerate}[(i)]
	\item We have $i,j\in Q_0^\ell$ and $i\triangleleft^\ell j$, for some $\ell$.
	\item We have $i\in Q_0^\ell$, $j\in Q_0^{\overline{\ell}}$, $i\triangleleft^\ell v$ and $v\triangleleft^{\overline{\ell}} j$.
\end{enumerate}
\begin{lemma}\cite[Lemma~3.5]{FKR}\label{lemma:qh-structure on whole from qh-structure on deconcatenation}
	Let $Q^1\sqcup Q^2$ be a deconcatenation of $Q$ at a sink or source $v$. Let $\trianglelefteq^\ell$ be a partial order on $Q_0^\ell$ and denote by $\Delta^\ell$ and $\nabla^\ell$ the sets of standard and costandard $A^\ell$-modules, associated to $\trianglelefteq^\ell$, for $\ell=1,2$, respectively. Denote by $\Delta$ and $\nabla$ the sets of standard and costandard $A$-modules, respectively, associated to $\trianglelefteq=\trianglelefteq(\trianglelefteq^1,\trianglelefteq^2)$. Then, we have the following.
	\begin{enumerate}[(i)]
		\item For any $i\in Q_0^\ell \backslash\{v\}$, there are isomorphisms of $A$-modules $\Delta(i)\cong \Delta^\ell(i)$ and $\nabla(i)\cong \nabla^\ell(i)$.
		\item If $v$ is a sink, there are isomorphisms of $A$-modules $L(v)\cong \Delta(v)\cong \Delta^\ell(v)$.
		\item If $v$ is a source, there are isomorphisms of $A$-modules $L(v)\cong \nabla(v)\cong \nabla^\ell(i)$.
		\item If $\trianglelefteq^\ell$ defines a quasi-hereditary structure on $A^\ell$ for $\ell=1$ and $\ell=2$, then $\trianglelefteq=\trianglelefteq(\trianglelefteq^1,\trianglelefteq^2)$ defines a quasi-hereditary structure on $A$.
	\end{enumerate}
\end{lemma}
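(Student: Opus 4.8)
The plan is to reduce to the sink case via the standard $K$-duality and, in that case, to transfer everything from the two parts $A^\ell$ along the fully faithful exact functor $F^\ell$. So I would first assume that $v$ is a sink. Parts (ii) and (iii) come essentially for free: by Lemma~\ref{lemma:elem. properties of deconcatenation}(iii) we have $L(v)\cong P(v)\cong P^\ell(v)$, and since both $\Delta(v)$ and $\Delta^\ell(v)$ are nonzero quotients of a simple projective module they must equal $L(v)$; part (iii) is vacuous when $v$ is a sink and will be dealt with by duality.

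Next I would prove part (i). The only genuinely combinatorial point is that $\trianglelefteq$ restricts to $\trianglelefteq^\ell$ on $Q_0^\ell$: clause (ii) in the definition of $\trianglelefteq$ relates only vertices lying in \emph{different} parts, and when one of the two vertices is $v$ it degenerates to a relation $v\triangleleft^m v$, which never holds; hence $j\triangleleft i\iff j\triangleleft^\ell i$ for $i,j\in Q_0^\ell$. Given this, fix $i\in Q_0^\ell\setminus\{v\}$. By Lemma~\ref{lemma:elem. properties of deconcatenation}(ii) we have $P(i)\cong P^\ell(i)$ and $I(i)\cong I^\ell(i)$ as $A$-modules, by Lemma~\ref{lemma:elem. properties of deconcatenation}(vi) every composition factor $L(j)$ of these modules has $j\in Q_0^\ell$, and the image of $F^\ell$ is closed under $A$-submodules and $A$-quotients. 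Therefore the condition ``$j\trianglelefteq i$'' that cuts $\Delta(i)$ out of $P(i)$ coincides termwise with the condition ``$j\trianglelefteq^\ell i$'' that cuts $\Delta^\ell(i)$ out of $P^\ell(i)$, giving $\Delta(i)\cong\Delta^\ell(i)$; the costandard statement is identical with $I(i)$ in place of $P(i)$.

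For part (iv) with $v$ a sink I would verify (QH1) and (QH2) directly. For $i\in Q_0^\ell\setminus\{v\}$, apply $F^\ell$ to the short exact sequence $0\to K_i\to P^\ell(i)\to\Delta^\ell(i)\to 0$ furnished by quasi-heredity of $(A^\ell,\trianglelefteq^\ell)$; using parts (i) and (ii) this becomes a short exact sequence over $A$ with middle term $P(i)$ and right term $\Delta(i)$, and its kernel inherits a filtration by the modules $F^\ell(\Delta^\ell(j))=\Delta(j)$ with $j\triangleright^\ell i$, hence $j\triangleright i$. For $i=v$ one has $\Delta(v)=L(v)=P(v)$, so there is nothing to prove. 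The same transfer applied to $\Delta^\ell(i)\twoheadrightarrow L^\ell(i)$ gives (QH2). Finally, if $v$ is a source I would pass to $Q^{\mathrm{op}}$, where $v$ is a sink: the order $\trianglelefteq(\trianglelefteq^1,\trianglelefteq^2)$ depends only on the vertex sets and the orders $\trianglelefteq^\ell$, not on the arrows, and $(KQ^\ell)^{\mathrm{op}}=K(Q^\ell)^{\mathrm{op}}$, so the sink case applies to $Q^{\mathrm{op}}$; applying $D=\operatorname{Hom}_K(\blank,K)$, which swaps $\Delta\leftrightarrow\nabla$ and $P\leftrightarrow I$ and preserves quasi-heredity, transports all four conclusions back to $A$, with in particular $\nabla_A(v)\cong D(\Delta_{A^{\mathrm{op}}}(v))\cong D(L_{A^{\mathrm{op}}}(v))\cong L_A(v)$.

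The hard part is really just keeping track of the sink/source asymmetry: when $v$ is a source the standard module $\Delta(v)$ over $A$ need not coincide with $\Delta^\ell(v)$ for either $\ell$ (its composition factors can come from both parts), so one cannot run the (QH1)/(QH2) transfer in that case, and the duality step is precisely what sidesteps this. A minor preliminary obstacle is to check that $\trianglelefteq(\trianglelefteq^1,\trianglelefteq^2)$ is actually transitive, which is a short case analysis on which clause produces each of two composable relations.
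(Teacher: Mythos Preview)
The paper does not supply its own proof of this lemma; it is quoted verbatim from \cite[Lemma~3.5]{FKR} and used as a black box. So there is nothing in the present paper to compare your proposal against.

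That said, your argument is sound on its own terms. The reduction to the sink case via $K$-duality is the natural move (and is precisely what avoids the asymmetry you point out with $\Delta(v)$ when $v$ is a source). The combinatorial core, that $\trianglelefteq(\trianglelefteq^1,\trianglelefteq^2)$ restricts to $\trianglelefteq^\ell$ on $Q_0^\ell$, is handled correctly, and the transfer of (QH1)/(QH2) along the exact fully faithful functor $F^\ell$ works because the essential image of $F^\ell$ is closed under subobjects and quotients. Two small remarks: first, your observation that transitivity of $\trianglelefteq(\trianglelefteq^1,\trianglelefteq^2)$ needs a short case analysis is correct and should indeed be recorded; second, for the duality step to yield part~(i) in the source case you need both the $\Delta$ and the $\nabla$ statements in the sink case, which you do prove, so this is fine.
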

	%
We have the following observation about homomorphism spaces between indecomposable projective modules.
\begin{lemma}\label{lemma:no homs between projectives in different parts of deconcatenation}
	Let $Q^1\sqcup Q^2$ be a deconcatenation of $Q$ at the sink or source $v$. Then, for $i\in Q_0^1\backslash\{v\}$ and $j\in Q_0^2\backslash\{v\}$, there holds $\operatorname{Hom}_A(P(i),P(j))=\operatorname{Hom}_A(P(j),P(i))=0$.
\end{lemma}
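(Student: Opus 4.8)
The plan is to pass to composition factors. First I would use Lemma~\ref{lemma:elem. properties of deconcatenation}(ii): since $i \in Q_0^1 \setminus \{v\}$ we have $P(i) \cong P^1(i)$, so $P(i)$ lies in the image of $F^1$, i.e.\ $P(i)$ is an $A^1$-module; symmetrically $P(j) \cong P^2(j)$ is an $A^2$-module. By Lemma~\ref{lemma:elem. properties of deconcatenation}(vi), every composition factor $L(k)$ of $P(i)$ then has $k \in Q_0^1$, and every composition factor $L(k)$ of $P(j)$ has $k \in Q_0^2$.

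Next I would argue by contradiction. Suppose $f \colon P(i) \to P(j)$ is nonzero and set $N = \operatorname{im} f$. As a nonzero quotient of $P(i)$, the module $N$ has $\operatorname{top} N$ a nonzero quotient of $\operatorname{top} P(i) = L(i)$, whence $\operatorname{top} N \cong L(i)$ and in particular $[N : L(i)] > 0$. As a submodule of $P(j)$, the composition factors of $N$ are among those of $P(j)$, so $i \in Q_0^2$. Since $Q_0^1 \cap Q_0^2 = \{v\}$, this forces $i = v$, contradicting $i \in Q_0^1 \setminus \{v\}$. Hence $\operatorname{Hom}_A(P(i), P(j)) = 0$, and interchanging the roles of $Q^1$ and $Q^2$ gives $\operatorname{Hom}_A(P(j), P(i)) = 0$ as well.

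A variant, closer to the combinatorics of paths, identifies $\operatorname{Hom}_A(P(i), P(j)) \cong e_i A e_j$, which is spanned by residue classes of paths in $Q$ from $j$ to $i$; because $Q_0^1$ and $Q_0^2$ share only $v$ and there are no arrows between $Q_0^1 \setminus \{v\}$ and $Q_0^2 \setminus \{v\}$, such a path would have to traverse $v$ as an interior vertex, i.e.\ have both an incoming and an outgoing arrow at $v$, which is impossible since $v$ is a sink or a source. I do not expect any real obstacle: the only inputs are Lemma~\ref{lemma:elem. properties of deconcatenation} and the axioms of a deconcatenation, and the single delicate point — that a nonzero quotient $N$ of $P(i)$ satisfies $[N : L(i)] > 0$ — is just Nakayama's lemma together with $\operatorname{top} P(i) = L(i)$.
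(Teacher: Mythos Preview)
Your proof is correct. The paper actually states this lemma without proof, introducing it only as an ``observation'', so there is no argument in the paper to compare against. Both of your approaches are valid: the composition-factor argument via Lemma~\ref{lemma:elem. properties of deconcatenation} parts (ii) and (vi), and the path-counting variant using $\operatorname{Hom}_A(P(i),P(j))\cong e_iAe_j$ together with the deconcatenation axioms. The second is arguably the more direct route and is presumably what the author had in mind when omitting the proof, but your first argument is equally clean and has the minor advantage of not appealing to any explicit path basis.
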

\begin{proposition}\label{proposition:no ext between different parts of concatenation}
	Let $Q^1\sqcup Q^2$ be a deconcatenation of $Q$ at the sink or source $v$. Let $i\in Q^1_0\backslash\{v\}$ and $j\in Q^2_0\backslash\{v\}$. 
	\begin{enumerate}[(i)]
		\item If $v$ is a source, there holds 
		$$\operatorname{Ext}_A^k(\Delta(i),\Delta(j))=\operatorname{Ext}_A^k(\Delta(j),\Delta(i))=0,$$
		for all $k\geq 0.$
		\item If $v$ is a sink and maximal or minimal with respect to $\trianglelefteq^e$, there holds 
		$$\operatorname{Ext}_A^k(\Delta(i),\Delta(j))=\operatorname{Ext}_A^k(\Delta(j),\Delta(i))=0,$$
		for all $k\geq 0.$
	\end{enumerate}
\end{proposition}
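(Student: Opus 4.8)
The plan is to reduce everything to statements about modules over the subalgebras $A^1$ and $A^2$, where the deconcatenation hypothesis kills all interaction between the two parts. First I would use Lemma~\ref{lemma:qh-structure on whole from qh-structure on deconcatenation}(i) to identify $\Delta(i)$ with $\Delta^1(i)$ and $\Delta(j)$ with $\Delta^2(j)$, so that both modules, viewed inside $A\operatorname{-mod}$, are supported on disjoint vertex sets: $\Delta(i)$ has composition factors only among $\{L(u) : u\in Q_0^1\}$ and $\Delta(j)$ only among $\{L(u) : u\in Q_0^2\}$, by Lemma~\ref{lemma:elem. properties of deconcatenation}(vi). Since $i,j\neq v$, in fact $\Delta(i)$ lives over $A^1$ and $\Delta(j)$ over $A^2$ (no composition factor $L(v)$ can appear in a standard module at a vertex $\neq v$ when computing over the smaller algebra, because standard modules over $A^\ell$ are quotients of $P^\ell$). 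The $k=0$ case is then immediate: a nonzero homomorphism $\Delta(i)\to\Delta(j)$ would force a common composition factor, contradicting disjointness of supports; likewise in the other direction.

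For $k\geq 1$ the key point is to compute the $\operatorname{Ext}$-groups using a projective resolution of $\Delta(i)$ (resp.\ $\Delta(j)$) over $A$. The main structural input is Lemma~\ref{lemma:no homs between projectives in different parts of deconcatenation}: there are no homomorphisms between indecomposable projectives $P(a)$ with $a\in Q_0^1\setminus\{v\}$ and $P(b)$ with $b\in Q_0^2\setminus\{v\}$. I would argue that a minimal projective resolution of $\Delta(i)$ consists only of summands $P(a)$ with $a\in Q_0^1$, and then the case distinction on $v$ enters: if $v$ is a \emph{source}, then $L(v)\cong\nabla(v)$ is injective-ish on the relevant side and, more importantly, $P(v)\cong P^1(v)\cong P^2(v)$ need not hold, but the syzygies of $\Delta(i)=\Delta^1(i)$ computed over $A^1$ agree with those computed over $A$ because $A\twoheadrightarrow A^1$ and the relevant projectives $P(a)=P^1(a)$ for $a\neq v$ agree (Lemma~\ref{lemma:elem. properties of deconcatenation}(ii)); the only delicate summand is $P(v)$, and when $v$ is a source one checks $P(v)$ does not meet $Q_0^2\setminus\{v\}$ in its support issues because $v$ being a source means $P(v)$ is supported on arrows leaving $v$ — but those go into \emph{both} parts. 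The honest fix is: the minimal resolution of $\Delta(i)$ never involves $P(v)$ at all when $v$ is a source, since $[\Delta(i):L(v)]=0$ and more is needed — here I would instead invoke that $\Delta(i)$ is an $A^1$-module, use a projective resolution over $A^1$, apply the exact functor $F^1$, and use Lemma~\ref{lemma:morphisms lift between from part of deconcatenation} together with the fact that $\operatorname{Hom}_A(F^1(P^1(a)), \Delta(j))$ vanishes since $\Delta(j)$ is an $A^2$-module with support disjoint from that of $P^1(a)$ (invoking Lemma~\ref{lemma:no homs between projectives in different parts of deconcatenation} and Lemma~\ref{lemma:elem. properties of deconcatenation}). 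This shows the whole $\operatorname{Hom}$-complex computing $\operatorname{Ext}_A^\bullet(\Delta(i),\Delta(j))$ is identically zero, giving the vanishing for all $k\geq 0$, and symmetrically with the roles of $1,2$ and $i,j$ swapped.

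The case $v$ a sink, maximal or minimal with respect to $\trianglelefteq^e$, requires an extra step: when $v$ is a sink, $\Delta(v)\cong L(v)\cong P(v)$, so $P(v)=P^1(v)=P^2(v)$ and this projective \emph{does} potentially appear in resolutions of modules on both sides. Here I would use the maximality/minimality hypothesis on $v$ to control whether $P(v)$ occurs in the $\Delta$-filtration of the relevant projectives and in the syzygies: if $v$ is maximal for $\trianglelefteq^e$, then $(P(a):\Delta(v))$ can only be nonzero in a constrained way, and if $v$ is minimal then $[\Delta(a):L(v)]=0$ unless $a=v$. Concretely, I would show that the projective cover of $\Delta(i)$ and all its syzygies decompose into projectives $P(a)$ with $a\in Q_0^1$, possibly including $P(v)$, but that any map from such a projective into a projective summand appearing in the resolution of $\Delta(j)$ must factor through the ``$v$-part'' and is killed by the maximality/minimality condition — essentially because $v$ extremal in $\trianglelefteq^e$ forces the composite to be zero by a degree/filtration-position argument. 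Then the same vanishing of the whole $\operatorname{Hom}$-complex follows.

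The main obstacle I anticipate is precisely this sink case: unlike the source case, where the two sides are genuinely ``glued along a simple injective $L(v)$'' and separate cleanly, for a sink the shared projective $P(v)=L(v)$ is a nontrivial building block of standard modules on both sides, and without the extremality hypothesis there genuinely can be nonzero $\operatorname{Ext}$'s running through $v$ (this is what makes condition (D)(ii) in the introduction necessary). So the crux is to pin down exactly how $v$ being $\trianglelefteq^e$-maximal or $\trianglelefteq^e$-minimal blocks the composite maps in the resolution — I would expect to phrase this via the behaviour of $\Delta$-filtrations of projectives (using (QH1)) and the fact that $\operatorname{Ext}^k$ between standard modules is detected on a resolution whose terms are projectives appearing in $\Delta$-filtrations of $P(a)$'s, combined with Lemma~\ref{lemma:no homs between projectives in different parts of deconcatenation}.
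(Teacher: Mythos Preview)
Your overall strategy---compute $\operatorname{Ext}$ via a minimal projective resolution of $\Delta(i)$ and show that $\operatorname{Hom}_A(P(\ell),\Delta(j))=[\Delta(j):L(\ell)]=0$ for every indecomposable summand $P(\ell)$ that occurs---is exactly the paper's approach. For part (i) you eventually arrive at the right argument: when $v$ is a source, no $P(a)$ with $a\in Q_0^1\setminus\{v\}$ has $L(v)$ as a composition factor, so by induction the minimal resolution of $\Delta(i)$ involves only such $P(a)$, and the Hom complex into $\Delta(j)$ vanishes term by term. Your detour through $F^1$ and Lemma~\ref{lemma:morphisms lift between from part of deconcatenation} is unnecessary once you notice this.

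For part (ii) your proposal is too vague and, more importantly, you have the two subcases swapped. The condition ``$[\Delta(a):L(v)]=0$ for all $a\neq v$'' is what $v$ \emph{maximal} gives you (since $[\Delta(a):L(v)]>0$ would force $v\triangleleft^e a$), not $v$ minimal. With this in hand the maximal case is immediate: every summand $P(\ell)$ in the resolution of $\Delta(i)$ has $\ell\in Q_0^1$, and $\operatorname{Hom}_A(P(\ell),\Delta(j))=[\Delta(j):L(\ell)]=0$ for \emph{all} such $\ell$ including $\ell=v$, so the whole complex is zero. The minimal case is the one that actually requires the filtration argument you gesture at: one shows inductively that $P(v)$ never occurs as a summand of $P^k$ for $k\geq 1$, because if it did then $\Delta(v)$ would appear in the standard filtration of the $k$-th syzygy of $\Delta(i)$; but that syzygy lies in $\mathcal{F}(\Delta(\triangleright^e i))$ by (QH1), forcing $i\triangleleft^e v$ and contradicting minimality of $v$. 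You should make this induction explicit rather than leaving it as ``a degree/filtration-position argument''.
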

\begin{proof}
	\begin{enumerate}[(i)]
		\item Let $v$ be a source and let $P^\bullet\to \Delta(i)$ be a minimal projective resolution. We claim that each term $P^k$ of the resolution $P^\bullet$ decomposes into a direct sum of indecomposable projective modules as
		$$P^k=\bigoplus_{\ell=1}^n P(\ell)^{m_{\ell,k}},$$
		where, if $m_{\ell,k}>0$, then $\ell \in Q_0^1\backslash\{v\}$. Since $v$ is a source, we must have $[P(i):L(v)]=0$. Consequently, if $\pi_i: P(i)\to \Delta(i)$ is the natural projection, we must also have $[\ker \pi_i: L(v)]=0$. It follows that the projective cover of $\ker \pi_i$ has no composition factor $L(v)$. Proceeding by induction, the claim holds. Applying $\operatorname{Hom}_A(\blank,\Delta(j))$ to $P^\bullet$, we consider the space $\operatorname{Hom}_A(P^k,\Delta(j))$. This space decomposes into a direct sum with summands of the form $\operatorname{Hom}_A(P(\ell), \Delta(j))$, where $\ell \in Q_0^1\backslash\{v\}$, according to our claim. Then, it follows that
		$$\dim \operatorname{Hom}_A(P(\ell),\Delta(j))=[\Delta(j):L(\ell)]=0.$$
		\item 	Let $v$ be a sink and assume that $v$ is maximal with respect to $\trianglelefteq^e$. Suppose there is an extension of degree $k$ from $\Delta(i)$ to $\Delta(j)$. Then, there is a non-zero homomorphism in the space $\operatorname{Hom}_A(P^k,\Delta(j))$. Note that the indecomposable direct summands of $P^k$ are of the form $P(\ell)$, with $\ell\in Q_0$. Since the $\operatorname{Hom}$-functor is additive, consider the space $\operatorname{Hom}_A(P(\ell),\Delta(j))$. For this space to have a non-zero element, we must have $[\Delta(j):L(\ell)]>0$. The only possibility for such an $\ell$ is $\ell=v$. But then $[\Delta(j):L(v)]>0$, implying $v\triangleleft^e j$, contradicting maximality of $v$.
		
		Assume instead that $v$ is minimal with respect to $\trianglelefteq^e$. For $k=0$, the claim of the proposition is $$\operatorname{Hom}_A(\Delta(i),\Delta(j))=0,$$ which is true because $i\in Q_0^1\backslash \{v\}$ and $j\in Q_0^2 \backslash\{v\}$.
		
	 	Next, we claim that $[P^k:L(v)]=0$ for all $k\geq 1$ and proceed by induction. Consider the following picture:
	 	$$\xymatrixcolsep{0.7cm}\xymatrixrowsep{0.5cm}\xymatrix{
			\dots \ar[r] & P^{1} \ar[rr] \ar@{->>}[rd]& & P(i) \ar[r]^-{\pi_i}& \Delta(i)	\\
			& & \ker \pi_i \ar@{^{(}->}[ur]
		}$$
		Since $A$ is quasi-hereditary, the module $\ker\pi_i$ is contained in $\mathcal{F}(\Delta (\triangleright^e i))$, by which we mean that each standard module $\Delta(a)$ occuring in its filtration satisfies $i \triangleleft^e a$. If 
		$$\dim \operatorname{Ext}_A^1(\Delta(i),\Delta(j))>0,$$ then $\dim \operatorname{Hom}_A(P^1,\Delta(j))>0$, which can only happen when $P^1$ has a direct summand isomorphic to $L(v)$. This implies that $L(v)\subset \operatorname{top} \ker \pi_i$, which means that some $\Delta(b)$ with $[\Delta(b):L(v)]>0$ occurs in the standard filtration of $\ker \pi_i$. Since $L(v)$ is contained in $\operatorname{top} \ker \pi_i$, the module $\Delta(v)$ occurs in its standard filtration, since any other standard module $\Delta(c)$ with a composition factor $L(v)$ must satisfy $L(v)\subset \operatorname{rad}\Delta(c)$. Since $\ker \pi_i \in \mathcal{F}(\Delta(\triangleright^e i))$, this implies $v\triangleleft^e i$, contradicting minimality of $v$.
		
		For the inductive step, assume that the module $P^k$ is contained in $\mathcal{F}(\Delta(\triangleright^e i))$. Then, so is $\ker d_k$, and we argue as above.
		 \qedhere
	\end{enumerate}
\end{proof}
\begin{lemma}\label{lemma:projective resolution of module in A^l does not contain P(v)}
	Assume that $v$ is a source. Let $i\in Q_0^\ell \backslash\{v\}$ and let $P^\bullet \to \Delta^\ell(i)$ be a minimal projective resolution of $\Delta^\ell(i)$ as an $A$-module. Then no term $P^k$ of the projective resolution $P^\bullet$ contains the projective module $P^\ell(v)$ as a direct summand.
\end{lemma}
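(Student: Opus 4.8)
The plan is to mimic the structure of the proof of Proposition~\ref{proposition:no ext between different parts of concatenation}(i), where a similar claim about projective resolutions was established. The key observation is that, since $v$ is a source, the simple module $L(v)$ cannot be a composition factor of the radical of any indecomposable projective $A$-module: if $[P(j):L(v)]>0$ with $j\neq v$, then there would be a path in $Q$ ending at $v$, contradicting that $v$ is a source. Consequently, for any $A$-module $M$ with $[M:L(v)]>0$, the multiplicity $[M:L(v)]$ is accounted for only by copies of $L(v)$ in the top of $M$, and these force $P(v)$ to appear in the projective cover of $M$ only if $L(v)\subset\operatorname{top}M$.

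First I would show that $[\Delta^\ell(i):L(v)]=0$ for $i\in Q_0^\ell\setminus\{v\}$. Indeed, by Lemma~\ref{lemma:qh-structure on whole from qh-structure on deconcatenation}(i) (or Lemma~\ref{lemma:qh-structure on parts from qh-structure of whole}(i)) we have $\Delta^\ell(i)\cong\Delta(i)$ as $A$-modules, and $\Delta(i)$ is a quotient of $P(i)$, so $[\Delta^\ell(i):L(v)]\leq[P(i):L(v)]=0$ since $i\neq v$ and $v$ is a source. Next, let $\pi_i\colon P(i)\twoheadrightarrow\Delta^\ell(i)$ be the projective cover and set $K_0=\ker\pi_i$. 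Since $[P(i):L(v)]=0$, also $[K_0:L(v)]=0$, and therefore $L(v)$ does not occur in $\operatorname{top}K_0$; hence the projective cover $P^1\twoheadrightarrow K_0$ has no summand $P(v)$. (Here I use that $P(v)$ and $P^\ell(v)$ coincide as $A$-modules, by Lemma~\ref{lemma:elem. properties of deconcatenation}; when $v$ is a source, $P(v)$ may genuinely differ from $P^\ell(v)$, so care is needed — see the remark at the end.) Actually, since $v$ is a source, $P(v)$ is the projective cover of $L(v)$ over $A$, and its only composition factor of the form $L(v)$ is in the top, so the cleaner formulation is: $P(v)$ is a direct summand of the projective cover of an $A$-module $M$ if and only if $L(v)\subset\operatorname{top}M$, which fails for $M=K_0$ since $[K_0:L(v)]=0$.

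Then I would run the evident induction: writing the minimal projective resolution as $\cdots\to P^2\to P^1\to P^0=P(i)\to\Delta^\ell(i)$, with syzygies $K_k=\operatorname{im}(P^{k}\to P^{k-1})$, I claim $[K_k:L(v)]=0$ for all $k\geq 0$. The base case $k=0$ is above. For the inductive step, $K_{k+1}$ is a submodule of $P^{k+1}$, but more usefully $K_{k+1}=\ker(P^{k+1}\to K_k)$ fits in $0\to K_{k+1}\to P^{k+1}\to K_k\to 0$; since by the inductive hypothesis $[K_k:L(v)]=0$ and (by the previous inductive step) $P^{k+1}$ has no summand $P(v)$ so $[P^{k+1}:L(v)]=0$, additivity of composition multiplicities gives $[K_{k+1}:L(v)]=0$, and then $L(v)\not\subset\operatorname{top}K_{k+1}$, so the projective cover $P^{k+2}$ of $K_{k+1}$ has no summand $P(v)=P^\ell(v)$. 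This closes the induction and proves the statement.

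The main obstacle, and the point requiring genuine care, is the identification of $P(v)$ with $P^\ell(v)$: when $v$ is a \emph{source}, Lemma~\ref{lemma:elem. properties of deconcatenation}(ii) only gives $P(i)\cong P^\ell(i)$ for $i\neq v$, and in general $P(v)\not\cong P^\ell(v)$ as the arrows out of $v$ into the other part $Q^{\overline\ell}$ contribute to $P(v)$ but not to $P^\ell(v)$. The resolution of this is to note that $P^\ell(v)$ is, as an $A$-module, a quotient of $P(v)$, and any direct summand of a term $P^k$ isomorphic to $P^\ell(v)$ would in particular have $L(v)$ in its top; but the argument above shows $[K_k:L(v)]=0$, so no indecomposable projective summand of $P^{k+1}$ has $L(v)$ in its top at all, and in particular none is isomorphic to $P^\ell(v)$. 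So the statement follows without ever needing $P(v)\cong P^\ell(v)$; one only needs that any projective $A$-module having $P^\ell(v)$ as a summand has $L(v)$ in its top, which is clear.
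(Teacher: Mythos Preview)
Your proof is correct and follows essentially the same inductive argument as the paper: both show by induction that $L(v)$ does not occur as a composition factor of the successive syzygies, using that $v$ being a source forces $[P(t):L(v)]=0$ for all $t\neq v$. Your explicit discussion of the distinction between $P(v)$ and $P^\ell(v)$ is well-placed and in fact clarifies a point the paper leaves implicit.
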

\begin{proof}
	We proceed by induction. The basis is clear, as $P^0=P^\ell(i)$. Consider the following picture.
	$$\xymatrixcolsep{0.7cm}\xymatrixrowsep{0.5cm}\xymatrix{
		\dots \ar[r]^-{d_{k+2}} & P^{k+1} \ar[rr]^{d_{k+1}} \ar@{->>}[rd]& & P^k \ar[r]^-{d_k}& \dots	\\
		& & \ker d_k \ar@{^{(}->}[ur]
	}$$
	Assume that the statement holds for the module $P^k$ and assume towards a contradiction that the module $P^{k+1}$ contains a direct summand $P^\ell(v)$. By construction, $P^{k+1}$ is a projective cover of $\ker d_k$, thus we conclude that the module $\operatorname{top} \ker d_k$ contains a direct summand $L^\ell(v)$. This fact, in turn, implies that there is a direct summand $P^\ell(t)$ of $P^k$, such that $\left[ P^\ell(t):L^\ell(v)\right]>0$. Since a basis of $P^\ell(t)$ is given by paths in $A^\ell$ starting in $t$, this implies that there is a path in $A^\ell$ from $t$ to $v$. This is our desired contradiction, as $v$ was assumed to be a source.
\end{proof}
\begin{proposition}\label{proposition:ext between standards in deconcatenation}
	Let $Q^1\sqcup Q^2$ be a deconcatenation at a sink or source $v$.
	\begin{enumerate}[(i)]
		\item If $i,j\in Q_0^\ell \backslash\{v\}$, there holds $\operatorname{Ext}_{A^\ell}^k(\Delta^\ell(i),\Delta^\ell(j))\cong  \operatorname{Ext}_A^k(\Delta(i),\Delta(j)),$
		for all $k\geq 0$.
		\item If $v$ is a sink, then $\operatorname{Ext}^k_{A^\ell}(\Delta^\ell(v),\Delta^\ell(j))\cong \operatorname{Ext}_A^k(\Delta(v),\Delta(j))$, for all $k\geq 0$.
		\item If $v$ is a sink, then 
		$\operatorname{Ext}^k_{A^\ell}(\Delta^\ell(i),\Delta^\ell(v))\cong \operatorname{Ext}_A^k(\Delta(i),\Delta(v))$, for all $k\geq 0$.
	\end{enumerate} 
\end{proposition}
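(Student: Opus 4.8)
The plan is to establish all three parts by comparing minimal projective resolutions over $A^\ell$ and over $A$, using the fully faithful exact functor $F^\ell\colon A^\ell\operatorname{-mod}\to A\operatorname{-mod}$ together with Lemma~\ref{lemma:morphisms lift between from part of deconcatenation}. First I would recall that, by Lemma~\ref{lemma:qh-structure on parts from qh-structure of whole}(i)--(ii) and Lemma~\ref{lemma:qh-structure on whole from qh-structure on deconcatenation}, the standard modules in question agree under $F^\ell$: $\Delta(i)\cong F^\ell(\Delta^\ell(i))$ for $i\in Q_0^\ell\setminus\{v\}$, and $\Delta(v)\cong F^\ell(\Delta^\ell(v))$ when $v$ is a sink. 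So in every case the two $\operatorname{Ext}$-spaces to be compared have the ``same'' first and second arguments, only computed in different module categories.

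The core of the argument is this: take a minimal projective resolution $P^\bullet\to \Delta^\ell(X)$ in $A^\ell\operatorname{-mod}$, where $X$ is $i$ or $v$ as appropriate. Each $P^k$ is a direct sum of modules $P^\ell(t)$ with $t\in Q_0^\ell$. I would show that, for the source case (part (i) with $v$ a source), no $P^\ell(v)$ appears as a summand --- this is exactly Lemma~\ref{lemma:projective resolution of module in A^l does not contain P(v)} --- and that for the sink case $P^\ell(v)\cong P(v)$ by Lemma~\ref{lemma:elem. properties of deconcatenation}(iii), while $P^\ell(t)\cong P(t)$ for $t\neq v$ by Lemma~\ref{lemma:elem. properties of deconcatenation}(ii). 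The upshot is that applying $F^\ell$ sends $P^\bullet$ to a complex of \emph{projective} $A$-modules resolving $\Delta(X)$; since $F^\ell$ is exact and faithful, and since a minimal projective resolution over $A^\ell$ stays minimal (the differentials have entries in the radical, which is preserved), $F^\ell(P^\bullet)$ is in fact a minimal projective resolution of $\Delta(X)$ over $A$. Then $\operatorname{Ext}^k$ is computed by applying $\operatorname{Hom}$ to these resolutions, and I would invoke Lemma~\ref{lemma:morphisms lift between from part of deconcatenation} (with $N=\Delta(Y)$ the appropriate second argument, noting that $\Delta(Y)$ already lies in the image of $F^{\overline\ell}$ only when needed, but more importantly that $\operatorname{Hom}_A(P^\ell(t),\Delta(Y))\cong\operatorname{Hom}_{A^\ell}(P^\ell(t),\Delta^\ell(Y))$ whenever $t\in Q_0^\ell$, since $\dim\operatorname{Hom}_A(P(t),\Delta(Y))=[\Delta(Y):L(t)]$ and composition factors behave well under $F^\ell$ by Lemma~\ref{lemma:elem. properties of deconcatenation}(vi)/(i)) to identify the two $\operatorname{Hom}$-complexes term by term, compatibly with differentials, hence with the same cohomology.

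For part (ii) ($v$ a sink, first argument $\Delta^\ell(v)$), note $\Delta^\ell(v)\cong L^\ell(v)\cong L(v)\cong P(v)$ is projective (Lemma~\ref{lemma:elem. properties of deconcatenation}(iii)), so both $\operatorname{Ext}$-algebras vanish in positive degree and the degree-zero comparison is $\operatorname{Hom}_{A^\ell}(L^\ell(v),\Delta^\ell(j))\cong\operatorname{Hom}_A(L(v),\Delta(j))$, immediate from $F^\ell$ fully faithful. Part (iii) is the genuinely resolution-based case with $X=i\in Q_0^\ell\setminus\{v\}$ and $Y=v$: here one needs that $\operatorname{Hom}_A(P(t),\Delta(v))=\operatorname{Hom}_A(P(t),L(v))$ is nonzero only for $t=v$, and $P^\ell(v)\cong P(v)$ makes the term-by-term comparison go through exactly as above. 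The step I expect to be the main obstacle is verifying carefully that $F^\ell$ carries a \emph{minimal} projective resolution over $A^\ell$ to a minimal projective resolution over $A$ --- i.e.\ that projective covers are preserved --- and, relatedly in part (i), handling the sink subcase of (i) where $P^\ell(v)$ \emph{can} occur: there one must additionally use Lemma~\ref{lemma:elem. properties of deconcatenation}(iii) to see $P^\ell(v)\cong P(v)$ is still projective over $A$, so no hypothesis on $v$ being extremal is needed for part (i), only for the vanishing statements of Proposition~\ref{proposition:no ext between different parts of concatenation}.
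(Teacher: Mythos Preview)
Your proposal is correct and follows essentially the same approach as the paper: transport a minimal projective resolution of $\Delta^\ell(X)$ along the fully faithful exact functor $F^\ell$, observe that the result is a projective resolution of $\Delta(X)$ over $A$ (using Lemma~\ref{lemma:projective resolution of module in A^l does not contain P(v)} in the source case and $P^\ell(v)\cong P(v)$ in the sink case), and then identify the $\operatorname{Hom}$-complexes term by term. In fact you are slightly more careful than the paper, which in part~(i) invokes Lemma~\ref{lemma:projective resolution of module in A^l does not contain P(v)} without comment even though that lemma is stated only for the source case; your observation that in the sink subcase of~(i) one simply uses $P^\ell(v)\cong P(v)$ (so $F^\ell(P^\bullet)$ is still a complex of $A$-projectives) fills that small gap. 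Two minor simplifications: minimality of the transported resolution is not needed to compute $\operatorname{Ext}$, so you can drop that concern; and for the term-by-term $\operatorname{Hom}$ comparison, the cleanest route is just full faithfulness of $F^\ell$ (as the paper implicitly does) rather than Lemma~\ref{lemma:morphisms lift between from part of deconcatenation} or composition-factor counting.
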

\begin{proof}Let $P^\bullet \to \Delta^\ell(i)$ and $Q^\bullet \to \Delta^\ell(j)$ be minimal projective resolutions. By Lemma \ref{lemma:elem. properties of deconcatenation}, there is an isomorphism of $A$-modules $P^\ell(x)\cong P(x)$, for all $x\in Q_0^\ell\backslash\{v\}$. Note that this isomorphism is the one induced by the functor $F^\ell$, that is, $F^\ell(P^\ell(x))\cong  P(x)$. Similarly, we have $F^\ell(\Delta^\ell(i))\cong \Delta(i)$ and $F^\ell(\Delta^\ell(j))\cong \Delta(j)$.
	\begin{enumerate}[(i)]
		\item The functor $F^\ell:A^\ell\operatorname{-mod}\to A\operatorname{-mod}$ is fully faithful and exact. By Lemma \ref{lemma:projective resolution of module in A^l does not contain P(v)}, the terms of the projective resolution $P^\bullet \to \Delta^\ell(i)$ do not contain any direct summands isomorphic to $P^\ell(v)$. We conclude that $F^\ell(P^\bullet)\to F^\ell(\Delta^\ell(i))\cong \Delta(i)$ is a minimal projective resolution. Similarly, we have  that $F^\ell(Q^\bullet) \to F^\ell(\Delta^\ell(j))\cong \Delta(j)$ is a minimal projective resolution. The statement follows.
		\item If $v$ is a sink, we have $P^\ell(v)\cong L(v)\cong P(v)$ as $A$-modules, by Lemma \ref{lemma:elem. properties of deconcatenation}, and $\Delta^\ell(v)\cong L(v)\cong \Delta(v)$ as $A$-modules, by Lemma \ref{lemma:qh-structure on parts from qh-structure of whole}. Thus, the statement follows by applying the functor $F^\ell$ to the minimal projective resolutions of $\Delta^\ell(v)$ and $\Delta^\ell(j)$.
		\item Similar to (ii). \qedhere
	\end{enumerate}
\end{proof}
We recall the notation of \cite{FKR}, where $\overline{1}=2$ and $\overline{2}=1$.
\begin{lemma}\label{lemma:homs into projective at source}
	Let $Q_1\sqcup Q_2$ be a deconcatenation of $Q$ at the source $v$ and let $i\in Q_0^\ell \backslash \{v\}$. Then, there are isomorphisms of vector spaces
	\begin{align*} \operatorname{Hom}_A(P(i),P^\ell(v))\cong \operatorname{Hom}_A(P(i),P(v))\quad \textrm{and}\quad \operatorname{Hom}_A(P^\ell(v),P(i))\cong \operatorname{Hom}_A(P(v),P(i)), \textrm{ for }\ell=1,2.
	\end{align*}
\end{lemma}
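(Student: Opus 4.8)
The plan is to express $P(v)$ as an extension involving $P^\ell(v)$ and then feed the resulting short exact sequence into two $\operatorname{Hom}$-functors. Fix $\ell$ and write $\overline{\ell}$ for its complement in $\{1,2\}$. Since $v$ is a source and, by the definition of a deconcatenation, there are no arrows between $Q_0^1\setminus\{v\}$ and $Q_0^2\setminus\{v\}$, every nontrivial path of $Q$ starting at $v$ lies entirely inside $Q^1$ or entirely inside $Q^2$ (a path cannot return to $v$, as no arrow ends at $v$). Consequently, regarding $P^1(v)$ and $P^2(v)$ as $A$-modules via $F^1$ and $F^2$, the subspace $\operatorname{rad}P^{\overline{\ell}}(v)\subseteq P(v)$ spanned by the nontrivial paths from $v$ lying in $Q^{\overline{\ell}}$ is an $A$-submodule of $P(v)$, and the quotient is canonically $P^\ell(v)$. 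This gives a short exact sequence of $A$-modules
\[
0 \longrightarrow \operatorname{rad}P^{\overline{\ell}}(v) \longrightarrow P(v) \longrightarrow P^\ell(v) \longrightarrow 0 .
\]
Moreover $\operatorname{rad}P^{\overline{\ell}}(v)$ is an $A^{\overline{\ell}}$-module whose composition factors $L(k)$ all have $k\in Q_0^{\overline{\ell}}\setminus\{v\}$ (again as $v$ is a source), while by Lemma~\ref{lemma:elem. properties of deconcatenation} we have $P(i)\cong P^\ell(i)$, so $P(i)$ is an $A^\ell$-module whose composition factors $L(k)$ all have $k\in Q_0^\ell\setminus\{v\}$.

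For the first isomorphism, apply the exact functor $\operatorname{Hom}_A(P(i),\blank)$ to the sequence above; it then suffices to show $\operatorname{Hom}_A(P(i),\operatorname{rad}P^{\overline{\ell}}(v))=0$. But $\operatorname{Hom}_A(P(i),M)\cong e_iM$ for any $A$-module $M$, and $e_i$ annihilates $\operatorname{rad}P^{\overline{\ell}}(v)$ since that module is an $A^{\overline{\ell}}$-module and $i\in Q_0^\ell\setminus\{v\}\subseteq Q_0\setminus Q_0^{\overline{\ell}}$. (Alternatively, this isomorphism is exactly the instance of Lemma~\ref{lemma:morphisms lift between from part of deconcatenation} with $M=P(i)$ and $N=P(v)$, once one identifies $\overline{P(v)}$ with $P^\ell(v)$.)

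For the second isomorphism, apply the contravariant left exact functor $\operatorname{Hom}_A(\blank,P(i))$ to the same sequence; it suffices to show $\operatorname{Hom}_A(\operatorname{rad}P^{\overline{\ell}}(v),P(i))=0$. The image of any such homomorphism is simultaneously a quotient of $\operatorname{rad}P^{\overline{\ell}}(v)$ and a submodule of $P(i)$, hence has composition factors indexed by $Q_0^{\overline{\ell}}\setminus\{v\}$ and by $Q_0^\ell\setminus\{v\}$ at the same time; since $Q_0^\ell\cap Q_0^{\overline{\ell}}=\{v\}$, these index sets are disjoint, so the image is zero and the map vanishes. Equivalently, every $f\in\operatorname{Hom}_A(P(v),P(i))$ kills $\operatorname{rad}P^{\overline{\ell}}(v)$ and therefore factors uniquely through $P^\ell(v)$.

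The argument has no substantial obstacle; the only step that needs care is the very first one, namely checking that the source hypothesis genuinely prevents paths from $v$ from mixing the two parts, so that $\operatorname{rad}P^{\overline{\ell}}(v)$ is an $A$-submodule of $P(v)$ with quotient $P^\ell(v)$, and that the composition-factor supports of $\operatorname{rad}P^{\overline{\ell}}(v)$ and of $P(i)$ are as claimed. Everything after that is a routine application of $\operatorname{Hom}$-exactness.
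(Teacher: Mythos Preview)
Your proof is correct and follows essentially the same approach as the paper: the paper also uses the short exact sequence $0\to \ker p \to P(v)\to P^\ell(v)\to 0$ (your $\ker p=\operatorname{rad}P^{\overline{\ell}}(v)$), invokes Lemma~\ref{lemma:morphisms lift between from part of deconcatenation} for the first isomorphism, and applies $\operatorname{Hom}_A(\blank,P(i))$ together with $\operatorname{Hom}_A(\ker p,P(i))=0$ for the second. Your argument is slightly more detailed in identifying $\ker p$ and in spelling out the composition-factor reasoning, but the strategy is identical.
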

\begin{proof}
The first statement is precisely Lemma~\ref{lemma:morphisms lift between from part of deconcatenation} applied to the modules $P(i)$ and $P^\ell(v)$. Note that in the notation of Lemma~\ref{lemma:morphisms lift between from part of deconcatenation}, we have $P^\ell(v)=\overline{P(v)}$. For the second statement, let $p:P(v)\to P^\ell(v)$ denote the natural epimorphism. Then, there is a short exact sequence
$$\xymatrix{0 \ar[r] & \ker p \ar[r] & P(v) \ar[r]^-p & P^\ell(v) \ar[r] & 0}.$$
Note that $P^\ell(v)$ is an $A^\ell$-module while $\ker p$ is an $A^{\overline{\ell}}$-module. Applying $\operatorname{Hom}_A(\blank, P(i))$ to this sequence, we obtain:
$$\xymatrix{
	0\ar[r] & \operatorname{Hom}_A(P^\ell(v), P(i)) \ar[r] & \operatorname{Hom}_A(P(v),P(i)) \ar[r] & \operatorname{Hom}_A(\ker p, P(i))
	}$$
The space $\operatorname{Hom}_A(\ker p, P(i))$ is zero because of our previous observation, so we have
$$\operatorname{Hom}_A(P^\ell(v), P(i))\cong \operatorname{Hom}_A(P(v), P(i))$$
since the Hom-functor is left exact. \qedhere
\end{proof}
\begin{lemma}\label{lemma:projective resolution of standard at source}
	Let $v$ be a source, let $P^\bullet \to \Delta^1(v)$ be a minimal projective resolution with terms $P^k$, $k\geq 0$ and let $Q^\bullet \to \Delta^{2}(v)$ be a minimal projective resolution with terms $Q^k$,  $k\geq 0$. Then, there is a minimal projective resolution $R^\bullet \to \Delta(v)$, with terms $R^k=P^k\oplus Q^k$, for $k\geq 1$, and $R^0=P(v)$.
\end{lemma}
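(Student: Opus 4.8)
The plan is to first pin down $\Delta(v)$ as the pullback of $\Delta^1(v)$ and $\Delta^2(v)$ over $L(v)$ (equivalently, to compute $\Omega_A\Delta(v)$), and then to glue the given resolutions termwise in positive degrees with $P(v)$ placed in degree $0$.

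\emph{Step 1: the structure of $\Delta(v)$.} Since $v$ is a source and $A=KQ$ is a path algebra, a basis of $P(v)=Ae_v$ consists of the paths of $Q$ starting at $v$, and by the definition of a deconcatenation every such path of positive length stays entirely inside $Q^1$ or inside $Q^2$ and never returns to $v$. Hence $\operatorname{rad}P(v)=N^1\oplus N^2$ as $A$-modules, where $N^\ell$ is spanned by the positive-length paths lying in $Q^\ell$; the module $N^\ell$ is supported on $Q_0^\ell\setminus\{v\}$, the canonical surjection $\pi_\ell\colon P(v)\twoheadrightarrow P^\ell(v)$ has kernel $N^{\overline\ell}$, and $\pi_\ell$ restricts to an isomorphism $Ke_v\oplus N^\ell\xrightarrow{\sim}P^\ell(v)$. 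Put $U^\ell=\ker(P^\ell(v)\twoheadrightarrow\Delta^\ell(v))$ and $\widetilde U^\ell=(\pi_\ell|_{N^\ell})^{-1}(U^\ell)\subseteq N^\ell$. I claim $\ker(P(v)\twoheadrightarrow\Delta(v))=\widetilde U^1\oplus\widetilde U^2$, equivalently that there is a short exact sequence $0\to\Delta(v)\to\Delta^1(v)\oplus\Delta^2(v)\to L(v)\to 0$. The point is that $\operatorname{rad}P(v)$ is a direct sum of two modules with no common composition factor, so every $A$-submodule $W$ of it splits as $(W\cap N^1)\oplus(W\cap N^2)$. Writing $\Delta(v)=P(v)/W$ with $W$ the smallest submodule whose quotient has all composition factors among the $L(j)$ with $j\trianglelefteq v$ (so $W\subseteq\operatorname{rad}P(v)$), and using that for $i\in Q_0^\ell\setminus\{v\}$ one has $i\triangleleft v\iff i\triangleleft^\ell v$ by the definition of $\trianglelefteq(\trianglelefteq^1,\trianglelefteq^2)$, the module $P^\ell(v)/\pi_\ell(W\cap N^\ell)$ is a quotient of $P^\ell(v)$ all of whose composition factors are $\trianglelefteq^\ell v$; maximality of $\Delta^\ell(v)$ forces $U^\ell\subseteq\pi_\ell(W\cap N^\ell)$, while the reverse inclusion $W\subseteq\widetilde U^1\oplus\widetilde U^2$ is immediate because $P(v)/(\widetilde U^1\oplus\widetilde U^2)$ visibly has all composition factors $\trianglelefteq v$. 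In particular $\Omega_A\Delta(v)\cong U^1\oplus U^2$ as $A$-modules, with $U^1$ and $U^2$ supported on disjoint vertex sets.

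\emph{Step 2: the higher terms of $P^\bullet$ and $Q^\bullet$ are $A$-projective.} The top of $\Delta^\ell(v)$ is $L(v)$, so its projective cover over $A^\ell$ is $P^\ell(v)$; thus $P^0=P^1(v)$, $Q^0=P^2(v)$ and $\Omega_{A^1}\Delta^1(v)=U^1$, $\Omega_{A^2}\Delta^2(v)=U^2$. Because $v$ is a source, $[U^\ell\colon L(v)]=0$, so re-running the induction from the proof of Lemma~\ref{lemma:projective resolution of module in A^l does not contain P(v)}, now starting from $U^\ell=\Omega_{A^\ell}\Delta^\ell(v)$, shows that $P^\ell(v)$ is not a summand of any $P^k$ or $Q^k$ with $k\ge1$; these terms are therefore direct sums of modules $P^\ell(x)\cong P(x)$ with $x\in Q_0^\ell\setminus\{v\}$, all $A$-projective. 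Since $A\twoheadrightarrow A^\ell$, the fully faithful exact functor $F^\ell$ takes $\operatorname{rad}_{A^\ell}$ to $\operatorname{rad}_A$, hence minimal projective covers to minimal projective covers; so $F^\ell$ sends the minimal $A^\ell$-projective resolution of $U^\ell$ (the degree $\ge1$ truncation of $P^\bullet$, resp.\ $Q^\bullet$) to a minimal $A$-projective resolution of $U^\ell$. Consequently $P^{k+1}$ is the $A$-projective cover of $\Omega_A^kU^1$ and $Q^{k+1}$ is that of $\Omega_A^kU^2$, for all $k\ge0$.

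\emph{Step 3: assembling $R^\bullet$.} Take $R^0=P(v)$, the $A$-projective cover of $\Delta(v)$ (whose top is $L(v)$). By Step~1, $\Omega_A\Delta(v)\cong U^1\oplus U^2$, and since $U^1,U^2$ have disjoint supports, $\Omega_A^k(U^1\oplus U^2)=\Omega_A^kU^1\oplus\Omega_A^kU^2$, whose $A$-projective cover is $P^{k+1}\oplus Q^{k+1}$ by Step~2. Splicing the minimal $A$-projective resolution of $U^1\oplus U^2$ onto $0\to U^1\oplus U^2\to P(v)\to\Delta(v)\to0$ yields a minimal $A$-projective resolution $R^\bullet\to\Delta(v)$ with $R^0=P(v)$ and $R^k=P^k\oplus Q^k$ for $k\ge1$; minimality is automatic since each $R^k$ is then a projective cover of $\Omega_A^k\Delta(v)$. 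The main obstacle is Step~1: identifying $\Delta(v)$ exactly among the quotients of $P(v)$ and showing it is the pullback $\Delta^1(v)\times_{L(v)}\Delta^2(v)$. This is where all the deconcatenation hypotheses are used — $v$ being a source and there being no arrows between $Q_0^1\setminus\{v\}$ and $Q_0^2\setminus\{v\}$ (so that $\operatorname{rad}P(v)$ splits into pieces with disjoint supports), together with the precise description of how $\trianglelefteq(\trianglelefteq^1,\trianglelefteq^2)$ restricts to each $\trianglelefteq^\ell$, and the elementary fact that a submodule of a direct sum of modules with no common composition factor is itself such a direct sum. Steps~2 and~3 are then bookkeeping built on Lemma~\ref{lemma:projective resolution of module in A^l does not contain P(v)} and exactness of $F^\ell$.
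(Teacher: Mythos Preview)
Your proof is correct and follows essentially the same strategy as the paper's: identify $\ker\bigl(P(v)\twoheadrightarrow\Delta(v)\bigr)$ as the direct sum of the two kernels $U^1\oplus U^2$ coming from the pieces of the deconcatenation, then build the resolution in positive degrees from the resolutions of these summands. The paper carries this out by an explicit induction on $k$, using Lemma~\ref{lemma:no homs between projectives in different parts of deconcatenation} to see that the differentials on $P^k\oplus Q^k$ are block-diagonal, whereas you phrase the same induction in terms of syzygies and appeal to Lemma~\ref{lemma:projective resolution of module in A^l does not contain P(v)} to ensure the higher terms are $A$-projective; these are equivalent packagings of the same argument. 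Your Step~1 is in fact more carefully justified than the paper's treatment of $\ker p$, since you verify both inclusions via the definition of $\trianglelefteq(\trianglelefteq^1,\trianglelefteq^2)$ rather than asserting a basis description.
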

\begin{proof}
	We proceed by induction. For the case $k=1$, consider the projection $p:P(v)\to \Delta(v)$. The module $\ker p$ has a basis consisting of paths $q$ in $A$, such that $s(q)=v$ and $t(q)=j$ with $j\ntriangleleft v$. Let $x_1,\dots, x_n$ be those paths that satisfy $t(x_i)\in Q_0^1$ and let $y_1, \dots, y_m$ be those paths that satisfy $t(y_j)\in Q_0^2$. Put $X=\operatorname{span}\{x_1,\dots, x_n\}$ and $Y=\operatorname{span}\{y_1,\dots, y_m\}$. Clearly, $\ker p=X\oplus Y$ as vector spaces. Since the action of $A$ on $X$ and $Y$ is given by left multiplication, we see that, in fact, $\ker p=X\oplus Y$ as $A$-modules. 
	
	It is clear that, as an $A^1$-module, $X$ is isomorphic to the kernel of the projection $p^1: P^1(v)\twoheadrightarrow \Delta^1(v)$, so $P^1$ is a projective cover of $X$. Similarly, as an $A^2$-module, $Y$ is isomorphic to the kernel of the projection $p^2: P^2(v)\twoheadrightarrow \Delta^2(v)$, so $Q^1$ is a projective cover of $Y$. When $k=2$, we consider the picture:
	$$\xymatrix{
	R^2 \ar[r] \ar[d]&P^1\oplus Q^1 \ar[rd] \ar[rr]^-{(d^1, e^1)} & & P(v) \\
	\ker d^1\oplus \ker e^1 \ar[ru]& & X\oplus Y \ar[ru]	}$$
It is clear that we may view the map from $P^1\oplus Q^1$ to $P(v)$ as $(d^1, e^1)$. It the follows that its kernel is the direct sum $\ker d^1\oplus \ker e^1$, which has a projective cover $R^2=P^2\oplus Q^2$.
	This finishes the basis of the induction. Consider the following picture:
	\begin{align*}
		\xymatrix@C=2cm{
			R^{k+1} \ar[d]\ar[r]^-{f^{k+1}}	&P^k\oplus Q^k \ar[r]^-{\left(\begin{smallmatrix}
					d^k &0 \\ 0 & e^k
				\end{smallmatrix}\right)} & P^{k-1}\oplus Q^{k-1} \\
			\ker d^k\oplus \ker e^k \ar[ru]
		}
	\end{align*}
	That the right matrix above is diagonal follows from Lemma \ref{lemma:no homs between projectives in different parts of deconcatenation} and the induction hypothesis. Then, the kernel of the map $\left( \begin{smallmatrix}
		d^k & 0 \\ 0 & e^k
	\end{smallmatrix}\right)$ is isomorphic to $\ker d^k \oplus \ker e^k$, and a projective cover is therefore $P^{k+1}\oplus Q^{k+1}$. This shows that $R^{k+1}=P^{k+1}\oplus Q^{k+1}$ and $f^{k+1}=\left(\begin{smallmatrix}
	d^{k+1} & 0 \\ 0 & e^{k+1}
\end{smallmatrix}\right)$.\qedhere
\end{proof}
\begin{proposition}\label{proposition:ext to and from source in deconcatenation}
	Let $Q_1\sqcup Q_2$ be a deconcatenation of $Q$ at the source $v$ and let $i\in Q_0^\ell \backslash \{v\}$. Then, there are isomorphisms of vector spaces
	\begin{align*}
		\operatorname{Ext}^k_{A^\ell}(\Delta^\ell(i),\Delta^\ell(v))&\cong \operatorname{Ext}^k_{A}(\Delta(i),\Delta(v))\quad \textrm{and}\quad  		\operatorname{Ext}^k_{A^\ell}(\Delta^\ell(v),\Delta^\ell(i))\cong \operatorname{Ext}^k_{A}(\Delta(v),\Delta(i)),
	\end{align*}
	for all $k\geq 0$.
\end{proposition}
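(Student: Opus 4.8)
The plan is to prove both isomorphisms by constructing a minimal projective resolution of the \emph{first} argument over $A$ out of the data over $A^1$ and $A^2$, and then comparing the resulting $\operatorname{Hom}$-complexes with those over $A^\ell$. By the symmetry exchanging the two halves of the deconcatenation it suffices to take $\ell=1$; fix $i\in Q_0^1\setminus\{v\}$, and recall that $F^1$ is exact and fully faithful with $F^1(\Delta^1(x))\cong\Delta(x)$ and $F^1(P^1(x))\cong P(x)$ for $x\in Q_0^1\setminus\{v\}$ (Lemmas~\ref{lemma:elem. properties of deconcatenation} and~\ref{lemma:qh-structure on whole from qh-structure on deconcatenation}).

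For the first isomorphism, take a minimal projective resolution $P^\bullet\to\Delta^1(i)$ over $A^1$; by Lemma~\ref{lemma:projective resolution of module in A^l does not contain P(v)} none of its terms involves $P^1(v)$, so $F^1P^\bullet$ is a complex of genuine projective $A$-modules and a minimal projective resolution of $\Delta(i)$, whence $\operatorname{Ext}^k_A(\Delta(i),\Delta(v))=H^k\operatorname{Hom}_A(F^1P^\bullet,\Delta(v))$. Since each term of $F^1P^\bullet$ is a sum of modules $P(x)$ with $x\in Q_0^1\setminus\{v\}$, composition with the canonical surjection $\Delta(v)\twoheadrightarrow\overline{\Delta(v)}$ (notation of Lemma~\ref{lemma:morphisms lift between from part of deconcatenation}) induces, termwise, an isomorphism onto $\operatorname{Hom}_A(F^1P^\bullet,\overline{\Delta(v)})$: it is always injective, and here both sides have dimension $[\Delta(v):L(x)]$ because the kernel of that surjection is supported away from $Q_0^1$. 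As $\overline{\Delta(v)}\cong\Delta^1(v)$ — which follows from the definition of $\trianglelefteq$ together with the fact that $\operatorname{rad}\Delta(v)$ splits over $A$ as $\operatorname{rad}\Delta^1(v)\oplus\operatorname{rad}\Delta^2(v)$, $v$ being a source — passing to cohomology of this natural isomorphism of complexes gives $\operatorname{Ext}^k_A(\Delta(i),\Delta(v))\cong\operatorname{Ext}^k_{A^1}(\Delta^1(i),\Delta^1(v))$. (Equivalently one can use the adjunction $(F^1,\operatorname{Hom}_A(A^1,\blank))$ directly, at the cost of invoking $\operatorname{Ext}^{\ge1}(\Delta,\nabla)=0$ over $A^1$.)

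For the second isomorphism I would resolve $\Delta(v)$ instead, using Lemma~\ref{lemma:projective resolution of standard at source}: there is a minimal projective resolution $R^\bullet\to\Delta(v)$ over $A$ with $R^0=P(v)$ and, for $k\ge1$, $R^k=F^1P^k\oplus F^2Q^k$, where $P^\bullet\to\Delta^1(v)$ and $Q^\bullet\to\Delta^2(v)$ are minimal projective resolutions over $A^1$ and $A^2$ and the differentials of $R^\bullet$ are block-diagonal in degrees $\ge2$. Since $v$ is a source in $Q^2$ as well, $[P^2(x):L^2(v)]=0$ for $x\ne v$, so inductively $[Q^k:L^2(v)]=0$ and hence $Q^k$ has no summand $P^2(v)$ for $k\ge1$; thus each $F^2Q^k$ ($k\ge1$) is a sum of modules $P(y)$ with $y\in Q_0^2\setminus\{v\}$, and since $\Delta(i)\cong\Delta^1(i)$ is supported on $Q_0^1$ (Lemma~\ref{lemma:elem. properties of deconcatenation}(vi)) we get $\operatorname{Hom}_A(F^2Q^k,\Delta(i))=0$. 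Therefore $\operatorname{Hom}_A(R^\bullet,\Delta(i))$ reduces, in degrees $\ge1$, to $\operatorname{Hom}_A(F^1P^\bullet,\Delta(i))=\operatorname{Hom}_{A^1}(P^\bullet,\Delta^1(i))$ (full faithfulness of $F^1$), and in degree $0$ to $\operatorname{Hom}_A(P(v),\Delta(i))\cong\operatorname{Hom}_{A^1}(P^1(v),\Delta^1(i))$, the last because any $A$-homomorphism from $P(v)$ into the $A^1$-module $\Delta(i)$ kills $\sum_{u\in Q_0^2\setminus\{v\}}e_uP(v)$ and factors through $P(v)/\sum_{u\in Q_0^2\setminus\{v\}}e_uP(v)=P^1(v)$; once these are checked to be compatible with the differentials, taking cohomology yields $\operatorname{Ext}^k_A(\Delta(v),\Delta(i))\cong\operatorname{Ext}^k_{A^1}(\Delta^1(v),\Delta^1(i))$.

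The hard part will be this last compatibility check, the subtlety being that $R^0=P(v)$, not $F^1P^1(v)$, so $d_1\colon R^1\to R^0$ is not literally $F^1$ of the first differential of $P^\bullet$. I would handle it by running the whole comparison for the second isomorphism through the natural isomorphism $\operatorname{Hom}_A(\blank,\Delta(i))\cong\operatorname{Hom}_{A^1}(A^1\otimes_A\blank,\Delta(i))$, which reduces everything to $A^1\otimes_AR^\bullet\cong P^\bullet$ as complexes of projective $A^1$-modules. On terms this is immediate ($A^1\otimes_AF^1P^k=P^k$, $A^1\otimes_AF^2Q^k=0$ since the idempotents $e_y$ with $y\in Q_0^2\setminus\{v\}$ vanish in $A^1$, and $A^1\otimes_AP(v)=P^1(v)$), and on differentials it comes out by unwinding the explicit construction of $d_1$ in the proof of Lemma~\ref{lemma:projective resolution of standard at source}, where $F^1P^1$ is precisely the projective cover of the $Q^1$-supported direct summand of $\ker(P(v)\twoheadrightarrow\Delta(v))$.
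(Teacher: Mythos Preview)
Your argument is correct, and it is genuinely different from the paper's. The paper computes both $\operatorname{Ext}$-groups as morphisms in the homotopy category: it resolves \emph{both} arguments, represents classes as chain maps $\varepsilon^\ell:P^\bullet\to Q^\bullet[k]$, lifts them component-wise to chain maps $P^\bullet\to T^\bullet[k]$ using the splitting $T^s=Q^s\oplus R^s$ for $s\ge1$ and Lemma~\ref{lemma:homs into projective at source} at $s=0$, and then separately checks that homotopies lift in the same fashion; the resulting map on $\operatorname{Ext}$ is seen to be bijective by a termwise $\operatorname{Hom}$-dimension count. Your route avoids all of this bookkeeping by applying $\operatorname{Hom}$ to the actual second argument rather than to its resolution. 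For the first isomorphism you use that postcomposition with the surjection $\Delta(v)\twoheadrightarrow\overline{\Delta(v)}\cong\Delta^1(v)$ is a \emph{natural} transformation, hence automatically a morphism of complexes, and is termwise bijective because the kernel has no composition factors from $Q_0^1$; this replaces the paper's explicit chain-map and homotopy verifications by a one-line naturality argument. For the second isomorphism you invoke the adjunction $\operatorname{Hom}_A(-,F^1\Delta^1(i))\cong\operatorname{Hom}_{A^1}(A^1\otimes_A-,\Delta^1(i))$ and reduce to checking $A^1\otimes_A R^\bullet\cong P^\bullet$ as complexes, which is immediate on terms and on differentials follows from the explicit description of $d_1$ in Lemma~\ref{lemma:projective resolution of standard at source}. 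The trade-off is that the paper's construction is more hands-on and exhibits the comparison map explicitly (useful later when one wants compatibility with products in the $\operatorname{Ext}$-algebra), while your approach is shorter and makes the role of the decomposition $\operatorname{rad}\Delta(v)=\operatorname{rad}\Delta^1(v)\oplus\operatorname{rad}\Delta^2(v)$ more transparent.
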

\begin{proof}
	Let $P^\bullet\to \Delta^\ell(i)$ and $Q^\bullet \to \Delta^\ell(v)$ be minimal projective resolutions. Let $\varepsilon^\ell: P^\bullet \to Q^\bullet[k]$ be a chain map.
	\begin{align*}
		\xymatrix{
			\dots \ar[r]& P^{k+2}\ar[r]^{d^{k+2}} \ar[d]^-{\varepsilon_2^\ell} & P^{k+1} \ar[d]^-{\varepsilon_1^\ell}\ar[r]^{d^{k+1}}& P^k	\ar[d]^-{\varepsilon_0^\ell}	\ar[r] & \dots\\
			\dots \ar[r]& Q^{2}\ar[r]_-{\delta^2} & Q^{1}\ar[r]_-{\delta^1} & P^\ell(v)
		}
	\end{align*}
	According to Lemma \ref{lemma:projective resolution of standard at source}, there is a minimal projective resolution $T^\bullet\to \Delta(v)$ with terms
	$$T^k=\begin{cases}
		Q^k \oplus R^k, & \textrm{if }k\geq 1\\
		P(v), & \textrm{if } k=0,
	\end{cases}$$
where the modules $R^k$ constitute a minimal projective resolution of $\Delta^{\overline{\ell}}(v)$. Define a map $\varepsilon: P^\bullet \to T^\bullet[k]$, as having components given by
	$$\varepsilon_n=\begin{cases}
		\left(\begin{smallmatrix}
			\varepsilon_n^\ell \\ 0
		\end{smallmatrix}\right), & \textrm{if } n\geq 1 \\
		\Phi(\varepsilon_0^\ell), & \textrm{if }n=0.
	\end{cases}$$
Here, $\Phi: \operatorname{Hom}_A(P^k, P^\ell(v))\to \operatorname{Hom}_A(P^k, P(v))$ is the isomorphism obtained in the proof of Lemma \ref{lemma:homs into projective at source}, so that $\varepsilon_0$ is uniquely defined by the equation $p\varepsilon_0=\varepsilon_0^\ell$, where $p:P(v)\to P(v)^\ell$ is the natural projection.
	\begin{align*}
		\xymatrix{
			\dots \ar[r]& P^{k+2}\ar[r]^{d^{k+2}} \ar[d]^-{\varepsilon_2} & P^{k+1} \ar[d]^-{\varepsilon_1}\ar[r]^{d^{k+1}}& P^k	\ar[d]^-{\varepsilon_0}	\ar[r] & \dots\\
			\dots \ar[r]& Q^{2}\oplus R^2\ar[r]_-{\left(\begin{smallmatrix}
					\delta^2 & 0 \\ 0 & \partial^2
				\end{smallmatrix}\right)} & Q^{1} \oplus R^1\ar[r] & P(v)
		}
	\end{align*}
We claim that $\varepsilon$ is a chain map. Considering the following diagram, where $n\geq 1$.
$$\xymatrixcolsep{2cm}\xymatrix{
	P^{k+n+1}\ar[d]_-{\varepsilon_{n+1}} \ar[r]^-{d^{k+n+1}} & P^{k+n} \ar[d]^-{\varepsilon_n} \\
	Q^{n+1} \oplus R^{n+1} \ar[r]_-{\left(\begin{smallmatrix}
			\delta^{n+1} & 0 \\ 0 & \partial^{n+1}
		\end{smallmatrix}\right)}& Q^n\oplus R^n	
}$$
We have
\begin{align*}
\left(\begin{smallmatrix}
	\delta^{n+1} & 0 \\ 0 & \partial^{n+1}
\end{smallmatrix}\right) \varepsilon_{n+1}&=\left(\begin{smallmatrix}
\delta^{n+1} & 0 \\ 0 & \partial^{n+1}
\end{smallmatrix}\right)\left(\begin{smallmatrix}
\varepsilon_{n+1}^\ell \\ 0
\end{smallmatrix}\right)=\left(\begin{smallmatrix}
\delta^{n+1}\varepsilon_{n+1}^\ell \\0
\end{smallmatrix}\right)=\left(\begin{smallmatrix}
\varepsilon_n^\ell d^{k+n+1} \\ 0
\end{smallmatrix}\right)=\left(\begin{smallmatrix}
	\varepsilon_n^\ell\\0
\end{smallmatrix}\right) d^{k+n+1}=\varepsilon_n d^{k+n+1}
\end{align*}
since $\delta^{n+1}\varepsilon_{n+1}^\ell=\varepsilon_n^\ell d^{k+n+1}$ by virtue of $\varepsilon^\ell$ being a chain map. It remains to check the following square.
	$$\xymatrix{
		P^{k+1} \ar[d]_-{\varepsilon_1}\ar[r]^{d^{k+1}} & P^k	\ar[d]^-{\varepsilon_0}	\\
		Q^{1} \oplus R^1\ar[r] & P(v)
	}$$
	In the proof of Lemma \ref{lemma:projective resolution of standard at source}, we saw that the kernel of the projection $P(v)\twoheadrightarrow \Delta(v)$ is isomorphic to $X\oplus Y$, where $X$ is isomorphic to the kernel of the projection $P^1(v)\twoheadrightarrow \Delta^1(v)$ and $Y$ is isomorphic to the kernel of the projection $P^2(v)\twoheadrightarrow \Delta^2(v)$. Consider the following pictures:
	$$\xymatrix{
		Q^1 \ar[rr]^-{\delta^1} \ar[rd]& & P^1(v) \ar[r] & \Delta^1(v)\\
		& X \ar[ru]
	},\quad \xymatrix{
		R^1 \ar[rr]^-{\partial^1}\ar[rd]& & P^2(v) \ar[r] & \Delta^2(v)\\
		& Y \ar[ru]
	}$$
	By construction, $\delta^1$ and $\partial^1$ are radical maps. Since $\operatorname{rad}P^1(v)$ and $\operatorname{rad}P^2(v)$ are isomorphic to submodules of $P(v)$, we may view $\delta^1$ and $\partial^1$ as maps $\delta^1: Q^1\to P(v)$ and $\partial^1: R^1\to P(v)$, respectively. Then the bottom map in the above square can be written as $(\delta^1,\partial^1):P^1 \oplus R^1\to P(v)$. Consider the following picture:
	$$\xymatrix{
		P^{k+1} \ar[d]_-{\varepsilon_1} \ar[r]^-{d^{k+1}} & P^k \ar[r]^-{\varepsilon_0^\ell} \ar[d]^-{\varepsilon_0} & P^\ell(v)\\
		Q^1\oplus R^1 \ar[r]_-{(\delta^1,\partial^1)} & P(v) \ar@{->>}[ru]_-{p}
	}$$
	By construction, the triangle on the right commutes. Since $\varepsilon^\ell$ is a chain map, we have
	$$(\delta^1,\partial^1)\varepsilon_1=(\delta^1, \partial^1)\left(\begin{smallmatrix}
		\varepsilon_1^\ell \\ 0
	\end{smallmatrix}\right)=\delta^1 \varepsilon_1^\ell=\varepsilon_0^\ell d^{k+1}.$$
	Moreover, the image of $\delta^1\varepsilon_1^\ell=\varepsilon_0^\ell d^{k+1}$ is contained in the submodule $\operatorname{rad}(A^\ell)\cdot P(v)$, on which $p$ acts as the identity. This implies that the perimeter of the diagram commutes, and since $p$ is an isomorphism on $\operatorname{rad}(A^\ell)\cdot P(v)$, the square commutes. Let $e^\ell:P^\bullet \to Q^\bullet[k]$ be another chain map. We claim that if $\varepsilon^\ell$ and $e^\ell$ are homotopic, so are $\varepsilon$ and $e$, where $e: P^\bullet \to T^\bullet[k]$ is the chain map with components
	$$e_n=\begin{cases}
		\left(\begin{smallmatrix}
			e_n^\ell \\ 0
		\end{smallmatrix}\right), & \textrm{if } n\geq 1 \\
		\Phi(e_0^\ell), & \textrm{if }n=0.
	\end{cases}$$
	$$\xymatrixcolsep{2cm}\xymatrix{
		P^{k+n+1}\ar[r]^-{d^{k+n+1}}  \ar[d] & P^{k+n} \ar[ld]_-{h_{n+1}}\ar[d]^-{\varepsilon_n^\ell}_-{e_n^\ell} \ar[r]^-{d^{k+n}} &P^{k+n-1}  \ar[d]\ar[ld]_-{h_n}\\
		Q^{n+1} \ar[r]_-{\delta^{n+1}} & Q^n \ar[r]_-{\delta^n}& Q^{n-1}
	}$$
	Assume that $h_n d^{k+n}+\delta^{n+1}\ h_{n+1} =\varepsilon_n^\ell-e_n^\ell$, for $n\geq 1$. Consider the picture:
	$$\xymatrixcolsep{2cm}\xymatrixrowsep{1.5cm}\xymatrix{
		P^{k+n+1}\ar[r]^-{d^{k+n+1}}  \ar[d] & P^{k+n}\ar[d]^-{\left(\begin{smallmatrix}
				\varepsilon_n^\ell\\ 0
			\end{smallmatrix}\right)}_-{\left(\begin{smallmatrix}
				e_n^\ell\\ 0
			\end{smallmatrix}\right)} \ar[r]^-{d^{k+n}} \ar[ld]_-{\left(\begin{smallmatrix}
				h_{n+1}\\0
			\end{smallmatrix}\right)}&P^{k+n-1} \ar[ld]_-{\left(\begin{smallmatrix}
				h_{n}\\0
			\end{smallmatrix}\right)}  \ar[d]\\
		Q^{n+1}\oplus R^{n+1} \ar[r]_-{\left(\begin{smallmatrix}
				\delta^{n+1} & 0 \\ 0 & \partial^{n+1}
			\end{smallmatrix}\right)} & Q^n\oplus R^n \ar[r]_-{\left(\begin{smallmatrix}
				\delta^{n} & 0 \\ 0 & \partial^{n}
			\end{smallmatrix}\right)}& Q^{n-1}\oplus R^{n-1}
	}$$
	Then, we have
	$$\left(\begin{smallmatrix}
		\delta^{n+1} & 0 \\ 0 & \partial^{n+1}
	\end{smallmatrix}\right)  \left(\begin{smallmatrix}
		h_{n+1}\\0
	\end{smallmatrix}\right) + \left(\begin{smallmatrix}
		h_n\\0
	\end{smallmatrix}\right)d^{k+n}=\left(\begin{smallmatrix}
		\delta^{n+1} h_{n+1}\\0
	\end{smallmatrix}\right) + \left(\begin{smallmatrix}
		h_n d^{k+n}\\0
	\end{smallmatrix}\right) =\left(\begin{smallmatrix}
		\varepsilon_n^\ell-e_n^\ell\\0
	\end{smallmatrix}\right).$$
It remains to check the case $n=0$. Consider the following picture.
	$$\xymatrixcolsep{2cm}\xymatrix{
		P^{k+1}\ar[r]^-{d^{k+1}}  \ar[d] & P^{k} \ar[ld]_-{h_{1}}\ar[d]^-{\varepsilon_0^\ell}_-{e_0^\ell} \ar[r]^-{d^{k}} &P^{k-1}  \ar[d]\ar[ld]_-{h_0}\\
		Q^{1} \ar[r]_-{\delta^{1}} & P^\ell(v) \ar[r]& 0
	}$$
	Assume that $h_0\circ d^k + \delta^1 h_1=\varepsilon_0^\ell-e_0^\ell$ as maps $P^k\to P^\ell(v)$. To finish our homotopy, we take the map $\Phi(h_0)$, where $$\Phi:\operatorname{Hom}_A(P^k, P^\ell(v))\to \operatorname{Hom}_A(P^k, P(v))$$
	is the isomorphism constructed in the proof of Lemma \ref{lemma:homs into projective at source}.
	$$\xymatrixcolsep{2cm}\xymatrixrowsep{1.5cm}\xymatrix{
		P^{k+1} \ar[d]\ar[r]^-{d^{k+1}} & P^k \ar[ld]_-{\left(\begin{smallmatrix}
				h_1\\0
			\end{smallmatrix}\right)} \ar[r]^-{d^k} \ar[d]^-{\varepsilon_0}_-{e_0} & P^{k-1}\ar[ld]_-{\Phi(h_0)} \ar[d]^-{h_0}\\
		Q^1\oplus R^1 \ar[r]_-{(\delta^1,\partial^1)} & P(v) \ar@{->>}[r]_-{p} & P^\ell(v)
	}$$
Note that the maps $\varepsilon_0, e_0$ and $\Phi(h_0)$ are defined by the equations
$$p\varepsilon_0=\varepsilon_0^\ell,\quad pe_0=e_0^\ell,\quad\textrm{and}\quad p\Phi(h_0)=h_0.$$
Using this, we obtain $\delta^1h_1 + p\Phi(h_0)d^k=p(\varepsilon_0-e_0)$. Since the image of $\delta^1 h_1$ is contained in the submodule $\operatorname{rad}P^\ell(v)$, which in a natural way is a submodule of $P(v)$ on which $p:P(v)\to P^\ell(v)$ acts as the identity, we have $\delta^1 h_1=p\delta^1 h_1$ as maps $P^k\to P^\ell(v)$. This means that we have the equality
$$p(\delta^1h_1 + \Phi(h_0)d^k)=p(\varepsilon_0- e_0)$$
as maps $P^k\to P^\ell(v)$. Now, since $P^k$ is an $A^\ell$-module, we may argue as in the proof of Lemma \ref{lemma:homs into projective at source}, to conclude that the images of $\delta^1 h_1, \Phi(h_0), \varepsilon_0$ and $e_0$ are all contained in the submodule $\operatorname{rad}(A^\ell)\cdot P(v)\subset P(v)$, on which $p$ acts as the identity. This implies that
$$\varepsilon_0 - e_0= \delta^1h_1 + \Phi(h_0)d^k= (\delta^1,\partial^1)\left(\begin{smallmatrix}
	h_1 \\0
\end{smallmatrix}\right)+\Phi(h_0)d^k,$$
confirming the claim that $\varepsilon$ and $e$ are homotopic. Thus, we have shown that the map
$$\Upsilon:\operatorname{Ext}_{A^\ell}^k(\Delta^\ell(i),\Delta^\ell(v)) \to \operatorname{Ext}_A^k(\Delta(i),\Delta(v)),$$
defined by $\varepsilon^\ell \mapsto \varepsilon$, is well-defined.
	
	Next, note that, since $i\in Q_0^\ell\backslash\{v\}$, the terms of $P^\bullet$ do not contain indecomposable direct summands isomorphic to $P^\ell(v)$, according to Lemma \ref{lemma:projective resolution of module in A^l does not contain P(v)}. Then, since the indecomposable direct summands of $R^s$ are of the form $P(j)$, for $j\in Q_0^{\overline{\ell}}$, we have $\operatorname{Hom}_A(P^{k+s}, R^s)=0$, for any $s\geq 1$, according to Lemma \ref{lemma:no homs between projectives in different parts of deconcatenation}. Thus, we have an isomorphism of vector spaces
	$$\operatorname{Hom}_{A^\ell} (P^{k+s}, Q^s) \cong \operatorname{Hom}_A(P^{k+s}, T^s),$$
	for all $s\geq 1$, given by $f\mapsto \left(\begin{smallmatrix}
		f \\ 0
	\end{smallmatrix}\right)$ (note that $T^s=Q^s\oplus R^s$ for $s\geq 1$). When $s=0$, we have the isomorphism $\operatorname{Hom}_{A^\ell}(P^k, Q^0)\cong \operatorname{Hom}_A(P^k, T^0)$, according to Lemma \ref{lemma:homs into projective at source}, since $Q^0=P^\ell(v)$ and $T^0=P(v)$. This means that we have an isomorphism of vector spaces
$$\xi^s: \operatorname{Hom}_{A^\ell}(P^{k+s}, Q^s)\cong \operatorname{Hom}_{A}(P^{k+s}, T^s),$$
	for all $s\geq 0$. We note that, by construction, $\left(\Upsilon(\varepsilon^\ell)\right)_n=\xi^n(\varepsilon^\ell_n)$ for all $n\geq 0$, which shows that $\Upsilon$ is a linear isomorphism. This proves the first statement. The second statement is proved similarly.
\end{proof}
Let $\operatorname{Ext}_{A^\ell}^\ast(\Delta^\ell,\Delta^\ell)$ denote the $\operatorname{Ext}$-algebra of standard modules over $A^\ell$, for $\ell=1,2$. Similarly, let $\operatorname{Ext}_A^\ast(\Delta,\Delta)$ denote the $\operatorname{Ext}$-algebra of standard modules over $A$. Fix the basis $1_{\Delta(v)}$ of the space $\operatorname{End}_A(\Delta(v))$. 
\begin{theorem}\label{theorem:pasting of ext-algebras of parts of concatenation isomorphic to whole ext-algebra}
	Let $Q^1\sqcup Q^2$ be a deconcatenation of $Q$ at a sink or source $v$, such that if $v$ is a sink, then $v$ is minimal or maximal with respect to $\trianglelefteq^e$. Then, there is an isomorphism of graded algebras $\operatorname{Ext}_A^\ast(\Delta,\Delta)\cong \operatorname{Ext}_{A^1}^\ast(\Delta^1,\Delta^1)\diamond \operatorname{Ext}_{A^2}^\ast(\Delta^2,\Delta^2)$.
\end{theorem}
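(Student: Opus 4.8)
The plan is to exhibit $\operatorname{Ext}_A^\ast(\Delta,\Delta)$ itself as a gluing of two of its own subalgebras. Recall that, as a graded vector space, $\operatorname{Ext}_A^\ast(\Delta,\Delta)=\bigoplus_{i,j\in Q_0}\operatorname{Ext}_A^\ast(\Delta(i),\Delta(j))$, and that the Yoneda product carries $\operatorname{Ext}_A^\ast(\Delta(j),\Delta(k))\otimes\operatorname{Ext}_A^\ast(\Delta(i),\Delta(j))$ into $\operatorname{Ext}_A^\ast(\Delta(i),\Delta(k))$ while annihilating any pair of summands with mismatched middle index. Put
$$X=\bigoplus_{i,j\in Q_0^1}\operatorname{Ext}_A^\ast(\Delta(i),\Delta(j)),\qquad Y=\bigoplus_{i,j\in Q_0^2}\operatorname{Ext}_A^\ast(\Delta(i),\Delta(j)).$$
The index bookkeeping above shows that $X$ and $Y$ are graded subalgebras, and since $Q_0=Q_0^1\cup Q_0^2$ the unit $\sum_{i\in Q_0}1_{\Delta(i)}$ of $\operatorname{Ext}_A^\ast(\Delta,\Delta)$ lies in $\operatorname{span}(X,Y)$. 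Their intersection is $X\cap Y=\operatorname{Ext}_A^\ast(\Delta(v),\Delta(v))$, which is one-dimensional: $\operatorname{Ext}_A^k(\Delta(v),\Delta(v))=0$ for $k\geq 1$ (a standard property of standard modules over a quasi-hereditary algebra; when $v$ is a sink it is immediate from $\Delta(v)\cong P(v)$), and $\operatorname{End}_A(\Delta(v))=K\cdot 1_{\Delta(v)}$ since the endomorphism ring of a standard module is a division ring and $K$ is algebraically closed.

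Next I would fix the complements $X'=\bigoplus_{i,j\in Q_0^1,\,(i,j)\neq(v,v)}\operatorname{Ext}_A^\ast(\Delta(i),\Delta(j))$ and $Y'=\bigoplus_{i,j\in Q_0^2,\,(i,j)\neq(v,v)}\operatorname{Ext}_A^\ast(\Delta(i),\Delta(j))$ of $X\cap Y$ in $X$ and $Y$. The one essential input — and the only place the hypothesis on $v$ enters — is Proposition~\ref{proposition:no ext between different parts of concatenation}, which gives $\operatorname{Ext}_A^k(\Delta(i),\Delta(j))=0$ for all $k\geq 0$ whenever $i$ and $j$ lie in different parts of the deconcatenation. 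From this, $X'\cdot Y'=Y'\cdot X'=0$: a nonzero product of $\xi\in X'$ with target index $k\in Q_0^1$ and $\eta\in Y'$ with source index $i\in Q_0^2$ forces the shared middle index to lie in $Q_0^1\cap Q_0^2=\{v\}$, whence $i\in Q_0^2\backslash\{v\}$ and $k\in Q_0^1\backslash\{v\}$ and the product lands in $\operatorname{Ext}_A^\ast(\Delta(i),\Delta(k))=0$; the reversed product is symmetric. Hence $X'\oplus Y'\oplus(X\cap Y)=X\diamond Y$ is a subalgebra of $\operatorname{Ext}_A^\ast(\Delta,\Delta)$, and it is all of $\operatorname{Ext}_A^\ast(\Delta,\Delta)$, because every summand $\operatorname{Ext}_A^\ast(\Delta(i),\Delta(j))$ either vanishes (mixed indices) or is contained in one of $X'$, $Y'$, $X\cap Y$.

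It remains to identify $X$ and $Y$, as graded algebras, with $\operatorname{Ext}_{A^1}^\ast(\Delta^1,\Delta^1)$ and $\operatorname{Ext}_{A^2}^\ast(\Delta^2,\Delta^2)$. For this I would use the exact and fully faithful functor $F^\ell: A^\ell\operatorname{-mod}\to A\operatorname{-mod}$. Exactness makes $F^\ell$ send Yoneda extensions of $A^\ell$-modules to Yoneda extensions of $A$-modules compatibly with splicing, so after the identifications $F^\ell(\Delta^\ell(i))\cong\Delta(i)$ for $i\in Q_0^\ell$ of Lemma~\ref{lemma:qh-structure on parts from qh-structure of whole} it induces a homomorphism of graded algebras $\Theta_\ell:\operatorname{Ext}_{A^\ell}^\ast(\Delta^\ell,\Delta^\ell)\to\operatorname{Ext}_A^\ast(\Delta,\Delta)$ with image in $X$ (for $\ell=1$) or $Y$ (for $\ell=2$), sending $1_{\Delta^\ell(v)}$ to $1_{\Delta(v)}$. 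On each graded component this map is precisely the vector-space isomorphism of Proposition~\ref{proposition:ext between standards in deconcatenation} or Proposition~\ref{proposition:ext to and from source in deconcatenation} — in the source case the explicit comparison $\Upsilon$ constructed there is exactly the change-of-resolution map realizing the action of $F^\ell$ on $\operatorname{Ext}$, which is forced upon us because $P^\ell(v)$ is then no longer projective as an $A$-module. Hence $\Theta_1$ and $\Theta_2$ are graded algebra isomorphisms onto $X$ and $Y$, and combining with the previous paragraph yields $\operatorname{Ext}_A^\ast(\Delta,\Delta)=X\diamond Y\cong\operatorname{Ext}_{A^1}^\ast(\Delta^1,\Delta^1)\diamond\operatorname{Ext}_{A^2}^\ast(\Delta^2,\Delta^2)$.

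The bulk of the work, and the main obstacle, is the last paragraph: verifying that $\Theta_\ell$ is genuinely a ring homomorphism restricting on each component to the isomorphism of the cited propositions. In the sink case this is routine functoriality of $F^\ell$ applied to projective resolutions, but in the source case one must unwind that the ad hoc comparison map $\Upsilon$ of Proposition~\ref{proposition:ext to and from source in deconcatenation} coincides with the canonical map induced by $F^\ell$ between two projective resolutions of $\Delta(v)$; everything else reduces to the index bookkeeping of the first two paragraphs together with the vanishing supplied by Proposition~\ref{proposition:no ext between different parts of concatenation}.
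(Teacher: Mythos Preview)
Your proof is correct and follows essentially the same route as the paper: both arguments rest on the componentwise isomorphisms of Propositions~\ref{proposition:ext between standards in deconcatenation} and~\ref{proposition:ext to and from source in deconcatenation} together with the cross-vanishing of Proposition~\ref{proposition:no ext between different parts of concatenation}, and then assemble these into a graded algebra isomorphism. The paper builds a map $\Phi:C\to\operatorname{Ext}_A^\ast(\Delta,\Delta)$ and finishes with a surjectivity-plus-dimension count, whereas you exhibit $\operatorname{Ext}_A^\ast(\Delta,\Delta)$ directly as $X\diamond Y$ and then identify $X,Y$ via $F^\ell$; your version is more explicit about the one genuinely non-formal point---that the comparison maps are multiplicative, particularly in the source case where $P^\ell(v)$ is not $A$-projective---which the paper absorbs into a single clause.
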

\begin{proof}
	We start by noting that, according to Propositions \ref{proposition:ext between standards in deconcatenation} and \ref{proposition:ext to and from source in deconcatenation}, there holds $$\operatorname{Ext}^k_{A^\ell}(\Delta^\ell(i),\Delta^\ell(j))\cong \operatorname{Ext}_A^k(\Delta(i),\Delta(j)),$$ for all $i,j\in Q_0^\ell$ and all $k\geq 0$. Put $C=\operatorname{Ext}_{A^1}^\ast(\Delta^1,\Delta^1)\diamond \operatorname{Ext}_{A^2}^\ast(\Delta^2,\Delta^2).$
By definition of the multiplication on $C$, for any $x\in \operatorname{Ext}_{A^1}^\ast(\Delta^1,\Delta^1)$ and $y\in \operatorname{Ext}_{A^2}^\ast(\Delta^2,\Delta^2)$ such that $\deg x>0$ and $\deg y>0$, there holds $x\cdot_C y=y\cdot_C x=0$. Similarly, according to Proposition \ref{proposition:no ext between different parts of concatenation}, we have $$\operatorname{Ext}_{A^1}^{>0}(\Delta^1,\Delta^1)\cdot \operatorname{Ext}_{A^2}^{>0}(\Delta^2,\Delta^2)=\operatorname{Ext}_{A^2}^{>0}(\Delta^2,\Delta^2)\cdot\operatorname{Ext}_{A^1}^{>0}(\Delta^1,\Delta^1)=0.$$
	Together, these facts imply that there is a homomorphism of graded algebras $\Phi:C\to \operatorname{Ext}_A^\ast(\Delta,\Delta)$. Note that, because of Proposition \ref{proposition:no ext between different parts of concatenation}, the subspaces $\Phi(\operatorname{Ext}_{A^\ell}^\ast(\Delta^\ell,\Delta^\ell))$ generate $\operatorname{Ext}_A^\ast(\Delta,\Delta)$, showing that $\Phi$ is surjective. Lastly, note that $\dim_K C=\dim_K \operatorname{Ext}_A^\ast(\Delta,\Delta)$, so that $\Phi$ is a surjective linear map between vector spaces of the same dimension, hence a linear isomorphism.
\end{proof}
\begin{example}
	We consider the deconcatenation $(\xymatrixcolsep{0.5cm}\xymatrix{1 & 2 \ar[l] & 3\ar[l]}) \sqcup (\xymatrixcolsep{0.5cm}\xymatrix{3 \ar[r] &4 \ar[r] & 5})$
	of the quiver 
	$$Q=\xymatrixcolsep{0.5cm}\xymatrix{1 & 2 \ar[l]& 3\ar[r] \ar[l] & 4\ar[r] & 5}.$$
	We draw the Loewy diagrams of the projective and standard modules over $A^1$.
	\begin{align*}
		P^1(1)&: 1,\quad P^1(2): \vcenter{\xymatrixrowsep{0.5cm}\xymatrix{2\ar[d]\\1}},\quad P^1(3):\vcenter{\xymatrixrowsep{0.5cm}\xymatrix{3\ar[d]\\2\ar[d]\\1}},\quad \Delta^1(1)\cong P^1(1),\quad \Delta^1(2)\cong P^1(2),\quad \Delta^1(3)\cong P^1(3).
	\end{align*}
	As the standard modules over $A^1$ are projective, there are no non-split extensions between them. We see that there are homomorphisms $f:\Delta^1(1)\to \Delta^1(2)$, $g:\Delta^1(2)\to \Delta^1(3)$ and $h:\Delta^1(1)\to \Delta^1(3)$, such that $h=g\circ f$. The quiver of the Ext-algebra $\operatorname{Ext}_{A_1}^\ast(\Delta^1,\Delta^1)$ is then
	$$\xymatrix{1 \ar@{-->}[r]^-f & 2 \ar@{-->}[r]^-g & 3},$$
	with $\deg f=\deg g=0$, subject to no additional relations.
	
	Next, we draw the Loewy diagrams of the projective and standard modules over $A^2$. 
	\begin{align*}
		P^2(3)&: \vcenter{\xymatrixrowsep{0.5cm}\xymatrix{3\ar[d]\\4\ar[d]\\5}},\quad P^2(4): \vcenter{\xymatrixrowsep{0.5cm}\xymatrix{4\ar[d]\\5}},\quad P^2(5):5,\quad \Delta^2(3):3,\quad \Delta^2(4):4,\quad \Delta^2(5):5.
	\end{align*} As the standard modules over $A^2$ are simple, we see that we have non-split extensions $\alpha\in \operatorname{Ext}_{A_2}^1(\Delta^2(3),\Delta^2(4))$ and $\beta\in \operatorname{Ext}_{A_2}^1(\Delta^2(4),\Delta^2(5))$, such that $\beta\alpha=0$. The quiver of the Ext-algebra $\operatorname{Ext}_{A_2}^\ast(\Delta^2,\Delta^2)$ is then
	
	$$\xymatrix{3 \ar[r]^-\alpha & 4 \ar[r]^-\beta& 5},$$
	subject to the relation $\beta\alpha=0$. Finally, we draw the Loewy diagrams of the projective and standard modules over $A$.
	\begin{align*}
		P(1)&:\vcenter{\xymatrixrowsep{0.5cm}\xymatrix{1}},\quad P(2):\vcenter{\xymatrixrowsep{0.5cm}\xymatrix{2\ar[d]\\1}},\quad P(3):\vcenter{\xymatrixcolsep{0.5cm}\xymatrixrowsep{0.5cm}\xymatrix{&3 \ar[ld]\ar[rd]\\2\ar[d] & & 4\ar[d]\\1 & & 5}},\quad P(4):\vcenter{\xymatrixrowsep{0.5cm}\xymatrix{4 \ar[d]\\5}},\quad P(5):\vcenter{\xymatrixrowsep{0.5cm}\xymatrix{5}}, \\
		\Delta(1)&\cong P(1),\quad \Delta(2)\cong P(2),\quad \Delta(3):\vcenter{\xymatrixrowsep{0.5cm}\xymatrix{3\ar[d]\\2\ar[d]\\1}},\quad \Delta(4)\cong L(4),\quad \Delta(5)\cong P(5).
	\end{align*}
	We see that there are homomorphisms $f^\prime:\Delta(1)\to \Delta(2)$, $g^\prime: \Delta(2) \to \Delta(3)$ and $h^\prime: \Delta(1)\to \Delta(3)$, such that $h^\prime=g^\prime\circ f^\prime$. Additionally, we see that there are non-split extensions $\alpha^\prime\in \operatorname{Ext}_A^1(\Delta(3),\Delta(4))$ and $\beta^\prime\in \operatorname{Ext}_{A}^1(\Delta(4),\Delta(5))$, such that $\beta^\prime \alpha^\prime=0$. The composition $\alpha^\prime g^\prime$ produces an extension in $\operatorname{Ext}_A^1(\Delta(2),\Delta(4))$, and is therefore equal to zero, according to Lemma \ref{proposition:ext between standards in deconcatenation}. Therefore, the Ext-algebra $\operatorname{Ext}_A^\ast(\Delta,\Delta)$ is given by the quiver
	$$\xymatrix{1 \ar@{-->}[r]^-{f^\prime} & 2 \ar@{-->}[r]^-{g^\prime} & 3\ar[r]^-{\alpha^\prime} &4 \ar[r]^-{\beta^\prime} & 5},$$
	subject to the relations $\beta^\prime \alpha^\prime=0$ and $\alpha^\prime g^\prime=0$. We observe that this algebra coincides with the algebra 
	$$C=\operatorname{Ext}_{A^1}^\ast(\Delta^1,\Delta^1)\diamond\operatorname{Ext}_{A^2}^\ast(\Delta^2,\Delta^2).$$
\end{example}
\subsection{Exact Borel subalgebras under deconcatenations}
Consider a deconcatenation $Q^1\sqcup Q^2$ of $Q$ at the sink or source $v$. Let $B^1\subset A^1$ be a subalgebra having a basis $\{b_1,\dots, b_n, e_v\}$ and let $B^2\subset A^2$ be a subalgebra with basis $\{c_1,\dots, c_m, e_v\}$. Then $B=B^1\diamond B^2$ is a subalgebra of $A$.
\begin{proposition}\label{proposition:gluing at source admits borel}
	Let $Q=Q^1\sqcup Q^2$ be a deconcatenation at the source $v$ and assume that we have regular exact Borel subalgebras $B^1\subset A^1$ and $B^2\subset A^2$. Then, $A$ admits a regular exact Borel subalgebra.
\end{proposition}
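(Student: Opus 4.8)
The plan is to construct a regular exact Borel subalgebra of $A$ by gluing exact Borel subalgebras of $A^1$ and $A^2$, using the "gluing of subalgebras" formalism from Section~2 together with Theorem~\ref{theorem:pasting of ext-algebras of parts of concatenation isomorphic to whole ext-algebra}. First I would invoke Corollary~\ref{corollary:theorem:isomorphisms preserve regular exact borel subalgebras} (applied to $A^1$ and $A^2$) to replace $B^1$ and $B^2$ by regular exact Borel subalgebras containing the idempotents $\{e_i : i \in Q_0^\ell\}$; in particular both $B^1$ and $B^2$ contain the idempotent $e_v$. Since $v$ is a source, Lemma~\ref{lemma:elem. properties of deconcatenation}(iv) gives $L(v) \cong I(v)$, so $\Delta(v) = L(v)$ is simple, and Lemma~\ref{lemma:qh-structure on whole from qh-structure on deconcatenation} identifies the standard modules of $A$ with those of $A^1$ and $A^2$ away from $v$. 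I would then form $B = B^1 \diamond B^2$ inside $A$: here $X = B^1$, $Y = B^2$, $X \cap Y = K e_v$ (one checks $B^1 \cap B^2 = K e_v$ since the only basis elements common to $A^1$ and $A^2$, as subspaces of $A$ spanned by paths, are multiples of $e_v$), and $X' \cdot Y' = Y' \cdot X' = 0$ because a nontrivial path through $Q^1 \setminus\{v\}$ and one through $Q^2 \setminus\{v\}$ cannot be composed (Lemma~\ref{lemma:elem. properties of deconcatenation}(iii) and the definition of deconcatenation). This makes $B = B^1 \diamond B^2$ a well-defined subalgebra of $A$.

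Next I would verify the three defining properties of an exact Borel subalgebra for $B \subset A$. The number of simple modules: $B$ is a gluing, so its primitive orthogonal idempotents are $\{b_i\} \sqcup \{c_j\} \sqcup \{e_v\}$, giving $|Q_0|$ of them, and $(B, \trianglelefteq)$ is directed (hereditary-like) hence quasi-hereditary with simple standards, since $B^1$ and $B^2$ are and the gluing introduces no new paths between the two parts. For exactness of $A \otimes_B \blank$ and the identification $A \otimes_B L_B(i) \cong \Delta(i)$: for $i \in Q_0^\ell \setminus \{v\}$ this should follow by transferring the corresponding statement over $A^\ell$ along the functor $F^\ell$, using that $A^\ell \otimes_{B^\ell} L_{B^\ell}(i) \cong \Delta^\ell(i)$ and that $F^\ell$ is exact and fully faithful (Lemma~\ref{lemma:morphisms lift between from part of deconcatenation} and the discussion of $F^\ell$); for $i = v$, $\Delta(v) = L(v)$ is simple and $A \otimes_B L_B(v)$ should be computed directly. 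The key homological input is then Theorem~\ref{theorem:pasting of ext-algebras of parts of concatenation isomorphic to whole ext-algebra}: since $v$ is a source, the hypothesis of that theorem is automatically satisfied, so $\operatorname{Ext}_A^\ast(\Delta,\Delta) \cong \operatorname{Ext}_{A^1}^\ast(\Delta^1,\Delta^1) \diamond \operatorname{Ext}_{A^2}^\ast(\Delta^2,\Delta^2)$, and Propositions~\ref{proposition:ext between standards in deconcatenation} and~\ref{proposition:ext to and from source in deconcatenation} give $\operatorname{Ext}^k_{A^\ell}(\Delta^\ell(i),\Delta^\ell(j)) \cong \operatorname{Ext}_A^k(\Delta(i),\Delta(j))$ for $i,j \in Q_0^\ell$.

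To show $B$ is \emph{regular}, I must check that the comparison map $\operatorname{Ext}_B^k(L_B(i), L_B(j)) \to \operatorname{Ext}_A^k(\Delta(i),\Delta(j))$ is an isomorphism for all $k \geq 1$. I would argue this by a case analysis on which parts $i$ and $j$ belong to. If $i, j$ lie in the same part $Q_0^\ell$, I reduce to regularity of $B^\ell \subset A^\ell$: on one side, $\operatorname{Ext}_B^k(L_B(i),L_B(j)) \cong \operatorname{Ext}_{B^\ell}^k(L_{B^\ell}(i), L_{B^\ell}(j))$ because $B = B^1 \diamond B^2$ has no paths connecting the two parts (so a projective resolution over $B^\ell$ remains one over $B$ after the natural inclusion, or one invokes a base-change argument); on the other side, $\operatorname{Ext}_A^k(\Delta(i),\Delta(j)) \cong \operatorname{Ext}_{A^\ell}^k(\Delta^\ell(i),\Delta^\ell(j))$ by the propositions just cited. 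If $i$ and $j$ lie in \emph{different} parts, then $\operatorname{Ext}_B^k(L_B(i),L_B(j)) = 0$ for $k \geq 1$ (no paths in $B$ between the parts, and the gluing kills products across parts) and $\operatorname{Ext}_A^k(\Delta(i),\Delta(j)) = 0$ for all $k$ by Proposition~\ref{proposition:no ext between different parts of concatenation}(i), which applies precisely because $v$ is a source; so the map is trivially an isomorphism. The main obstacle I anticipate is the bookkeeping at the vertex $v$ itself: carefully checking that $A \otimes_B L_B(v) \cong \Delta(v)$ and that the $\operatorname{Ext}$-comparison is an isomorphism in the cases where one of $i, j$ equals $v$ — here one cannot simply cite a statement over a single $A^\ell$, and one must use both Proposition~\ref{proposition:ext to and from source in deconcatenation} and the compatibility of the gluing decomposition of $\operatorname{Ext}_A^\ast(\Delta,\Delta)$ with the idempotent $1_{\Delta(v)}$, which is exactly the shared dimension in the $\diamond$-construction. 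Once the $v$-cases are handled, assembling the global statement from the part-wise statements is routine given the gluing formalism.
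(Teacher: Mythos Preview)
Your proposal contains a concrete error that breaks the argument at the vertex $v$. You write that ``since $v$ is a source, Lemma~\ref{lemma:elem. properties of deconcatenation}(iv) gives $L(v) \cong I(v)$, so $\Delta(v) = L(v)$ is simple''. This is backwards: when $v$ is a source it is the \emph{injective} envelope $I(v)$ that is simple, hence $\nabla(v)\cong L(v)$, while $P(v)$ is typically large and $\Delta(v)$ need not be simple at all. (When $v$ is a sink it is the other way around.) This matters in at least two places. First, your computation of $A\otimes_B L_B(v)$ cannot be reduced to ``$\Delta(v)$ is simple, so compute directly''; one genuinely has to build an isomorphism to a non-simple $\Delta(v)$, which requires splitting $\operatorname{rad}\Delta(v)$ into the two pieces coming from $Q^1$ and $Q^2$ and matching them with the images of $f_1$ and $f_2$ (this is exactly what the paper does later, in Proposition~\ref{proposition:gluing of borels is borel}). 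Second, the regularity check at $v$ cannot be dismissed as a self-extension of a simple standard module; one has to invoke Proposition~\ref{proposition:ext to and from source in deconcatenation} and the gluing description of $\operatorname{Ext}_A^\ast(\Delta,\Delta)$ with $\Delta(v)$ non-simple.

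Independently of this error, your route is genuinely different from the paper's. The paper does \emph{not} construct $B^1\diamond B^2$ here at all; instead it invokes the Conde criterion (Theorem~\ref{theorem:when does KA_n have regular exact borel}), i.e.\ it checks that $\operatorname{rad}\Delta(i)\in\mathcal F(\nabla)$ for every $i$. For $i\in Q_0^\ell\setminus\{v\}$ the $\nabla^\ell$-filtration of $\operatorname{rad}\Delta^\ell(i)\cong\operatorname{rad}\Delta(i)$ is already a $\nabla$-filtration (using $\nabla^\ell(j)\cong\nabla(j)$ for $j\neq v$ and $\nabla(v)\cong L(v)$ since $v$ is a source). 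For $i=v$ one uses that $v$ being a source gives the decomposition $\operatorname{rad}\Delta(v)=\operatorname{rad}\Delta^1(v)\oplus\operatorname{rad}\Delta^2(v)$, and each summand lies in $\mathcal F(\nabla)$ by regularity of $B^\ell$. This is a one-paragraph proof; the explicit gluing $B=B^1\diamond B^2$ is deferred to Proposition~\ref{proposition:gluing of borels is borel}, where the \emph{existence} of a regular exact Borel (just proved) is taken as a hypothesis and used, together with conjugacy (Theorems~\ref{theorem: uniqueness of borel} and~\ref{theorem:inner automorphisms preserve regular exact borel subalgebras}), to conclude that $B^1\diamond B^2$ is itself regular. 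Your plan essentially tries to merge these two propositions into one; that is feasible, but it is strictly more work, and the $\Delta(v)$-simple shortcut you propose is exactly the step that is not available.
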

\begin{proof}
		For any $i\in Q_0^\ell \backslash\{v\}$, we have $\Delta(i)\cong \Delta^\ell(i)$. Let $0\subset M_n\subset \dots \subset M_0=\operatorname{rad}\Delta^\ell(i)$ be a filtration with costandard subquotients. We claim that this is also a filtration of $\operatorname{rad}\Delta(i)$. Consider a subquotient of the filtration
	$$\faktor{M_k}{M_{k+1}}\cong \nabla^\ell(j).$$
	Then, $\nabla^\ell(j)\cong \nabla(j)$ for all $j\in Q_0^\ell$. For $j\neq v$, we appeal to Lemma \ref{lemma:qh-structure on parts from qh-structure of whole}. For $j=v$, this is automatic as $\nabla(v)$ is simple by virtue of $v$ being a source. Left to check is that we have a $\nabla$-filtration of $\operatorname{rad}\Delta(v)$. Since $v$ is a source, we have a direct sum decomposition:
	$$\operatorname{rad}\Delta(v)=\operatorname{rad}\Delta^1(v)\oplus \operatorname{rad}\Delta^2(v).$$
	Since $A^1$ has a regular exact Borel subalgebra, $\operatorname{rad}\Delta^1(v)$ has a filtration whose subquotients are members of the set
	$$\{\nabla(i)\ |\ i\in Q_0^1\backslash\{v\} \}.$$
	To see that $\nabla(v)$ does not occur as a composition factor, note that $[\operatorname{rad}\Delta(v):L(v)]=0$. Indeed, an occurence of a composition factor $L(v)$ would imply that $[P(v):L(v)]\geq 2$, contradicting the fact that $v$ is a source. Similarly, $\operatorname{rad}\Delta^2(v)$ has a filtration whose subquotients are members of the set
	$$\{\nabla(i)\ | \ i\in Q_0^2\backslash\{v\}\}.$$
	We conclude that $\operatorname{rad}\Delta(v)\in \mathcal{F}(\nabla)$, proving the statement.
	\end{proof}
\begin{proposition}\label{proposition:gluing of deconcatenation admits regular exact borel}
	Let $Q=Q^1\sqcup Q^2$ be a deconcatenation at the sink $v$. Assume further that we have regular exact Borel subalgebras $B^1 \subset A^1$, $B^2\subset A^2$ and that the vertex $v$ is minimal with respect to the essential order on $Q_0$. Then, there exists a regular exact Borel subalgebra $B\subset A$.
\end{proposition}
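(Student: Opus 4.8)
The plan is to follow the strategy of the proof of Proposition~\ref{proposition:gluing at source admits borel}: reduce to showing that $\operatorname{rad}\Delta(i)\in\mathcal{F}(\nabla)$ for every $i\in Q_0$, and then invoke the equivalence of \cite{CONDE21} underlying Theorem~\ref{theorem:when does KA_n have regular exact borel} (used there in both directions). That same equivalence, applied to $A^\ell$, tells us that since $B^\ell$ is a regular exact Borel subalgebra of $A^\ell$, we already have $\operatorname{rad}_{A^\ell}\Delta^\ell(i)\in\mathcal{F}(\nabla^\ell)$ for every $i\in Q_0^\ell$, $\ell=1,2$. (Once existence is established one may moreover take $B=B^1\diamond B^2$ as a concrete regular exact Borel subalgebra, as in (C) and (D), but for the statement of the proposition only the cohomological criterion is needed.)

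The key new ingredient is the identity $\nabla(v)=L(v)$, and this is exactly where minimality of $v$ is used. Since $v$ is minimal with respect to $\trianglelefteq^e$, no $j\neq v$ satisfies $j\triangleleft^e v$; in particular $[\Delta(v):L(j)]=0$ and $(P(j):\Delta(v))=0$ for all such $j$. The first equalities give $\Delta(v)=L(v)$ (automatic anyway, as $v$ is a sink, so $P(v)=L(v)$ by Lemma~\ref{lemma:elem. properties of deconcatenation}), and by BGG reciprocity the second gives $[\nabla(v):L(j)]=(P(j):\Delta(v))=0$, so $\nabla(v)$ has $L(v)$ as its only composition factor, i.e.\ $\nabla(v)=L(v)$. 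This step replaces the corresponding point of the source case, where $\nabla(v)$ is simple for free because $v$ has no incoming arrows.

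It then remains to verify $\operatorname{rad}\Delta(i)\in\mathcal{F}(\nabla)$ in two cases. If $i=v$, then $\operatorname{rad}\Delta(v)=\operatorname{rad}L(v)=0\in\mathcal{F}(\nabla)$. If $i\in Q_0^\ell\setminus\{v\}$, then $\Delta(i)\cong F^\ell(\Delta^\ell(i))$ by Lemma~\ref{lemma:qh-structure on parts from qh-structure of whole}, and since the kernel of $A\to A^\ell$ annihilates every $A^\ell$-module we get $\operatorname{rad}_A\Delta(i)\cong F^\ell(\operatorname{rad}_{A^\ell}\Delta^\ell(i))$. Fixing a costandard filtration of $\operatorname{rad}_{A^\ell}\Delta^\ell(i)$ and applying the exact, fully faithful functor $F^\ell$ yields a filtration of $\operatorname{rad}_A\Delta(i)$ with subquotients $F^\ell(\nabla^\ell(j))$, $j\in Q_0^\ell$. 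For $j\neq v$ one has $F^\ell(\nabla^\ell(j))\cong\nabla(j)$ by Lemma~\ref{lemma:qh-structure on parts from qh-structure of whole}. For $j=v$, the module $\nabla^\ell(v)$ is a nonzero submodule of $I^\ell(v)$, hence has simple socle $L^\ell(v)=L(v)$, and all of its composition factors $L(j')$ satisfy $j'\trianglelefteq^\ell v$, hence $j'\trianglelefteq v$; therefore $F^\ell(\nabla^\ell(v))$ embeds into $I(v)$ inside the largest submodule with composition factors $\trianglelefteq v$, which is $\nabla(v)=L(v)$, and since it contains the socle $L(v)$ it must equal $\nabla(v)$. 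Thus every subquotient of the filtration is a costandard $A$-module, so $\operatorname{rad}_A\Delta(i)\in\mathcal{F}(\nabla)$, and the proof concludes by the criterion of \cite{CONDE21}.

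The main obstacle is the handling of the gluing vertex $v$: for a sink, unlike for a source, $\nabla(v)$ need not be simple, and without the minimality hypothesis the transported subquotient $\nabla^\ell(v)$ could be a proper, non-costandard submodule of $\nabla(v)$ once viewed as an $A$-module, which would break the argument; the essential-order condition is precisely what forces $\nabla(v)=L(v)$. A subsidiary point to keep track of is the compatibility of $\operatorname{rad}$, $\operatorname{soc}$, and of the standard and costandard modules with $F^\ell$, all of which follows from exactness and full faithfulness of $F^\ell$ together with Lemmas~\ref{lemma:elem. properties of deconcatenation} and \ref{lemma:qh-structure on parts from qh-structure of whole}.
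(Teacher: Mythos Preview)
Your proof is correct and follows essentially the same route as the paper: both reduce to checking $\operatorname{rad}\Delta(i)\in\mathcal{F}(\nabla)$ via Conde's criterion, transfer the $\nabla^\ell$-filtration of $\operatorname{rad}\Delta^\ell(i)$ through $F^\ell$, and establish the crucial identity $\nabla(v)=L(v)$ from minimality of $v$ using BGG reciprocity. Your treatment of the subquotient $\nabla^\ell(v)$ is in fact a bit more explicit than the paper's---you argue directly that $F^\ell(\nabla^\ell(v))$ embeds into $\nabla(v)=L(v)$ and hence coincides with it---whereas the paper leaves this step implicit after showing $\nabla(v)$ is simple.
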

\begin{proof}
	For any $i\in Q_0^\ell \backslash\{v\}$, we have $\Delta(i)\cong \Delta^\ell(i)$. Let $0\subset M_n\subset \dots \subset M_0=\operatorname{rad}\Delta^\ell(i)$ be a filtration with costandard subquotients. We claim that this is also a filtration of $\operatorname{rad}\Delta(i)$. Consider a subquotient of the filtration
	$$\faktor{M_k}{M_{k+1}}\cong \nabla^\ell(j).$$
	Then $\nabla^\ell(j)\cong \nabla(j)$ for all $j\in Q_0^\ell\backslash\{v\}$, according to Lemma \ref{lemma:qh-structure on parts from qh-structure of whole}. For $j=v$, we claim that when $v$ is a sink, $v$ is minimal in the essential order on $Q_0$ if and only if $\nabla(v)$ is simple. Indeed, if $v$ is minimal, then $\nabla(v)$ is clearly simple. Conversely, assume that $\nabla(v)$ is simple and that there exists another vertex $k$ such that $k\triangleleft^e v$. Then, we have
	$$[\Delta(v):L(k)]>0 \quad \textrm{or} \quad (P(k):\Delta(v))>0.$$
	The first condition cannot be satisfied, since $\Delta(v)$ is simple by virtue of $v$ being a sink. According to \cite[Lemma 2.5]{DlabRingel}, we have
	$$(P(k):\Delta(v))=[\nabla(v):L(k)],$$
		so this condition may not be satisfied either, since $\nabla(v)$ was assumed to be simple.
		Therefore, for $i\in Q_0^\ell\backslash\{v\}$, we have $\operatorname{rad}\Delta(i)\in \mathcal{F}(\nabla)$, since $A^1$ and $A^2$ admit regular exact Borel subalgebras, by Theorem \ref{theorem:when does KA_n have regular exact borel}. Since $v$ is a sink, $\Delta(v)\cong L(v)$ which implies that $\operatorname{rad}\Delta(v)=0$, so there is nothing more to show.
\end{proof}
\begin{lemma}\label{lemma:costandard at sink only one with composition factor}
	Let $Q=Q^1\sqcup Q^2$ be a deconcatenation at the sink $v$. Then, for all $i\in Q_0$, we have $\left[\nabla(i):L(v)\right]>0$ if and only if $i=v$.
\end{lemma}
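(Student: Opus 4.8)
The plan is to use the explicit description of costandard modules afforded by the deconcatenation machinery, together with the fact that $v$ is a sink. The "if" direction is immediate: $\nabla(v)$ has $L(v)$ as its socle (indeed, $L(v) \hookrightarrow \nabla(v)$ by (QH2)$'$), so $[\nabla(v):L(v)] > 0$. The substance is the "only if" direction, i.e.\ showing that for $i \neq v$ one has $[\nabla(i):L(v)] = 0$.

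First I would split into cases according to which part of the deconcatenation $i$ lies in. If $i \in Q_0^\ell \setminus \{v\}$, then by Lemma \ref{lemma:qh-structure on parts from qh-structure of whole}(i) we have $\nabla(i) \cong \nabla^\ell(i)$ as $A$-modules; here $\nabla^\ell(i)$ is regarded as an $A$-module via the functor $F^\ell$. By Lemma \ref{lemma:elem. properties of deconcatenation}(vi), every composition factor $L(k)$ of an $A^\ell$-module satisfies $k \in Q_0^\ell$. Since $v$ is a sink, one can argue more sharply: suppose $[\nabla(i):L(v)] > 0$. Because $v$ is a sink, $L(v) \cong P(v)$ (Lemma \ref{lemma:elem. properties of deconcatenation}(iii)), so $L(v)$ is projective; hence any occurrence of $L(v)$ as a composition factor of $\nabla(i)$ would split off as a direct summand, forcing $\nabla(i)$ to have $L(v)$ as a summand. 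But $\nabla(i)$ is indecomposable with socle $L(i)$, and $i \neq v$, a contradiction. Alternatively, and perhaps more cleanly: $\nabla(i)$ embeds in the injective envelope $I(i)$, and since $v$ is a sink, $L(v)$ is not a composition factor of any $I(i)$ for $i \neq v$ — because $[I(i):L(v)] = [P(v):L(i)]$ (by the standard duality between projectives and injectives via the quiver), and $P(v) = L(v)$ when $v$ is a sink, so $[P(v):L(i)] = \delta_{i,v} = 0$. This handles all $i \in Q_0 \setminus \{v\}$ in one stroke once we note $\nabla(i) \hookrightarrow I(i)$.

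The remaining case, $i = v$ but we want the converse, is vacuous. So the key step is the second one, and the main obstacle is choosing which of the two arguments above is cleanest in the paper's conventions — the projectivity-of-$L(v)$ argument is self-contained from the cited lemmas, whereas the $[I(i):L(v)] = [P(v):L(i)]$ identity would need a one-line justification (it is the standard BGG-type reciprocity for path algebras, or simply the observation that in $KQ/I$ the multiplicity $[I(i):L(v)]$ counts paths from $v$ to $i$ modulo $I$, which is the same count as $[P(v):L(i)]$). I would go with the projectivity argument: since $v$ is a sink, $P(v) \cong L(v)$ is simple projective, so $L(v)$ is a projective $A$-module; if $[\nabla(i):L(v)] > 0$ then a composition series of $\nabla(i)$ has $L(v)$ as a subquotient, but being projective, $L(v)$ would be a direct summand of $\nabla(i)$, contradicting that $\nabla(i)$ is indecomposable with simple socle $L(i) \not\cong L(v)$. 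This completes the proof.
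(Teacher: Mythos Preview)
Your chosen ``projectivity argument'' contains a genuine error: from $L(v)$ being projective and $[\nabla(i):L(v)]>0$ you conclude that $L(v)$ is a \emph{direct summand} of $\nabla(i)$. This implication is false in general. Projectivity of $L(v)$ tells you that any surjection $M\twoheadrightarrow L(v)$ splits, not that any inclusion $L(v)\hookrightarrow M$ splits. Concretely, take $Q=1\to 2\leftarrow 3$; then $v=2$ is a sink, $L(2)=P(2)$ is simple projective, and $I(2)$ is indecomposable with $[I(2):L(2)]=1$, yet $L(2)$ is certainly not a summand of $I(2)$. (In fact $I(2)=\nabla(2)$ for the order making $2$ maximal, so this is exactly the shape of module you are trying to rule out for $i\neq v$.)

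The argument is easily repaired, and the repair is close to what you wrote. Projectivity of $L(v)$ does yield that $L(v)$ is a \emph{submodule} of $\nabla(i)$: pick a composition series with a subquotient $M_k/M_{k+1}\cong L(v)$; the surjection $M_k\twoheadrightarrow L(v)$ splits, so $L(v)\hookrightarrow M_k\subset \nabla(i)$. Since $\nabla(i)$ has simple socle $L(i)$, any simple submodule must equal $L(i)$, forcing $i=v$. This is a correct and short proof.

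For comparison, the paper's proof is essentially your discarded alternative: $\nabla(i)\hookrightarrow I(i)$, and since $v$ is a sink there is no path from $v$ to $i\neq v$, hence $[I(i):L(v)]=0$. Your identity $[I(i):L(v)]=[P(v):L(i)]$ (both equal $\dim e_iAe_v$) is a clean way to say the same thing.
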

\begin{proof}
	By definition, $\nabla(i)$ is a submodule of $I(i)$, so $\left[\nabla(i):L(v)\right]>0$ if and only if there is a path from $v$ to $i$, not passing through a vertex greater than $i$ in $Q_0$. This is a contradiction, since $v$ is assumed to be a sink.
\end{proof}
From Lemma \ref{lemma:costandard at sink only one with composition factor}, it is also clear that for any module $M\in \mathcal{F}(\nabla)$, the costandard module $\nabla(v)$ appears as a subquotient in the filtration of $M$ if and only if $\left[M:L(v)\right]>0$.

Now we are ready to provide a necessary and sufficient condition for $A$ to admit a regular exact Borel subalgebra $B\subset A$, where $A=KQ$ and $Q=Q^1\sqcup Q^2$ is a deconcatenation at a sink $v$.

\begin{proposition}\label{proposition:sufficient and necessary condition for borel when v is a sink and costandard at v not simple}
	Let $Q=Q^1\sqcup Q^2$ be a deconcatenation at the sink $v$ and assume that $B^1\subset A^1$ and $B^2\subset A^2$ are regular exact Borel subalgebras. Moreover, assume that $v$ is not minimal with respect to the essential order on $Q_0$. Then, the following are equivalent. 
	\begin{enumerate}[(i)]
		\item There exists a regular exact Borel subalgebra $C\subset A$.
		\item $\left[\operatorname{rad}\Delta(i):L(v)\right]=0$, for all $i\in Q_0$.
		\item The vertex $v$ is maximal with respect to the essential order on $Q_0$.
	\end{enumerate}
\end{proposition}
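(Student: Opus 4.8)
The plan is to reduce the whole equivalence to the single condition $(\star)$: $\operatorname{rad}\Delta(i)\in\mathcal{F}(\nabla)$ for every $i\in Q_0$. By the criterion of \cite{CONDE21} (cf.\ Theorem~\ref{theorem:when does KA_n have regular exact borel}), $(\star)$ holds if and only if (i) holds, so it remains to prove $(\star)\Leftrightarrow\text{(ii)}\Leftrightarrow\text{(iii)}$. I would settle (ii)$\Leftrightarrow$(iii) first, and purely combinatorially. Since $v$ is a sink, $\Delta(v)\cong L(v)$, so $\operatorname{rad}\Delta(v)=0$ and $(P(v):\Delta(j))=0$ for all $j\neq v$; thus (ii) is equivalent to $[\Delta(m):L(v)]=0$ for every $m\neq v$. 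On the other hand, by $(P(k):\Delta(j))=[\nabla(j):L(k)]$ from \cite[Lemma~2.5]{DlabRingel} together with $(P(v):\Delta(j))=0$, the definition of the essential order shows that any chain $v\triangleleft^e m_1\triangleleft^e\cdots$ must start with a relation $[\Delta(m_1):L(v)]>0$, $m_1\neq v$; hence $v$ is maximal for $\trianglelefteq^e$ precisely when (ii) holds.

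For (ii)$\Rightarrow(\star)$ I would repeat the computation in Proposition~\ref{proposition:gluing of deconcatenation admits regular exact borel}: $\operatorname{rad}\Delta(v)=0$, and for $i\in Q_0^\ell\setminus\{v\}$ one has $\operatorname{rad}\Delta(i)\cong\operatorname{rad}\Delta^\ell(i)\in\mathcal{F}(\nabla^\ell)$ (by Lemma~\ref{lemma:qh-structure on parts from qh-structure of whole} and regularity of $B^\ell$); by (ii) this $A^\ell$-module has no composition factor $L(v)$, so by Lemma~\ref{lemma:costandard at sink only one with composition factor} the subquotient $\nabla^\ell(v)$ does not occur in its $\nabla^\ell$-filtration, hence all subquotients are $\nabla^\ell(j)\cong\nabla(j)$ with $j\neq v$, and $\operatorname{rad}\Delta(i)\in\mathcal{F}(\nabla)$. (Concretely, a regular exact Borel subalgebra of $A$ may then be realised as the gluing $B^1\diamond B^2$.)

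The main work is $(\star)\Rightarrow\text{(ii)}$, which I would prove by contraposition. Suppose $[\operatorname{rad}\Delta(m_0):L(v)]>0$ for some $m_0\in Q_0^\ell\setminus\{v\}$; since $v$ is not minimal for $\trianglelefteq^e$, $\nabla(v)$ is not simple (as $v$ is a sink, $[\nabla(v):L(v)]=1$, so $\nabla(v)$ has a composition factor $L(k_0)\not\cong L(v)$). \emph{Case 1: $\nabla(v)$ is not an $A^\ell$-module.} Then a contradiction with $(\star)$ is immediate: $\operatorname{rad}\Delta(m_0)\cong\operatorname{rad}\Delta^\ell(m_0)$ is an $A^\ell$-module in $\mathcal{F}(\nabla)$ having $L(v)$ among its factors, so by Lemma~\ref{lemma:costandard at sink only one with composition factor} and the remark after it $\nabla(v)$ would be a subquotient of it, hence an $A^\ell$-module. \emph{Case 2: $\nabla(v)$ is an $A^\ell$-module.} Since $v$ is a sink, $I^{\overline{\ell}}(v)$ is a submodule of $I(v)$, whence $\nabla^{\overline{\ell}}(v)\subseteq\nabla(v)$; being simultaneously an $A^\ell$- and an $A^{\overline{\ell}}$-module, $\nabla^{\overline{\ell}}(v)\cong L(v)$, i.e.\ $v$ is minimal for $(\trianglelefteq^{\overline{\ell}})^e$. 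As $Q^{\overline{\ell}}$ is connected with $v$ a sink in it, $v$ has an in-neighbour $k\in Q_0^{\overline{\ell}}$, and then $[P^{\overline{\ell}}(k):L(v)]>0$ while $(P^{\overline{\ell}}(k):\Delta^{\overline{\ell}}(v))=[\nabla^{\overline{\ell}}(v):L(k)]=0$; hence the $\Delta^{\overline{\ell}}$-filtration of $P^{\overline{\ell}}(k)$ contains some $\Delta^{\overline{\ell}}(j)$ with $j\neq v$ and $[\Delta^{\overline{\ell}}(j):L(v)]>0$, i.e.\ $[\operatorname{rad}\Delta^{\overline{\ell}}(j):L(v)]>0$. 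Now $\operatorname{rad}\Delta(j)\cong\operatorname{rad}\Delta^{\overline{\ell}}(j)$ is an $A^{\overline{\ell}}$-module with $L(v)$ among its factors; under $(\star)$ it lies in $\mathcal{F}(\nabla)$, forcing $\nabla(v)$ to be a subquotient of it, whereas $\nabla(v)$, being a non-simple $A^\ell$-module, has a composition factor $L(k_0)$ with $k_0\in Q_0^\ell\setminus\{v\}$ and so is not an $A^{\overline{\ell}}$-module. Thus $(\star)$ fails in either case.

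The crux is Case~2 of that last step: when $\nabla(v)$ happens to be confined to $Q^\ell$ one cannot extract a contradiction from the original witness $m_0$ (indeed $\nabla(v)$ may even coincide with $\operatorname{rad}\Delta^\ell(m_0)$), and one must instead manufacture a fresh witness for the failure of (ii) on the \emph{other} side of the deconcatenation — which is exactly what the analysis of the injective $I^{\overline{\ell}}(v)$ at the sink $v$ and of the standard filtration of the projective at an in-neighbour of $v$ provides. Everything else is bookkeeping with Lemmas~\ref{lemma:qh-structure on parts from qh-structure of whole}, \ref{lemma:costandard at sink only one with composition factor} and \cite[Lemma~2.5]{DlabRingel}, together with the equivalence between simplicity of $\nabla(v)$ and minimality of $v$ already established in the proof of Proposition~\ref{proposition:gluing of deconcatenation admits regular exact borel}.
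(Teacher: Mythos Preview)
Your argument is correct. The scaffolding --- reducing (i) to the condition $(\star)$ that $\operatorname{rad}\Delta(i)\in\mathcal{F}(\nabla)$ for all $i$ via the criterion of \cite{CONDE21}, and the treatment of (ii)$\Leftrightarrow$(iii) through the simplicity of $P(v)$ --- matches the paper's. The one substantive divergence is in the contrapositive $(\star)\Rightarrow$(ii).

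The paper fixes the witness $i\in Q_0^1\setminus\{v\}$ and then picks a neighbour $j$ of $v$ in $Q^2$, splitting into the three cases $j\triangleleft^e v$, $v\triangleleft^e j$, or $j$ and $v$ incomparable; the last case is handled by adaptedness of $\trianglelefteq^e$ applied to the length-two module with top $L(j)$ and socle $L(v)$, and in each of the first two cases one exhibits $\nabla(v)$ as a subquotient of an $A^1$-module and of an $A^2$-module simultaneously, forcing it to be simple. You instead split on whether $\nabla(v)$ itself lies in $A^\ell\operatorname{-mod}$: Case~1 is immediate, while in Case~2 you observe that $\nabla^{\overline{\ell}}(v)\subseteq\nabla(v)$ is trapped in $A^\ell\operatorname{-mod}\cap A^{\overline{\ell}}\operatorname{-mod}$ and hence simple, and then use BGG reciprocity on $P^{\overline{\ell}}(k)$ for an in-neighbour $k$ of $v$ to manufacture a second witness on the $Q^{\overline{\ell}}$ side. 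Your route trades the adaptedness argument and the three-way case split for the short verification that $I^{\overline{\ell}}(v)\hookrightarrow I(v)$ and a reciprocity computation; both reach the same contradiction --- a non-simple $\nabla(v)$ forced to live in both $A^1\operatorname{-mod}$ and $A^2\operatorname{-mod}$ --- but yours is arguably more structural, since the case distinction is governed directly by the module $\nabla(v)$ rather than by an auxiliary choice of neighbour.
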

\begin{proof}
	We first show the equivalence (i) $\iff$ (ii). We assume that (ii) holds and claim that $\operatorname{rad}\Delta(i)\in \mathcal{F}(\nabla)$, for all $i\in Q_0$. If $i=v$, we have $\Delta(v)\cong L(v)$ and consequently $\operatorname{rad}\Delta(v)=0$, so there is nothing to prove. Assume that $i\neq v$, where $i\in Q_0^\ell$. Then, $\Delta^\ell(i)\cong \Delta(i)$. According to the remark after Lemma \ref{lemma:costandard at sink only one with composition factor}, any $\nabla$-filtration of $\operatorname{rad}\Delta^\ell(i)$ does not contain $\nabla^\ell(v)$ as a subquotient. Therefore, any $\nabla$-filtration of $\operatorname{rad}\Delta^\ell (i)$ is automatically a $\nabla$-filtration of $\operatorname{rad}\Delta(i)$, since all occurring subquotients in the filtration of $\operatorname{rad}\Delta^\ell(i)$ are of the form $\nabla^\ell(j)$ with $j\neq v$.
	
	Conversely, if $C\subset A$ is a regular exact Borel subalgebra, then $\operatorname{rad}\Delta(i)\in \mathcal{F}(\nabla)$ for $1\leq i\leq n$. Let $i$ be such that $[\operatorname{rad}\Delta(i):L(v)]>0$ and assume without loss of generality that $i\in Q_0^1\backslash\{v\}$. Then, $\nabla(v)$ occurs in the $\nabla$-filtration of $\operatorname{rad}\Delta(i)$, according to the remark after Lemma~\ref{lemma:costandard at sink only one with composition factor}. Note that since $v$ is assumed not to be minimal in the essential order, $\nabla(v)$ is not simple.  Since $Q^2$ is connected, $v$ has at least one neighbour, $j\in Q_0^2\backslash\{v\}$. If $j\triangleleft^e v$, then $L(j)$ is a composition factor in $\nabla(v)$, which implies that $[\operatorname{rad}\Delta(i):L(j)]>0$ since $\nabla(v)$ occurs in the $\nabla$-filtration of $\operatorname{rad}\Delta(i)$. This is a contradiction, since $\operatorname{rad}\Delta(i)$ is an $A^1$-module. If $v\triangleleft^e j$, then $[\operatorname{rad}\Delta(j):L(v)]>0$, so that $\nabla(v)$ is a subquotient of $\operatorname{rad} \Delta(j)$. This contradicts $\nabla(v)$ being a subquotient of $\operatorname{rad}\Delta(i)$, unless $\nabla(v)$ is simple. However, we assumed that $v$ is not minimal in the essential order, guaranteeing it is not simple. If $v$ and $j$ are incomparable, we consider the module $M$, given by the following Loewy diagram:
	$$\xymatrixrowsep{0.4cm}\xymatrix{ j \ar[d] \\ v}$$
	Since $A^2$ is quasi-hereditary, $\trianglelefteq^e$ is adapted to $A^2$, according to Lemma~\ref{lemma: q.h algebra has adapted order}. Then, by definition, there is a vertex $k$ such that $v\triangleleft^e k$, $j\triangleleft^e k$ and $[M:L(k)]\neq 0$. This is a contradiction, since $M$ has no other composition factors besides $L(v)$ and $L(j)$. This proves (i) $\iff$ (ii).
	
	Next, we prove (ii) $\iff$ (iii). Assume (ii) and that $v$ is not maximal. Then there is another vertex $k$, such that $v\triangleleft^ek$. Then, by definition of the essential order, we have
	$$\left[\Delta(k):L(v)\right]>0\quad \textrm{or}\quad (P(v):\Delta(k))>0.$$
	The first condition can not be satisfied, as it contradicts (ii), and neither may the second, since $P(v)$ is simple by virtue of $v$ being a sink. Conversely, assume that $v$ is maximal and consider $P(i)$, for $i\in Q_0$. If $\left[P(i):L(v)\right]=0$, there is nothing to show, as $\operatorname{rad}\Delta(i)$ is a submodule of a quotient of $P(i)$. Otherwise, if $\left[P(i):L(v)\right]>0$, there still holds $\left[\Delta(i):L(v)\right]=0$, since by definition, the composition factors $L(t)$ of $\Delta(i)$ satisfy $t\triangleleft i$.
\end{proof}
\begin{proposition}\label{proposition:gluing of borels is borel}
	Let $Q=Q^1\sqcup Q^2$ be a deconcatenation at a sink or a source $v$. Assume that there are regular exact Borel subalgebras $B^1\subset A^1$ and $B^2\subset A^2$ containing the sets of idempotents $\{e_i \ |\ i\in Q_0^1\}$ and $\{e_j \ |\ j\in Q_0^2\}$, respectively, and that $A$ admits a regular exact Borel subalgebra. Then $B^1\diamond B^2\subset A$ is a regular exact Borel subalgebra of $A$.
\end{proposition}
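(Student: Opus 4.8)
The plan is to show that $B:=B^1\diamond B^2$ is an exact Borel subalgebra of $A$ and then to upgrade this to \emph{regular} for free. For the upgrade, observe that $A=KQ$ is basic and hereditary, that $\trianglelefteq^\ell$ is exactly the partial order with respect to which $A^\ell$ is quasi-hereditary (because $B^\ell$ is an exact Borel subalgebra of $A^\ell$), and hence that $A$ is quasi-hereditary with respect to $\trianglelefteq=\trianglelefteq(\trianglelefteq^1,\trianglelefteq^2)$ by Lemma \ref{lemma:qh-structure on whole from qh-structure on deconcatenation}(iv); since $A$ admits a regular exact Borel subalgebra by hypothesis, Corollary \ref{corollary:theorem:isomorphisms preserve regular exact borel subalgebras} then forces \emph{every} exact Borel subalgebra of $A$ to be regular. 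So it suffices to verify the three conditions of Definition \ref{definition:exact borel subalg} for $B$, which is a subalgebra of $A$ containing $\{e_i\mid i\in Q_0\}$ (as recalled before Proposition \ref{proposition:gluing at source admits borel}). Two structural facts will be used throughout. First, exactly as $A^\ell=A/\langle e_u\mid u\in Q_0\setminus Q_0^\ell\rangle$, one has $B^\ell\cong B/\langle e_u\mid u\in Q_0^{\overline{\ell}}\setminus\{v\}\rangle$ (the idempotent ideal generated by these $e_u$ equals $(B^{\overline{\ell}})'$, since $v$ is a sink or source and $Q_B$ is acyclic, so no path of $B$ passes through $v$); this yields fully faithful exact functors $G^\ell\colon B^\ell\operatorname{-mod}\to B\operatorname{-mod}$ parallel to the $F^\ell$, and identifies $B^\ell$ with the idempotent truncation $e_{Q_0^\ell}Be_{Q_0^\ell}$. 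Second, because $A$ is a path algebra, $e_vAe_j=0$ for all $j\ne v$ when $v$ is a source and $e_jAe_v=0$ for all $j\ne v$ when $v$ is a sink; consequently $v$ is a source (respectively sink) of the quiver $Q_{B^\ell}$ of $B^\ell$, $Q_B$ is obtained by gluing $Q_{B^1}$ and $Q_{B^2}$ along $v$ with $v$ a sink or source of it, and $e_kB^\ell=e_kB$ for each $k\in Q_0^\ell\setminus\{v\}$ (and for $k=v$ when $v$ is a source).

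For condition (i), $B$ has $n$ simple modules because it contains a complete set of primitive orthogonal idempotents. For the standard modules to be simple, note that $P_B(i)\cong P_{B^\ell}(i)$ as $B$-modules for $i\in Q_0^\ell\setminus\{v\}$, while $P_B(v)$ is the amalgam $P_{B^1}(v)\oplus_{L(v)}P_{B^2}(v)$ along the common top $L(v)$; in either case $\operatorname{rad}P_B(i)$ decomposes into ``$Q^1$-'' and ``$Q^2$-parts'' of the radicals $\operatorname{rad}P_{B^\ell}(i)$. As $B^\ell$ is an exact Borel subalgebra of $A^\ell$, the pair $(B^\ell,\trianglelefteq^\ell)$ is quasi-hereditary with simple standard modules, so $\operatorname{rad}P_{B^\ell}(i)$ has its composition factors labelled by vertices $\triangleright^\ell i$; since $\triangleright^\ell$ and $\triangleright$ agree on each $Q_0^\ell$, the same holds for $\operatorname{rad}P_B(i)$. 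Hence $\Delta_B(i)=L_B(i)$, the kernel of $P_B(i)\twoheadrightarrow\Delta_B(i)$ is filtered by the $\Delta_B(j)=L_B(j)$ with $j\triangleright i$, and (QH1), (QH2) hold, so $(B,\trianglelefteq)$ is quasi-hereditary with simple standard modules. For condition (iii), if $i\in Q_0^\ell\setminus\{v\}$ then $L_B(i)=G^\ell(L_{B^\ell}(i))$ is annihilated by every $e_u$ with $u\in Q_0^{\overline{\ell}}$, and a direct computation gives $A\otimes_B G^\ell(M)\cong F^\ell(A^\ell\otimes_{B^\ell}M)$ for all such $B^\ell$-modules $M$ (only the tensor factors $e_jA\otimes_B M$ with $j\in Q_0^\ell$ survive, and there $e_jA=e_jA^\ell$ with its $B$-action factoring through $B\to B^\ell$), whence $A\otimes_B L_B(i)\cong F^\ell(A^\ell\otimes_{B^\ell}L_{B^\ell}(i))\cong F^\ell(\Delta^\ell(i))\cong\Delta_A(i)$ by Lemma \ref{lemma:qh-structure on whole from qh-structure on deconcatenation}(i). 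For $i=v$, compute $A\otimes_B L_B(v)=Ae_v/A\cdot\operatorname{rad}(B)e_v$: if $v$ is a sink this is $Ke_v=L(v)=\Delta_A(v)$; if $v$ is a source, $Ae_v=P_A(v)$ is the amalgam $P^1(v)\oplus_{L(v)}P^2(v)$ (the case $k=1$ of Lemma \ref{lemma:projective resolution of standard at source}), $\operatorname{rad}(B)e_v=\operatorname{rad}(B^1)e_v\oplus\operatorname{rad}(B^2)e_v$ and $A\cdot\operatorname{rad}(B^\ell)e_v=A^\ell\cdot\operatorname{rad}(B^\ell)e_v=\ker(P^\ell(v)\twoheadrightarrow\Delta^\ell(v))$, so the quotient is $\Delta^1(v)\oplus_{L(v)}\Delta^2(v)$, which equals $\Delta_A(v)$ since $\operatorname{rad}\Delta_A(v)=\operatorname{rad}\Delta^1(v)\oplus\operatorname{rad}\Delta^2(v)$ by the proof of Proposition \ref{proposition:gluing at source admits borel}.

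For condition (ii) one must show $A$ is projective as a right $B$-module; decompose $A=\bigoplus_{i\in Q_0}e_iA$ as a right $B$-module. For $i\in Q_0^\ell\setminus\{v\}$ we have $e_iA=e_iA^\ell$, which is projective over $B^\ell$ (exact Borel property), the indecomposable summand $e_kB^\ell$ occurring with multiplicity $[\Delta^\ell(k):L(i)]$; using $e_kB^\ell=e_kB$ (for $k\ne v$, and $e_vB^\ell=Ke_v=e_vB$ when $v$ is a source; while $[\Delta^\ell(v):L(i)]=0$ when $v$ is a sink since then $\Delta^\ell(v)=L(v)$) one concludes that $e_iA$ is right $B$-projective. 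For $i=v$ a source, $e_vA=Ke_v=e_vB$; for $i=v$ a sink, $e_vA$ is the amalgam of the right projectives $e_vA^1,e_vA^2$ over their common simple top, and decomposing each $e_vA^\ell$ over $B^\ell$ and using that $e_vB$ is itself the amalgam of $e_vB^1$ and $e_vB^2$ over their common top shows $e_vA$ is right $B$-projective. Hence $A$ is right $B$-projective, $B$ is an exact Borel subalgebra, and by the first paragraph a regular one. I expect the last point — condition (ii) — to be the real obstacle: it is the careful bookkeeping of the right projective $B$-modules at the cut vertex $v$, where for a source one must glue the minimal projective resolutions of $A^1$ and $A^2$ over $B^1$ and $B^2$ as in Lemma \ref{lemma:projective resolution of standard at source} and Proposition \ref{proposition:ext to and from source in deconcatenation}, and for a sink one must match up the two ``injective-side'' indecomposable projectives glued at $v$.
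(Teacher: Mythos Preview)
Your proposal is correct and follows essentially the same strategy as the paper: verify conditions (i)--(iii) of Definition~\ref{definition:exact borel subalg} for $B^1\diamond B^2$ directly, then use Corollary~\ref{corollary:theorem:isomorphisms preserve regular exact borel subalgebras} (equivalently, conjugacy with the assumed regular exact Borel subalgebra plus Theorem~\ref{theorem:inner automorphisms preserve regular exact borel subalgebras}) to upgrade ``exact Borel'' to ``regular exact Borel''. The paper checks (i) by comparing the quiver of $B^1\diamond B^2$ with that of a known regular exact Borel subalgebra $C$ via the Ext-computations of Propositions~\ref{proposition:no ext between different parts of concatenation}, \ref{proposition:ext between standards in deconcatenation}, and \ref{proposition:ext to and from source in deconcatenation}; you instead verify (i) directly by analysing the composition factors of $\operatorname{rad}P_B(i)$, which is a bit more self-contained. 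Your treatment of (ii) via the right-module decomposition $A=\bigoplus_i e_iA$ is the same as the paper's, and your handling of the cases is in fact cleaner than the paper's (which appears to conflate the sink and source cases for the summand at $v$). Your argument for the sink case of (ii) is terse---the sentence ``decomposing each $e_vA^\ell$ over $B^\ell$ and using that $e_vB$ is itself the amalgam of $e_vB^1$ and $e_vB^2$ over their common top shows $e_vA$ is right $B$-projective'' compresses the construction of an explicit isomorphism $e_vA\cong e_vB\oplus M^1\oplus M^2$---but the underlying claim is correct: one chooses the splittings $e_vA^\ell\cong e_vB^\ell\oplus M^\ell$ so that the subspace $e_vB^\ell\subset e_vA^\ell$ is mapped identically onto the first summand, and then the two maps agree on the overlap $Ke_v$ and glue to a right $B$-module isomorphism.
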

\begin{proof}
	Consider the regular exact Borel subalgebra $B^\ell$, where $\ell=1,2$. We have seen that $B^\ell=KQ_{B^\ell}$, where $Q_{B^\ell}$ is the following quiver.
	\begin{enumerate}[(i)]
		\item The set of vertices of $Q_{B^\ell}$ is ${Q_{B^\ell}}_0=Q_0^\ell$.
		\item There is an arrow $i\to j$ in $Q_{B^\ell}$ if and only $\dim \operatorname{Ext}_{A^\ell}(\Delta^\ell(i),\Delta^\ell(j))=1$.
	\end{enumerate}
	Let $C$ be a regular exact Borel subalgebra. Then, $C=KQ_C$ for some quiver $Q_C$ with vertex set $Q_0$. We observe that, by Proposition \ref{proposition:ext between standards in deconcatenation} and \ref{proposition:ext to and from source in deconcatenation}, we have
	$$\dim\operatorname{Ext}_{A^\ell}(\Delta^\ell(i),\Delta^\ell(j))=\dim\operatorname{Ext}_{A}(\Delta(i),\Delta(j)),$$
	for $i,j\in Q_0^\ell$. Using that $B^1$ and $B^2$ are regular exact Borel subalgebras, we get
	$$\dim\operatorname{Ext}^1_{B^\ell}(L(i),L(j))=\dim\operatorname{Ext}^1_{A^\ell}(\Delta^\ell(i),\Delta^\ell(j))=\dim\operatorname{Ext}^1_{A}(\Delta(i),\Delta(j))=\dim\operatorname{Ext}^1_C(L(i),L(j)).$$
	This shows that for any $i,j\in Q_0^\ell$, there is an arrow $i\to j$ in the quiver of $C$ if and only if there is an arrow $i\to j$ in the quiver of $B^\ell$. Next, note that if $i\in Q_0^1\backslash\{v\}$ and $x\in Q_0^2\backslash\{v\}$, then there are no arrows between $i$ and $x$ in $Q_C$, because
	$$\dim \operatorname{Ext}_C^1(L(i),L(x))=\dim \operatorname{Ext}_{A}^1(\Delta(i),\Delta(x))=0,$$
	according to Proposition \ref{proposition:no ext between different parts of concatenation}. This means that the quivers of $C$ and $B^1\diamond B^2$ coincide, so that $C$ and $B^1\diamond B^2$ are isomorphic. This shows that $B^1\diamond B^2$ has $n$ simple modules up to isomorphism and is quasi-hereditary with simple standard modules.
	
	Next, we check that the functor $A\otimes_{B^1\diamond B^2}\blank$ is exact. To this end, we claim that $A$ is projective as a right $B^1\diamond B^2$-module. Consider the decomposition of right $A^1$-modules
	$$A^1\cong \bigoplus_{i\in Q_0^1} P^1(i).$$
	
	Since $A^1\otimes_{B^1}\blank$ is an exact functor, $A^1$ is projective as a right $B^1$-module. Because $B^1\subset A^1$ is a subalgebra, the decomposition above is also a decomposition of right $B^1$-modules. This, in turn, implies that the summands $P^1(i)$ above are projective as right $B^1$-modules. Because $B^1\diamond B^2$ surjects onto $B^1$, each $B^1$-module has the natural structure of a $B^1\diamond B^2$-module (see the remark prior to Lemma \ref{lemma:elem. properties of deconcatenation}). Then, the above decomposition is also a decomposition of right $B^1\diamond B^2$-modules, since the action of $\operatorname{rad}B^2$ on each summand is trivial. Now, consider the decomposition
	$$A\cong \bigoplus_{i\in Q_0^1\backslash\{v\}} P(i) \oplus \bigoplus_{j\in Q_0^2\backslash\{v\}} P(j) \oplus P(v)$$
of right $A$-modules. By the above argument, the summands $P(i)$ in the first term are projective as right $B^1\diamond B^2$-modules (note that for $i\in Q_0^1\backslash\{v\}$, we have $P(i)\cong P^1(i)$). Similarly, the summands of the second term are projective right $B^1\diamond B^2$-modules. Left to check is that $P(v)$ is a projective right $B^1\diamond B^2$-module.

Let $v$ be a sink. Then $P(v)\cong L(v)$ and there is nothing to show. Let $v$ be a source. As noted earlier, $P^\ell(v)$ is projective as a right $B^\ell$-module. Since $v$ is a source also in the quiver of $B^\ell$, the module $P_{B^\ell}^\ell(v)$ is the unique indecomposable projective $B^\ell$-module having $L(v)$ as a composition factor. This implies that there is an isomorphism $f^\ell$ of right $B^\ell$-modules:
$$f^\ell: P^\ell(v)\to P_{B^\ell}^\ell(v) \oplus M^\ell,$$
where $M^\ell$ is a direct sum of indecomposable projective $B^\ell$-modules, with some multiplicites. Note that $M^\ell$ does not contain $P_{B^\ell}^\ell(v)$ as a direct summand. We claim that there is an isomorphism of right $B^1\diamond B^2$-modules
$$f:P(v)\to P_{B^1\diamond B^2}(v) \oplus M^1 \oplus M^2.$$
Define $f$ on paths in $P(v)$ by
$$p\mapsto \begin{cases}
	f^1(p) & \textrm{if }p\in A^1\backslash \operatorname{span}(e_v) \\
	f^2(p) & \textrm{if }p\in A^2\backslash \operatorname{span}(e_v) \\
e_v & \textrm{if }p=e_v.
\end{cases}$$
It is clear that $f$ is bijective and a homomorphism of $B^1\diamond B^2$-modules, hence an isomorphism.

Next, we check that $A\otimes_{B^1\diamond B^2}L(i)\cong \Delta(i)$, for all $i\in Q_0$. When $i\in Q_0^\ell\backslash\{v\}$, we know that $\Delta^\ell(i)\cong \Delta(i)$. In particular, $\Delta(i)$ is an $A^\ell$-module. Let $\varphi:A^\ell\otimes_{B^{\ell}} L(i)\to \Delta(i)$ be an isomorphism. The module $A\otimes_{B^1\diamond B^2}L(i)$ is generated by elements of the form $p\otimes e_i$, where $p$ is a path in $A$. Note that, if $p$ is a path in $A^{\overline{\ell}}$ not equal to $e_v$, then
$$p\otimes e_i=p e_{s(p)} \otimes e_i= p\otimes e_{s(p)}e_i=0.$$
To see this, note that $p$ is a path in $A^{\overline{\ell}}$ and $i\in Q_0^\ell\backslash\{v\}$, so that $i\neq s(p)$. Then, we may define a homomorphism
$$\tilde{\varphi}: A\otimes_{B^1\diamond B^2}L(i)\to \Delta(i)$$ on the non-zero generators of $A\otimes_{B^1\diamond B^2}e_i$ by
$$p\otimes_{B^1\diamond B^2} e_i \mapsto \varphi(p\otimes_{B^\ell} e_i),$$
which is clearly an isomorphism. It remains to check that $A\otimes_{B^1\diamond B^2} L(v)\cong \Delta(v)$. When $v$ is a sink, we have $\Delta(v)\cong L(v)$, so the assertion is clear, given that $A^\ell\otimes_{B^{\ell}} L(v)\cong \Delta(v)$. Assume that $v$ is a source and let
$$f_1: A^1\otimes_{B^1}L(v)\to \Delta^1(v),\quad \textrm{and} \quad f_2:A^2\otimes_{B^2}L(v)\to \Delta^2(v)$$
be isomorphisms. The module $A\otimes_{B^1\diamond B^2} L(v)$ is generated by elements of the form $p\otimes e_v$, where $p\in A$ is some path. Note that with the exception of $e_v$, every such $p$ is contained in $A^1$ or in $A^2$. Define a map
$$f:A\otimes_{B^1\diamond B^2} L(v)\to \Delta(v)$$
on generators by
$$p\otimes e_v \mapsto \begin{cases}
	f_1(p\otimes e_v) & \textrm{if }p\in A^1\backslash \operatorname{span}(e_v),\\
	f_2(p\otimes e_v) & \textrm{if }p\in A^2\backslash \operatorname{span}(e_v),\\
	e_v\otimes e_v & \textrm{if }p=e_v.
\end{cases}$$
	Note that, if $p\in A^1$ and $q\in A^2$, then $p\otimes e_v$ and $q\otimes e_v$ are either zero or linearly independent, except for when $p=q=e_v$. Then, $f$ is a well-defined homomorphism because $f_1$ and $f_2$ are. Similarly, $f$ is injective because $f_1$ and $f_2$ are and the intersection of their images equals $\operatorname{span}(e_v)$. For surjectivity, note that as a vector space, we have $\Delta(v)=\operatorname{span}(e_v)\oplus \operatorname{rad}\Delta^1(v)\oplus \operatorname{rad}\Delta^2(v).$
	
	To finish the proof, note that since $B^1\diamond B^2$ and $C$ are both exact Borel subalgebras, they are conjugate by Theorem~\ref{theorem: uniqueness of borel}. By Theorem~\ref{theorem:inner automorphisms preserve regular exact borel subalgebras}, $B^1\diamond B^2$ is regular since $C$ is.
\end{proof}
We conclude this section with an easy observation about the behavior of (certain) $A_\infty$-structures under deconcatenations.
\begin{proposition}\label{proposition:concatenation of formals is formal}
	Let $Q=Q^1\sqcup Q^2$ be a deconcatenation at a sink or a source $v$ such that if $v$ is a sink, then $v$ is minimal or maximal with respect to $\trianglelefteq^e$, and assume that $\operatorname{Ext}_{A^\ell}(\Delta^\ell,\Delta^\ell)$ is intrinsically formal, for $\ell=1,2$. Then, $\operatorname{Ext}_{A}^\ast(\Delta,\Delta)$ is intrinsically formal.
\end{proposition}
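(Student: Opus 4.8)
The plan is to reduce the statement to the intrinsic formality of $\operatorname{Ext}_{A^1}^\ast(\Delta^1,\Delta^1)$ and $\operatorname{Ext}_{A^2}^\ast(\Delta^2,\Delta^2)$ by means of Theorem~\ref{theorem:pasting of ext-algebras of parts of concatenation isomorphic to whole ext-algebra}. That theorem identifies $\operatorname{Ext}_A^\ast(\Delta,\Delta)$ with the gluing $\operatorname{Ext}_{A^1}^\ast(\Delta^1,\Delta^1)\diamond\operatorname{Ext}_{A^2}^\ast(\Delta^2,\Delta^2)$, and under this identification each $\operatorname{Ext}_{A^\ell}^\ast(\Delta^\ell,\Delta^\ell)$ sits inside $\operatorname{Ext}_A^\ast(\Delta,\Delta)$ as the graded subalgebra $\bigoplus_{i,j\in Q_0^\ell}\operatorname{Ext}_A^\ast(\Delta(i),\Delta(j))$, equivalently as the full subcategory on the objects $\{\Delta(i) : i\in Q_0^\ell\}$ (here we use that the isomorphisms of Propositions~\ref{proposition:ext between standards in deconcatenation} and \ref{proposition:ext to and from source in deconcatenation} are induced by the exact fully faithful functors $F^\ell$, hence respect the Yoneda product). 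Now fix an $A_\infty$-structure $(m_n)_{n\geq 1}$ on $\operatorname{Ext}_A^\ast(\Delta,\Delta)$, regarded as an $A_\infty$-category on the objects $\{\Delta(i) : i\in Q_0\}$; since $m_\ell$ is multilinear, it suffices to prove $m_\ell(\varphi_\ell,\dots,\varphi_1)=0$ for every $\ell\geq 3$ and every composable string of nonzero homogeneous morphisms $\varphi_k\colon\Delta(i_k)\to\Delta(i_{k+1})$.

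The key observation is that such a string is monotone for the partial order $\trianglelefteq$. Since $A=KQ$ is hereditary, each $\varphi_k$ has degree $0$ or $1$. A nonzero degree-$0$ morphism is a nonzero element of $\operatorname{Hom}_A(\Delta(i_k),\Delta(i_{k+1}))$, whose image has simple top $L(i_k)$ and is a submodule of $\Delta(i_{k+1})$, so $L(i_k)$ is a composition factor of $\Delta(i_{k+1})$ and hence $i_k\trianglelefteq i_{k+1}$; a nonzero degree-$1$ morphism is a nonzero element of $\operatorname{Ext}^1_A(\Delta(i_k),\Delta(i_{k+1}))$, which by (QH1) (the kernel of $P(i_k)\twoheadrightarrow\Delta(i_k)$ is $\Delta$-filtered by standard modules indexed strictly above $i_k$, and $\operatorname{Hom}_A(P(i_k),-)$ is exact) forces $i_k\triangleleft i_{k+1}$. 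Thus $i_1\trianglelefteq i_2\trianglelefteq\cdots\trianglelefteq i_{\ell+1}$, and by antisymmetry the set $\{k : i_k=v\}$ is an interval $\{p,\dots,q\}$ (possibly empty). Moreover, for every index $k$ with $i_k\neq v\neq i_{k+1}$, Proposition~\ref{proposition:no ext between different parts of concatenation} rules out $i_k$ and $i_{k+1}$ lying in different parts, so all of $i_1,\dots,i_{p-1}$ lie in a single part $Q_0^\alpha$ and all of $i_{q+1},\dots,i_{\ell+1}$ lie in a single part $Q_0^\beta$.

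I would then distinguish two cases. If $\alpha=\beta$ — which in particular covers the situations where no index differs from $v$, or where the block $\{p,\dots,q\}$ meets an end of the string — then every $i_k$ lies in $Q_0^\alpha$, so every $\varphi_k$ lies in the subalgebra $\operatorname{Ext}_{A^\alpha}^\ast(\Delta^\alpha,\Delta^\alpha)$. The output $m_\ell(\varphi_\ell,\dots,\varphi_1)$ is a morphism $\Delta(i_1)\to\Delta(i_{\ell+1})$ with $i_1,i_{\ell+1}\in Q_0^\alpha$, hence again lies in $\operatorname{Ext}_{A^\alpha}^\ast(\Delta^\alpha,\Delta^\alpha)$; consequently $(m_n)$ restricts to $\operatorname{Ext}_{A^\alpha}^\ast(\Delta^\alpha,\Delta^\alpha)$, and, the $A_\infty$-relations for the restriction being special cases of those for $(m_n)$, this restriction is an $A_\infty$-structure on that graded algebra. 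By hypothesis $\operatorname{Ext}_{A^\alpha}^\ast(\Delta^\alpha,\Delta^\alpha)$ is intrinsically formal, so $m_\ell$ vanishes there, and in particular $m_\ell(\varphi_\ell,\dots,\varphi_1)=0$. If $\alpha\neq\beta$, then necessarily $p\geq 2$ and $q\leq\ell$, so $i_1\in Q_0^\alpha\setminus\{v\}$ and $i_{\ell+1}\in Q_0^\beta\setminus\{v\}$ lie in distinct parts; by Proposition~\ref{proposition:no ext between different parts of concatenation} the space $\operatorname{Ext}_A^\ast(\Delta(i_1),\Delta(i_{\ell+1}))$ is zero, so $m_\ell(\varphi_\ell,\dots,\varphi_1)=0$ as well. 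In either case the string contributes $0$, whence $m_\ell=0$ for all $\ell\geq 3$.

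The main obstacle is the bookkeeping in the first case: one must be sure that the full $A_\infty$-subcategory of $\operatorname{Ext}_A^\ast(\Delta,\Delta)$ on the objects $\{\Delta(i) : i\in Q_0^\alpha\}$ really is, as a graded category with $m_2$ the Yoneda product, the graded algebra $\operatorname{Ext}_{A^\alpha}^\ast(\Delta^\alpha,\Delta^\alpha)$ to which the intrinsic formality hypothesis applies — this is exactly where Theorem~\ref{theorem:pasting of ext-algebras of parts of concatenation isomorphic to whole ext-algebra} and the multiplicativity of the isomorphisms in Propositions~\ref{proposition:ext between standards in deconcatenation} and \ref{proposition:ext to and from source in deconcatenation} are used. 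It is worth noting that the hypothesis on $v$ (minimality or maximality in $\trianglelefteq^e$ when $v$ is a sink) enters the argument only through these cited results.
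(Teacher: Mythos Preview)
Your proof is correct and follows the same strategy as the paper's: determine where $m_\ell(\varphi_\ell,\dots,\varphi_1)$ lands and use either the intrinsic formality of $\operatorname{Ext}_{A^\ell}^\ast(\Delta^\ell,\Delta^\ell)$ or the vanishing of cross-extensions from Proposition~\ref{proposition:no ext between different parts of concatenation}. Your argument is in fact more careful than the paper's very terse proof, since your monotonicity observation (that $\operatorname{Hom}$ and $\operatorname{Ext}^{\geq 1}$ between standard modules force $i_k\trianglelefteq i_{k+1}$) is exactly what guarantees that when the endpoints lie in $Q_0^\alpha$ so do all intermediate objects, making the restriction to the intrinsically formal subcategory legitimate---a point the paper leaves implicit.
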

\begin{proof}
	Let $m_n$ denote the higher multiplications on $\operatorname{Ext}_A^\ast(\Delta,\Delta)$ and consider $m_n(\varphi_n,\dots, \varphi_1)$, where $\varphi_i\in \operatorname{Ext}_A^\ast(\Delta,\Delta)$ for $1\leq i \leq n$. Suppose that $m_n(\varphi_n,\dots, \varphi_1)\in \operatorname{Ext}_A^k(\Delta(a),\Delta(b))$, for some $a, b\in Q_0$ and $k\geq 0$. If $a,b\in Q_0^\ell$, then $m_n(\varphi_n,\dots, \varphi_1)=0$, because $\operatorname{Ext}_{A^\ell}^\ast(\Delta^\ell,\Delta^\ell)$ is intrinsically formal. If $a\in Q_0^{\ell}\backslash\{v\}$ and $b\in Q_0^{\ell}\backslash\{v\}$, then $m_n(\varphi_n,\dots, \varphi_1)=0$, according to Proposition~\ref{proposition:no ext between different parts of concatenation}. If $a=b=v$, then $m_n(\varphi_n,\dots,\varphi_1)=0$, since standard modules have no non-split self-extensions.
\end{proof}
\subsection{Regular exact Borel subalgebras for path algebras of linear quivers}
Let $Q$ be the linear quiver
$$\xymatrix{1\ar@{-}[r] & 2 \ar@{-}[r] &\dots \ar@{-}[r] & n-1 \ar@{-}[r] & n}$$
where the edges may be of either orientation, and put $\Lambda=KQ$. Denote by $\mathbf{A}_m$ and $\mathbf{B}_\ell$ the special cases
$$\xymatrix{1 \ar[r] & \dots \ar[r] & m}\quad\textrm{and}\quad \xymatrix{1 & \dots \ar[l] & \ell\ar[l]},$$
respectively. By Proposition \ref{proposition:A_n has a regular exact borel}, $K\mathbf{A}_m$ admits a regular exact Borel subalgebra, regardless of the partial order on the vertices. We remark that all the statements from Section 2, describing the structure of $K\mathbf{A}_m$ in terms of its binary search tree, remain true for $K\mathbf{B}_\ell$, if we swap ``left'' and ``right''. For instance, over $K\mathbf{B}_\ell$, the composition factors of a standard module $\Delta_{K\mathbf{B}_\ell}(i)$ are found in the \emph{left} subtree of the vertex labeled by $i$, rather than the right. From this, it follows that also $K\mathbf{B}_\ell$ admits a regular exact Borel subalgebra, regardless of the partial order on the vertices. Now, consider $\Lambda=KQ$. The quiver $Q$ admits an iterated deconcatenation
$$Q=Q^1\sqcup \dots \sqcup Q^s,$$
where, for each $1\leq r\leq s$, the quiver $Q^r$ is either $\mathbf{A}_m$ or $\mathbf{B}_\ell$. Recall that the partial order on the set of vertices of $Q$ is constructed in the following way. When $Q=Q^1\sqcup Q^2$, with $Q_0^1\cap Q_0^2=\{v\}$ and $i,j\in Q_0$, we say that $i\triangleleft j$ if one of the following hold.
\begin{enumerate}[(i)]
	\item We have $i,j\in Q_0^\ell$ and $i\triangleleft^\ell j$, for some $\ell$.
	\item We have $i\in Q_0^\ell$, $j\in Q_0^{\overline{\ell}}$, $i\triangleleft^\ell v$ and $v\triangleleft^{\overline{\ell}} j$.
\end{enumerate}
It is clear that, in this setting, $v$ is maximal in $Q_0$ if and only if it is maximal in both $Q_0^\ell$ and $Q_0^{\overline{\ell}}$. We conclude that we have a special case of Proposition \ref{proposition:gluing of deconcatenation admits regular exact borel} as follows.

\begin{theorem}\label{theorem:when does path algebra of linear quiver have borel}
	The algebra $\Lambda$ admits a regular exact Borel subalgebra $C\subset \Lambda$ if and only if each vertex $v\in Q_0$ which is a sink at which a deconcatenation occurs is minimal or maximal with respect to the essential order on $Q_0$.
\end{theorem}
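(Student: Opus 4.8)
The idea is to reduce to the module-theoretic criterion used throughout this section — a path algebra arising from such a deconcatenation admits a regular exact Borel subalgebra if and only if $\operatorname{rad}\Delta(i)\in\mathcal{F}(\nabla)$ for every $i$ (Theorem~\ref{theorem:when does KA_n have regular exact borel}, applied as in the proofs of Propositions~\ref{proposition:gluing at source admits borel}, \ref{proposition:gluing of deconcatenation admits regular exact borel} and \ref{proposition:sufficient and necessary condition for borel when v is a sink and costandard at v not simple}) — and then to induct on the number $s$ of pieces in the iterated deconcatenation $Q=Q^1\sqcup\cdots\sqcup Q^s$ into uniformly oriented subquivers, where consecutive $Q^r$ share a turning vertex, each $Q^r$ is of the form $\mathbf{A}_m$ or $\mathbf{B}_\ell$, and the turning vertices are exactly the vertices at which deconcatenations are performed. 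For $s=1$ the quiver $Q$ is uniformly oriented, so $\Lambda$ admits a regular exact Borel subalgebra by Proposition~\ref{proposition:A_n has a regular exact borel} and its mirror image for $K\mathbf{B}_\ell$, and the asserted condition is vacuous.

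For the inductive step I would split off the leftmost uniform piece, writing $Q=Q^1\sqcup Q''$ with $Q''=Q^2\sqcup\cdots\sqcup Q^s$ and the deconcatenation taking place at the leftmost turning vertex $v$. Then $KQ^1$ always has a regular exact Borel subalgebra, and by the induction hypothesis $KQ''$ has one if and only if every sink of $Q''$ at which a deconcatenation occurs is minimal or maximal for the essential order on $Q''_0$. If $v$ is a source, Proposition~\ref{proposition:gluing at source admits borel} together with the criterion gives that $\Lambda$ has a regular exact Borel subalgebra precisely when $KQ''$ does, which, as $v$ imposes no constraint, is exactly the condition asserted for $Q$. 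If $v$ is a sink that is minimal for the essential order, Proposition~\ref{proposition:gluing of deconcatenation admits regular exact borel} gives the same conclusion. If $v$ is a sink that is not minimal, Proposition~\ref{proposition:sufficient and necessary condition for borel when v is a sink and costandard at v not simple} gives — assuming $KQ''$ has a regular exact Borel subalgebra — that $\Lambda$ has one if and only if $v$ is maximal; and if $KQ''$ has none, then $\operatorname{rad}\Delta^{2}(i)\notin\mathcal{F}(\nabla)$ for some $i\in Q''_0\setminus\{v\}$, whence $\operatorname{rad}\Delta(i)\notin\mathcal{F}(\nabla)$ since $\Delta(i)\cong\Delta^{2}(i)$, so $\Lambda$ has none either. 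In all three cases $\Lambda$ admits a regular exact Borel subalgebra if and only if the condition holds for $Q''$ and, additionally, $v$ is minimal or maximal whenever $v$ is a sink — that is, if and only if the condition holds for $Q$.

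The main obstacle I anticipate is the bookkeeping with the essential order. The induction hypothesis speaks about $\trianglelefteq^e$ on $Q''_0$, whereas the target condition and the hypotheses of Propositions~\ref{proposition:gluing of deconcatenation admits regular exact borel} and \ref{proposition:sufficient and necessary condition for borel when v is a sink and costandard at v not simple} refer to $\trianglelefteq^e$ on $Q_0$; one must verify that for a turning vertex $w$ of $Q''$, minimality (resp.\ maximality) for $\trianglelefteq^e$ on $Q''_0$ is equivalent to the same property for $\trianglelefteq^e$ on $Q_0$, and similarly for $v$. The remark preceding the statement records this for $\trianglelefteq$, and it should be checked that it passes to $\trianglelefteq^e$ using the explicit description of $\trianglelefteq^e$ in terms of the orders on the two parts of a deconcatenation. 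A second point requiring care is that Propositions~\ref{proposition:gluing at source admits borel}, \ref{proposition:gluing of deconcatenation admits regular exact borel} and \ref{proposition:sufficient and necessary condition for borel when v is a sink and costandard at v not simple} assume regular exact Borel subalgebras on \emph{both} halves of a single deconcatenation; peeling off one uniformly oriented piece at a time keeps us in that situation because $KQ^1$ always has one, but the ``only if'' halves of the equivalences must be available, which is precisely why I would route the whole argument through the criterion ``$\operatorname{rad}\Delta(i)\in\mathcal{F}(\nabla)$ for all $i$'' rather than through the bare existence of a Borel subalgebra.
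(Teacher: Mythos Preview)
Your proposal is correct and follows exactly the approach the paper intends: the paper does not give an explicit proof of this theorem but simply records, in the sentence immediately preceding the statement, that maximality (and minimality) of a turning vertex for $\trianglelefteq$ on $Q_0$ is equivalent to the same property on each $Q_0^\ell$, and then presents the theorem as the outcome of iterating Propositions~\ref{proposition:gluing at source admits borel}, \ref{proposition:gluing of deconcatenation admits regular exact borel} and \ref{proposition:sufficient and necessary condition for borel when v is a sink and costandard at v not simple} along the decomposition into uniformly oriented pieces. Your inductive scheme---peeling off one uniformly oriented piece at a time so that one half always has a regular exact Borel subalgebra by Proposition~\ref{proposition:A_n has a regular exact borel}, and routing the ``only if'' direction through the criterion $\operatorname{rad}\Delta(i)\in\mathcal{F}(\nabla)$---is precisely this iteration made explicit, and the two concerns you flag (compatibility of the essential orders under restriction, and the need for the criterion rather than bare existence to get both implications) are the right ones; the first is what the paper's one-line remark is meant to cover.
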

\begin{example}
	Consider the quiver
	$$Q=\xymatrix{
	1 \ar[r] & 2 \ar[r] & 3 & 4 \ar[l] & 5\ar[l] \ar[r] & 6 \ar[r]	&7.
	}$$
Deconcatenating $Q$ as far as possible we find that
$$Q=(\xymatrix{1\ar[r]^-x & 2 \ar[r]^-y & 3})\sqcup(\xymatrix{3 & 4 \ar[l]_-a & 5\ar[l]_-b})\sqcup (\xymatrix{5 \ar[r]^-u & 6 \ar[r]^-v & 7}).$$

Suppose the order on $Q_0$ is the one given by the following orders on $Q_0^1, Q_0^2$ and $Q_0^3$.

$$1\triangleleft_T 3, 2\triangleleft_T 3, \quad 4\triangleleft_T 5 \triangleleft_T3,\quad 6\triangleleft_T 5 \triangleleft_T 7,$$
corresponding to binary search trees
	$$\begin{tikzpicture}
	\node(a) [shape=circle, draw, thick, fill=lightgray] at (0,0) {3};
	\node(b) [shape=circle, draw, thick, fill=lightgray] at (-1,-1) {2};
	\node(c) [shape=circle, draw, thick, fill=lightgray] at (1,-1) {1};
	\node(d) at (0,-2) {};
	\draw[thick] (a) to (b);
	\draw[thick] (a) to (c);
\end{tikzpicture}\quad \begin{tikzpicture}
\node(a) [shape=circle, draw, thick, fill=lightgray] at (0,0) {3};
\node(b) [shape=circle, draw, thick, fill=lightgray] at (1,-1) {5};
\node(c) [shape=circle, draw, thick, fill=lightgray] at (0.5, -2) {4};
\draw[thick] (a) to (b);
\draw[thick] (b) to (c);
\end{tikzpicture}\quad  \begin{tikzpicture}
\node(a) [shape=circle, draw, thick, fill=lightgray] at (0,0) {7};
\node(b) [shape=circle, draw, thick, fill=lightgray] at (-1, -1) {5};
\node(c) [shape=circle, draw, thick, fill=lightgray] at (-0.5, -2) {6};
\draw[thick] (a) to (b);
\draw[thick] (b) to (c);
\end{tikzpicture}$$
We compute the standard modules.
$$\Delta(1)\cong L(1), \quad \Delta(2)\cong L(2),\quad \Delta(3)\cong L(3), \quad \Delta(4)\cong L(4),\quad \Delta(5)\cong M(4,5),\quad \Delta(6)\cong L(6),\quad  \Delta(7)\cong L(7)$$
The only standard module with non-trivial radical is $\Delta(5)$ with $\operatorname{rad}\Delta(5)\cong L(4)\cong \nabla(4)$, so $\operatorname{rad}\Delta(i)\in \mathcal{F}(\nabla)$ for all $1\leq i\leq 7$.
Then, $A^1, A^2$ and $A^3$ have regular exact Borel subalgebras given by

$$(\xymatrix{1\ar[r]^-x & 2 & 3}),\quad (\xymatrix{ 3 & 4\ar[r]^-b & 5}),\quad (\xymatrix{5\ar@/^1pc/[rr]^-{vu} & 6 & 7}).$$
We note that, since $3$ is maximal in the first and second quivers, $A$ has a regular exact Borel subalgebra, by Proposition \ref{proposition:gluing of borels is borel}, given by the following quiver.
$$\xymatrix{
1 \ar[r]^-x & 2 & 3 & 4\ar[r]^-b & 5 \ar@/^1pc/[rr]^-{vu} & 6 & 7	
}.$$
\end{example}
Similarly to the above, we now wish to extend the statement of Proposition \ref{proposition:a-infty on A_n is trivial} to the algebra $\Lambda$ discussed in this section.
\begin{proposition}\label{proposition:a-infty on linear quiver is trivial}
$\operatorname{Ext}_{\Lambda}^\ast(\Delta,\Delta)$ is intrinsically formal.
\end{proposition}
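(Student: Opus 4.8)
The plan is to bootstrap from Proposition~\ref{proposition:a-infty on A_n is trivial} using the behaviour of $A_\infty$-structures under deconcatenations, isolating one residual situation that has to be handled by hand.

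First, $\Lambda=KQ$ is hereditary, so $\operatorname{Ext}_\Lambda^k(\Delta(i),\Delta(j))=0$ for $k\geq 2$ and $R:=\operatorname{Ext}_\Lambda^\ast(\Delta,\Delta)$ is concentrated in degrees $0$ and $1$. Since $m_n$ has degree $2-n$, for homogeneous arguments $\psi_1,\dots,\psi_n$ of degrees $\epsilon_1,\dots,\epsilon_n\in\{0,1\}$ the element $m_n(\psi_n,\dots,\psi_1)$ sits in degree $\sum_r\epsilon_r+2-n$, which must lie in $\{0,1\}$ when non-zero; for $n\geq 3$ this forces $\sum_r\epsilon_r\in\{n-2,n-1\}$, so at least $n-2\geq 1$ of the arguments are proper extensions. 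Moreover $R_{\geq 1}\cdot R_{\geq 1}\subseteq R_2=0$, and accumulating a product of arrows from the right shows that every non-zero path in the quiver of $R$ carries at most one arrow of degree $1$. These facts play the role of the degree bookkeeping in the proof of Proposition~\ref{proposition:a-infty on A_n is trivial}.

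Next I would induct on $\lvert Q_0\rvert$. If $Q$ has an interior source $v$, deconcatenate $Q=Q^1\sqcup Q^2$ at $v$; by Lemma~\ref{lemma:qh-structure on parts from qh-structure of whole} each $KQ^\ell$ is quasi-hereditary for the restricted order, by the inductive hypothesis $\operatorname{Ext}_{KQ^\ell}^\ast(\Delta^\ell,\Delta^\ell)$ is intrinsically formal, and Proposition~\ref{proposition:concatenation of formals is formal}, whose hypothesis is vacuous when $v$ is a source, shows $R$ is intrinsically formal. Iterating, we may assume $Q$ has no interior source. A connected linear quiver all of whose interior turning points are sinks has at most one interior turning point: once the orientation reverses (a sink-turn) it cannot reverse again without a source-turn. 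Hence $Q$ is now $\mathbf A_m$, $\mathbf B_m$, or a gluing of an $\mathbf A$-type and a $\mathbf B$-type linear quiver at their common sink $v$. The first two cases follow from Proposition~\ref{proposition:a-infty on A_n is trivial} together with the left--right symmetry for $K\mathbf B_\ell$ recorded in this section; and if $v$ is minimal or maximal for $\trianglelefteq^e$, Proposition~\ref{proposition:concatenation of formals is formal} applies.

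The remaining case --- $Q$ glued from $\mathbf A_m$ and $\mathbf B_{m'}$ at a sink $v$ which is neither minimal nor maximal for $\trianglelefteq^e$ --- is the one the earlier results do not cover, and I expect it to absorb most of the work: here Proposition~\ref{proposition:no ext between different parts of concatenation} genuinely fails and there are non-zero extensions between standard modules supported on the two arms. I would treat it directly, mirroring the proof of Proposition~\ref{proposition:a-infty on A_n is trivial}. As $\Delta(v)\cong L(v)\cong P(v)$ is projective, Proposition~\ref{proposition:ext between standards in deconcatenation} identifies all of $R$ except the ``bridge'' spaces $\operatorname{Ext}_\Lambda^1(\Delta(i),\Delta(j))$ with $i$, $j$ on different arms; a projective-resolution computation along the lines of Proposition~\ref{proposition:no ext between different parts of concatenation} shows these are at most one-dimensional and determines, in both directions, exactly which pairs give a bridge arrow in terms of the endpoints of the interval modules $\Delta^1(i)$ and $\Delta^2(j)$ relative to $v$. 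This pins down the quiver and relations of $R$, with every product of two degree-$1$ arrows (bridge arrows included) zero because $\operatorname{Ext}_\Lambda^2=0$. With this description, the configuration argument of Proposition~\ref{proposition:a-infty on A_n is trivial} goes through: for $n\geq 3$, a non-zero $m_n$ with degree-$1$ output would need at least two degree-$1$ arguments, impossible since a non-zero path carries at most one; and a non-zero $m_n$ with degree-$0$ output needs an extension among its arguments, which --- tracing through the two binary search trees joined at $v$ --- places the source and target standard modules in positions admitting no homomorphism, so the output lands in a zero space. Hence $m_n=0$ for all $n\geq 3$. Tracking the ``sides'' of the two trees across the shared sink $v$ is the genuinely new bookkeeping, and is where I expect the main obstacle to lie.
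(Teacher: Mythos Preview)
The paper's own proof is a single sentence: it cites Proposition~\ref{proposition:a-infty on A_n is trivial} and Proposition~\ref{proposition:concatenation of formals is formal} and stops. You are doing considerably more, and your reduction is in fact more scrupulous than the paper's: you notice that the hypothesis of Proposition~\ref{proposition:concatenation of formals is formal} (``if $v$ is a sink then $v$ is $\trianglelefteq^e$-minimal or maximal'') is not automatic for an arbitrary linear quiver and quasi-hereditary order, and you reduce carefully to the one residual configuration --- a $\mathbf A$-arm and a $\mathbf B$-arm meeting at a sink $v$ that is neither extremal --- that the cited results do not cover. The paper simply does not isolate or discuss this case.

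That said, your proposed treatment of the residual case has a genuine gap. For degree-$1$ output with $n\geq 3$ you argue that one needs at least two degree-$1$ arguments, ``impossible since a non-zero path carries at most one.'' But the inputs $\varphi_1,\dots,\varphi_n$ to $m_n$ need only be individually non-zero and pairwise composable; there is no reason their $m_2$-product should be non-zero. Already in the $A_n$ case one can take $\varepsilon_i^j$ and $\varepsilon_j^k$ with $i$ the left child of $j$ and $j$ the left child of $k$: two composable degree-$1$ arrows whose product vanishes in $R$, yet a perfectly legitimate input to some $m_n$. This is exactly why the paper's proof of Proposition~\ref{proposition:a-infty on A_n is trivial} does \emph{not} use a ``non-zero path'' count but instead tracks the position of the unique degree-$0$ argument through the binary search tree and shows the endpoint configuration is impossible. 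Your residual case needs the same kind of configuration argument across the two trees joined at $v$; you correctly flag this as ``the genuinely new bookkeeping,'' but the shortcut via non-zero paths does not establish it.
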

\begin{proof}
This follows immediately from Proposition~\ref{proposition:a-infty on A_n is trivial} and Proposition~\ref{proposition:concatenation of formals is formal}.
\end{proof}
In view of the situations considered in \cite{FKR}, it would be interesting to do as similar investigation with quivers of Dynkin type $\mathbb{D}$ and $\mathbb{E}$.

\subsection{A nonlinear example}
Consider the algebra $A$, given by the quiver

$$\xymatrix{
	1 \ar@<0.5ex>[r]^-a \ar@<-0.5ex>[r]_-{b} & 2 & 3\ar[r]^-c \ar@/^1.5pc/[ll]^-d \ar@/_1.5pc/[ll]_-e& 4}$$
subject to the relations $ae=bd$, $ad=0$ and $be=0$. With the usual order $1\triangleleft2\triangleleft3\triangleleft4$, the standard modules over $A$ are
$$\Delta(1)\cong L(1),\quad \Delta(2)\cong L(2),\quad \Delta(3):\vcenter{ \xymatrixrowsep{0.5cm}\xymatrixcolsep{0.5cm}\xymatrix{
& 3 \ar[ld]_-d \ar[rd]^-e \\
1 \ar[rd]_-b & &1\ar[ld]^-a\\
& 2	
}},\quad \textrm{and} \quad \Delta(4)\cong L(4).$$
The only standard module with non-trivial radical is $\Delta(3)$, whose radical is isomorphic to the costandard module $\nabla(2)$. Therefore, $A$ admits a regular exact Borel subalgebra $B\subset A$, which we find is given by the quiver
$$\xymatrix{
	1 \ar@<0.5ex>[r]^-a \ar@<-0.5ex>[r]_-{b} & 2 & 3 \ar[r]^-c& 4}.$$
Consider now the algebra $\Lambda$ given by the quiver
$$\xymatrix{
	1 \ar@<0.5ex>[r]^-a \ar@<-0.5ex>[r]_-{b} & 2 & 3\ar[r]^-c \ar@/^1.5pc/[ll]^-d \ar@/_1.5pc/[ll]_-e& 4 & \ar[l]_-{c^\prime}5 \ar@/^1.5pc/[rr]^-{e^\prime} \ar@/_1.5pc/[rr]_-{d^\prime} & 6 & 7 \ar@<-0.5ex>[l]_-{a^\prime} \ar@<0.5ex>[l]^-{b^\prime}}$$
subject to the relations
$$ae=bd, \quad a^\prime e^\prime=b^\prime d^\prime,\quad ad=0,\quad a^\prime d^\prime=0,\quad be=0,\quad \textrm{and}\quad b^\prime e^\prime=0,$$
with the order on the vertices being $1<2<3<4, \quad 7<6<5<4$. Then, $4$ is maximal and Proposition \ref{proposition:sufficient and necessary condition for borel when v is a sink and costandard at v not simple} applies, and $\Lambda$ has a regular exact Borel subalgebra $C\subset \Lambda$, given by the quiver
$$\xymatrix{1 \ar@<0.5ex>[r]^-a \ar@<-0.5ex>[r]_-{b}& 2 & 3\ar[r]^-{c} & 4 & 5\ar[l]_-{c^\prime} & 6 & 7 \ar@<-0.5ex>[l]_-{a^\prime} \ar@<0.5ex>[l]^-{b^\prime}},$$
according to Proposition \ref{proposition:gluing of borels is borel}.

\section{Acknowledgements}
\subsection*{Funding} Open access funding provided by Uppsala University. The author did not receive support from any organization for the submitted work.
\subsection*{Competing interests} The author has no competing interesets to declare that are relevant to the content of this article.
\newpage\printbibliography
\end{document}